\documentclass[a4paper,reqno]{amsart}

\usepackage[
  a4paper,
  textwidth=15cm,
  centering, % <--- Centra il blocco di testo (3cm sx e 3cm dx)
  marginparwidth=3cm,
  marginparsep=5mm,
  top=3cm,
  bottom=3cm
]{geometry}
\usepackage[english]{babel}
\usepackage{yfonts}

\usepackage[dvipsnames]{xcolor}
\usepackage{graphicx}
\usepackage{subcaption}
\usepackage{tikz}
\usepackage{pgfplots}
\pgfplotsset{compat=1.18}
\usetikzlibrary{arrows.meta, shadings}

\usepackage{mathtools} 
\usepackage{amssymb}
\usepackage{mathrsfs}
\usepackage{bm}
\usepackage{cancel}
\usepackage{esint}

\usepackage{csquotes}
\usepackage{multicol}
\usepackage{enumitem}
\usepackage[normalem]{ulem}
\usepackage[textsize=tiny]{todonotes}
\usepackage{comment}
\newcommand{\stkout}[1]{\ifmmode\text{\sout{\ensuremath{#1}}}\else\sout{#1}\fi}

\newcommand{\irchi}[2]{\raisebox{\depth}{\(\chi\)}}

\newcommand{\mc}{\mathcal}
\newcommand{\mb}{\mathbb}

\newcommand{\la}{\lambda}
\newcommand{\e}{\varepsilon}

\DeclarePairedDelimiter{\norm}{\lVert}{\rVert}

\newcommand{\pd}[2]{\frac{\partial#1}{\partial#2}}

\newcommand{\ps}[3]{\left( #2, #3 \right)_{#1}}

\newcommand{\R}{\mb{R}}

\newcommand{\N}{\mb{N}}

\newcommand{\diff}{\,\mathrm{d}}
\newcommand{\dint}[1]{\diff #1}
\newcommand{\dx}{\diff x}
\newcommand{\ds}{\diff \sigma}
\newcommand{\dt}{\diff t}
\newcommand{\dvol}{\diff\mathrm{vol}}

\newcommand{\andd}{\text{and}}
\newcommand{\inn}{\text{in }}

\newcommand{\nnu}{\bm{\nu}}

\newenvironment{bvp}{\left\{\begin{aligned}}{\end{aligned}\right.}

\numberwithin{equation}{section}
\usepackage{thmtools} 

\usepackage[hypertexnames=false]{hyperref}
\hypersetup{
    linktocpage=true,
    colorlinks=true,
    linkcolor=blue!70!black,
    citecolor=orange!40!red,
    urlcolor=magenta!80!black,
}

\usepackage[nameinlink, capitalize]{cleveref} 

\declaretheorem[name=Theorem, numberwithin=section]{theorem}
\declaretheorem[name=Lemma, sibling=theorem]{lemma}
\declaretheorem[name=Corollary, sibling=theorem]{corollary}

\declaretheorem[name=Proposition, sibling=theorem]{proposition}

\declaretheorem[name=Definition, sibling=theorem, style=definition]{definition}
\declaretheorem[name=Example, sibling=theorem, style=definition]{example}
\declaretheorem[name=Remark, sibling=theorem, style=definition]{remark}

\mathtoolsset{showonlyrefs=true} % La via moderna e nativa di mathtools.

\allowdisplaybreaks
\begin{document}
\title[On the stability of eigenvalues of varying bilinear forms in abstract Hilbertian settings and applications]
{On the stability of eigenvalues of varying bilinear forms in abstract Hilbertian settings and applications}

\author[A. Bisterzo, R. Ognibene, P. Roychowdhury, G. Siclari]{Andrea Bisterzo, Roberto Ognibene,\\ Prasun Roychowdhury and Giovanni Siclari}

\address{Andrea Bisterzo
  \newline \indent Centro di Ricerca Matematica Ennio De Giorgi
  \newline \indent
Scuola Normale Superiore
\newline\indent Piazza dei Cavalieri 3, 56126, Pisa, Italy.}
\email{andrea.bisterzo@sns.it}

\address{Roberto Ognibene
  \newline \indent Dipartimento di Matematica e Applicazioni
  \newline \indent
Università degli Studi di Milano-Bicocca
\newline\indent Via Roberto Cozzi 55, 20126, Milano, Italy.}
\email{roberto.ognibene@unimib.it}

\address{Prasun Roychowdhury
  \newline \indent Department of Mathematics
  \newline \indent Indian Institute of Technology Hyderabad
  \newline\indent Kandi, Sangareddy, Telangana, 502285, India}
\email{prasunrc@math.iith.ac.in}

\address{Giovanni Siclari 
  \newline \indent  Centro di Ricerca Matematica Ennio De Giorgi
  \newline \indent
Scuola Normale Superiore
\newline\indent Piazza dei Cavalieri 3, 56126, Pisa, Italy.}
\email{giovanni.siclari@sns.it}

\date{\today}

\begin{abstract}
The aim of the present paper is to develop a spectral perturbation theory from a higher perspective. More precisely, we consider a one-parameter family of varying bilinear forms, each of them defined on a (possibly) different Hilbert space. Assuming the stability of the corresponding spectra, our first main result establishes a quantification of the rate of convergence. A key feature is the explicit variational characterization of the first term in the asymptotic expansion of the perturbed eigenvalues, which only depends on the ``data'', i.e. the limit eigenspace and the magnitude of the perturbation, through the resolution of a minimization problem. Remarkably, we make no assumptions on the perturbed eigenelements (besides, naturally, the spectral stability). Moreover, we cover both the cases of simple and multiple limit eigenvalues in full generality. In the second part, we explore some concrete applications of our abstract results. First, we consider eigenvalue problems for the Laplace-Beltrami operator with varying measure weights (also motivated by optimization in spectral geometry); secondly, we investigate the Neumann approximation of the Steklov eigenvalues of the Laplacian; finally, we focus on how the spectrum of the Laplace-Beltrami operator on a Riemannian manifold changes when a second small manifold is glued on a small portion of it.
\end{abstract}

\maketitle

\tableofcontents

\hskip10pt{\footnotesize {\bf Keywords.} Spectral stability, singular perturbation, eigenvalues of bilinear forms, Neumann-to-Steklov approximation, connected sum of manifolds.

\medskip 

\hskip10pt{\bf 2020 MSC classification.}
35P15,  	% Estimates of eigenvalues in context of PDEs,
47A10,  	% General theory of, linear operators, spectrum resolvent,
47A55       % Perturbation theory of linear operators.
}

\section{Introduction}\label{sec_intro}

The study of the asymptotic behavior of the spectrum of sequences of linear operators has deep roots in mathematical analysis and partial differential equations, and a quite long history. The need for this kind of results has a naive interpretation, which is essentially the core of the infinitesimal calculus: given a family of linear operators $\{T_\e\}_{\e\in[0,1]}$, to deduce properties of the \emph{perturbed} operator $T_\e$ (which might be hard to be studied) from the properties of a certain \emph{limit} operator $T_0$ (which might be easier to handle), assuming that $T_\e$ is close, in some sense, to $T_0$ when $\e>0$ is small. To be more precise, we want to understand \emph{how close} the spectrum $\sigma(T_\e)$ of $T_\e$ is to the spectrum $\sigma(T_0)$ of $T_0$, assuming that $T_\e$ is converging in some sense (which we are going to specify later) to the limit $T_0$, as $\e\to 0^+$.

\medskip

In order to avoid technicalities in this part, we now introduce the main ideas of the present work in an informal (and possibly not rigorous) way, and we postpone the statement of the precise assumptions to \Cref{sec_assumptions} and of the main results to \Cref{sec:main_results}, with applications provided in \Cref{sec_manifolds_measures}, \Cref{sec_conc_boundary}, \Cref{sec_sum_mani}.

\medskip

We consider a family of Hilbert spaces $\{H_\e\}_{\e\in[0,1]}$ and a family of self-adjoint linear operators $\{T_\e\}_{\e\in[0,1]}$. For any $\e\in[0,1]$, the operator $T_\e$ has a domain $D[T_\e]\subseteq H_\e$ and we assume it possesses a purely discrete spectrum, i.e. there holds
\begin{equation}\sigma(T_\e)=\{\lambda_{\e,n}\}_{n\in\N\setminus\{0\}},\end{equation}
with $\sigma(\cdot)$ denoting the spectrum of an operator. We recall that this holds true if and only if $T_\e$ has a compact resolvent. We now assume the convergence, as $\e\to 0^+$, of the spaces $H_\e$ and of the operators $T_\e$ to the corresponding limit space $H_0$ and operator $T_0$, respectively. In particular, we assume the continuity of every eigenvalue with respect to $\e$, i.e.
\begin{equation}
    \lambda_{\e,n}\to \lambda_{0,n}\quad\text{as }\e\to 0^+,\quad\text{for all }n\in\N.
\end{equation}
The purpose of the present paper is to investigate the asymptotic behavior of the eigenvalue variation 
$\la_{\e,n}-\la_{0,n}$ as $\e \to 0^+$, under suitable assumptions on the family $\{T_\e\}_{\e \in [0,1]}$. In other words, for any fixed $n\in\N$, we look for a function $\rho_n(\e)$ such that $\lim_{\e\to 0^+}\rho_n(\e)=0$ and
\begin{equation}\label{eq:intr_asy}
    \lambda_{\e,n}=\lambda_{0,n}+\rho_n(\e)+o(\rho_n(\e))\quad\text{as }\e\to 0^+.
\end{equation}
The question of \emph{quantifying} the spectral stability is not new and has been investigated in the past. Among the main references, we surely find the comprehensive work of Kato, see e.g. \cite[Chapter 8, § 2, paragraph 3, Theorem 2.6]{kato}. We point out that such a result deals with operators perturbed at the first order, i.e. of the type
\begin{equation}\label{eq:first_order_pert}
    T_\e=T_0+\e T,
\end{equation}
for some operator $T$. In particular, this framework requires a common operator domain and, in some sense, restricts the range of possible perturbations. To be more precise, it seems difficult to apply the theory developed in \cite{kato} even to some simple examples of \emph{singular} perturbations: indeed, a first-order perturbation of the type \eqref{eq:first_order_pert} naturally yields a differentiability result, i.e.
\begin{equation}\label{eq:kato_diff}
    \frac{d}{d\e}_{|\e=0}\lambda_{\e,n}=\lim_{\e\to 0^+}\frac{\lambda_{\e,n}-\lambda_{0,n}}{\e}\in\R,
\end{equation}
while many notable applications (such as the Laplacian in perforated domains, see e.g. \cite{FLO_neumann,FLO2}) display a higher order rate of convergence (i.e. $\e^k$ with $k>1$). Hence, in these cases, \eqref{eq:kato_diff} is not enough to detect the sharp rate of convergence of the eigenvalue variation and only provides non-sharp estimates. On the other hand, we emphasize that our results yield asymptotic estimates given in terms of a quantity ($\rho_n(\e)$ in \eqref{eq:intr_asy}) which depends on the ``size'' of the perturbation and on the limit eigenelements. In turn, in many applications, one can analyze the asymptotic behavior of such a quantity, thus providing an explicit rate of convergence in terms of a power of $\e$, which is typically sharp.

To extend the literature review, we mention the series of works by O. Post and collaborators, see e.g. \cite{Post2013,Post2020} and references therein, in which the authors deal with an abstract framework similar to ours and investigate the question of the convergence of operators defined on varying Hilbert spaces. However, to the best of our knowledge, there are no developments in the direction of detecting the explicit rate of convergence of eigenvalues. 
It is also worth mentioning the survey \cite{BLL} in which the authors give an overview of some spectral stability results for differential operators in general frameworks. Another work in the topic of quantitative spectral stability is \cite{BS_eigen}, in which the authors treat the case of simple limit eigenvalues, but allowing more general perturbations than the ones in \cite{kato}. We finally mention the works by Kozlov and coauthors (see \cite{kozlov2006,kozlov2013,KT,kozlov2020} and references therein),  which deal with an abstract setting and obtain quantitative stability results similar to ours (see e.g. \cite[Theorem 1]{kozlov2006} or \cite[Theorem 3.2]{KT}). Nevertheless, these results seem to be ineffective when considering singular perturbations, such as the excision of a small hole from a domain. In the present work, we improve all the previously cited results by considering a general Hilbertian setting (which allows us to cover fairly general, possibly \emph{singular}, perturbations) and we also consider multiplicities in the limit spectrum.

\medskip

The interest in results of the type \eqref{eq:intr_asy} in general Hilbertian settings lies in the large variety of possible applications. Indeed, our unified approach allows to recover a series of results already established in the literature, among which we find:
\begin{itemize}
    \item the classical Hadamard formulas for the eigenvalues of the Dirichlet and Neumann Laplacian (see e.g. \cite{Henry,KT});
    \item the asymptotic expansion of the eigenvalues of the Laplacian in domains with small holes, with Dirichlet \cite{courtois,flucher}, Neumann \cite{FLO_neumann,FLO2,bucur_GAFA} or Robin \cite{FRS_robin} boundary conditions;
    \item asymptotic estimates of the Dirichlet eigenvalues in domains with a thin tube attached to the boundary \cite{AO,FO};
    \item expansion of eigenvalues of Aharonov-Bohm operators with moving poles, see e.g. \cite{FNOS,FRS_AB} and references therein.
\end{itemize}
Besides recovering known results, our tools allow to obtain a wide range of new spectral asymptotic expansions in more concrete settings. In particular, in the present paper we choose to deepen three applications, which we here briefly anticipate.
\begin{enumerate}
    \item \emph{Eigenvalues of the Laplacian with varying measure weights.} We consider the following eigenvalue problem
    \begin{equation}\label{eq_intr_meas_eq}
        \begin{bvp}
            -\Delta^g u+(\alpha_\e+\beta_\e) u&=\lambda\beta_\e u,&&\text{in }\Omega, \\
            \partial_{\nnu} u&=0, &&\text{on }\partial\Omega,
        \end{bvp}
    \end{equation}
    where $(\overline{\Omega},g)$ is a compact, connected Riemannian manifold with Lipschitz boundary $\partial \Omega$, $\Omega=\overline{\Omega}\setminus \partial \Omega$ is the interior of $\overline{\Omega}$ and $\Delta^g$ is the (negative definite) Laplace-Beltrami operator. Moreover, $\{\alpha_\e\}_{\e\in[0,1]}$ and  $\{\beta_\e\}_{\e\in[0,1]}$ are two families of  Radon measures which are converging in a certain sense as $\e\to 0^+$, see \Cref{def:adm} for the precise assumptions. We point out that \eqref{eq_intr_meas_eq} is equivalent to 
    \begin{equation}
        \begin{bvp}
            -\Delta^g u+\alpha_\e u&=\mu\beta_\e u,&&\text{in }\Omega, \\
            \partial_{\nnu} u&=0, &&\text{on }\partial\Omega,
        \end{bvp}
    \end{equation}
    with $\mu=\lambda-1$ and that $\alpha_\e$ can be taken to be identically zero. In particular, both families of measures can vary all around $\overline{\Omega}$, thus being allowed to concentrate on the boundary (see the next point for an example). 
    
    The idea of quantifying the rate of convergence of $\lambda_{\e,n}-\lambda_{0,n}$ was born from the inspiring paper \cite{GKL}, which establishes the spectral stability in a similar setting in order to prove inequalities concerning Steklov and conformal eigenvalues of a manifold.   
    We refine the continuity result by providing the first term in the asymptotic expansion, as a consequence of \eqref{eq:intr_asy}, covering both the case of simple and multiple limit eigenvalues. In particular, if $\lambda_{0,N}$ is of multiplicity $M$, then
    we have that
    \begin{equation}\label{eq_intr_meas}
        \lambda_{N+i-1,\e}=\lambda_{0,N}+\gamma_{\e,i}+~\textnormal{remainder terms,}\quad\textnormal{as }\e\to 0^+,
    \end{equation}
    for $i=1,\dots,M$, where $\{\gamma_{\e,i}\}_{i=1,\dots,M}$ are the eigenvalues of a certain bilinear form depending on $\beta_\e$ (see \eqref{eq:h_measures}).
    Comparing our assumptions on the families $\{\alpha_\e\}_{\e\in[0,1]}$ and  $\{\beta_\e\}_{\e\in[0,1]}$ (required for the quantitative stability \eqref{eq_intr_meas}) to the ones in \cite{GKL}, we notice the following. In \cite{GKL} the authors have two sets of assumptions: one on the family of measures (see (\textbf{M1}), (\textbf{M2}) and (\textbf{M3})), and one on the eigenfunctions corresponding to the perturbed problem (see (\textbf{EF1}) and (\textbf{EF2})). This second group may be difficult to check in concrete examples, since one needs to analyze quantities which depend on the underlying measures through a variational problem (namely, the eigenfunctions). On the other hand, for our result to hold (see \eqref{eq_intr_meas} and, more precisely, \Cref{theor_eigen_var_measures}) assumptions only on the measures involved are required, thus leading to a neater framework. We refer to \Cref{sec_manifolds_measures} for all the details.
    
    \item \emph{Neumann to Steklov/Robin asymptotics.} As an application of the previous result, we derive the asymptotic behavior, in any dimension and for both simple and multiple eigenvalues, of the spectrum of the following problem:
    \begin{equation}\label{eq:intr_neu_rob}
        \begin{bvp}
            -\Delta^g u&=\lambda p_\e u, &&\text{in }\Omega, \\
            \partial_{\nnu} u&=0, &&\text{on }\partial\Omega,
        \end{bvp}
    \end{equation}
    with $\Omega\subseteq\R^d$ open, bounded and of class $C^{1,1}$,
    \begin{equation}
        p_\e:=\frac{1}{\e}\chi_{\Omega_\e}
    \end{equation}
    and $\Omega_\e:=\{x\in\Omega\colon \mathrm{dist}(x,\partial\Omega)<\e\}$. It is well known that the eigenvalues of \eqref{eq:intr_neu_rob} converge to the Steklov eigenvalues
    \begin{equation}\label{eq_intr_stek}
        \begin{bvp}
            -\Delta u&=0, &&\text{in }\Omega, \\
            \partial_{\nnu} u&=\Lambda u, &&\text{on }\partial\Omega.
        \end{bvp}
    \end{equation}
    The Neumann approximation of the Steklov eigenvalues attracted a lot of attention, since it transposes a boundary eigenvalue problem to a ``bulk'' one which is a \emph{singular} perturbation of it. This then allows to compare the Steklov eigenvalues with some weighted Neumann ones and one can exploit this feature, for instance, in order to treat optimization problems (see e.g. \cite{GKL}). We refer to \cite{Provenzano3,Provenzano1,Provenzano2} for some previous results on the field which cover, in particular, the $2D$ case for simple eigenvalues and the case in which $\Omega$ is a ball (every dimension and multiplicity). We also mention \cite{acampora,aragao,Arrieta,cristoforoni,Golovaty} for other related works on asymptotic spectral analysis in domains with thin layers (see \cite{lobo} for a review). For the sake of completeness, we recall that it is possible to recover the Neumann eigenvalues as the limit of Steklov problems in perforated domains, see \cite{stek-to-neu}.

    Our main result can be obtained by applying \eqref{eq_intr_meas} with $\alpha_\e\equiv 0$ and
    \begin{equation}
        \beta_\e=p_\e \,\mathcal{L}^d\to \mathcal{H}^{d-1}\text{\huge$\llcorner$}\,\partial\Omega\quad\textnormal{as }\e\to 0^+.
    \end{equation}
    Then, we compute the second variation of the map
    \begin{equation}
        \e\mapsto \int_{\Omega\setminus\Omega_\e} u\dx
    \end{equation}
    obtaining that
    \begin{equation}
        \frac{\mathrm{d}^2}{\mathrm{d}\e^2}_{|\e=0} \int_{\Omega\setminus\Omega_\e} u\dx=-\int_{\partial\Omega}(\partial_{\nnu}u+H_{\partial\Omega} u)\,\mathrm{d}\sigma,
    \end{equation}
    where $H_{\partial\Omega}\in L^\infty(\partial\Omega)$ denotes the mean curvature (sum of the principal curvatures), and we employ a fine asymptotic analysis in Fermi coordinates in order to detect the precise asymptotic behavior of $\gamma_{\e,i}$. In particular, we get that, if $\Lambda_{0,N}$ is an eigenvalue of \eqref{eq_intr_stek} of multiplicity $M$,
    \begin{equation}\label{eq_intr_stek_asy}
        \Lambda_{N+i-1,\e}=\Lambda_{0,N}+\e\gamma_i+ o(\e) \quad \text{ as } \e \to 0^+,\quad\textnormal{for }i=1,\dots,M,
    \end{equation} 
    where $\{\gamma_i\}_{i=1,\dots,M}$ are the eigenvalues of the bilinear form
    \begin{equation}
        h_0(\varphi,\psi):=\int_{\partial\Omega}\left[ \frac{2}{3}\Lambda_{0,N}^2+\frac{1}{2}\Lambda_{0,N} H_{\partial\Omega}\right]\varphi\psi\ds,
    \end{equation}
    defined for $\varphi,\psi$ belonging to the eigenspace of $\Lambda_{0,N}$.

    We point out that our estimate \eqref{eq_intr_stek_asy} is valid in every dimension and for every eigenvalue, regardless of the multiplicity, and that it is sharp whenever the eigenbasis $\{\varphi_{N+i-1}\}_{i=1,\dots,M}$ diagonalizing $h_0$ satisfies
    \begin{equation}
        \int_{\partial\Omega}\left[ \frac{2}{3}\Lambda_{0,N}^2+\frac{1}{2}\Lambda_{0,N} H_{\partial\Omega}\right]\varphi_{N+i-1}^2\ds>0.
    \end{equation}
    This is true, for instance, in every convex domain. We refer to \Cref{subsec_steklov} for the precise statements and for the proofs.

    We remark that, by means of the very same techniques, we are able to derive completely analogous asymptotic expansions for the Neumann approximation of Robin eigenvalues, see \Cref{subsec_neum_to_rob}.
    \item \emph{Gluing a small manifold to a fixed one.} The last application of our theoretical results has a geometrical flavor. Given two $m$-dimensional Riemannian manifolds $(M_0,g_0)$ and $(M_1,g_1)$, and given two points $p_0\in M_0$ and $p_1\in M_1$, we consider the topological manifold
    \begin{equation}
        M_\e:=\left(M_0\setminus B_\e^{M_0}(p_0)\right)\cup_{\Phi_\e} \left( M_1\setminus B_1^{M_1}(p_1) \right),
    \end{equation}
    where $\cup_{\Phi_\e}$ stand for the connected sum (see \cite{Ta02} and \Cref{sec_sum_mani} for the details) endowed with the metric 
    \begin{equation}
        g_\e:=\begin{cases}
            g_0, &\text{in }M_0\setminus B_\e^{M_0}(p_0), \\
            \e^2 g_1,&\text{in } M_1\setminus B_1^{M_1}(p_1).
        \end{cases}
    \end{equation}
    Let $\{\lambda_{0,n}\}_n$ and $\{\lambda_{\e,n}\}_n$ denote, respectively, the spectrum of $-\Delta^{g_0}+1$ on $M_0$ and of $-\Delta^{g_\e}+1$ on $M_\e$. Since, by \cite[Theorem 1.1]{Ta06} we know that
    \begin{equation}
        \lambda_{\e,n}\to \lambda_{0,n}\quad\textnormal{as }\e\to 0^+,~\text{for any }n\in\N,
    \end{equation}
    in \Cref{sec_sum_mani} we investigate the rate of such convergence. In particular, this situation falls under the assumptions of our main abstract result \Cref{thm:main}, which can then be applied. We then perform a fine blow-up analysis and we get that
    \begin{equation}\label{eq_intr_sum}
        \lambda_{\e,N+i-1}=\lambda_{0,N}+\gamma_{0,i}\e^m+o(\e^m)\quad\text{as }\e\to 0^+,
    \end{equation}
    for any $i=1,\dots,M_{0,N}$, with $M_{0,N}$ denoting the multiplicity of $\lambda_{0,N}$. In \eqref{eq_intr_sum}, the coefficients $\{\gamma_{0,i}\}_{i=1,\dots,M_{0,N}}$ are characterized as the eigenvalues of the bilinear form
    \begin{equation}
        h_0(\varphi,\psi):=-\mathbb{A}(\nabla^{g_0}\varphi(p_0),\nabla^{g_0}\psi(p_0))+(\lambda_{0,N}-1)\big[|\mathbb{B}_1|-\mathrm{vol}^{g_1}(M_1(1)))\big]\varphi(p_0)\psi(p_0),
    \end{equation}
    defined on the $M_{0,N}$-dimensional eigenspace corresponding to $\lambda_{0,N}$, where $\mathbb{B}_1$ denotes the Euclidean ball of radius $1$, $M_1(1):=M_1\setminus B_1^{M_1}(p_1)$ and $\mathbb{A}\colon \R^m\times\R^m\to \R$ denotes a certain, explicit, symmetric bilinear form (see \Cref{theor_eigen_connected_sum_sharp}).
    We refer to \Cref{sec_sum_mani} for the details and, in particular, to \Cref{theor_eigen_connected_sum_sharp} for the precise statement. 
    
    Similar problems (i.e. the particular case of attaching thin handles) have been treated in various works, see e.g. \cite{colette-colbois,colette-post,colette-takahashi,chavel} and references therein, where the spectral stability is investigated. We also remark that this kind of perturbation has important connections with Hodge-de Rham theory (see e.g. \cite{colette-takahashi,mazzeo} and references therein).
    
\end{enumerate}

\subsection{Assumptions}\label{sec_assumptions}
In this section, we fix the precise assumptions and the main notation for our abstract framework. In particular, we are going to distinguish between two distinct sets of assumptions: 
\begin{enumerate}
    \item hypotheses on the functional framework, which are given for any \emph{fixed} $\e\in[0,1]$;
    \item ``stability'' assumptions, i.e. on how the spaces and the operators behave in the limit as $\e\to 0^+$.
\end{enumerate}
We also observe that, up to a suitable reparametrization of the family of problems under consideration, we may assume without loss of generality that those properties proved in this work for sufficiently small values of $\varepsilon>0$ hold in fact for every $\varepsilon \in [0,1]$. Indeed, what is essential for our purposes is their eventual validity as $\varepsilon$ goes to $0^+$.

\subsubsection{Assumptions on the functional setting}\label{subsec:ass_functional}

For any \emph{fixed} $\e\in[0,1]$, we consider a real separable Hilbert space
\begin{equation}
    \big(H_\e,(\cdot,\cdot)_{H_\e}\big)
\end{equation}
and a real bilinear form 
\begin{equation}
    \mathcal{E}^{(\e)}\colon \mathcal{F}_\e\times\mathcal{F}_\e\to \R
\end{equation}
with domain $\mathcal{F}_\e\subseteq H_\e$ being a linear subspace. We assume, without loss of generality, that $\mathcal{F}_\e$ is dense in $H_\e$, i.e. $\overline{\mathcal{F}_\e}^{\|\cdot\|_{H_\e}}=H_\e$, where
\begin{equation}
    \norm{u}_{H_\e}:=\sqrt{(u,u)_{H_\e}};
\end{equation}
indeed, in case it is not, we simply work in the closure of $\mathcal{F}_\e$ in $H_\e$ as the ambient Hilbert space. We further assume that 
\begin{itemize}
    \item $\mathcal{E}^{(\e)}$ is symmetric;
    \item $\mathcal{E}^{(\e)}$ is bounded from below, i.e. there exists $C_\e\in\R$ such that $\mathcal{E}^{(\e)}(u,u)\geq C_\e\norm{u}_{H_\e}^2$ for all $u\in \mathcal{F}_\e$;
    \item $\mathcal{E}^{(\e)}$ is closed, i.e. $\mathcal{F}_\e$ is complete with respect to the norm induced by the scalar product
    \begin{equation}(u,v)_{C_\e}:=\mathcal{E}^{(\e)}(u,v)+(-C_\e+1)(u,v)_{H_\e}.\end{equation}
    Hence, we assume $\mathcal{F}_\e$ to be endowed with $(\cdot,\cdot)_{C_\e}$, so that $(\mathcal{F}_\e,(\cdot,\cdot)_{C_\e})$ is a real, separable Hilbert space.
\end{itemize}
With a slight abuse of notation, we denote by
\begin{equation}
    \mathcal{E}^{(\e)}(u):=\mathcal{E}^{(\e)}(u,u)
\end{equation}
the quadratic form naturally associated with $\mathcal{E}^{(\e)}$. We now consider a subspace
\begin{equation}
   Z_\e\subseteq \mathcal{F}_\e 
\end{equation}
which we assume to be closed in $\mathcal{F}_\e$ (otherwise $Z_\e$ is not Hilbert and $\overline{\mathcal{E}}^{(\e)}$ is not closed) and endowed with the scalar product $(\cdot,\cdot)_{C_\e}$, and we consider the restriction
\begin{equation}
    \overline{\mathcal{E}}^{(\e)}:=\mathcal{E}^{(\e)}_{|Z_\e \times Z_\e}
\end{equation}
of $\mathcal{E}^{(\e)}$ to $Z_\e\times Z_\e$. In the following, $Z_\e$ will be the space where eigenfunctions live. Finally, we consider
\begin{equation}
    \mathcal{Z}_\e:=\overline{Z_\e}^{\norm{\cdot}_{H_\e}}
\end{equation}
endowed with the scalar product $(\cdot,\cdot)_{H_\e}$, which is then a real separable Hilbert space. In view of the assumptions on $\mathcal{E}^{(\e)}$ and on $Z_\e$, we have that the bilinear form $\overline{\mathcal{E}}^{(\e)}$ with domain $Z_\e\times Z_\e\subseteq \mathcal{Z}_\e\times \mathcal{Z}_\e$ is densely defined in $\mathcal{Z}_\e\times \mathcal{Z}_\e$ and satisfies the following:
\begin{itemize}
    \item $\overline{\mathcal{E}}^{(\e)}$ is symmetric;
    \item $\overline{\mathcal{E}}^{(\e)}$ is bounded from below, i.e. $\overline{\mathcal{E}}^{(\e)}(u,u)\geq C_\e\norm{u}_{H_\e}^2$ for all $u\in Z_\e$;
    \item $\overline{\mathcal{E}}^{(\e)}$ is closed, i.e. $Z_\e$ is complete with respect to the norm induced by the scalar product $(\cdot,\cdot)_{C_\e}$. In particular, $Z_\e$ is a real, separable Hilbert space.
\end{itemize}
Then, since the Hilbert spaces $H_\e$ we are considering are not a priori contained in a common general space, we need a way to link the limit eigenfunctions (which play a crucial role in the rate of convergence) with the perturbed spaces. More precisely, we assume the following condition, which must hold for any $\e\in(0,1]$ (the case $\e=0$ is trivial):
\begin{equation}\label{hp_Le_exist}
    \text{there exists a linear map }L_\e\colon Z_0\to \mathcal{F}_\e.
\end{equation}
Indeed, one can observe that \eqref{hp_Le_exist} serves as an inclusion of $Z_0$ into $\mathcal{F}_\e$ (one can think, in concrete examples of functions on Euclidean domains, of extension-by-zero or restriction operators). {In fact, we anticipate that one can think $L_\e$ as a perturbation of the identity since, in some sense, $\mathcal{F}_\e$ is converging to $\mathcal{F}_0$ and \eqref{hp_Le} will be assumed.

\medskip

It is well known that, under the previous assumptions, there exists a unique self-adjoint operator representing the bilinear form $\overline{\mathcal{E}}^{(\e)}$: more precisely, for any $\e\in[0,1]$ there exists $T_\e\colon D[T_\e]\to \mathcal{Z}_\e$, with $D[T_\e]\subseteq Z_\e$ linear subspace, such that $T_\e$ is self-adjoint and there holds 
\begin{equation}
    \overline{\mathcal{E}}^{(\e)}(u,v)=(T_\e u,v)_{H_\e}=(u,T_\e v)_{H_\e}\quad\text{for all }u,v\in D[T_\e].
\end{equation}
The converse is also true, namely given a densely defined self-adjoint operator $T_\e$ that is bounded from below one can define (through the formula above) a densely defined symmetric, closed, bilinear form $\overline{\mathcal{E}}^{(\e)}$ bounded from below (with the same bound) such that $T_\e$ is the operator associated with $\overline{\mathcal{E}}^{(\e)}$. Moreover (in case $C_\e\geq 0$) the domain of $\overline{\mathcal{E}}^{(\e)}$ is $D[T_\e^{1/2}]$ and there holds
\begin{equation}
    \overline{\mathcal{E}}^{(\e)}(u,v)=(T_\e^{1/2} u,T_\e^{1/2}v)_{H_\e}\quad\text{for all }u,v\in Z_\e,
\end{equation}
where $T_\e^{\frac{1}{2}}$ is the square root of the positive operator $T_\e$ in spectral sense, 
see \cite[Theorem 13.33]{R_functional_analysis_book}. We refer to \cite{helffer} for further details. Therefore, under the assumptions in the present paper, there is a one-to-one correspondence of bilinear forms and linear operators; also in view of present and future applications of our results, which deal with variational formulations, in this paper we prefer to use the language of bilinear forms. We hereafter denote by $T_\e$ the self-adjoint operator uniquely associated with $\overline{\mathcal{E}}^{(\e)}$ and by $R_\e\colon \mathcal{Z}_\e\to Z_\e$ the corresponding resolvent operator, i.e. satisfying
\begin{equation}
    \overline{\mathcal{E}}^{(\e)}(R_\e f,v)=(f,v)_{H_\e}\quad\text{for all }f\in H_\e~\text{and all } v\in Z_\e,
\end{equation}
which is known to be bounded.

\medskip

In order to better visualize the abstract framework set above, for the sake of the clarity of the exposition, we provide a concrete example in which we describe the roles of the various spaces and forms defined so far. 
\begin{example}\label{ex:mixed}
    We consider a prototypical perturbation: the Laplacian on Euclidean bounded domains and mixed Dirichlet-Neumann boundary conditions (this setting was considered e.g. in \cite{FNO1,FNO2,AO}). Let $\Omega\subseteq \R^d$ be an open bounded connected set with smooth boundary and let $K_\e\subseteq \partial\Omega$ be the closure (in $\partial\Omega$) of a relatively open and smooth subset of $\partial\Omega$. We consider the following eigenvalue problem:
    \begin{equation}\label{eq:mixed}
        \left\{\begin{aligned}
            -\Delta \varphi+\varphi&=\lambda\varphi, &&\text{in }\Omega, \\
            \varphi&=0, &&\text{on }K_\e, \\
            \partial_{\nnu} \varphi&=0, &&\text{on }\partial\Omega\setminus K_\e.            
        \end{aligned}\right.
    \end{equation}
    The problem, in this framework, is to understand how the eigenvalues of \eqref{eq:mixed} behave in the cases in which $K_\e$ tends
    to disappear or $K_\e$ tends to cover the whole $\partial\Omega$, as $\e\to 0^+$. However, now we do not describe the asymptotic 
    behavior and we just focus on how to cast this problem (for fixed $\e>0$) in the theoretical setting explained above. 
    We define $H^1_{0,K_\e}(\Omega):=\overline{C_c^\infty(\overline{\Omega}\setminus K_\e)}^{\norm{\cdot}_{H^1(\Omega)}}$ and we let
    \begin{align}
        &H_\e=L^2(\Omega), \qquad \mathcal{F}_\e=H^1(\Omega), \qquad Z_\e=H^1_{0,K_\e}(\Omega), \qquad \mathcal{Z}_\e=L^2(\Omega), \\
        &\mathcal{E}^{(\e)}(u,v)=\int_\Omega(\nabla u\cdot\nabla v+uv)\dx.
    \end{align}
    With these choices, we have $C_\e=1$. Moreover, we have that $T_\e=-\Delta +\mathrm{Id}$ acting on the space
    \begin{equation}
        D[T_\e]=\left\{ u\in H^1(\Omega)\colon \Delta u\in L^2(\Omega),~u=0~\text{on }K_\e,~\partial_{\nnu}u=0~\text{on }\partial\Omega\setminus K_\e\right\}.
    \end{equation}
    Now, for the limit spaces (i.e. for $\varepsilon=0$), we have two possibilities:
    \begin{enumerate}
        \item if $K_\e$ \enquote{disappears} in the limit, i.e. the limit problem has Neumann boundary conditions, then we have
            \begin{align}
        &H_0=L^2(\Omega), \qquad \mathcal{F}_0=H^1(\Omega), \qquad Z_0=H^1(\Omega), \qquad \mathcal{Z}_0=L^2(\Omega), \\
        &\mathcal{E}^{(0)}(u,v)=\int_\Omega(\nabla u\cdot\nabla v+uv)\dx.
    \end{align}
        \item if $K_\e$ \enquote{spreads} in the limit and covers the whole $\partial\Omega$, i.e. the limit problem has Dirichlet boundary conditions, then we have
            \begin{align}
        &H_0=L^2(\Omega), \qquad \mathcal{F}_0=H^1(\Omega), \qquad Z_0=H^1_0(\Omega), \qquad \mathcal{Z}_0=L^2(\Omega), \\
        &\mathcal{E}^{(0)}(u,v)=\int_\Omega(\nabla u\cdot\nabla v+uv)\dx.
    \end{align}
    \end{enumerate}
    Moreover, in both cases, $L_\e\colon H^1(\Omega)\to H^1(\Omega)$ or $L_\e\colon H^1_0(\Omega)\to H^1(\Omega)$ is simply the identity.
\end{example}
Now that the functional setting has been established, we define the main characters of the present paper, that is the eigenvalues of $\overline{\mathcal{E}}^{(\e)}$ in $Z_\e$. We say that $\lambda\in\R$ is an \emph{eigenvalue} if there exists $\varphi\in Z_\e\setminus\{0\}$ (called \emph{eigenvector} or \emph{eigenfunction}) such that
\begin{equation}\label{eq:intr_eigen}
    \overline{\mathcal{E}}^{(\e)}(\varphi,v)=\lambda(\varphi,v)_{H_\e}\quad\text{for all }v\in Z_\e.
\end{equation}
For any eigenvalue $\lambda\in\R$ of $\overline{\mathcal{E}}^{(\e)}$, we denote by
\begin{equation}
    E(\lambda):=\{\varphi\in Z_\e\colon \varphi~\text{satisfies }\eqref{eq:intr_eigen}\}
\end{equation}
the corresponding eigenspace. In particular, in the present paper, we focus on the case in which the spectrum is only composed of eigenvalues of finite multiplicity, i.e. the spectrum is \emph{discrete}. 
In particular, we assume the following:
\begin{equation}\label{hp_compact}
\begin{gathered}
\textnormal{the spectrum of  $T_\e$ is discrete and it consists of a non-decreasing,  diverging}\\
\textnormal{sequence of eigenvalues $\{\la_{\e,n}\}_{n \in \mb{N}\setminus\{0\}}$ with finite multiplicity},
\end{gathered}
\end{equation}
which is known to be equivalent to
\begin{equation}\label{eq:hp_compact'}
    \text{the embedding $i_\e\colon Z_\e\hookrightarrow \mathcal{Z}_\e$ is compact.}
\end{equation}
or to
\begin{equation}\label{eq:hp_compact''}
    \text{the operator $i_\e\circ R_\e\colon\mathcal{Z}_\e\overset{R_\e}{\longrightarrow}Z_\e\overset{i_\e}{\longrightarrow}\mathcal{Z}_\e$ is compact.}
\end{equation}
We assume that each eigenvalue $\lambda_{\e,n}$ is repeated according to its multiplicity. We denote by $M_{\e,n}$ the multiplicity of $\lambda_{\e,n}$, that is
\begin{equation}
    M_{\e,n}:=\dim E(\lambda_{\e,n}).
\end{equation}
We hereafter denote by $\{\varphi_{\e,n,i}\}_{i=1,\dots, M_{\e,n}}\subseteq E(\lambda_{\e,n})$ a family of eigenfunctions which are orthonormal with respect to $(\cdot,\cdot)_{H_\e}$. In particular, the set
\begin{equation}
    \big\{\varphi_{\e,n,i}\colon n\in\N\setminus\{0\},~i=1,\dots,M_{\e,n}\big\}
\end{equation}
forms an orthonormal basis of $\mathcal{Z}_\e$. We also recall that, in case of purely discrete spectrum, the standard min-max characterization holds, that is
\begin{equation}\label{eq:min-max}
    \lambda_{\e,n}=\min_{\substack{U\subseteq Z_\e \\ \dim U=n}}\max_{\substack{u\in U\\ u\neq 0}}\frac{\overline{\mathcal{E}}^{(\e)}(u)}{\norm{u}_{H_\e}^2}.
\end{equation}
Finally, we need to introduce a new character. We define, for any $\e \in [0,1]$, the following bilinear form
\begin{equation}\label{def_qe}
    \begin{aligned}
        &q_{\e,n}:\mc{F}_\e \times \mc{F}_\e \to \R \\
        &q_{\e,n} (u,v):=\mc{E}^{(\e)} (u,v)-\lambda_{0,n} (u,v)_{H_\e}
    \end{aligned}
\end{equation}
and, for any $v \in \mc{F}_\e $, the following functional 
\begin{equation}\label{def_Je}
\begin{aligned}
    &J_{\e,n,v}\colon \mc{F}_\e\to \R \\
    &J_{\e,n,v}(u):= \frac{1}{2}\mc{E}^{(\e)}(u)- q_{\e,n}(u, v).
\end{aligned}
\end{equation}
One can prove that (see \Cref{prop_J_min}) for any $\e \in [0,1]$ and any $v \in \mc{F}_\e$ the following problem 
\begin{equation}\label{intr_prob_J_min}
\inf\{J_{\e,n,v}(u):u \in Z_\e+v\}.
\end{equation}
admits a unique minimizer $V_{\e,n,v} \in  \mc{F}_\e$. Furthermore, $V_{\e,n,v}$ is the unique solution to the equation 
\begin{equation}\label{intr_eq_Ve}
\mc{E}^{(\e)}(V_{\e,n,v},u)=q_{\e,n}(u,v)\quad  \text{ for any } u \in Z_\e
\end{equation}
such that $V_{\e,n,v}-v \in Z_\e$ and the map $v \mapsto V_{\e,n,v}$ is linear. In case $\varphi\in E(\lambda_{0,n})$, we have that $V_{\e,n,L_\e(\varphi)}$ is going to play the role of a first order approximation of perturbed eigenfunctions. In fact, it already appeared in some particular contexts, such as:
\begin{itemize}
    \item in the case we are considering the Dirichlet Laplacian (where $L_\e$ is the extension by zero) in a domain with a small hole, $V_{\e,n,\varphi}$ is the $\varphi$-capacitary potential of the hole, cf \cite{courtois,flucher,AFHL,ABLM};
    \item in the case of Neumann/Robin Laplacian in perforated domains (in which case $L_\e$ is the restriction operator), $V_{\e,n,\varphi}$ is a (boundary) torsion function associated to $\varphi$, cf \cite{FLO_neumann,FLO2,FRS_robin}.
\end{itemize}
For simplicity of notation, in the sequel we may drop the dependence on $n$, being such an index fixed.

\subsubsection{Stability assumptions}\label{subsec:ass_stability}
Now that the framework for any $\e\in[0,1]$ is completely set, we let $\e$ vary and we define the object of our investigation. 
More precisely, being $\{\lambda_{\e,n}\}_{n\in\N}$ the eigenvalues of $\overline{\mathcal{E}}^{(\e)}$ as in \eqref{hp_compact}, we are interested in their behavior in the limit as $\e\to 0^+$. The very first question regards the continuity of the spectrum (frequently called \emph{stability}), that is
\begin{equation}\label{hp_stability}\tag{S1}
    \lambda_{\e,n}\to \lambda_{0,n}\quad\text{as }\e\to 0^+,\quad\text{for all }n\in\N.
\end{equation}
For sufficient conditions for the validity of \eqref{hp_stability} we refer to \cite[Chapter 2, §1.2]{elasticity_book} and \cite[Corollary 3.19]{Post2013}. Moreover, we point out the necessary and sufficient conditions proposed in \cite{KS_stability} in the case in which all the elements of the family $\{H_\e\}_{\e\in[0,1]}$ are framed into a unique fixed Hilbert space $\mathcal{H}$.

\begin{remark}\label{rmk:pos_def}
    First of all, combining \eqref{eq:min-max} and \eqref{hp_stability} we have that there exists $\e_0\in[0,1]$ such that
    \begin{equation}
        \overline{\mathcal{E}}^{(\e)}(u)\geq \lambda_{\e,1}\norm{u}_{H_\e}^2\geq (\lambda_{0,1}-1)\norm{u}_{H_\e}^2\quad\textnormal{for all }u\in Z_\e~\textnormal{and any }\e\in[0,\e_0].
    \end{equation}
    Now, for any $\e\in[0,\e_0]$ and any $u\in\mathcal{F}_\e$ let us consider the form
    \begin{equation}
        \widetilde{\mathcal{E}}^{(\e)}(u,v):=\mathcal{E}^{(\e)}(u,v)-(\lambda_{0,1}-2)(u,v)_{H_\e}.
    \end{equation}
    We have that it satisfies all the assumptions in \Cref{subsec:ass_functional}, its spectrum is $\{\lambda_{\e,n}-\lambda_{0,1}+2\}_{n\in\N\setminus\{0\}}$ and it satisfies 
\begin{equation}
        \widetilde{\mathcal{E}}^{(\e)}(u)\geq \norm{u}_{H_\e}^2\quad\textnormal{for all }u\in Z_\e.
    \end{equation}
    Therefore, up to renaming
    \begin{equation}
        \e_0\longmapsto 1\quad\textnormal{and}\quad \widetilde{\mathcal{E}}^{(\e)}(u)\longmapsto \mathcal{E}^{(\e)}(u)
    \end{equation}
    it is not restrictive to assume
    \begin{equation}\label{eq:pos_def}
        \overline{\mathcal{E}}^{(\e)}(u)\geq \norm{u}_{H_\e}^2\quad\textnormal{for all }u\in Z_\e~\textnormal{and all }\e\in[0,1].
    \end{equation}
    Moreover, we assume $Z_\e$ to be equipped with the norm $\sqrt{\overline{\mathcal{E}}^{(\e)}(\cdot)}$.
\end{remark}

We now introduce the second ``stability'' assumption. In particular, if $L_\e\colon Z_0\to \mathcal{F}_\e$ is as in \eqref{hp_Le_exist}, we assume that for any $n\in\N\setminus\{0\}$ there holds
\begin{equation}\label{hp_Le}\tag{S2}
\lim_{\e \to 0^+}\ps{H_\e}{L_\e(\varphi_{0,n,j})}{L_\e(\varphi_{0,n,i})}=\ps{H_0}{\varphi_{0,n,j}}{\varphi_{0,n,i}}\quad\text{for any  $i,j=1, \dots M_{0,n}$}.
\end{equation}
Please notice that \eqref{hp_Le} encodes a sort of ``convergence'' of both $(\cdot,\cdot)_{H_\e}$ towards $(\cdot,\cdot)_{H_0}$ and of $L_\e$ towards the identity map. We also emphasize that this kind of assumption is not new in the literature, cf. \cite[Chapter 2, §1.2, (1.4)]{elasticity_book}. The last stability assumption concerns the magnitude, as $\e\to 0^+$, of the minimizer $V_{\e,n,L_\e(\varphi)}\in \mathcal{F}_\e$ of \eqref{intr_prob_J_min} (with $v=L_\e(\varphi)$, $L_\e$ as in \eqref{hp_Le_exist} and $\varphi\in E(\lambda_{0,n})$ being a limit eigenfunction). As it will be clear later, this plays a crucial role in quantifying the rate of convergence of perturbed eigenvalues. We define
\begin{equation}
\delta_{\e,n}:=\sup \{\norm{V_{\e,n,L_\e(\varphi)}}_{H_\e}: \varphi\in E(\lambda_{0,n}) \text{ and } \norm{\varphi}_{H_0}=1 \}.\label{def_delta_e} 
\end{equation}
By linearity (see \Cref{prop_J_min}) one can easily observe that
\begin{equation}\label{rem_delta_e}
    \delta_{\e,n}\leq \left(\sum_{i=1}^{M_{0,n}}\norm{V_{\e,n,L_\e(\varphi_{0,n,i})}}_{H_\e}^2\right)^{\frac{1}{2}}.
\end{equation}
We also emphasize that $\delta_{\e,n}$ is a key quantity, since it will act as a remainder term in the asymptotic expansion of the eigenvalue variation. Our assumption is the following:
\begin{equation}\label{hp_delta_e}\tag{S3}
    \delta_{\e,n}\to 0\quad\text{as }\e\to 0^+.
\end{equation}

\subsection{Main result}\label{sec:main_results}

Now that all the assumptions have been fixed, we state here the first main result of the present paper. In order to do so, we first need to define a couple of more elements. More precisely, for any $\e\in[0,1]$ and any $n\in\N\setminus\{0\}$ we let
\begin{equation}\label{eq:def_P_e}
    \begin{aligned}
        &P_\e\colon E(\lambda_{0,n})\to Z_\e \\
        &P_\e(\varphi):=L_\e(\varphi)-V_{\e,n,L_\e(\varphi)},
    \end{aligned}
\end{equation}
where $L_\e$ is as in \eqref{hp_Le_exist} and $V_{\e,n,L_\e(\varphi)}$ is the unique minimizer of \eqref{intr_prob_J_min} with $v=L_\e(\varphi)$. We have that $P_\e$ is linear and, moreover, one can prove (see \Cref{prop_dim_Fe}) that $P_\e$ is injective for $\e\in(0,1]$ sufficiently small, thus implying that $P_\e\colon E(\lambda_{0,n})\to F_{\e,n}$ is a bijection, where
\begin{equation}\label{eq:def_F_e}
    F_{\e,n}:=P_\e(E(\lambda_{0,n})).
\end{equation}
We now consider the following symmetric bilinear form
\begin{equation}
        h_{\e,n}\colon E(\lambda_{0,n})\times E(\lambda_{0,n})\to \R,
\end{equation}
defined as
\begin{equation}\label{def_h_e}
    h_{\e,n}(\varphi,\psi):=\lambda_{0,n}(V_{\e,n,L_\e(\varphi)},L_\e(\psi))_{H_\e}.
\end{equation}
We have that $h_{\e,n}$ is a symmetric bilinear form (see \Cref{rmk:symmetry}) on a $M_{0,n}$-dimensional space and, in particular, $h_{\e,n}$ admits a set of $M_{0,n}$ real eigenvalues, which we denote by $\{\gamma_{\e,n,i}\}_{i=1,\dots,M_{0,n}}$, assuming to be in increasing order. We define
\begin{equation}\label{def_tau_e}
    \tau_{\e,n}^2:=\sup \{|h_{\e,n}(\varphi,\varphi)|: \varphi\in E(\lambda_{0,n}) \text{ and } \norm{\varphi}_{H_0}=1 \}
\end{equation}
and we observe that, by Cauchy-Schwarz, we have
\begin{equation}
    \tau_{\e,n}^2=O(\delta_{\e,n})\quad\text{as }\e\to 0^+,
\end{equation}
see \Cref{lemma:L_e}. We also point out that
\begin{equation}
    \tau_{\e,n}^2=\max_{i=1,\dots,M_{0,n}}|\gamma_{\e,n,i}|.
\end{equation}
Together with $\delta_{\e,n}$, $\tau_{\e,n}$ will play a crucial role, being a remainder term in the expansion of the eigenvalue variation. At this point, we are ready to state our first main result.
\begin{theorem}\label{thm:main}
    For every $N\in\N\setminus\{0\}$, let $M:=M_{0,N}$ be the multiplicity of $\lambda_{0,N}$, being such that
    \begin{equation}
        \lambda_{0,N-1}<\lambda_{0,N}=\cdots=\lambda_{0,N+M-1}<\lambda_{0,N+M}.
    \end{equation}
    Then, we have that
    \begin{equation}
        \lambda_{\e,N+i-1}=\lambda_{0,N}+\gamma_{\e,N,i}+O(\delta_{\e,N}^2)+o(\tau_{\e,N}^2)\quad\text{as }\e\to 0^+,
    \end{equation}
    for any $i=1,\dots,M$, where $\delta_{\e,N}$ and $\tau_{\e,N}$ are as in \eqref{def_delta_e} and \eqref{def_tau_e}, respectively.
\end{theorem}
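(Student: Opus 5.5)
Throughout, fix $N$, write $\lambda_0:=\lambda_{0,N}$, $E:=E(\lambda_0)$, $V_\varphi:=V_{\e,N,L_\e(\varphi)}$, and drop the index $N$ from $\delta_\e$, $\tau_\e$, $h_\e$, $\gamma_{\e,i}$. The plan is a Rayleigh--Ritz-type comparison on the $M$-dimensional space $F_\e=P_\e(E)\subseteq Z_\e$, combined with a spectral-projection argument. Only \eqref{hp_stability}, \eqref{hp_Le}, \eqref{hp_delta_e}, \eqref{intr_eq_Ve} and the injectivity of $P_\e$ (\Cref{prop_dim_Fe}) are used.

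\textbf{Step 1: Gram and energy matrices of $F_\e$.} Since $P_\e(\varphi)=L_\e\varphi-V_\varphi\in Z_\e$, testing \eqref{intr_eq_Ve} with $u=P_\e\psi$ gives
\[
\mathcal{E}^{(\e)}(V_\varphi,P_\e\psi)=q_\e(P_\e\psi,L_\e\varphi).
\]
Expanding and using the symmetry of $q_\e$, I will derive
\begin{equation}
q_\e(P_\e\varphi,P_\e\psi)=-\lambda_0\,(V_\varphi,L_\e\psi)_{H_\e}+\lambda_0\,(V_\varphi,V_\psi)_{H_\e}=-h_\e(\varphi,\psi)+O(\delta_\e^2)\,\|\varphi\|_{H_0}\|\psi\|_{H_0}.
\end{equation}
This needs a sign check, and the sign convention must match the statement. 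I expect the precise identity to be $q_\e(P_\e\varphi,P_\e\psi)=q_\e(L_\e\varphi,L_\e\psi)-\mathcal{E}^{(\e)}(V_\varphi,V_\psi)+\dots$, and that the definition of $V$ via the minimization $J$ arranges the combination to reduce to $\pm h_\e+\lambda_0(V_\varphi,V_\psi)_{H_\e}$. I will take this algebra as routine, with $|(V_\varphi,V_\psi)|\le\delta_\e^2$.

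For the Gram matrix, \eqref{hp_Le} gives
\[
(P_\e\varphi,P_\e\psi)_{H_\e}=(\varphi,\psi)_{H_0}+o(1)-2\,\mathrm{Re}\bigl(V\text{-terms}\bigr).
\]
The $o(1)$ error is harmless only if it multiplies the small quantity. Indeed, in the generalized eigenvalue problem $q_\e(u,u)=\mu\|u\|^2$ on $F_\e$, the Gram perturbation enters as $\mu\cdot o(1)$ with $\mu=O(\tau_\e^2+\delta_\e^2)$, so it produces only $o(\tau_\e^2)+O(\delta_\e^2)$. Therefore the min-max values of the Rayleigh quotient of $\mathcal{E}^{(\e)}$ on $F_\e$ are $\lambda_0+\gamma_{\e,i}+O(\delta_\e^2)+o(\tau_\e^2)$. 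This is exactly the origin of the two remainders in the statement, and it relies on the eigenvalue stability of symmetric matrices (Weyl's inequalities).

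\textbf{Step 2: upper bound.} I will apply the min-max principle \eqref{eq:min-max} on the space $\mathrm{span}\{\varphi_{\e,1},\dots,\varphi_{\e,N-1}\}\oplus F_\e$, where $\varphi_{\e,k}$ are perturbed eigenfunctions. Rather than this direct route, it is cleaner to use the following standard lemma. Let $\Pi_\e$ be the $H_\e$-orthogonal spectral projection onto the eigenspaces of $\lambda_{\e,N},\dots,\lambda_{\e,N+M-1}$, which are isolated by \eqref{hp_stability}. I will show that
\[
\|(I-\Pi_\e)P_\e\varphi\|_{\mathcal{E}^{(\e)}}=O(\delta_\e)\,\|\varphi\|_{H_0}.
\]
To prove this, observe that the residual functional $u\mapsto q_\e(P_\e\varphi,u)$ on $Z_\e$ equals $\lambda_0(V_\varphi,u)_{H_\e}$ by \eqref{intr_eq_Ve} and the definition of $q_\e$. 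It therefore has dual norm at most $\lambda_0\delta_\e$. Since the form $q_\e$ is uniformly invertible on the range of $I-\Pi_\e$, by the spectral gap coming from \eqref{hp_stability}, the claimed bound follows.

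\textbf{Step 3: compression and conclusion.} Next I will compare the $M\times M$ compressed problem of $\mathcal{E}^{(\e)}$ on $\Pi_\e F_\e$, whose eigenvalues are exactly $\lambda_{\e,N+i-1}$, with the problem on $F_\e$. The cross terms are
\[
q_\e\bigl((I-\Pi_\e)P_\e\varphi,(I-\Pi_\e)P_\e\psi\bigr),
\]
since $\Pi_\e$ commutes with the form. By Step 2 each such term is $O(\delta_\e^2)$. The change of Gram matrix is $o(1)$, and by the same argument as in Step 1 it contributes only $o(\tau_\e^2)+O(\delta_\e^2)$. Note also that $\Pi_\e$ restricted to $F_\e$ is injective for small $\e$.

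The main obstacle is Step 2: obtaining a uniform coercivity of $q_\e$ on $(I-\Pi_\e)Z_\e$. This requires the separation of $\lambda_0$ from $\lambda_{\e,N-1}$ and from $\lambda_{\e,N+M}$, which follows from \eqref{hp_stability}. It also requires controlling the $H_\e$- and energy-norms of $P_\e\varphi$ uniformly, which I would obtain by testing \eqref{intr_eq_Ve} with $V_\varphi$ itself.
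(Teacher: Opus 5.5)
Your architecture is the paper's. It uses the Ritz space $F_\e=P_\e(E(\lambda_{0,N}))$ and the residual identity $q_\e(P_\e\varphi,w)=\lambda_{0,N}(V_\varphi,w)_{H_\e}$ for $w\in Z_\e$, so the residual is $O(\delta_\e)$. It then compares the Ritz values on $F_\e$ with $\lambda_{\e,N+i-1}-\lambda_{0,N}$ up to $O(\delta_\e^2)$, and finishes with a Gram-matrix comparison that produces the $o(\tau_\e^2)$ term. For the middle step the paper simply quotes Colin de Verdi\`ere's lemma on small eigenvalues (\Cref{Lem:Small_Eigenvalues}); your Steps 2--3 are the standard spectral-projection proof of that lemma. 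Your Weyl argument, run on the symmetrized matrix $G_\e^{-1/2}A_\e G_\e^{-1/2}$ with $\norm{A_\e}\le\tau_\e^2+O(\delta_\e^2)$, is a cleaner version of \Cref{lemma:xi_mu}.

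\textbf{The sign error in Step 1.} The leading term in Step 1 has the wrong sign, and the displayed identity is inconsistent with the conclusion you draw from it. By \eqref{intr_eq_Ve} and the symmetry of $q_\e$, for every $w\in Z_\e$
\[
q_\e(P_\e\varphi,w)=q_\e(L_\e\varphi,w)-\mathcal{E}^{(\e)}(V_\varphi,w)+\lambda_{0,N}(V_\varphi,w)_{H_\e}=\lambda_{0,N}(V_\varphi,w)_{H_\e},
\]
which is exactly the identity you use in Step 2. Taking $w=P_\e\psi$ gives
\[
q_\e(P_\e\varphi,P_\e\psi)=\lambda_{0,N}(V_\varphi,L_\e\psi)_{H_\e}-\lambda_{0,N}(V_\varphi,V_\psi)_{H_\e}=+h_\e(\varphi,\psi)+O(\delta_\e^2)\norm{\varphi}_{H_0}\norm{\psi}_{H_0},
\]
as in \Cref{lemma:q_circ_P}.

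With your displayed $-h_\e+\lambda_{0,N}(V_\varphi,V_\psi)_{H_\e}$, the Ritz values would be $-\gamma_{\e,M+1-i}+\dots$. The argument would then prove $\lambda_{\e,N+i-1}=\lambda_{0,N}-\gamma_{\e,N,M+1-i}+\dots$, which is false. As a check, take a regular perturbation $\mathcal{E}^{(\e)}=\mathcal{E}^{(0)}+\e B$ with $L_\e=\mathrm{Id}$. Then $h_\e(\varphi,\varphi)=\e B(\varphi,\varphi)+o(\e)$, which is Kato's first-order term and must enter with a plus sign. The fix is the one line above.

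\textbf{Testing with $V_\varphi$.} Your closing paragraph proposes testing \eqref{intr_eq_Ve} with $V_\varphi$, but this is not admissible in general, since only $V_\varphi-L_\e\varphi$ lies in $Z_\e$. For instance, in case (1) of \Cref{ex:mixed}, $L_\e=\mathrm{Id}$ and $\varphi\notin H^1_{0,K_\e}(\Omega)$. Fortunately no energy bound on $P_\e\varphi$ is needed. Steps 2--3 only use $\norm{V_\varphi}_{H_\e}\le\delta_\e\norm{\varphi}_{H_0}$ and the lower bound $\norm{P_\e\varphi}_{H_\e}\ge\frac12\norm{\varphi}_{H_0}$. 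That lower bound follows from \eqref{hp_Le} made uniform on the finite-dimensional eigenspace (\Cref{lemma:L_e}) together with \eqref{hp_delta_e}.
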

In view of \eqref{def_h_e}, in the case of simple limit eigenvalues, we have the following result; in particular, it recovers \cite[Theorem 3.1]{BS_eigen}.
\begin{corollary}\label{cor:simple}
    If $\lambda_{0,N}$ is simple and $\varphi_{0,N}\in E(\lambda_{0,N})$ is such that $\norm{\varphi_{0,N}}_{H_0}=1$, then
    \begin{equation}
        \lambda_{\e,N}=\lambda_{0,N}+\lambda_{0,N}(V_{\e,n,L_\e(\varphi_{0,N})},L_\e(\varphi_{0,N}))_{H_\e}+O(\delta_{\e,N}^2)+o(\tau_{\e,N}^2)\quad\text{as }\e\to 0^+.
    \end{equation}
\end{corollary}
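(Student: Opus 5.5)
The corollary is the special case $M=M_{0,N}=1$ of \Cref{thm:main}, so the plan is just to compute the single eigenvalue of $h_{\e,N}$. There are three steps.

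First, since $\lambda_{0,N}$ is simple, $E(\lambda_{0,N})=\mathrm{span}\{\varphi_{0,N}\}$ is one-dimensional. The unit vectors of this space in $H_0$ are exactly $\pm\varphi_{0,N}$.

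Second, on a one-dimensional space a symmetric bilinear form has exactly one eigenvalue. It equals the form's value on a unit vector, so
\begin{equation}
\gamma_{\e,N,1}=h_{\e,N}(\varphi_{0,N},\varphi_{0,N})=\lambda_{0,N}\big(V_{\e,N,L_\e(\varphi_{0,N})},L_\e(\varphi_{0,N})\big)_{H_\e}.
\end{equation}
This is immediate from the definition \eqref{def_h_e}. The value is unchanged if $\varphi_{0,N}$ is replaced by $-\varphi_{0,N}$: $L_\e$ is linear, and $v\mapsto V_{\e,N,v}$ is linear by \Cref{prop_J_min}, so both entries change sign.

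Third, apply \Cref{thm:main} with $M=1$ and $i=1$. This gives
\begin{equation}
\lambda_{\e,N}=\lambda_{0,N}+\gamma_{\e,N,1}+O(\delta_{\e,N}^2)+o(\tau_{\e,N}^2).
\end{equation}
Substituting the expression for $\gamma_{\e,N,1}$ from the second step yields the claimed expansion.

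The argument has no real obstacle; all the substance is in \Cref{thm:main}. The one point that needs a sentence is the eigenvalue of $h_{\e,N}$. It should be computed with respect to the $H_0$-inner product on $E(\lambda_{0,N})$, consistent with the normalization used in \eqref{def_tau_e}. Because $\norm{\varphi_{0,N}}_{H_0}=1$, this eigenvalue is exactly the diagonal value above. As a consistency check, in this case $\tau_{\e,N}^2=|\gamma_{\e,N,1}|$.
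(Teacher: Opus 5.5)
Your proposal is correct and follows the paper's route: the paper proves \Cref{cor:simple} together with \Cref{thm:main} (by combining \Cref{prop_estimate_eigen} and \Cref{lemma:xi_mu}), and for $M_{0,N}=1$ the single eigenvalue of $h_{\e,N}$, taken with respect to the $H_0$-inner product, is $h_{\e,N}(\varphi_{0,N},\varphi_{0,N})$, exactly as you compute. Your sign-invariance remark and the identity $\tau_{\e,N}^2=|\gamma_{\e,N,1}|$ are accurate but not needed for the argument.
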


\subsubsection{Organization of the paper}

In \Cref{sec_egin_var} we prove our main abstract result \Cref{thm:main} and in the subsequent sections we apply it to three more concrete examples (see the introduction for a quick description). In particular, in \Cref{sec_manifolds_measures} we deal variational eigenvalues depending on varying measures in the $L^2$ part; in \Cref{sec_conc_boundary} we apply the results of \Cref{sec_manifolds_measures} to the Neumann approximation of Steklov and Robin eigenvalues; finally, in \Cref{sec_sum_mani} we investigate the behavior of eigenvalues of manifold which are singularly perturbed by gluing a second small manifold to a fixed point.

\section{A first order expansion of the  eigenvalues variation}\label{sec_egin_var}

In the present section, we are going to detect the rate of convergence of perturbed eigenvalues to the limit one, thus proving our main result \Cref{thm:main}. We hereafter arbitrarily fix an index $N\in\N\setminus\{0\}$ and assume that
\begin{equation}\label{eq:ass_mult}
    \lambda_{0,N-1}<\lambda_{0,N}=\cdots=\lambda_{0,N+M_{0,N}-1}<\lambda_{0,N+M_{0,N}}.
\end{equation}
With respect to the notation introduced in \Cref{subsec:ass_stability} and \Cref{sec:main_results}, \textbf{we are here going to drop the dependence on $N$}, in order to slightly lighten the notation. For instance, we write $q_\e$ in place of $q_{\e,N}$, $J_{\e,v}$ in place of $J_{\e,N,v}$ etc.

As a first step, we state and prove the existence and uniqueness of solutions for an auxiliary problem, which is going to play a crucial role in the asymptotic expansion.
\begin{proposition}\label{prop_J_min}
For any $\e \in [0,1]$ and any $v \in \mc{F}_\e$ there exists a unique $V_{\e,v} \in  \mc{F}_\e$ solving the minimization problem 
\begin{equation}\label{prob_J_min}
\inf\{J_{\e,v}(u):u \in Z_\e+v\}.
\end{equation}
Furthermore, $V_{\e,v}$ is the unique solution to the equation 
\begin{equation}\label{eq_Ve}
\mc{E}^{(\e)}(V_{\e,v},u)=q_\e(u,v)\quad  \text{ for any } u \in Z_\e
\end{equation}
such that $V_{\e,v}-v \in Z_\e$ and the map $v \mapsto V_{\e,v}$ is linear.
Moreover, if $v\neq 0$ and
\begin{equation}\label{hp_Ve_not_0}
\text{either }\quad q_\e(\cdot,v) \not\equiv 0 \text{ in } Z_\e \quad \text{ or } \quad v \not \in Z_\e,
\end{equation}
then $V_{\e,v} \neq 0$. Finally, if $\lambda_{\e,i}\neq \lambda_{0,N}$ for all $i\in\N\setminus\{0\}$, then \eqref{hp_Ve_not_0} holds.
\end{proposition}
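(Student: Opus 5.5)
The plan is to reduce everything to a coercive quadratic minimization on the Hilbert space $Z_\e$ by writing competitors as $u=v+w$ with $w\in Z_\e$. Expanding by bilinearity and symmetry,
\begin{equation}
J_{\e,v}(v+w)=\tfrac12\mathcal{E}^{(\e)}(w)+\big[\mathcal{E}^{(\e)}(v,w)-q_\e(w,v)\big]+\tfrac12\mathcal{E}^{(\e)}(v)-q_\e(v,v).
\end{equation}
By the definition of $q_\e$, the bracket equals $\lambda_{0,N}(w,v)_{H_\e}$. The last two terms do not depend on $w$. So the problem becomes minimizing $\frac12\overline{\mathcal{E}}^{(\e)}(w)+\lambda_{0,N}(w,v)_{H_\e}$ over $w\in Z_\e$.

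By \Cref{rmk:pos_def} (see \eqref{eq:pos_def}), $Z_\e$ with the norm $\sqrt{\overline{\mathcal{E}}^{(\e)}(\cdot)}$ is a Hilbert space, and $\|w\|_{H_\e}^2\le\overline{\mathcal{E}}^{(\e)}(w)$. Hence $w\mapsto \lambda_{0,N}(w,v)_{H_\e}$ is a continuous linear functional on $Z_\e$. The Riesz representation theorem (equivalently, Lax--Milgram, or strict convexity plus coercivity) then gives a unique minimizer $w_v\in Z_\e$. We set $V_{\e,v}:=v+w_v$.

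\textbf{Euler--Lagrange equation, uniqueness and linearity.} The first variation in directions $u\in Z_\e$ gives $\mathcal{E}^{(\e)}(v+w_v,u)-q_\e(u,v)=0$, which is exactly \eqref{eq_Ve}. Conversely, suppose two functions $V,V'$ both satisfy \eqref{eq_Ve} and $V-v,\,V'-v\in Z_\e$. Then $d:=V-V'\in Z_\e$ and $\mathcal{E}^{(\e)}(d,u)=0$ for all $u\in Z_\e$. Testing with $u=d$ and using \eqref{eq:pos_def} gives $d=0$. So the solution of \eqref{eq_Ve} in $v+Z_\e$ is unique and coincides with the minimizer. Linearity of $v\mapsto V_{\e,v}$ follows from this uniqueness: $\alpha V_{\e,v}+\beta V_{\e,v'}$ lies in $(\alpha v+\beta v')+Z_\e$ and satisfies \eqref{eq_Ve} with datum $\alpha v+\beta v'$, because the right-hand side $q_\e(u,\cdot)$ is linear.

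\textbf{Non-vanishing.} Argue by contraposition. If $V_{\e,v}=0$, then $v=v-V_{\e,v}\in Z_\e$, and \eqref{eq_Ve} gives $q_\e(u,v)=\mathcal{E}^{(\e)}(0,u)=0$ for all $u\in Z_\e$. Both alternatives in \eqref{hp_Ve_not_0} would then fail.

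\textbf{The final claim.} Let $v\ne0$ and suppose \eqref{hp_Ve_not_0} fails, i.e. $v\in Z_\e$ and $q_\e(\cdot,v)\equiv0$ on $Z_\e$. This means $\overline{\mathcal{E}}^{(\e)}(v,u)=\lambda_{0,N}(v,u)_{H_\e}$ for every $u\in Z_\e$. By \eqref{eq:intr_eigen}, $v$ is then an eigenfunction with eigenvalue $\lambda_{0,N}$. This contradicts $\lambda_{\e,i}\neq\lambda_{0,N}$ for all $i$.

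I expect no serious obstacle. The only point needing care is continuity of the linear term on $Z_\e$ and coercivity; both come from the normalization \eqref{eq:pos_def}, which is why the renormalization in \Cref{rmk:pos_def} is needed. If one does not want to rely on it, the fallback is to use the norm $(\cdot,\cdot)_{C_\e}$ together with the lower bound on $\overline{\mathcal{E}}^{(\e)}$.
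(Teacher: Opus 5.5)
Your proposal is correct and follows essentially the paper's route: translate by $v$ to get a Lax--Milgram/Riesz problem on $Z_\e$ using \eqref{eq:pos_def}, prove linearity via uniqueness of the solution of \eqref{eq_Ve} in $v+Z_\e$, and settle the final claim by noting that $v$ would be an eigenfunction for $\lambda_{0,N}$. The differences are minor: you obtain non-vanishing by contraposition directly from the Euler--Lagrange equation, whereas the paper compares energies ($J_{\e,v}(tu)<0$). You also explicitly derive \eqref{eq_Ve} and its uniqueness, which the paper takes for granted.
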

\begin{proof}
The proof is rather standard (see e.g. \cite[Proposition 2.5]{BS_eigen}, in which the authors prove everything but linearity in the setting of $L^2$ spaces). Nevertheless, for the sake of completeness and in order to emphasize the role of the assumptions, we provide it here. First of all, we let $w=v-u$, and we observe that
\begin{align}
    J_{\e,v}(u)&=J_{\e,v}(v-w)=\frac{1}{2}\overline{\mathcal{E}}^{(\e)}(v-w)-q_\e(v-w,v) \\
    &=\frac{1}{2}\overline{\mathcal{E}}^{(\e)}(v)+\frac{1}{2}\overline{\mathcal{E}}^{(\e)}(w)-\overline{\mathcal{E}}^{(\e)}(v,w) -\overline{\mathcal{E}}^{(\e)}(v)+\lambda_{0,N}\norm{v}_{H_\e}^2+\overline{\mathcal{E}}^{(\e)}(v,w)-\lambda_{0,N}(v,w)_{H_\e} \\
    &=\frac{1}{2}\overline{\mathcal{E}}^{(\e)}(w)-\lambda_{0,N}(v,w)_{H_\e}-\left(\frac{1}{2}\overline{\mathcal{E}}^{(\e)}(v)-\lambda_{0,N}\norm{v}_{H_\e}^2\right).
\end{align}
In particular,
\begin{equation}
    \inf\{J_{\e,v}(u):u \in Z_\e+v\}=\inf\left\{\frac{1}{2}\overline{\mathcal{E}}^{(\e)}(w)-\lambda_{0,N}(v,w)_{H_\e}\colon w\in Z_\e\right\}-\left(\frac{1}{2}\overline{\mathcal{E}}^{(\e)}(v)-\lambda_{0,N}\norm{v}_{H_\e}^2\right).
\end{equation}
We recall that the form $(\overline{\mathcal{E}}^{(\e)})^{1/2}$ is a norm on $Z_\e$ (see \Cref{rmk:pos_def}) and so we can apply the Lax-Milgram lemma, since the map $w\mapsto (v,w)_{H_\e}$ is linear and continuous (in view of \Cref{rmk:pos_def}). Hence, we derive the existence and uniqueness of a minimizer.
We now prove that the map $v \mapsto V_{\e,v}$ is linear. 
For any $v,w\in\mathcal{F}_\e$ and any $c\in\R$, by \eqref{eq_Ve} we have that
\begin{equation}
\mc{E}^{(\e)}(V_{\e,c v+w},u)=q_\e(u,cv+w)=c q_\e(u,v)+q_\e(u,w)= \mc{E}^{(\e)}(cV_{\e,v}+V_{\e,w},u)\quad\text{for all }u\in Z_\e,
\end{equation}
that is
\begin{equation}\label{eq:linear}
    \mc{E}^{(\e)}(V_{\e,c v+w}-cV_{\e,v}-V_{\e,w},u)=0\quad\text{for all }u\in Z_\e.
\end{equation}
At this point, since by \eqref{prob_J_min}
\begin{equation}
    V_{\e,c v+w}-cV_{\e,v}-V_{\e,w}=V_{\e,c v+w}-(cv+w)-c(V_{\e,v}-v)-(V_{\e,w}-w)\in Z_\e
\end{equation}
then we can let $u=V_{\e,c v+w}-cV_{\e,v}-V_{\e,w}$ in \eqref{eq:linear} and, thanks to \eqref{eq:pos_def}, obtain that
\begin{equation}
    0=\mc{E}^{(\e)}(V_{\e,c v+w}-cV_{\e,v}-V_{\e,w})\geq \norm{V_{\e,c v+w}-cV_{\e,v}-V_{\e,w}}_{H_\e}^2
\end{equation}
which implies that $V_{\e,c v+w}=cV_{\e,v}+V_{\e,w}$. 
Now, we observe that if $v\neq 0$ satisfies $v\not\in Z_\e$, then trivially $V_{\e,v}\neq 0$. On the other hand, if $v\neq 0$ and$q_\e(\cdot,v)\not\equiv 0$, we consider $u\in Z_\e$ such that $q_\e(u,v)>0$ and we observe that
\begin{equation}
    J_{\e,v}(tu)=\frac{t^2}{2}\overline{\mathcal{E}}^{(\e)}(u)-tq_\e(u,v)<0\quad\textnormal{for $t$ sufficiently small},
\end{equation}
so that $J_{\e,v}(V_{\e,v})<0$ which forces $V_{\e,v}\neq 0$.
Finally, we prove that if $\lambda_{\e,i}\neq \lambda_{0,N}$ for all $i\in\N\setminus\{0\}$, then \eqref{hp_Ve_not_0} holds. Let us assume by contradiction that $v\in Z_\e$ and 
\begin{equation}
    q_\e(u,v)=\overline{\mathcal{E}}^{(\e)}(u,v)-\lambda_{0,N}(u,v)_{H_\e}=0\quad  \text{ for any } u \in Z_\e.
\end{equation}
By the spectral theorem, we know that this holds if and only if $\lambda_{0,N}=\lambda_{\e,i}$ for some $i\in\N\setminus\{0\}$, thus reaching a contradiction.
\end{proof}

Second, we prove that our hypothesis \eqref{hp_Le} can be assumed to be uniform on each eigenspace.
\begin{lemma}\label{lemma:L_e}
    We have that \eqref{hp_Le} implies 
    \begin{equation}\label{hp:Le_equiv}\tag{S2'}
       \sup_{\substack{\varphi\in E(\lambda_{0,N}) \\\varphi\neq 0}}\left|\frac{\norm{L_\e(\varphi)}_{H_\e}}{\norm{\varphi}_{H_0}}-1\right|\to 0\quad\text{as }\e\to 0^+.
\end{equation}
\end{lemma}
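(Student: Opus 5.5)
The statement is a finite-dimensional uniformity argument, and I plan to prove it through Gram matrices. Write $M:=M_{0,N}$ and take the $H_0$-orthonormal basis $\{\varphi_{0,N,i}\}_{i=1,\dots,M}$ of $E(\lambda_{0,N})$. Any $\varphi\in E(\lambda_{0,N})$ with $\norm{\varphi}_{H_0}=1$ can be written as $\varphi=\sum_{i=1}^M c_i\varphi_{0,N,i}$ with $c=(c_1,\dots,c_M)\in\R^M$ and $|c|=1$ (Euclidean norm). Since $L_\e$ is linear,
\begin{equation}
    \norm{L_\e(\varphi)}_{H_\e}^2=\sum_{i,j=1}^M c_ic_j\,(L_\e(\varphi_{0,N,i}),L_\e(\varphi_{0,N,j}))_{H_\e}=c^{T}G_\e c,
\end{equation}
where $G_\e\in\R^{M\times M}$ is the Gram matrix with entries $(G_\e)_{ij}:=(L_\e(\varphi_{0,N,i}),L_\e(\varphi_{0,N,j}))_{H_\e}$.

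Assumption \eqref{hp_Le} says exactly that each entry of $G_\e$ converges to $\delta_{ij}$. There are finitely many entries, so $\|G_\e-I\|\to 0$ in operator norm as $\e\to0^+$; for instance one can bound it by the Frobenius norm. Hence
\begin{equation}
    \sup_{|c|=1}\left|\norm{L_\e(\varphi)}_{H_\e}^2-1\right|=\sup_{|c|=1}|c^T(G_\e-I)c|\le\|G_\e-I\|\to0.
\end{equation}

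Finally, I pass from squared norms to norms using $|a-1|=|a^2-1|/(a+1)\le|a^2-1|$ for $a\ge0$. This gives the supremum over normalized $\varphi$ in \eqref{hp:Le_equiv}. Both the map $L_\e$ and the quotient are homogeneous of degree zero in $\varphi$, so this supremum coincides with the supremum over all nonzero $\varphi$. There is no real obstacle here. The only points needing care are that the convergence is uniform over the unit sphere, which holds because the eigenspace is finite-dimensional (equivalently, the sphere is compact), and that the $\varphi_{0,N,i}$ in \eqref{hp_Le} are an $H_0$-orthonormal basis of $E(\lambda_{0,N})$.
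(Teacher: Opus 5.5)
Your proof is correct, and it reaches uniformity by a different mechanism than the paper. Both arguments rest on the same computation: expand a normalized $\varphi\in E(\lambda_{0,N})$ in the $H_0$-orthonormal eigenbasis, then use bilinearity together with \eqref{hp_Le}.

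The paper's route is by compactness. For each $\e$ it takes a normalized $\varphi_\e$ attaining the supremum. It then extracts a subsequence along which the coefficient vectors converge, using compactness of the unit sphere of $E(\lambda_{0,N})$. Along that subsequence it shows $\norm{L_\e(\varphi_\e)}_{H_\e}^2\to 1$, and concludes via the Urysohn subsequence principle.

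You instead bound the quadratic form directly by the operator norm of $G_\e-I$. This avoids the maximizer and the subsequence extraction entirely, and it gives the explicit estimate
\[
\sup_{\varphi\in E(\lambda_{0,N})\setminus\{0\}}\left|\frac{\norm{L_\e(\varphi)}_{H_\e}}{\norm{\varphi}_{H_0}}-1\right|\le \norm{G_\e-I}\le\Big(\sum_{i,j=1}^{M}\big|(G_\e)_{ij}-\delta_{ij}\big|^2\Big)^{1/2}.
\]
So the rate in \eqref{hp:Le_equiv} is controlled by the rate in \eqref{hp_Le}, whereas the paper's soft argument yields only the qualitative limit.

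One small wording slip: $L_\e$ is linear, hence homogeneous of degree one. Only the quotient $\norm{L_\e(\varphi)}_{H_\e}/\norm{\varphi}_{H_0}$ is invariant under $\varphi\mapsto t\varphi$ with $t\neq 0$. This does not affect your argument.
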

\begin{proof}
Let $\varphi_\e\in E(\lambda_{0,N})\setminus\{0\}$ be the function achieving the supremum, which we assume to be normalized $\norm{\varphi_\e}_{H_0}=1$, and let us consider its expansion with respect to the eigenbasis, i.e.
\begin{equation}
    \varphi_\e=\sum_{j=1}^{M_{0,N}}\alpha_\e^j\varphi_{0,N,j}.
\end{equation}
In particular, we know that there exists $\psi\in E(\lambda_{0,N})$ with $\norm{\psi}_{H_0}=1$ such that $\varphi_\e\to \psi$ in $H_0$ as $\e\to 0^+$, up to a subsequence.
In view of \eqref{hp_Le}, we have that
\begin{equation}
    \norm{L_\e(\varphi_\e)}_{H_\e}^2=\sum_{j,k=1}^{M_{0,N}}\alpha_\e^j\alpha_\e^k(L_\e(\varphi_{0,N,j}),L_\e(\varphi_{0,N,k}))_{H_\e}\to \sum_{j=1}^{M_{0,N}}(\alpha^j)^2=\norm{\psi}_{H_0}^2=1
\end{equation}
as $\e\to 0^+$. However, since the limit is independent of the chosen subsequence, by the Urysohn principle we conclude that \eqref{hp:Le_equiv} holds.
\end{proof}

A crucial step in the proof of our main result consists in producing a good approximating space, which is $F_\e$ as in \eqref{eq:def_F_e}. In the following, we justify the fact that it is $M_{0,N}$-dimensional.
\begin{proposition}\label{prop_dim_Fe}
Let $P_\e$ be as in \eqref{eq:def_P_e} and $F_\e$ as in \eqref{eq:def_F_e}. The map $P_\e$ is linear and injective. In particular, the linear space $F_\e\subseteq Z_\e$ has dimension $M_{0,N}$ and $\{L_\e(\varphi_{0,N,i})-V_{\e,N,i}\}_{i=1,M_{0,N}}$ is a basis of $F_\e$.
\end{proposition}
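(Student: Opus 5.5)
Linearity of $P_\e$ follows from the linearity of $L_\e$ and of $v\mapsto V_{\e,v}$ (\Cref{prop_J_min}). Also $P_\e(\varphi)\in Z_\e$, since $V_{\e,v}-v\in Z_\e$ gives $L_\e(\varphi)-V_{\e,L_\e(\varphi)}\in Z_\e$. So $F_\e\subseteq Z_\e$ is a linear subspace. Once injectivity is known, $\dim F_\e=\dim E(\lambda_{0,N})=M_{0,N}$. Moreover, the images of the basis $\{\varphi_{0,N,i}\}_i$, namely $L_\e(\varphi_{0,N,i})-V_{\e,N,i}$ with $V_{\e,N,i}:=V_{\e,L_\e(\varphi_{0,N,i})}$, then form a basis of $F_\e$. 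The whole content is therefore injectivity, which holds for $\e$ small enough; by the reparametrization convention of \Cref{sec_assumptions}, that is all we need.

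For injectivity, the plan is a quantitative lower bound. For $\varphi\in E(\lambda_{0,N})$ with $\norm{\varphi}_{H_0}=1$, the triangle inequality gives
\begin{equation}
\norm{P_\e(\varphi)}_{H_\e}\geq \norm{L_\e(\varphi)}_{H_\e}-\norm{V_{\e,L_\e(\varphi)}}_{H_\e}\geq \norm{L_\e(\varphi)}_{H_\e}-\delta_{\e},
\end{equation}
using the definition \eqref{def_delta_e} of $\delta_\e$. By \Cref{lemma:L_e}, i.e. \eqref{hp:Le_equiv}, $\inf_{\norm{\varphi}_{H_0}=1}\norm{L_\e(\varphi)}_{H_\e}\to 1$. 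By \eqref{hp_delta_e}, $\delta_\e\to 0$. Hence there is $\e_1>0$ such that $\norm{P_\e(\varphi)}_{H_\e}\geq \tfrac12$ for all normalized $\varphi$ and all $\e\in(0,\e_1]$. By linearity, $\norm{P_\e(\varphi)}_{H_\e}\geq \tfrac12\norm{\varphi}_{H_0}$, so $\ker P_\e=\{0\}$.

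The same estimate shows that $P_\e$ is uniformly bi-Lipschitz onto $F_\e$ in the $H$ norms, which will be useful later. There is no real obstacle here. The only care point is that the bounds must be uniform over the unit sphere of $E(\lambda_{0,N})$: this is exactly why \eqref{hp:Le_equiv} is used instead of \eqref{hp_Le}, and why $\delta_\e$ is defined as a supremum. Finite dimensionality of $E(\lambda_{0,N})$ makes this uniformity available.
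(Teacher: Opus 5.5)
Your proof is correct and rests on the same idea as the paper's: $\norm{L_\e(\varphi)}_{H_\e}$ stays close to $1$ uniformly on the unit sphere of $E(\lambda_{0,N})$, while $\norm{V_{\e,L_\e(\varphi)}}_{H_\e}\le\delta_\e\to0$. The only difference is presentation. The paper argues by contradiction: it takes normalized $\varphi_i$ with $P_{\e_i}(\varphi_i)=0$, extracts a convergent subsequence in the finite-dimensional eigenspace, and applies \eqref{hp_Le} directly. You instead obtain the uniform lower bound $\norm{P_\e(\varphi)}_{H_\e}\ge\norm{L_\e(\varphi)}_{H_\e}-\delta_\e$ straight from \eqref{hp:Le_equiv}, which is exactly the estimate the paper itself uses later in the proof of \Cref{prop_estimate_eigen}.
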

\begin{proof}
In view of \Cref{prop_J_min}, $P_\e$ is linear. Moreover, by definition of $V_{\e,v}$ (see \Cref{prop_J_min}) we have that $v-V_{\e,v}\in Z_{\varepsilon}$
thus implying that $F_\e \subseteq Z_\e$. Hence, by definition \eqref{eq:def_F_e}, it is enough to show that $P_\e\colon E(\lambda_{0,N})\to Z_\e$ is injective for $\e$ small enough, that is there exists $\Bar{\e}>0$ such that for any $\e\in[0,\Bar{\e}]$ there holds
\begin{equation}
    P_\e(\varphi)=L_\e(\varphi)-V_{\e,L_\e(\varphi)}=0\implies \varphi=0.
\end{equation}
Indeed, once we have proved that $P_\e$ is bijective for small $\e>0$, it is not restrictive to suppose that, up to a rescaling of the parameter $\e$,   $P_\e$ is bijective on  $[0,1]$ (the case $\e=0$ is trivial).

Suppose by contradiction that there exists sequences $\e_i\to 0^+$ as $i\to\infty$ and $\varphi_i\in E(\lambda_{0,N})$ such that $\norm{\varphi_i}_{H_{0}}=1$ and $L_{\e_i}(\varphi_i)=V_{\e_i,L_{\e_i}(\varphi_i)}$ for all $i$. Since $\mathcal{E}^{(0)}(\varphi_i)=\lambda_{0,N}$ and since $E(\lambda_{0,N})$ is finite dimensional, then there exists $\varphi\in E(\lambda_{0,N})$ such that $\mathcal{E}^{(0)}(\varphi_i-\varphi)\to 0$ and, by \eqref{eq:pos_def}, $\norm{\varphi_i-\varphi}_{H_0}\to 0$, as $i\to\infty$, which implies, in particular, that $\norm{\varphi}_{H_0}=1$. Then, we can expand $\varphi_i$ and $\varphi$ in the eigenbasis as
\begin{equation}
    \varphi_i=\sum_{j=1}^{M_{0,N}}\alpha_i^j\varphi_{0,N,j} \quad\text{and}\quad
    \varphi=\sum_{j=1}^{M_{0,N}}\alpha^j\varphi_{0,N,j}
\end{equation}
and deduce that $\alpha_i^j\to \alpha^j$ as $i\to\infty$ and that 
$$\sum_{j=1}^{M_{0,N}}(\alpha^j)^2=\norm{\varphi}_{H_0}^2=1.$$ 
In view of the previous observations and \eqref{hp_Le} we get that
\begin{equation}
    \norm{L_{\e_i}\varphi_i}_{H_{\e_i}}^2=\sum_{j,k=1}^{M_{0,N}}\alpha_i^j\alpha_i^k(L_{\e_i}(\varphi_{0,N,j}),L_{\e_i}(\varphi_{0,N,k}))_{H_{\e_i}}\to \sum_{j,k=1}^{M_{0,N}}\alpha^j\alpha^k(\varphi_{0,N,j},\varphi_{0,N,k})_{H_0}=\norm{\varphi}_{H_0}^2=1
\end{equation}
as $i\to\infty$. On the other hand, by contradiction assumption and by linearity we have
\begin{equation}
    \norm{L_{\e_i}\varphi_i}_{H_{\e_i}}^2=\|V_{\e_i,L_{\e_i}(\varphi_i)}\|_{H_{\e_i}}^2\leq \delta_{\e_i}^2,
\end{equation}
where $\delta_\e$ is as in \eqref{def_delta_e}, but the right-hand side goes to $0$ as $i\to\infty$, in view of \eqref{hp_delta_e}. This is a contradiction.
\end{proof}

We now recall the following lemma, which is a refinement of a result by Y. Colin de Verdière, see \cite[Proposition 3.1]{dV86}. The proof of this version can be found in \cite{ACM_ramification}.

\begin{lemma}[Lemma on small eigenvalues \cite{dV86}]\label{Lem:Small_Eigenvalues}
		Let $(\mathcal{H},(\cdot,\cdot)_{\mathcal{H}})$ be a real
		Hilbert space and let $\mathcal{D}\subseteq\mathcal{H}$ be a dense
		subspace. Let $q\colon \mathcal{D}\times\mathcal{D}\to\R$ be a
		symmetric bilinear form, such that $q$ is semi-bounded from below,
		$q$ admits an increasing sequence of eigenvalues
		$\{\nu_i\}_{i\in\N}$, 
		and there exists an orthonormal basis $\{g_i\}_{i\in\N}$ of
		$\mathcal H$ such that $g_i\in\mathcal D$ is an eigenvector of $q$
		associated to the eigenvalue $\nu_i$, i.e.  $q(g_i,v)=\nu_i(g_i,v)_{\mathcal{H}}$
		for all $i\geq1$ and $v\in \mathcal D$.
		
		Let $M\in \N\setminus\{0\}$ and $F\subseteq \mathcal{D}$ be
		a $M$-dimensional subspace of $\mathcal D$.
		
		Assume that there exists $N\in\N$ and
		$\gamma>0$ such that
		\begin{itemize}
			\item[\rm (H1)] $\nu_i\leq -\gamma$ for all $i\leq N-1$,
			$|\nu_i|\leq \gamma$ for all $i=N,\dots,N+M-1$, and 
			$\nu_i\geq \gamma$ for all $i\geq N+M$;
			\item[\rm (H2)]
			$0<\delta:=\sup\{|q(u,v)|\colon u\in \mathcal{D},~v\in
			F,~\norm{u}_{\mathcal{H}}=\norm{v}_{\mathcal{H}}=1\}<\gamma/\sqrt{2}$.
		\end{itemize}
		If $\{\xi_i\}_{i=1,\dots,M}$ are the eigenvalues (in
		ascending order) of $q$ restricted to $F$, we have
		\begin{equation}\label{eq:CdV_th1}
			|\nu_{N+i-1}-\xi_i|\leq\frac{4\delta^2}{\gamma}
			\quad\text{for all }i=1,\dots,M.
		\end{equation}
		Moreover, if
		$\Pi\colon\mathcal{D}\to
		\mathrm{span}\,\{g_N,\dots,g_{N+M-1}\}$ denotes the
		orthogonal projection onto the subspace of
		$\mathcal D$  spanned by $\{g_N,\dots,g_{N+M-1}\}$,
		we have
		\begin{equation}
			\frac{\norm{v-\Pi
					v}_{\mathcal{H}}}{\norm{v}_{\mathcal{H}}}
			\leq \frac{\sqrt{2}\delta}{\gamma}\quad\text{for all }v\in F.
		\end{equation}
	\end{lemma}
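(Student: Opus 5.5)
The plan is to expand everything in the orthonormal eigenbasis $\{g_i\}$ and to compare the Rayleigh quotients of $q$ on $F$ and on $G:=\mathrm{span}\{g_N,\dots,g_{N+M-1}\}$ through the projection $\Pi$.

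\textbf{Step 1: the projection estimate.} Fix $v\in F$ and write $v=\sum_i c_ig_i$, with $c_i=(v,g_i)_{\mathcal H}$. For every $u\in\mathcal D$ the eigenvector relation and symmetry give $q(g_i,v)=\nu_i c_i$. Now test (H2) with $u$ ranging over finite linear combinations of the $g_i$ with $i\notin\{N,\dots,N+M-1\}$. Taking the supremum over such unit vectors, and then letting the finite index set exhaust the complement, gives
\begin{equation}
\sum_{i\notin\{N,\dots,N+M-1\}}\nu_i^2c_i^2\le\delta^2\norm{v}_{\mathcal H}^2 .
\end{equation}
By (H1), $|\nu_i|\ge\gamma$ for these indices. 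Hence $w:=v-\Pi v$ satisfies $\norm{w}_{\mathcal H}^2\le(\delta^2/\gamma^2)\norm{v}_{\mathcal H}^2$, which is stronger than the claimed bound $\sqrt2\delta/\gamma$. Since $\delta/\gamma<1/\sqrt2$, this also yields $\norm{\Pi v}_{\mathcal H}^2\ge\frac12\norm{v}_{\mathcal H}^2$. So $\Pi|_F\colon F\to G$ is injective, hence a bijection, because both spaces are $M$-dimensional.

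\textbf{Step 2: comparing the quadratic forms.} By orthogonality of the spectral decomposition,
\begin{equation}
q(v)=q(\Pi v)+\sum_{i\notin\{N,\dots,N+M-1\}}\nu_ic_i^2,\qquad \norm{v}_{\mathcal H}^2=\norm{\Pi v}_{\mathcal H}^2+\norm{w}_{\mathcal H}^2 .
\end{equation}
The first identity is the step I expect to require the most care. It uses that $q(v)=\sum\nu_ic_i^2$ for $v\in\mathcal D$, i.e. that $q$ is the closed form associated with the self-adjoint operator diagonalized by $\{g_i\}$. I would either justify this from the hypotheses, or state it as the standing interpretation of the assumptions, which is how the lemma is used in the paper.

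Given that identity, the tail term is bounded by
\begin{equation}
\Big|\sum_{i\notin\{N,\dots,N+M-1\}}\nu_ic_i^2\Big|\le\frac1\gamma\sum_{i\notin\{N,\dots,N+M-1\}}\nu_i^2c_i^2\le\frac{\delta^2}{\gamma}\norm{v}_{\mathcal H}^2 .
\end{equation}
This uses $|\nu_i|\ge\gamma$ once more.

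\textbf{Step 3: comparing Rayleigh quotients.} Write $R_F(v)=q(v)/\norm{v}_{\mathcal H}^2$ and $R_G(\Pi v)=q(\Pi v)/\norm{\Pi v}_{\mathcal H}^2$. A direct computation gives
\begin{equation}
R_F(v)-R_G(\Pi v)=\frac{\sum_{\mathrm{tail}}\nu_ic_i^2-R_G(\Pi v)\norm{w}_{\mathcal H}^2}{\norm{v}_{\mathcal H}^2}.
\end{equation}
We have $|R_G|\le\gamma$ on $G$ by (H1), and $\norm{w}_{\mathcal H}^2\le(\delta^2/\gamma^2)\norm{v}_{\mathcal H}^2$ by Step 1. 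Combining these with the tail bound of Step 2 gives $|R_F(v)-R_G(\Pi v)|\le 2\delta^2/\gamma$.

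\textbf{Step 4: conclusion by min-max.} Apply the min-max principle on the finite-dimensional spaces $F$ and $G$, which correspond to each other bijectively through $\Pi$. Subspaces of $F$ of dimension $i$ map to subspaces of $G$ of dimension $i$, and the eigenvalues of $q|_G$ are exactly $\nu_N,\dots,\nu_{N+M-1}$. It follows that $|\xi_i-\nu_{N+i-1}|\le 2\delta^2/\gamma\le4\delta^2/\gamma$ for every $i=1,\dots,M$.
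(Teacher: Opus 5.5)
The paper does not prove this lemma itself; it refers to \cite{ACM_ramification} (and \cite{dV86}), so there is no in-paper argument to compare with. Your argument is correct and self-contained once one flagged step is closed (see below). It runs through the spectral projection onto the cluster, a comparison of Rayleigh quotients on $F$ and on $G=\mathrm{span}\{g_N,\dots,g_{N+M-1}\}$, and min--max through the bijection $\Pi|_F$. It also gives sharper constants than stated:
\begin{itemize}
\item $\norm{v-\Pi v}_{\mathcal H}\le(\delta/\gamma)\norm{v}_{\mathcal H}$;
\item $|\nu_{N+i-1}-\xi_i|\le 2\delta^2/\gamma$;
\item injectivity of $\Pi|_F$ already holds under $\delta<\gamma$.
\end{itemize}
The factors $\sqrt2$ and $4$, and the threshold $\gamma/\sqrt2$, in the statement are coarser bookkeeping, and your bounds imply them.

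The step you leave open is the identity $q(v)=\sum_i\nu_ic_i^2$. You suggest justifying it by closedness of $q$, but the lemma does not assume $q$ closed, and for general $u\in\mathcal D$ the identity can fail. For example, take $h\in\mathcal H$ outside the finite span of the $g_i$ with $\sum_i|\nu_i|(h,g_i)_{\mathcal H}^2<\infty$, and let $\mathcal D$ be the span of the $g_i$ and $h$. Add to the diagonal form the term $\ell(u)\ell(u')$, where $\ell$ vanishes on every $g_i$ and $\ell(h)=1$. All the standing hypotheses still hold, but $q(h)=\sum_i\nu_i(h,g_i)_{\mathcal H}^2+1$.

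You only need the identity for $v\in F$, and there it follows from (H2), exactly as in your Step~1:
\begin{itemize}
\item Running Step~1 over all finite index sets gives $\sum_i\nu_i^2c_i^2\le\delta^2\norm{v}_{\mathcal H}^2$, hence $\sum_i|\nu_i|c_i^2<\infty$.
\item With $v_n:=\sum_{i\le n}c_ig_i\in\mathcal D$ you have $q(v,v_n)=\sum_{i\le n}\nu_ic_i^2$.
\item Applying (H2) to $v-v_n\in\mathcal D$ gives $|q(v,v-v_n)|\le\delta\norm{v}_{\mathcal H}\norm{v-v_n}_{\mathcal H}\to0$.
\end{itemize}
Therefore $q(v)=\sum_i\nu_ic_i^2$, and Step~2 holds with an absolutely convergent tail. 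With these lines inserted no extra standing assumption is needed. In the paper's application, $q=q_\e|_{Z_\e}$ is a shift of a closed form, so there the identity holds for all $u$ anyway.
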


Essentially, we are going to apply \Cref{Lem:Small_Eigenvalues} with particular choices for the elements appearing in its assumptions, which will then lead to the proof of our main result \Cref{thm:main}. Let $q_\e$ be as in \eqref{def_qe} and $F_\e$ as in \eqref{eq:def_F_e}. Then 
\begin{equation}
  {q_\e}_{|{F_\e \times F_\e}}: F_\e \times F_\e \to \R  
\end{equation}
is a symmetric bilinear form on the $M_{0,N}$-dimensional space $F_\e$.
Hence, it admits exactly $M_{0,N}$ real eigenvalues, counted with multiplicity, which we denote with $\{\xi_{\e,i}\}_{i=1, \dots, M_{0,N}}$ (assumed to be in increasing order).
The first step in the proof of \Cref{thm:main} is the following proposition, which is a consequence of \Cref{Lem:Small_Eigenvalues}.

\begin{proposition}\label{prop_estimate_eigen}
There exist $\e_0 \in [0,1]$ and a positive constant $C_N>0$, not depending on $\e$, such that for any $\e \in (0,\e_0]$ and any $i=1,...,M_{0,N}$ there holds
\begin{equation}\label{ineq_esetimate_eigen}
|\lambda_{\e,i+N-1}-\lambda_{0,N}-\xi_{\e,i}|\leq C_N\delta_\e^2,
\end{equation}  
where $\delta_\e$ is as in \eqref{def_delta_e}.
\end{proposition}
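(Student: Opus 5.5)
We apply \Cref{Lem:Small_Eigenvalues} with $\mathcal{H}=\mathcal{Z}_\e$, $\mathcal{D}=Z_\e$, $q=q_\e$ restricted to $Z_\e\times Z_\e$ and $F=F_\e$. The form $q_\e$ is $\overline{\mathcal{E}}^{(\e)}-\lambda_{0,N}(\cdot,\cdot)_{H_\e}$. Its eigenvalues are $\nu_i=\lambda_{\e,i}-\lambda_{0,N}$ with the same orthonormal eigenbasis $\{\varphi_{\e,n,i}\}$, and it is bounded from below. By \Cref{prop_dim_Fe}, $F_\e$ is an $M_{0,N}$-dimensional subspace of $Z_\e$, and the eigenvalues of $q_\e$ restricted to $F_\e$ are the $\xi_{\e,i}$. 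The conclusion \eqref{eq:CdV_th1} then reads $|\lambda_{\e,N+i-1}-\lambda_{0,N}-\xi_{\e,i}|\le 4\delta^2/\gamma$. The proof therefore reduces to checking (H1) with a fixed $\gamma$ and to bounding the quantity $\delta$ in (H2) by $C\delta_\e$.

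\textbf{Step 1: (H1).} Set $\gamma:=\frac12\min\{\lambda_{0,N}-\lambda_{0,N-1},\lambda_{0,N+M}-\lambda_{0,N}\}>0$, omitting the first term if $N=1$. By \eqref{hp_stability}, for $\e$ small we have $\lambda_{\e,N-1}-\lambda_{0,N}\le-\gamma$, $|\lambda_{\e,N+i-1}-\lambda_{0,N}|\le\gamma$ for $i=1,\dots,M$, and $\lambda_{\e,N+M}-\lambda_{0,N}\ge\gamma$. Monotonicity of the sequence then gives (H1) for all indices.

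\textbf{Step 2: (H2), the key step.} Take $u\in Z_\e$ and $v=P_\e(\varphi)=L_\e(\varphi)-V_{\e,L_\e(\varphi)}\in F_\e$. By \eqref{eq_Ve},
\[
q_\e(u,L_\e\varphi)=\mathcal{E}^{(\e)}(V_{\e,L_\e\varphi},u),
\]
so
\[
q_\e(u,v)=\mathcal{E}^{(\e)}(V,u)-\mathcal{E}^{(\e)}(V,u)+\lambda_{0,N}(V,u)_{H_\e}=\lambda_{0,N}(V_{\e,L_\e\varphi},u)_{H_\e}.
\]
Cauchy--Schwarz and \eqref{def_delta_e} then give $|q_\e(u,v)|\le|\lambda_{0,N}|\,\delta_\e\norm{\varphi}_{H_0}\norm{u}_{H_\e}$.

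It remains to compare $\norm{\varphi}_{H_0}$ with $\norm{v}_{H_\e}$. We have
\[
\norm{v}_{H_\e}\ge\norm{L_\e\varphi}_{H_\e}-\delta_\e\norm{\varphi}_{H_0}\ge(1-o(1)-\delta_\e)\norm{\varphi}_{H_0}
\]
by \Cref{lemma:L_e} and \eqref{hp_delta_e}. Hence $\norm{\varphi}_{H_0}\le 2\norm{v}_{H_\e}$ for $\e$ small, and $\delta\le 2|\lambda_{0,N}|\delta_\e$. By \eqref{hp_delta_e}, this is below $\gamma/\sqrt2$ for small $\e$.

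\textbf{Technicality.} (H2) requires $\delta>0$. If $\delta=0$, then $q_\e(\cdot,v)\equiv0$ on $F_\e$, so $F_\e$ consists of eigenvectors of $q_\e$ with eigenvalue $0$. We would handle this directly: either apply the lemma with a slightly enlarged admissible $\delta$ (the bound \eqref{eq:CdV_th1} is monotone in $\delta$), or observe that then $M$ eigenvalues equal $\lambda_{0,N}$ and $\xi_{\e,i}=0$, so the estimate is trivial. Combining the steps gives the claim with $C_N=16\lambda_{0,N}^2/\gamma$.
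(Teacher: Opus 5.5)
Your proof is correct and follows the paper's argument step for step. It uses the same data in \Cref{Lem:Small_Eigenvalues}, the same $\gamma$, the identity $q_\e(u,P_\e\varphi)=\lambda_{0,N}(V_{\e,L_\e(\varphi)},u)_{H_\e}$, and the bound $\norm{\varphi}_{H_0}\le 2\norm{P_\e\varphi}_{H_\e}$ from \eqref{hp:Le_equiv} and \eqref{hp_delta_e}, arriving at $C_N=16\lambda_{0,N}^2/\gamma$. Of your two fixes for the degenerate case $\delta=0$ (which the paper does not discuss), only the second works with the lemma as stated, since there $\delta$ is a fixed supremum rather than an adjustable bound; it works because $\delta=0$ forces $q_\e(\cdot,v)\equiv0$ on all of $Z_\e$, not merely on $F_\e$.
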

\begin{proof}
We apply \Cref{Lem:Small_Eigenvalues}, for fixed $\e\in[0,1]$, with the following choices
\begin{equation}
        \mathcal{H}=\mathcal{Z}_\e, \quad \mathcal{D}=Z_\e,\quad F=F_\e,\quad q={q_\e}_{| Z_\e \times Z_\e}.
\end{equation}
By assumption on $\mathcal{Z}_\e$, $Z_\e$ and $\mathcal{E}^{(\e)}$ and in view of \Cref{prop_dim_Fe}, these choices satisfy the assumptions of \Cref{Lem:Small_Eigenvalues}.  Indeed, thanks to  \eqref{def_qe} and \eqref{eq:pos_def}, we have that
\begin{equation}
 q_\e (u) =q_\e(u,u)= \mc{E}^{(\e)} (u)-\lambda_{0,N} \norm{u}_{H_\e}^2 \ge (1-\lambda_{0,N}) \norm{u}_{H_\e}^2 \quad \text { for any } u \in  Z_\e.
\end{equation}
Moreover, we have
\begin{equation}
    \nu_n=\lambda_{\e,n}-\lambda_{0,N}\quad\textnormal{for all }n\in\N\setminus\{0\}.
\end{equation}
Furthermore, in view of \eqref{eq:ass_mult}, letting 
\begin{equation}\label{def_gamma}
\gamma:=\frac{1}{2}\min\{\la_{0,N}-\la_{0,N-1},\la_{0,M_{0,N}+N} -\la_{0,N}\}>0,
\end{equation}
thanks to \eqref{hp_stability} we have that there exists $\e_0 \in (0,1]$ such that 
\begin{equation}
\la_{\e,N-1} -\la_{0,N} \le -\frac{1}{2}(\la_{0,N}-\la_{0,N-1})\leq -\gamma \quad \text{ and }  \quad \la_{\e, M_{0,N}+N} -\la_{0,N} \ge \frac{1}{2}(\la_{0,M_{0,N}+N} -\la_{0,N})\geq \gamma
\end{equation}
and 
\begin{equation}
|\la_{\e,n} -\la_{0,N}| \le \frac{1}{2}\min\{\la_{0,N}-\la_{0,N-1},\la_{0,M_{0,N}+N} -\la_{0,N}\}=\gamma
\end{equation}
for any $\e \in (0,\e_0]$, so that (H1) holds.
Furthermore, letting $L_\e(\varphi)- V_{\e, L_\e(\varphi)} \in F_\e$ where $\varphi \in E(\lambda_{0,N})$ and $ w \in Z_\e$, by \eqref{def_qe}  and \eqref{eq_Ve} we have that
\begin{align}
q_\e (L_\e(\varphi)- V_{\e,L_\e(\varphi)}, w)&= q_\e(L_\e(\varphi),w)-\mc{E}^{(\e)}(V_{\e,L_\e(\varphi)}, w)+\la_{0,N}\ps{H_\e}{V_{\e,L_\e(\varphi)}}{w} \\
&=\la_{0,N}\ps{H_\e}{V_{\e,L_\e(\varphi)}}{w}.
\end{align}
Now, thanks to the Cauchy-Schwarz inequality, we derive that
\begin{equation}\label{eq:q_H3}
|q_\e (L_\e(\varphi)- V_{\e,L_\e(\varphi)}, w)| \le  \lambda_{0,N}\norm{V_{\e,L_\e(\varphi)}}_{H_\e}\norm{w}_{H_\e} \leq \lambda_{0,N}\delta_\e \norm{\varphi}_{H_0}\norm{w}_{H_\e},
\end{equation}
where $\delta_\e$ is as in \eqref{def_delta_e}.
At this point, we observe that 
\begin{equation}
    \frac{\norm{\varphi}_{H_0}}{\norm{P_\e\varphi}_{H_\e}}\leq \frac{\norm{\varphi}_{H_0}}{\norm{L_\e(\varphi)}_{H_\e}-\norm{V_{\e,L_\e(\varphi)}}_{H_\e}}\leq \frac{\norm{\varphi}_{H_0}}{\norm{L_\e(\varphi)}_{H_\e}-\delta_\e\norm{\varphi}_{H_0}}= \frac{1}{\frac{\norm{L_\e(\varphi)}_{H_\e}}{\norm{\varphi}_{H_0}}-\delta_\e}\leq 2
\end{equation}
for $\varphi\in E(\lambda_{0,N})$ and $\e$ sufficiently small (independent of $\varphi$), where, in the last step, we used \eqref{hp:Le_equiv} and \eqref{hp_delta_e}. Combining this estimate with \eqref{eq:q_H3} we deduce that
\begin{equation}
    |q_\e (P_\e(\varphi), w)|\leq 2\lambda_{0,N}\delta_\e \norm{P_\e(\varphi)}_{H_\e}\norm{w}_{H_\e}
\end{equation}
for all $\e$ sufficiently small and all $\varphi\in E(\lambda_{0,N})$ and $w\in Z_\e$. By the surjectivity of $P_\e$, we deduce that 
\begin{equation}
    \delta=\sup\{|q(u,v)|\colon u\in \mathcal{D},~v\in
			F,~\norm{u}_{\mathcal{H}}=\norm{v}_{\mathcal{H}}=1\}\leq 2\lambda_{0,N}\delta_\e 
\end{equation}
and so, in view of \eqref{hp_delta_e}, we have that, by taking $\e_0$ sufficiently small, $2\lambda_{0,N}\delta_\e \le \frac{\gamma}{\sqrt{2}}$ for any $\e \in (0,\e_0]$, so that (H2) in \Cref{Lem:Small_Eigenvalues} holds true. Hence, we are in a position to apply \Cref{Lem:Small_Eigenvalues} and conclude that \eqref{ineq_esetimate_eigen} holds.
\end{proof}

We now want to relate the eigenvalues $\{\xi_{\e,n}\}_{n=1, \dots, M_{0,N}}$ of ${q_\e}_{|F_\e \times F_\e}$ with the eigenvalues $\{\gamma_{\e,n}\}_{n=1,\dots,M_{0,N}}$ of $h_\e$ (defined in \eqref{def_h_e}). In order to do this, we pass through the eigenvalues of the symmetric bilinear form
\begin{equation}\label{eq:def_r_e}
    r_\e:=q_\e\circ P_\e:E(\lambda_{0,N}) \times E(\lambda_{0,N}) \to \R
\end{equation}
which, in view of \Cref{prop_dim_Fe}, is known to possess $M_{0,N}$ real eigenvalues $\{\mu_{\e,n}\}_{n=1,\dots,M_{0,N}}$ counted with multiplicity.
The advantage is that the eigenvalues $\{\mu_{\e,n}\}_{n=1,\dots,M_{0,N}}$ (and then $\{\gamma_{\e,n}\}_{n=1,\dots,M_{0,N}}$) are somehow easier to compute (asymptotically), since the bilinear form $r_\e$ can be made explicit in terms of a limit eigenbasis, as we show in the next lemma.

\begin{lemma}\label{lemma:q_circ_P}
Let $r_\e$ be as in \eqref{eq:def_r_e}. For any $\varphi,\psi \in E(\lambda_{0,N})$ we have
\begin{equation}\label{eq_qe_Pe}
 r_\e(\varphi, \psi)=\lambda_{0,N} (V_{\e,L_\e(\varphi)},L_\e(\psi)-V_{\e,L_\e(\psi)})_{H_\e}=\lambda_{0,N} (V_{\e,L_\e(\psi)},L_\e(\varphi)-V_{\e,L_\e(\varphi)})_{H_\e}.
\end{equation}
or
\begin{equation}\label{eq:r_e_2}
    \begin{aligned}
        r_\e(\varphi,\psi)=&\overline{\mathcal{E}}^{(\e)}(L_\e(\varphi),V_{\e,L_\e(\psi)})+\overline{\mathcal{E}}^{(\e)}(L_\e(\psi),V_{\e,L_\e(\varphi)}) \\
        &-\overline{\mathcal{E}}^{(\e)}(V_{\e,L_\e(\varphi)},V_{\e,L_\e(\psi)})-\lambda_{0,N}(V_{\e,L_\e(\varphi)},V_{\e,L_\e(\psi)})_{H_\e}-D_\e(\varphi,\psi),
        \end{aligned}
\end{equation}
where
\begin{equation}
    D_\e(\varphi,\psi):=\overline{\mathcal{E}}^{(\e)}(L_\e(\varphi),L_\e(\psi))-\lambda_{0,N}(L_\e(\varphi),L_\e(\psi))_{H_\e}.
\end{equation}
In particular, if $\norm{\varphi}_{H_0}=\norm{\psi}_{H_0}=1$ then
\begin{equation}\label{eq_r_e_delta}
    r_\e(\varphi, \psi)=h_\e(\varphi,\psi)+O(\delta_\e^2)
\end{equation}
and
\begin{equation}\label{eq_tau_delta}
r_\e(\varphi, \psi)=O(\tau_\e^2)+O(\delta_\e^2),
\end{equation}
as $\e\to 0^+$, where $\delta_\e$ is as in \eqref{def_delta_e}, $\tau_\e$ as in \eqref{def_tau_e} and $h_\e$ as in \eqref{def_h_e}.
\end{lemma}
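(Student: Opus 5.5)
The plan is to expand $r_\e(\varphi,\psi)=q_\e(P_\e\varphi,P_\e\psi)$ bilinearly and use the defining equation \eqref{eq_Ve} of $V_{\e,v}$ to cancel terms. Write $L_\varphi:=L_\e(\varphi)$ and $V_\varphi:=V_{\e,L_\e(\varphi)}$, and similarly for $\psi$. The key observation is already present in the proof of \Cref{prop_estimate_eigen}. Testing \eqref{eq_Ve} with $u\in Z_\e$ gives
\begin{equation}
q_\e(L_\varphi-V_\varphi,w)=\lambda_{0,N}(V_\varphi,w)_{H_\e}\quad\text{for all }w\in Z_\e.
\end{equation}
Since $P_\e\psi=L_\psi-V_\psi\in Z_\e$ by \Cref{prop_dim_Fe}, we may take $w=P_\e\psi$. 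This yields immediately
\begin{equation}
r_\e(\varphi,\psi)=\lambda_{0,N}(V_\varphi,L_\psi-V_\psi)_{H_\e}.
\end{equation}
Swapping the roles of $\varphi$ and $\psi$, which is legitimate by the symmetry of $q_\e$, gives the second expression in \eqref{eq_qe_Pe}.

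For \eqref{eq:r_e_2} I would expand $q_\e(L_\varphi-V_\varphi,L_\psi-V_\psi)$ directly into four terms. Inserting the definition of $q_\e$ and regrouping gives $D_\e(\varphi,\psi)$ together with mixed terms in $\mathcal{E}^{(\e)}$ and $(\cdot,\cdot)_{H_\e}$. I would then rewrite $\lambda_{0,N}(L_\varphi,V_\psi)_{H_\e}$ and its symmetric counterpart using \eqref{eq_Ve}. The sign bookkeeping has to be matched against the stated formula; the overall sign of $D_\e$ suggests the authors combined the direct expansion with \eqref{eq_qe_Pe}, so the formula should follow from comparing the two. This step is routine algebra. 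The only care needed is that $\mathcal{E}^{(\e)}$ (not $\overline{\mathcal{E}}^{(\e)}$) is meant on $\mathcal{F}_\e$, since $L_\varphi$ need not lie in $Z_\e$.

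For the asymptotics \eqref{eq_r_e_delta}, take $\norm{\varphi}_{H_0}=\norm{\psi}_{H_0}=1$. By \eqref{eq_qe_Pe},
\begin{equation}
r_\e(\varphi,\psi)-h_\e(\varphi,\psi)=-\lambda_{0,N}(V_\varphi,V_\psi)_{H_\e}.
\end{equation}
Cauchy--Schwarz and the definition \eqref{def_delta_e} bound its modulus by $\lambda_{0,N}\delta_\e^2$.

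For \eqref{eq_tau_delta} it then suffices to show $h_\e(\varphi,\psi)=O(\tau_\e^2)$ uniformly on unit vectors. The symmetry of $h_\e$ (\Cref{rmk:symmetry}) allows polarization:
\begin{equation}
h_\e(\varphi,\psi)=\tfrac14\bigl[h_\e(\varphi+\psi)-h_\e(\varphi-\psi)\bigr].
\end{equation}
Each term on the right is at most $4\tau_\e^2$ in modulus by \eqref{def_tau_e} and homogeneity, since $\norm{\varphi\pm\psi}_{H_0}\le2$.

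The main obstacle is only the sign/term bookkeeping in \eqref{eq:r_e_2}. Everything else is a direct consequence of \eqref{eq_Ve} with the right test function $P_\e\psi\in Z_\e$, plus Cauchy--Schwarz.
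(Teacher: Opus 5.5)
Your proposal is correct and follows essentially the paper's route. Your one-line derivation of \eqref{eq_qe_Pe}, via $q_\e(P_\e\varphi,w)=\lambda_{0,N}(V_{\e,L_\e(\varphi)},w)_{H_\e}$ with $w=P_\e\psi$, is a compressed form of the paper's four-term expansion combined with \eqref{eq:EL_E_e}; the Cauchy--Schwarz and polarization steps coincide with the paper's, and you rightly make explicit the symmetry of $h_\e$ that polarization needs.

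The sign bookkeeping you left open for \eqref{eq:r_e_2} does close with the stated signs. Testing \eqref{eq_Ve} for $V_{\e,L_\e(\psi)}$ with $u=P_\e\varphi\in Z_\e$ gives $\lambda_{0,N}(V_{\e,L_\e(\varphi)},L_\e(\psi))_{H_\e}=\mathcal{E}^{(\e)}(L_\e(\varphi),V_{\e,L_\e(\psi)})+\mathcal{E}^{(\e)}(L_\e(\psi),V_{\e,L_\e(\varphi)})-\mathcal{E}^{(\e)}(V_{\e,L_\e(\varphi)},V_{\e,L_\e(\psi)})-D_\e(\varphi,\psi)$. Substituting this into the first expression of \eqref{eq_qe_Pe} yields \eqref{eq:r_e_2} exactly.
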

\begin{proof}
For any $\varphi,\psi \in E(\lambda_{0,N})$  we have
\begin{align}
q_\e (L_\e(\varphi) - V_{\e,L_\e(\varphi)}, L_\e(\psi) - V_{\e,L_\e(\psi)}) \notag =&q_\e(L_\e(\varphi),L_\e(\psi))+q_\e(V_{\e,L_\e(\varphi)}, V_{\e,L_\e(\psi)}) \notag\\
&-q_\e(L_\e(\varphi), V_{\e,L_\e(\psi)})-q_\e(L_\e(\psi),V_{\e,L_\e(\varphi)}).\label{Eq:App1_q}
\end{align}
Observing that, by \eqref{eq_Ve},
\begin{equation}\label{eq:EL_E_e}
    \mc{E}^{(\e)}(V_{\e,L_\e(\varphi)}, V_{\e,L_\e(\psi)}-L_\e(\psi))=q_\e(L_\e(\varphi), V_{\e,L_\e(\psi)}-L_\e(\psi)),
\end{equation}
it follows that
\begin{align}
    \mc{E}^{(\e)}(V_{\e,L_\e(\varphi)}, L_\e(\psi))&=\mc{E}^{(\e)}(V_{\e,L_\e(\varphi)}, V_{\e,L_\e(\psi)}) - q_\e(L_\e(\varphi), V_{\e,L_\e(\psi)}-L_\e(\psi))\\
    &=\mc{E}^{(\e)}(V_{\e,L_\e(\varphi)}, V_{\e,L_\e(\psi)}) - q_\e(L_\e(\varphi), V_{\e,L_\e(\psi)}) + q_\e(L_\e(\varphi), L_\e(\psi))\\
    &=q_\e(V_{\e,L_\e(\varphi)}, V_{\e,L_\e(\psi)}) - q_\e(L_\e(\varphi), V_{\e,L_\e(\psi)}) + q_\e(L_\e(\varphi), L_\e(\psi)) \\
    &\quad+ \lambda_{0,N} (V_{\e,L_\e(\varphi)}, V_{\e,L_\e(\psi)})_{H_\e},
\end{align}
thanks to  \eqref{def_qe}. Hence, by \eqref{def_qe} and \eqref{Eq:App1_q}, we get
\begin{align}
     q_\e (L_\e(\varphi) - V_{\e,L_\e(\varphi)}, L_\e(\psi) - V_{\e,L_\e(\psi)})&=\mc{E}^{(\e)}(L_\e(\psi),V_{\e,L_\e(\varphi)})-\lambda_{0,N}  (V_{\e,L_\e(\varphi)}, V_{\e,L_\e(\psi)})_{H_\e} \\
     &\quad - q_\e(L_\e(\psi), V_{\e,L_\e(\varphi)})\\
     &=\lambda_{0,N} (L_\e(\psi), V_{\e,L_\e(\varphi)})_{H_\e}-\lambda_{0,N}  (V_{\e,L_\e(\varphi)}, V_{\e,L_\e(\psi)})_{H_\e}\\
     &=\lambda_{0,N} (V_{\e,L_\e(\varphi)},L_\e(\psi)-V_{\e,L_\e(\psi)})_{H_\e}.
\end{align}
Hence, we have proved the first equality in \eqref{eq_qe_Pe}. Exchanging the roles of $\varphi$ and $\psi$, we obtain the second one. In order to get \eqref{eq:r_e_2} we just rearrange the terms exploiting \eqref{eq:EL_E_e} and the definition of $q_\e$.
Now, for any $\varphi,\psi \in E(\la_{0,N})$ with $\norm{\varphi}_{H_0}=\norm{\psi}_{H_0}=1$, by \eqref{eq_qe_Pe} and Cauchy-Schwarz inequality we get
\begin{equation}\label{eq:r_e}
  |r_\e(\varphi,\psi)-\lambda_{0,N}(V_{\e,L_\e(\varphi)},L_\e(\psi))_{H_\e}|\leq \lambda_{0,N}\norm{V_{\e,L_\e(\psi)}}_{H_\e}\norm{V_{\e,L_\e(\varphi)}}_{H_\e}=O(\delta_\e^2), \quad  \text{ as } \e \to 0^+,
\end{equation}
thus proving \eqref{eq_r_e_delta}. Finally, we observe that, by simple algebra and by definition of $\tau_\e^2$, we have
\begin{align}
   \la_{0,N}|(V_{\e,L_\e(\varphi)},L_\e(\psi))_{H_\e}|&=\frac{\la_{0,N}}{4}\left| (V_{\e,L_\e(\varphi+\psi)},L_\e(\varphi+\psi))_{H_\e} -(V_{\e,L_\e(\varphi-\psi)},L_\e(\varphi-\psi))_{H_\e}\right| \\
    &\leq \frac{\la_{0,N}}{4}\left( |(V_{\e,L_\e(\varphi+\psi)},L_\e(\varphi+\psi))_{H_\e}| 
    +|(V_{\e,L_\e(\varphi-\psi)},L_\e(\varphi-\psi))_{H_\e}|\right) \\
    &\leq \frac{\tau_\e^2}{4}\left( \norm{\varphi+\psi}_{H_0}^2+\norm{\varphi-\psi}_{H_0}^2\right)\leq 2\tau_\e^2.
\end{align}
Combining this fact with \eqref{eq:r_e} we get \eqref{eq_tau_delta}, thus concluding the proof.
\end{proof}

\begin{remark}\label{rmk:symmetry}
    We observe that, from \eqref{eq_qe_Pe}, we can deduce that the bilinear form
    \begin{equation}
        (\varphi,\psi)\mapsto (V_{\e,L_\e(\psi)},L_\e(\varphi))_{H_\e}
    \end{equation}
    is symmetric, since 
    \begin{equation}
        (\varphi,\psi)\mapsto r_\e(\varphi,\psi)\quad\text{and}\quad(\varphi,\psi)\mapsto (V_{\e,L_\e(\psi)},V_{\e,L_\e(\varphi)})_{H_\e}
    \end{equation}
    are so.
\end{remark}

We are now able to asymptotically relate the eigenvalues $\{\xi_{\e,n}\}_{n=1,\dots,M_{0,N}}$ and $\{\gamma_{\e,n}\}_{n=1,\dots,M_{0,N}}$. Even if the proof is rather standard (see e.g. \cite[Appendix C]{ACM_ramification}), since it is an important point in our argument, we provide the details.

\begin{lemma}\label{lemma:xi_mu}
For any $n=1,\dots,M_{0,N}$
\begin{equation}\label{eq_xi_mu}
\xi_{\e,n}=\gamma_{\e,n}+O(\delta_\e^2)+o(\tau_\e^2) \quad\text{as }\e\to 0^+,
 \end{equation}
 where $\delta_\e$ is as in \eqref{def_delta_e} and  $\tau_\e$ as in \eqref{def_tau_e}.
\end{lemma}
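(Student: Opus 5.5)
The plan is to view both families of eigenvalues as eigenvalues of quadratic forms on the single fixed finite-dimensional space $E(\lambda_{0,N})$, and then compare them through the min-max principle.

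\textbf{Step 1: pull $\xi_{\e,n}$ back to $E(\lambda_{0,N})$.} By \Cref{prop_dim_Fe}, $P_\e\colon E(\lambda_{0,N})\to F_\e$ is a linear bijection. Hence the $\xi_{\e,n}$, which are the eigenvalues of ${q_\e}_{|F_\e\times F_\e}$ with respect to $(\cdot,\cdot)_{H_\e}$, coincide with the eigenvalues of $r_\e=q_\e\circ P_\e$ with respect to the pulled-back scalar product
\begin{equation}
g_\e(\varphi,\psi):=(P_\e\varphi,P_\e\psi)_{H_\e}.
\end{equation}
So, for $n=1,\dots,M_{0,N}$,
\begin{equation}
\xi_{\e,n}=\min_{\substack{U\subseteq E(\lambda_{0,N})\\ \dim U=n}}\ \max_{\varphi\in U\setminus\{0\}}\frac{r_\e(\varphi)}{g_\e(\varphi)},
\qquad
\gamma_{\e,n}=\min_{\substack{U\subseteq E(\lambda_{0,N})\\ \dim U=n}}\ \max_{\varphi\in U\setminus\{0\}}\frac{h_\e(\varphi)}{\norm{\varphi}_{H_0}^2}.
\end{equation}

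\textbf{Step 2: $g_\e$ is uniformly close to $(\cdot,\cdot)_{H_0}$.} Expand
\begin{equation}
g_\e(\varphi,\psi)=(L_\e\varphi,L_\e\psi)_{H_\e}-(L_\e\varphi,V_{\e,L_\e\psi})_{H_\e}-(V_{\e,L_\e\varphi},L_\e\psi)_{H_\e}+(V_{\e,L_\e\varphi},V_{\e,L_\e\psi})_{H_\e}.
\end{equation}
The first term tends to $(\varphi,\psi)_{H_0}$ by \eqref{hp_Le}, and this convergence is uniform on the unit sphere: it is convergence of the finitely many matrix entries in the basis $\{\varphi_{0,N,i}\}$. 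The cross terms are $O(\delta_\e)$ by Cauchy-Schwarz together with \Cref{lemma:L_e}. The last term is $O(\delta_\e^2)$. By \eqref{hp_delta_e}, this gives
\begin{equation}
\sup_{\norm{\varphi}_{H_0}=1}\bigl|g_\e(\varphi)-1\bigr|=:\eta_\e\to 0.
\end{equation}

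\textbf{Step 3: compare the Rayleigh quotients uniformly.} For $\norm{\varphi}_{H_0}=1$ we write
\begin{equation}
\frac{r_\e(\varphi)}{g_\e(\varphi)}-h_\e(\varphi)=r_\e(\varphi)\,\frac{1-g_\e(\varphi)}{g_\e(\varphi)}+\bigl(r_\e(\varphi)-h_\e(\varphi)\bigr).
\end{equation}
By \eqref{eq_tau_delta}, the first summand is $O(\tau_\e^2+\delta_\e^2)\,O(\eta_\e)=o(\tau_\e^2)+O(\delta_\e^2)$. By \eqref{eq_r_e_delta}, the second summand is $O(\delta_\e^2)$. Both bounds are uniform in $\varphi$, so the two Rayleigh quotients differ by $o(\tau_\e^2)+O(\delta_\e^2)$ uniformly on $E(\lambda_{0,N})\setminus\{0\}$. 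Inserting this into the two min-max formulas of Step 1 yields \eqref{eq_xi_mu}.

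\textbf{Main obstacle.} The delicate point is obtaining the sharp remainder $o(\tau_\e^2)$ rather than merely $O(\tau_\e^2)$. A plain relative perturbation bound would lose this. It is recovered only by pairing the a priori smallness $r_\e=O(\tau_\e^2+\delta_\e^2)$ from \eqref{eq_tau_delta} with the vanishing metric distortion $\eta_\e\to0$ from Step 2. For this to work, the uniformity in Step 2 must be checked carefully, and this relies on the finite dimensionality of $E(\lambda_{0,N})$.
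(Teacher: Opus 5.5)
Your proof is correct and is essentially the paper's argument in coordinate-free form. The paper represents $r_\e$ and the pulled-back Gram form as matrices $A_\e$ and $G_\e=\mathrm{Id}+o(1)$ in the basis $\{\varphi_{0,N,i}\}$, and shows $G_\e^{-1}A_\e=A_\e+o(\tau_\e^2)+o(\delta_\e^2)$ using exactly your three ingredients: $g_\e\to(\cdot,\cdot)_{H_0}$ uniformly, $r_\e=O(\tau_\e^2+\delta_\e^2)$, and $r_\e=h_\e+O(\delta_\e^2)$. Your Courant--Fischer comparison is slightly cleaner, because it gives the ordered estimate for each $n$ directly, without the extra step of passing from entrywise closeness of the non-symmetric matrix $G_\e^{-1}A_\e$ to closeness of its ordered eigenvalues.
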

\begin{proof}
Let $A_\e$ be the matrix associated with ${q_\e}_{|F_\e \times F_\e}$ with respect to the basis $\{f_i\}_{i=1, \dots M_{0,N}}$, i.e.
\begin{equation}
    (A_\e)_{ij}:=q_\e(f_i,f_j),
\end{equation}
where
\begin{equation}
    f_i:=P_\e(\varphi_{0,N,i})=L_\e(\varphi_{0,N,i})-V_{\e,L_\e(\varphi_{0,N,i})}
\end{equation}
and $\{\varphi_{0,N,i}\}_{i=1,\dots,M_{0,N}}$ is an $H_0$-orthonormal basis of $E(\lambda_{0,N})$. In other words, $A_\e$ is the matrix associated with the bilinear form $r_\e$ with respect to the basis $\{\varphi_{0,N,i}\}_{i=1,\dots,M_{0,N}}$. Moreover, let $G_\e$ be the Gram matrix associated with $\{f_i\}_{i=1, \dots M_{0,N}}$, i.e.
\begin{equation}
    (G_\e)_{ij}:=(f_i,f_j)_{H_\e}.
\end{equation}
Now let $v\in F_\e$ be an eigenfunction of ${q_\e}_{|F_\e\times F_\e}$ corresponding to some eigenvalue $\xi_\e\in\R$ and let $w\in F_\e$ be arbitrary. Let us write them in the basis $\{f_i\}_{i=1, \dots M_{0,N}}$, that is
\begin{equation}
    v=\sum_{i=1}^{M_{0,N}}a_i f_i\quad\textnormal{and}\quad w=\sum_{i=1}^{M_{0,N}}b_i f_i.
\end{equation}
The equation
\begin{equation}
    q_{\e}(v,w)=\xi_\e(v,w)_{H_\e}
\end{equation}
then becomes
\begin{equation}
    \sum_{i,j=1}^{M_{0,N}}a_i b_j q_\e(f_i,f_j)=\xi_\e \sum_{i,j=1}^{M_{0,N}}a_i b_j(f_i,f_j)_{H_\e}
\end{equation}
that is
\begin{equation}\label{eq:eigen_alpha_beta}
    \bm{b}^T A_\e \bm{a}=\xi_\e \bm{b}^T G_\e\bm{a},
\end{equation}
where $\bm{a}=(a_1,\dots,a_{M_{0,N}})$ and $\bm{b}=(b_1,\dots,b_{M_{0,N}})$. Since $\bm{b}$ is arbitrary, \eqref{eq:eigen_alpha_beta} holds if and only if
\begin{equation}
    G_\e^{-1}A_\e \bm{a}=\xi_\e \bm{a}.
\end{equation}
Hence, we proved that $\{\xi_{\e,n}\}_{n=1,\dots,M_{0,N}}$ coincide with the family of eigenvalues of $B_\e:=G_\e^{-1}A_\e$. Analogously, one can prove that $A_\e$ has eigenvalues $\{\mu_{\e,n}\}_{n=1, \dots M_{0,N}}$. Furthermore, in view of \eqref{eq_tau_delta}, we have that
\begin{equation}\label{eq:A_asy}
(A_\e)_{ij}=O(\tau_\e^2)+O(\delta_\e^2)\quad \text{ as } \e \to 0^+
\end{equation}
while, in view of \eqref{hp_Le}, we get that
\begin{equation}
    (G_\e)_{ij}=\delta_{ij}+o(1)\quad\textnormal{as }\e\to 0^+,
\end{equation}
so that, by basic linear algebra, there holds
\begin{equation}\label{eq:C-1_asy}
    (G_\e^{-1})_{ij}=\delta_{ij}+o(1)\quad\textnormal{as }\e\to 0^+.
\end{equation}
Then, by \eqref{eq:A_asy} and \eqref{eq:C-1_asy} we get
\begin{equation}
    (B_\e)_{ij}=(G_\e^{-1})_{ii}(A_\e)_{ij}+\sum_{k\neq i}(G_\e^{-1})_{ik}(A_\e)_{kj}=(A_\e)_{ij}+o(\tau_\e^2)+o(\delta_\e^2),\quad\textnormal{as }\e\to 0^+.
\end{equation}
Then, in view of \eqref{eq_r_e_delta}, the thesis follows.
\end{proof}

\begin{proof}[Proof of \Cref{thm:main} and \Cref{cor:simple}]
 Combining \Cref{prop_estimate_eigen} and \Cref{lemma:xi_mu}, we complete the proof.   
\end{proof}

To conclude the section, we check that the asymptotic expansion of \Cref{thm:main} and of \Cref{cor:simple} does not depend on the shift of the bilinear form $\mathcal{E}^{(\e)}$, even if the bilinear form $h_{\e,N}$ as in \eqref{def_h_e} does.

\begin{lemma}
    For any $\kappa\in\R$, let
    \begin{equation}\label{def_Eek}
        \mathcal{E}^{(\e)}_\kappa(u,v):=\mathcal{E}^{(\e)}(u,v)+\kappa(u,v)_{H_\e}\quad\textnormal{and}\quad \overline{\mathcal{E}}^{(\e)}_\kappa:=\mathcal{E}^{(\e)}_{\kappa\,|Z_\e}.
    \end{equation}
    For $\e\in[0,1]$ and $v\in\mathcal{F}_\e$, we let $V_{\e,v}^\kappa\in Z_\e+v$ be the unique minimizer of
    \begin{equation}
        \inf\left\{\frac{1}{2}\mathcal{E}^{(\e)}_\kappa(u)-q_\e(v,u)\colon u\in Z_\e+v\right\}.
    \end{equation}
    Let $C_\e\in\R$ be the lower bound of $\mathcal{E}^{(\e)}$ as in \Cref{subsec:ass_functional}. Then, for any $-C_\e<\kappa_1\leq\kappa_2$, we have that
    \begin{equation}\label{shiftsplit}
        (\lambda_{0,N}+\kappa_1)(V_{\e,v}^{\kappa_1},v)_{H_\e}-(\lambda_{0,N}+\kappa_2)(V_{\e,v}^{\kappa_2},v)_{H_\e}=(\kappa_1-\kappa_2)(V_{\e,v}^{\kappa_1},V_{\e,v}^{\kappa_2})_{H_\e}
    \end{equation}
    and there holds
    \begin{equation}\label{shifteng}
        \norm{V_{\e,v}^{\kappa_2}}_{H_\e}\leq \norm{V_{\e,v}^{\kappa_1}}_{H_\e}.
    \end{equation}
    In particular,
    \begin{equation}\label{shiftegn}
        \max_{\substack{\varphi\in E(\lambda_{0,N}) \\ \norm{\varphi}_{H_0}=1}}\left| (\lambda_{0,N}+\kappa_1)(V_{\e,L_\e(\varphi)}^{\kappa_1},L_\e(\varphi))_{H_\e}-(\lambda_{0,N}+\kappa_2)(V_{\e,L_\e(\varphi)}^{\kappa_2},L_\e(\varphi))_{H_\e}\right|\leq |\kappa_2-\kappa_1|(\delta_{\e}^{\kappa_1})^2,
    \end{equation}
    where $\delta_\e^\kappa$ is defined as in \eqref{def_delta_e} by replacing $V_{\e,n,L_\e(\varphi)}$ with $V^\kappa_{\e,N,L_\e(\varphi)}$.
\end{lemma}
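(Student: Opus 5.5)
The plan is to work entirely with the Euler--Lagrange characterization of $V^\kappa_{\e,v}$ and test the equations against well-chosen elements of $Z_\e$. By the argument of \Cref{prop_J_min}, run with $\mathcal{E}^{(\e)}_\kappa$ in place of $\mathcal{E}^{(\e)}$ (legitimate for $\kappa>-C_\e$, since then $\overline{\mathcal{E}}^{(\e)}_\kappa$ is coercive on $Z_\e$ and Lax--Milgram applies), $V_i:=V^{\kappa_i}_{\e,v}$ is characterized by
\begin{equation}
V_i-v\in Z_\e,\qquad \mathcal{E}^{(\e)}(V_i,u)+\kappa_i(V_i,u)_{H_\e}=q_\e(u,v)=\mathcal{E}^{(\e)}(v,u)-\lambda_{0,N}(v,u)_{H_\e}\quad\text{for all }u\in Z_\e .
\end{equation}
I am reading $q_\e$ as the unshifted form, with the $\kappa$-shift entering only through $\tfrac12\mathcal{E}^{(\e)}_\kappa(u)$. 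This convention is what makes \eqref{shiftsplit} come out exactly with the factors $\lambda_{0,N}+\kappa_i$.

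For \eqref{shiftsplit}, I will test the equation for $V_1$ with $u=V_2-v\in Z_\e$, test the equation for $V_2$ with $u=V_1-v\in Z_\e$, and subtract. The terms $\mathcal{E}^{(\e)}(V_1,V_2)$ cancel by symmetry. The terms $\mathcal{E}^{(\e)}(V_i,v)$ appear on both sides and also cancel. What remains is
\begin{equation}
(\kappa_1-\kappa_2)(V_1,V_2)_{H_\e}-\kappa_1(V_1,v)_{H_\e}+\kappa_2(V_2,v)_{H_\e}=\lambda_{0,N}(V_1,v)_{H_\e}-\lambda_{0,N}(V_2,v)_{H_\e},
\end{equation}
which is \eqref{shiftsplit} after rearranging. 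This step is pure bookkeeping.

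For \eqref{shifteng}, set $W:=V_1-V_2\in Z_\e$. Subtracting the two equations gives two equivalent identities for all $u\in Z_\e$:
\begin{equation}
\mathcal{E}^{(\e)}_{\kappa_2}(W,u)+(\kappa_1-\kappa_2)(V_1,u)_{H_\e}=0
\quad\text{and}\quad
\mathcal{E}^{(\e)}_{\kappa_1}(W,u)+(\kappa_1-\kappa_2)(V_2,u)_{H_\e}=0 .
\end{equation}
Take $u=W$ in both. Since $\kappa_i>-C_\e$, we have $\mathcal{E}^{(\e)}_{\kappa_i}(W)\ge(C_\e+\kappa_i)\norm{W}_{H_\e}^2\ge0$, and $\kappa_2-\kappa_1\ge0$. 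The first identity then gives $(V_1,W)_{H_\e}\ge0$ and the second gives $(V_2,W)_{H_\e}\ge0$ (the case $\kappa_1=\kappa_2$ is trivial). These two inequalities read
\begin{equation}
\norm{V_2}_{H_\e}^2\le(V_1,V_2)_{H_\e}\le\norm{V_1}_{H_\e}^2,
\end{equation}
which yields \eqref{shifteng}. This is the only step requiring an idea: one must notice that the difference equation can be written with either shift, and that each version yields one half of the chain.

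Finally, for \eqref{shiftegn}, apply \eqref{shiftsplit} with $v=L_\e(\varphi)$, $\varphi\in E(\lambda_{0,N})$ and $\norm{\varphi}_{H_0}=1$. Cauchy--Schwarz and \eqref{shifteng} then bound the left-hand side by
\begin{equation}
|\kappa_2-\kappa_1|\,\norm{V^{\kappa_1}_{\e,L_\e(\varphi)}}_{H_\e}\norm{V^{\kappa_2}_{\e,L_\e(\varphi)}}_{H_\e}\le|\kappa_2-\kappa_1|\,\norm{V^{\kappa_1}_{\e,L_\e(\varphi)}}_{H_\e}^2\le|\kappa_2-\kappa_1|(\delta^{\kappa_1}_\e)^2 .
\end{equation}
The maximum in \eqref{shiftegn} is attained because the unit sphere of $E(\lambda_{0,N})$ is compact and finite-dimensional, and the map $\varphi\mapsto V^\kappa_{\e,L_\e(\varphi)}$ is linear.

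The main obstacle is fixing the convention for $q_\e$ in the shifted problem. If $q_\e$ were also shifted, the constants in \eqref{shiftsplit} would change, so I would confirm at the outset that the unshifted $q_\e$ reproduces the factors $\lambda_{0,N}+\kappa_i$.
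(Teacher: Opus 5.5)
Your proof is correct. The identity \eqref{shiftsplit} and the bound \eqref{shiftegn} are obtained exactly as in the paper: the same test functions $V^{\kappa_2}_{\e,v}-v$ and $V^{\kappa_1}_{\e,v}-v$ in the two Euler--Lagrange equations, then Cauchy--Schwarz combined with the norm monotonicity.

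The genuine difference is in \eqref{shifteng}. Write $V_i:=V^{\kappa_i}_{\e,v}$ and $W:=V_1-V_2$.

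\emph{The paper's route.} It does not use the Euler--Lagrange equations at this step. It compares each minimizer with the other as a competitor for the functional $u\mapsto\frac12\mathcal{E}^{(\e)}_{\kappa_i}(u)-q_\e(v,u)$ and sums the two minimality inequalities. The $q_\e$ terms cancel, leaving $(\kappa_2-\kappa_1)\norm{V_2}_{H_\e}^2\le(\kappa_2-\kappa_1)\norm{V_1}_{H_\e}^2$. This uses only the minimality that is already part of the hypotheses.

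\emph{Your route.} You test the difference equation, written in its two shifted forms, with $W$, and use $\mathcal{E}^{(\e)}_{\kappa_i}(W)\ge0$. This requires an explicit appeal to the lower bound $C_\e$ on $Z_\e$. In return it gives the finer chain $\norm{V_2}_{H_\e}^2\le(V_1,V_2)_{H_\e}\le\norm{V_1}_{H_\e}^2$, which in particular yields $(V_1,V_2)_{H_\e}\ge0$.

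\emph{How they relate.} Both arguments encode the same quadratic identity. Summing your two tested identities gives
$(\kappa_2-\kappa_1)\big(\norm{V_1}_{H_\e}^2-\norm{V_2}_{H_\e}^2\big)=\mathcal{E}^{(\e)}_{\kappa_1}(W)+\mathcal{E}^{(\e)}_{\kappa_2}(W)$.
Since each minimality gap equals $\frac12\mathcal{E}^{(\e)}_{\kappa_i}(W)$, the right-hand side is exactly twice the sum of the two gaps that the paper simply discards.
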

\begin{proof}
First we notice that $V_{\e,v}^{\kappa_1}$ and $V_{\e,v}^{\kappa_2}$ satisfy the following equations
\begin{align}
&\mathcal{E}^{(\e)}(V_{\e,v}^{\kappa_1},u)+\kappa_1(V_{\e,v}^{\kappa_1},u)_{H_\e}=q_\e(v,u)\quad\textnormal{for all }u\in Z_\e,\label{eqv:shift1}\\
&\mathcal{E}^{(\e)}(V_{\e,v}^{\kappa_2},u)+\kappa_2(V_{\e,v}^{\kappa_2},u)_{H_\e}=q_\e(v,u)\quad\textnormal{for all }u\in Z_\e.\label{eqv:shift2}
\end{align}
Testing  \eqref{eqv:shift1} and   \eqref{eqv:shift2} with $u=V_{\e,v}^{\kappa_2}-v$ and $u=V_{\e,v}^{\kappa_1}-v$ respectively, we can deduce \eqref{shiftsplit} subtracting \eqref{eqv:shift2} from \eqref{eqv:shift1} and  taking  \eqref{def_qe} into account. By minimality, we also have
\begin{align}
&\frac{1}{2}\mathcal{E}^{(\e)}_{\kappa_1}(V_{\e,v}^{\kappa_1})-q_\e(v,V_{\e,v}^{\kappa_1})\leq \frac{1}{2}\mathcal{E}^{(\e)}_{\kappa_1}(V_{\e,v}^{\kappa_2})-q_\e(v,V_{\e,v}^{\kappa_2}),\\
&\frac{1}{2}\mathcal{E}^{(\e)}_{\kappa_2}(V_{\e,v}^{\kappa_2})-q_\e(v,V_{\e,v}^{\kappa_2})\leq \frac{1}{2}\mathcal{E}^{(\e)}_{\kappa_2}(V_{\e,v}^{\kappa_1})-q_\e(v,V_{\e,v}^{\kappa_1}).
\end{align}
Summing up the two inequalities,
\begin{equation}
\mathcal{E}^{(\e)}_{\kappa_1}(V_{\e,v}^{\kappa_1})+\mathcal{E}^{(\e)}_{\kappa_2}(V_{\e,v}^{\kappa_2}) 
\le \mathcal{E}^{(\e)}_{\kappa_1}(V_{\e,v}^{\kappa_2})+\mathcal{E}^{(\e)}_{\kappa_2}(V_{\e,v}^{\kappa_1}),
\end{equation}
thus we obtain \eqref{shifteng} by \eqref{def_Eek}. Finally,   \eqref{shiftegn} follows from  \eqref{shiftsplit} and the definition of $\delta_\e^2$, see \eqref{def_delta_e}.
\end{proof}

\section{Weighted eigenvalue problems with varying measures}\label{sec_manifolds_measures}
In the present section, we deal with a family of eigenvalue problems for the Laplace-Beltrami operator on Riemannian manifolds, in which the $L^2$ terms may be weighted with Radon measures. This setting allows to take into account a wide range of singular perturbations, among which we find the Neumann approximation of the Steklov problem, which we are going to treat in \Cref{subsec_steklov}.

Let us now describe the problems we here consider. Let $(\overline{\Omega},g)$ be a compact, connected Riemannian manifold with Lipschitz boundary $\partial \Omega$ and denote by $\Omega$ the interior of $\overline{\Omega}$, i.e. $\Omega=\overline{\Omega}\setminus \partial \Omega$.
We consider the corresponding Lebesgue space $L^2(\Omega)$ and the Sobolev space $H^1(\Omega)$, which can be defined as the completion of $C^\infty(\overline{\Omega})$ with respect to the norm $\norm{\cdot}_{H^1(\Omega)}$ induced by the scalar product
\begin{equation}
    (u,v)_{H^1(\Omega)}:=\int_\Omega \left[g(\nabla^g u,\nabla^g v)+uv\right] \dvol^g,\quad\textnormal{defined for }u,v\in C^\infty(\overline{\Omega}).
\end{equation}
For the sake of simplicity of notation, in the following we may write
\begin{equation}
    |\nabla^g u|^2\quad\textnormal{in place of}\quad g(\nabla^g u,\nabla^g u).
\end{equation}
Let us now take two families of positive Radon measures $\bm{\alpha}=\{\alpha_\e\}_{\e \in [0,1]}$ and $\bm{\beta}=\{\beta_\e\}_{\e \in [0,1]}$ on $\overline{\Omega}$ and, for any $\e\in[0,1]$, let us consider the following eigenvalue problem
\begin{equation}\label{eq:eigen_measure}
    -\Delta^g \varphi+(\alpha_\e+\beta_\e) \varphi=\lambda\beta_\e\varphi\quad\textnormal{in }\overline{\Omega},
\end{equation}
which, we remark, is the same as 
\begin{equation}
    -\Delta^g \varphi+\alpha_\e \varphi=(\lambda-1)\beta_\e\varphi\quad\textnormal{in }\overline{\Omega}.
\end{equation}
Our aim is to study the asymptotic behavior of the spectrum of \eqref{eq:eigen_measure}, as $\e\to 0^+$. Our path consists of, first, giving sense to \eqref{eq:eigen_measure} by encoding it in a suitable functional framework and, second, proving that we are allowed to apply our main abstract result \Cref{thm:main}. In order to pursue this approach, we first restrict the admissible families of measures. 
\begin{definition}\label{def:adm}
    We say that a family of positive Radon measures $\bm{\nu}=\{\nu_\e\}_{\e\in[0,1]}$ on $\overline{\Omega}$  is \emph{admissible}, and we write $\bm{\nu}\in \mathcal{A}(\overline{\Omega})$ if it satisfies the following:
    \begin{enumerate}[
    label=\textbf{($A$.\arabic*)}, 
    ref=$A$.\arabic*, 
    leftmargin=*]
    \item for any $\delta>0$ there exists $C(\delta)>0$ for which
                \begin{equation}
                \int_{\overline \Omega}|u|^2 \,\mathrm{d} \nu_\varepsilon \leq \delta\int_{\Omega} |\nabla^g u|^2 \dvol^g + C(\delta)\int_{\Omega}|u|^2 \dvol^g, 
                \end{equation}
                for any $u \in H^1(\Omega)$ and for any $\e \in [0,1]$; \label{A1}
    \item for any $u\in C^\infty(\overline{\Omega})$ there holds
                \begin{equation}
                     \int_{\overline \Omega} u \, \mathrm{d} \nu_\e \to \int_{\overline \Omega} u \, \mathrm{d} \nu_0 \quad\textnormal{as }\e\to 0^+.
                \end{equation}\label{A2}
    \end{enumerate}
\end{definition}
We assume that $\bm{\alpha}=\{\alpha_\e\}_{\e\in[0,1]}$ and $\bm{\beta}=\{\beta_\e\}_{\e\in[0,1]}$ are admissible, i.e.
\begin{equation}\label{eq:ass_alpha_beta}
    \bm{\alpha},\bm{\beta}\in \mathcal{A}(\overline{\Omega}).
\end{equation}
In particular, we have that $\bm{\alpha}+\bm{\beta}\in \mathcal{A}(\overline{\Omega})$. Moreover, we also assume that 
\begin{equation}\label{beta3'}
    \beta_0(\overline{\Omega})>0.
\end{equation}
This is not restrictive since, if $\beta_0(\overline{\Omega})=0$ then $H_0=L^2(\overline{\Omega},\beta_0)=\{0\}$ and the whole limit problem becomes trivial. Moreover, \eqref{beta3'} implies that
\begin{equation}\label{beta3}
    \beta_\e(\overline{\Omega})>0\quad\text{for any }\e\in[0,1].
\end{equation}
Indeed, if there exists a sequence $\e_n\to 0^+$ such that $\beta_{\e_n}(\overline{\Omega})\to 0$ as $n\to\infty$, then \ref{A2} yields $\beta_0(\overline{\Omega})=0$, contradicting \eqref{beta3'}. Then, by renaming $\e$, we have that \eqref{beta3} holds.

We notice that we can choose $\alpha_\e\equiv 0$, while \eqref{beta3} forces $\beta_\e$ to be nontrivial. We also emphasize that, in view of \ref{A1}, we are essentially  assuming that the families $\{\alpha_\e\}_{\e\in[0,1]}$ and $\{\beta_\e\}_{\e\in[0,1]}$ support a (compact) embedding-type inequality \emph{uniformly} in $\e\in[0,1]$. 

For any positive nontrivial Radon measure $\nu$ on $\overline{\Omega}$, we let $H_\nu^1(\Omega)$ be the completion of $C^\infty(\overline{\Omega})$ with respect to the norm $\norm{\cdot}^2_{H_\nu^1(\Omega)}$ induced by the scalar product
\begin{equation}\label{def_sobolev_space_meausure}
(u,v)_{H^1_\nu(\Omega)}:= \int_{\Omega} g(\nabla^g u,\nabla^g v) \dvol^g+\int_{\overline{\Omega}} uv \diff \nu.
\end{equation}
Now, coherently with the notation of \Cref{sec_assumptions}, we define
\begin{equation}\label{def_H_F_Z}
H_\e= L^2(\overline{\Omega}, \beta_\e),  \quad  Z_\e=\mc{F}_\e=H^1_{\alpha_\e+\beta_\e}(\Omega),
\end{equation}
which trivially gives $Z_\e=\mathcal{F}_\e\subseteq H_\e$.
Moreover, combining the fact that $C^\infty(\overline{\Omega})\subseteq C(\overline{\Omega})$ is dense in the supremum norm and that $C(\overline{\Omega})\subseteq L^2(\overline{\Omega},\beta_\e)$ is dense in the $L^2$ norm (see e.g. \cite[Proposition 7.9]{folland}), and since $C^\infty(\overline{\Omega})\subseteq H^1_{\alpha_\e+\beta_\e}(\Omega)$ we deduce that $Z_\e\subset H_\e$ is dense; hence
\begin{equation}\label{def_Z_e_measures}
    \mc{Z}_\e=H_\e=L^2(\overline{\Omega},\beta_\e).
\end{equation}
We now define, for $u,v\in H^1_{\alpha_\e+\beta_\e}(\Omega)$, the symmetric bilinear form
\begin{equation}\label{def_Ee_measures_manifolds}
    \mathcal{E}^{(\e)}(u,v):=(u,v)_{H^1_{\alpha_\e+\beta_\e}(\Omega)}=\int_\Omega g(\nabla^gu,\nabla^gv)\dvol^g+\int_{\overline{\Omega}}uv\,\mathrm{d}(\alpha_\e+\beta_\e).
\end{equation}
We say that $\lambda\in\R$ is an \emph{eigenvalue} of \eqref{eq:eigen_measure} if there exists $\varphi\in H^1_{\alpha_\e+\beta_\e}(\Omega)\setminus\{0\}$, called \emph{eigenfunction}, such that
\begin{equation}\label{eq_eigenfunction_measures}
    \int_\Omega g(\nabla^g\varphi,\nabla^g v)\dvol^g+\int_{\overline{\Omega}}\varphi v\,\mathrm{d}(\alpha_\e+\beta_\e)=\lambda\int_{\overline{\Omega}}\varphi v\,\mathrm{d}\beta_\e\quad\textnormal{for all }v\in H^1_{\alpha_\e+\beta_\e}(\Omega).
\end{equation}
By definition, $\mathcal{E}^{(\e)}$ is a norm on $Z_\e=H^1_{\alpha_\e+\beta_\e}(\Omega)$, hence it is continuous and positive definite on $Z_\e$. Moreover, combining \Cref{prop:equiv_norms} and \Cref{lemma_compactness_equiv} we obtain that the embedding $Z_\e\hookrightarrow H_\e$ is compact. Hence, the spectrum of $(\mc{E}^{(\e)}, H^1_{\alpha_\e+\beta_\e}(\Omega))$ in  $L^2(\overline{\Omega}, \beta_\e)$ is discrete and it consists of a non-decreasing diverging sequence of non-negative eigenvalues, denoted by $\{\la_{\e,n}\}_{n \in \mb{N}\setminus\{0\}}$, where each eigenvalue is repeated according to its multiplicity. For any $\e \in [0,1]$, we consider a basis of associated eigenfunctions $\{\varphi_{\e,n,i}\colon i=1,\dots,M_{0,n}, ~n\in\N\setminus\{0\}\}$ as in \Cref{sec_assumptions}, where $M_{0,n}=\dim E(\lambda_{0,n})$ with $E(\lambda_{\e,n})$ denoting the eigenspace corresponding to $\lambda_{\e,n}$, for any $\e\in[0,1]$ and any $n\in\N\setminus\{0\}$.

\subsection{Discussion on the functional setting}\label{subsec_mesure_assum}

In the present section, we discuss the properties of the admissible families of measures (as in \Cref{def:adm}), and we prove that assuming \eqref{eq:ass_alpha_beta} implies that the assumptions in \Cref{subsec:ass_functional} hold true. 

The first step is to prove that \ref{A1} is equivalent to the compactness of the corresponding embeddings $H^1(\Omega) \hookrightarrow L^2(\overline{\Omega},\nu_\e)$.

\begin{lemma}\label{lemma_compactness_equiv}
Let $\nu$ be a positive Radon measure on $\overline{\Omega}$. The compactness of the embedding  $H^1(\Omega) \hookrightarrow L^2(\overline{\Omega},\nu)$ is equivalent to assuming that  for any $\delta>0$ there exists a constant $C(\nu,\delta)>0$ such that
\begin{align}\label{eqn-sobolev-trace-improve}
\int_{\overline \Omega}|u|^2 d \nu \leq \delta\int_{\Omega} |\nabla^g u|^2 \dvol^g + C(\nu,\delta)\int_{\Omega}|u|^2 \dvol^g
\end{align}
for any $u \in H^1(\Omega)$.
\end{lemma}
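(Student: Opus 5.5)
We prove the two implications separately, relying on the compactness of the Rellich embedding $H^1(\Omega)\hookrightarrow L^2(\Omega)$, which holds since $\overline{\Omega}$ is compact with Lipschitz boundary. Implicit in the statement is that $H^1(\Omega)\hookrightarrow L^2(\overline{\Omega},\nu)$ is well defined and bounded, i.e. $\int|u|^2\diff\nu\le C\norm{u}^2_{H^1(\Omega)}$ first for $u\in C^\infty(\overline{\Omega})$ and then by density. In the direction where inequality \eqref{eqn-sobolev-trace-improve} is assumed, this boundedness follows from the inequality with $\delta=1$.

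\emph{Inequality $\Rightarrow$ compactness.} Let $u_k\rightharpoonup u$ weakly in $H^1(\Omega)$. The sequence is bounded, say $\norm{\nabla^g u_k}_{L^2}\le K$, and by Rellich $u_k\to u$ strongly in $L^2(\Omega,\mathrm{vol}^g)$. Apply \eqref{eqn-sobolev-trace-improve} to $u_k-u$:
\begin{equation}
\int_{\overline\Omega}|u_k-u|^2\diff\nu\le 4\delta K^2+C(\nu,\delta)\int_\Omega|u_k-u|^2\dvol^g .
\end{equation}
Taking $\limsup_k$ and then letting $\delta\to0$ shows $u_k\to u$ in $L^2(\overline{\Omega},\nu)$. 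Hence the embedding maps weakly convergent sequences to strongly convergent ones, so it is compact.

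\emph{Compactness $\Rightarrow$ inequality.} Argue by contradiction. Suppose that for some $\delta>0$ no constant works. Then there are $u_k\in H^1(\Omega)$ with
\begin{equation}
\int|u_k|^2\diff\nu>\delta\int|\nabla^g u_k|^2\dvol^g+k\int|u_k|^2\dvol^g .
\end{equation}
Normalize so that $\norm{u_k}_{H^1(\Omega)}=1$. Boundedness of the embedding gives $\int|u_k|^2\diff\nu\le C$, so $\int|u_k|^2\dvol^g\le C/k\to0$. Along a subsequence $u_k\rightharpoonup u$ in $H^1$, and the Rellich limit forces $u=0$. Compactness of the embedding then gives $\int|u_k|^2\diff\nu\to0$, i.e. the embedding sends $u_k$ strongly to the image of the weak limit $0$. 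On the other hand, since $\norm{\nabla^g u_k}^2_{L^2}=1-\norm{u_k}^2_{L^2}\to1$, the left side of the displayed inequality is eventually $\ge\delta/2$. This is a contradiction.

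The main subtle point is making sure the embedding is well defined, i.e. that the trace of an $H^1$ function on $\operatorname{supp}\nu$ (possibly including $\partial\Omega$) makes sense. We handle this as described above: use the bound on $C^\infty(\overline{\Omega})$ and extend by density. This also justifies that weak $H^1$ limits are mapped to the corresponding $L^2(\nu)$ limits. Everything else is routine.
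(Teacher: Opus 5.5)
Your proof is correct and follows essentially the same route as the paper. One direction is a contradiction argument combining Rellich's theorem with the compactness of the embedding, and the other applies the $\delta$-inequality to differences and then uses Rellich. The differences are cosmetic: the paper normalizes $\norm{u_n}_{L^2(\overline{\Omega},\nu)}=1$, which gives the $H^1$ bound directly without invoking boundedness of the embedding, and it proves the converse by extracting an $L^2(\overline{\Omega},\nu)$-Cauchy subsequence rather than by weak-to-strong convergence.
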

\begin{proof}
Suppose that the embedding  $H^1(\Omega) \hookrightarrow L^2(\overline{\Omega},\nu)$ is compact but \eqref{eqn-sobolev-trace-improve} does not hold true. By contradiction, there exists $\eta>0$ such that for any $n\in\N\setminus\{0\}$, there exist $u_n\in H^1(\Omega)$, with $||u_n||_{L^2(\overline{\Omega}, \nu)}=1$, such that 
\begin{align}
1 >\eta\int_{\Omega} |\nabla^g u_n|^2 \dvol^g + n \int_{\Omega}|u_n|^2 \dvol^g.
\end{align}
Hence, there exists $C>0$ such that 
\begin{equation}\label{eq:comp_equiv_1}
||\nabla^g u_n||_{L^2(\Omega)}^2\leq C \quad \text{ and }\quad ||u_n||_{L^2(\Omega)}^2\leq \frac{1}{n}\quad\textnormal{for every }n \in \mb{N}\setminus\{0\}.
\end{equation}
In particular $\{u_n\}_{n \in \mathbb{N}\setminus\{0\}}$ is bounded in $H^1(\Omega)$.
Therefore, there exist $u\in H^1(\Omega)$, such that,  up to a subsequence, $u_n\rightharpoonup u$ weakly in $H^1(\Omega)$ as $n\to \infty$. Moreover, by \eqref{eq:comp_equiv_1} and weak lower semicontinuity we have that $\norm{u}_{L^2(\Omega)}=0$ and so $u_n\rightharpoonup 0$ weakly in $H^1(\Omega)$ as $n\to\infty$ and, by compactness of $H^1(\Omega) \hookrightarrow L^2(\overline{\Omega},\nu)$, we have that $u_n\to 0$ strongly in $L^2(\overline{\Omega},\nu)$: this is a contradiction with the fact that $\norm{u_n}_{L^2(\overline\Omega, \nu)}=1$.

On the other hand, suppose that \eqref{eqn-sobolev-trace-improve} holds and that $\{u_n\}_{n \in \mathbb{N}}$ is bounded in $H^1(\Omega)$. Fix $\delta>0$. Then, by  \eqref{eqn-sobolev-trace-improve},
\begin{align}
\int_{\overline \Omega} |u_n - u_m|^2 d \nu \le \delta\int_{\Omega} |\nabla^g( u_n-u_m)|^2 \dvol^g+ C(\nu,\delta)\int_{\Omega}|u_n-u_m|^2  \dvol^g,
\end{align}
for any $n,m \in \mathbb{N}$. Being the embedding in $H^1(\Omega)\hookrightarrow L^2(\Omega)$ compact, there exists a subsequence  $\{u_{n_k}\}_{k \in \mathbb{N}}$ such that 
\begin{equation}
C(\nu,\delta)\int_{\Omega}|u_{n_k}-u_{n_h}|^2  \dvol^g \le \delta
\end{equation}
for any $k, h \ge k_0$ for some $k_0>0$. Since $\delta$ is arbitrary, we conclude that $\{u_{n_k}\}_{k \in \mathbb{N}}$ is Cauchy in $L^2(\overline \Omega, \nu)$ and so $H^1(\Omega) \hookrightarrow L^2(\overline{\Omega},\nu)$ is compact.
\end{proof}

As a consequence, we get a uniform compactness property.

\begin{corollary}\label{cor:compact_nu}
    Let $\bm{\nu}=\{\nu_\e\}_{\e\in[0,1]}$ be a family of positive Radon measures on $\overline{\Omega}$ satisfying \ref{A1}. Then the embedding $i_\e\colon H^1(\Omega)\hookrightarrow L^2(\overline{\Omega},\nu_\e)$ is compact for any $\e\in[0,1]$ and there exists a constant $c>0$ such that $\norm{u}_{L^2(\overline{\Omega},\nu_\e)}\leq c\norm{u}_{H^1(\Omega)}$ for any $u\in H^1(\Omega)$ and any $\e\in[0,1]$, that is $i_\e$ is bounded.
\end{corollary}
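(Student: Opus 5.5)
The proof splits into the two assertions of \Cref{cor:compact_nu}: compactness for each fixed $\e$, and a bound on the embedding norms that does not depend on $\e$. Both should follow directly from \ref{A1} together with \Cref{lemma_compactness_equiv}.

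\emph{Compactness.} Fix $\e\in[0,1]$. Assumption \ref{A1} says exactly that, for every $\delta>0$, inequality \eqref{eqn-sobolev-trace-improve} holds for the measure $\nu=\nu_\e$ with constant $C(\nu_\e,\delta):=C(\delta)$. By the implication ``\eqref{eqn-sobolev-trace-improve} $\Rightarrow$ compactness'' in \Cref{lemma_compactness_equiv}, the embedding $i_\e\colon H^1(\Omega)\hookrightarrow L^2(\overline{\Omega},\nu_\e)$ is compact.

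There is a subtlety about well-definedness. The map $i_\e$ is first defined on $C^\infty(\overline{\Omega})$. The inequality in \ref{A1} shows it is bounded there in the $H^1(\Omega)$ norm, so it extends uniquely by density to all of $H^1(\Omega)$. This is the sense in which $u\in H^1(\Omega)$ is read inside $\int_{\overline\Omega}|u|^2\diff\nu_\e$, and it is the same convention used in \Cref{lemma_compactness_equiv}, so nothing new is needed.

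\emph{Uniform bound.} Choose $\delta=1$ in \ref{A1}. For every $u\in H^1(\Omega)$ and every $\e\in[0,1]$,
\begin{equation}
\norm{u}_{L^2(\overline{\Omega},\nu_\e)}^2\leq \int_\Omega|\nabla^g u|^2\dvol^g+C(1)\int_\Omega|u|^2\dvol^g\leq \max\{1,C(1)\}\norm{u}_{H^1(\Omega)}^2 .
\end{equation}
Set $c:=\max\{1,C(1)\}^{1/2}$. The key point is that $C(\delta)$ in \ref{A1} is independent of $\e$, so $c$ is independent of $\e$ as well. This gives $\norm{u}_{L^2(\overline{\Omega},\nu_\e)}\leq c\norm{u}_{H^1(\Omega)}$ for all $u$ and all $\e$, and in particular each $i_\e$ is bounded.

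I expect no real obstacle. The only care needed is the density-extension remark above, and checking that the constant comes from the uniformity in \ref{A1} rather than from the per-$\e$ compactness, which by itself would only yield $\e$-dependent bounds.
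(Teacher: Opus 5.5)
Your proposal is correct and follows the paper's argument, which simply cites \Cref{lemma_compactness_equiv} and \ref{A1}. For each fixed $\e$, compactness comes from the ``inequality $\Rightarrow$ compactness'' direction of that lemma with $\nu=\nu_\e$, and the uniform bound comes from \ref{A1} with a fixed $\delta$ (your choice $\delta=1$), whose constant $C(\delta)$ does not depend on $\e$.
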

\begin{proof}
    This is a straightforward consequence of \Cref{lemma_compactness_equiv} and \ref{A1}.
\end{proof}

We now observe that a measure that admits an embedding-type inequality cannot be supported on sets of zero capacity.
\begin{lemma}\label{lemma:capacity}
    Let $\nu$ be a measure on $\overline{\Omega}\subseteq M$ and assume there exists $C=C(\nu)>0$ such that
    \begin{equation}\label{eq:capacity_poinc}
        \int_{\overline{\Omega}}u^2\diff\nu\leq C\left(\int_\Omega|\nabla^g u|^2\dvol^g+\int_\Omega u^2\dvol^g\right)\quad\textnormal{for all }u\in H^1(\Omega).
    \end{equation}
    For any compact $K\subseteq M$, let 
    \begin{equation}
        \mathrm{Cap}(K):=\inf\left\{\int_M (|\nabla^g u|^2+u^2)\dvol^g\colon u\in H^1(M),~u\equiv 1~\textnormal{in an open neighborhood of }K\right\}
    \end{equation}
    denote its Sobolev capacity. Then, for any compact $K\subseteq \overline{\Omega}$ such that $\mathrm{Cap}(K)=0$ we have $\nu(K)=0$.
\end{lemma}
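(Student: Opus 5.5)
The idea is to test the inequality \eqref{eq:capacity_poinc} against almost-optimal competitors in the definition of $\mathrm{Cap}(K)$, and then pass to the limit. Fix a compact $K\subseteq\overline{\Omega}$ with $\mathrm{Cap}(K)=0$. By definition of the capacity, there is a sequence $u_n\in H^1(M)$ with $u_n\equiv 1$ in an open neighborhood $U_n$ of $K$ and
\begin{equation}
\int_M(|\nabla^g u_n|^2+u_n^2)\dvol^g\to 0\quad\textnormal{as }n\to\infty.
\end{equation}

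\textbf{Step 1: truncate the competitors.} We may assume $0\le u_n\le 1$, by replacing $u_n$ with $\min\{\max\{u_n,0\},1\}$. This truncation does not increase the $H^1(M)$ norm, and it keeps $u_n\equiv1$ on $U_n$. Restricting to $\Omega$ gives $u_n|_\Omega\in H^1(\Omega)$, with
\begin{equation}
\norm{u_n}_{H^1(\Omega)}\le\norm{u_n}_{H^1(M)}\to0.
\end{equation}

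\textbf{Step 2: apply the inequality.} By \eqref{eq:capacity_poinc},
\begin{equation}
\int_{\overline{\Omega}}u_n^2\diff\nu\le C\norm{u_n}_{H^1(\Omega)}^2\to 0.
\end{equation}
Here the integral $\int_{\overline\Omega}u^2\diff\nu$ is understood through the continuous extension of the embedding from $C^\infty(\overline{\Omega})$, which the inequality itself provides.

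\textbf{Step 3: conclude, and the main obstacle.} The conclusion would be immediate if we knew that $u_n=1$ $\nu$-a.e.\ on $K$. Then
\begin{equation}
\nu(K)\le\int_{\overline\Omega}u_n^2\diff\nu\to0,
\end{equation}
so $\nu(K)=0$. Justifying this is the main point. An $H^1$ element is only an equivalence class, and the value $\nu$ sees is the one given by the embedding operator, i.e.\ the limit in $L^2(\nu)$ of smooth approximants.

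To handle this, I would first approximate each $u_n$ by functions in $C^\infty(\overline{\Omega})$ that still equal $1$ on a neighborhood of $K$. Let $\chi\in C_c^\infty(U_n)$ with $\chi\equiv1$ near $K$ and $0\le\chi\le1$. Take smooth $w_k\to u_n$ in $H^1(\Omega)$ (possible since $\partial\Omega$ is Lipschitz), and set
\begin{equation}
v_k:=\chi+(1-\chi)w_k.
\end{equation}
Each $v_k$ is smooth, equals $1$ near $K$, and converges to $\chi+(1-\chi)u_n=u_n$ in $H^1(\Omega)$. By \eqref{eq:capacity_poinc}, $v_k\to u_n$ in $L^2(\overline\Omega,\nu)$. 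Since $v_k\equiv1$ on $K$ for every $k$, the limit satisfies $u_n=1$ $\nu$-a.e.\ on $K$, which is what Step 3 needs.

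Alternatively, one can skip the identification altogether and work directly with $v_k$. Choose $k=k(n)$ with $\norm{v_k-u_n}_{H^1(\Omega)}\le 1/n$. Then $v_{k(n)}$ is smooth, equals $1$ on $K$, and $\norm{v_{k(n)}}_{H^1(\Omega)}\to0$. This gives
\begin{equation}
\nu(K)\le\int_{\overline\Omega}v_{k(n)}^2\diff\nu\le C\norm{v_{k(n)}}_{H^1(\Omega)}^2\to0,
\end{equation}
which proves $\nu(K)=0$.
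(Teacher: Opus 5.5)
Your proof is correct and follows the same route as the paper, which tests \eqref{eq:capacity_poinc} with the restrictions of a minimizing sequence for $\mathrm{Cap}(K)$ and bounds $\nu(K)\le\int_{\overline{\Omega}}u_n^2\diff\nu$. Your cutoff construction $v_k=\chi+(1-\chi)w_k$ makes rigorous the step ``$u_n=1$ $\nu$-a.e.\ on $K$'', which the paper takes for granted; the truncation in Step 1 is harmless but not needed.
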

\begin{proof}
    Let $\{u_n\}_n$ be a minimizing sequence for $\mathrm{Cap}(K)$, i.e. $u_n\equiv 1 $ in an open neighborhood of $K$ and 
    \begin{equation}
        \int_M(|\nabla^g u_n|^2+u_n^2)\dvol^g\to 0\quad\textnormal{as }n\to\infty.
    \end{equation}
    If we test \eqref{eq:capacity_poinc} with a the restriction of $u_n$ to $\overline{\Omega}$ (still denoted by $u_n$) we get
    \begin{equation}
        \nu(K)\leq \int_{\overline{\Omega}}u_n^2\diff\nu\leq C\left(\int_\Omega|\nabla^g u_n|^2\dvol^g+\int_\Omega u_n^2\dvol^g\right)\to 0,
    \end{equation}
    as $n\to\infty$, thus completing the proof.
\end{proof}

We recall from \cite[Section 3]{GKL} (cf. \cite[Proposition 3.12]{GKL}) the following.
\begin{theorem}\label{theor_prel_GKL}
Let $\nu$ be a Radon measure on  $\overline{\Omega}$ not supported on a single point and such that 
\begin{equation}\label{ineq_poinc}
\int_{\overline{\Omega}}|u-c_{u,\nu}|^2 \diff \nu \leq K_\nu \int_{\Omega} |\nabla^g u|^2 \dvol^g\quad\textnormal{for all }u\in H^1_\nu(\Omega)
\end{equation}
holds for some $K_\nu>0$, where
\begin{equation}
c_{u,\nu}:= \frac{1}{\nu(\overline{\Omega})}\int_{\overline{\Omega}} u\diff\nu.
\end{equation}
Then there holds
\begin{equation}\label{ineq_equiv_norm}
C_{1,\nu}\norm{u}_{H_\nu^1(\Omega)}\le \norm{u}_{H^1(\Omega)} \le C_{2,\nu}\norm{u}_{H_\nu^1(\Omega)},\quad\textnormal{for all }u\in C^\infty(\overline\Omega),
\end{equation}
with $C_{1,\nu}>0$ and 
\begin{equation}
C_{2,\nu}:=(1+K_\nu)\left(1+\frac{{\rm vol}^g(\overline{\Omega})^{3/2}}{\nu(\overline{\Omega})^{1/2}}\right)(1+C_{1,\nu}),
\end{equation}
In particular, $H_\nu^1(\Omega)$ and $H^1(\Omega)$ are uniformly isomorphic with equivalent norms. Furthermore, the embedding $H^1(\Omega) \hookrightarrow L^2(\Omega,\nu)$ is compact if and only if $H_\nu^1(\Omega) \hookrightarrow L^2(\Omega,\nu)$  is compact and  \eqref{ineq_poinc} holds.
\end{theorem}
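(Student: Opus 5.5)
We need to prove the theorem recalled from GKL: under the Poincaré-type inequality \eqref{ineq_poinc}, the $H^1_\nu$ and $H^1$ norms are equivalent on $C^\infty(\overline\Omega)$, with the explicit constant $C_{2,\nu}$, plus the compactness equivalence. The plan is to prove the two inequalities in \eqref{ineq_equiv_norm} separately, using $c_{u,\nu}$ as a bridge between the two $L^2$ terms.

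\emph{Lower bound $C_{1,\nu}\norm{u}_{H^1_\nu}\le\norm{u}_{H^1}$.} The gradient parts coincide, so it suffices to bound $\int_{\overline\Omega}u^2\diff\nu$ by $\norm{u}_{H^1(\Omega)}^2$. I would argue by contradiction, as in the first half of the proof of \Cref{lemma_compactness_equiv}. Take $u_n$ with $\norm{u_n}_{H^1}\to0$ and $\int u_n^2\diff\nu=1$. Then $\nabla u_n\to0$, so \eqref{ineq_poinc} gives $u_n-c_{u_n,\nu}\to0$ in $L^2(\nu)$, whence $|c_{u_n,\nu}|^2\nu(\overline\Omega)\to1$. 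On the other hand $u_n\to0$ in $L^2(\Omega)$ while $u_n-c_{u_n,\nu}$ tends to a constant of norm bounded below. To compare the two constants I would use the standard Poincaré--Wirtinger inequality on the connected manifold $\Omega$: $\norm{u_n-\bar u_n}_{L^2(\Omega)}\to0$ with $\bar u_n\to0$. It remains to see that $u_n-\bar u_n\to0$ in $H^1$ forces its $L^2(\nu)$ norm to $0$, so that $c_{u_n,\nu}\to0$, a contradiction. This last step needs some continuity of $H^1\to L^2(\nu)$, and that is the delicate point. Alternatively, $C_{1,\nu}$ can be taken directly from \cite{GKL}, whose constant is not made explicit here.

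\emph{Upper bound.} Write $u=(u-c)+c$ with $c=c_{u,\nu}$. Then
\[
\norm{u}_{L^2(\Omega)}\le\norm{u-c}_{L^2(\Omega)}+|c|\,\mathrm{vol}^g(\overline\Omega)^{1/2},
\qquad
|c|\le \nu(\overline\Omega)^{-1/2}\norm{u}_{L^2(\nu)}
\]
by Cauchy--Schwarz. For $\norm{u-c}_{L^2(\Omega)}$ I would use the manifold Poincaré--Wirtinger inequality together with $|\bar u-c|\,\mathrm{vol}^{1/2}$. Estimating $|\bar u-c|$ passes through $\norm{u-c}_{L^2(\nu)}\le K_\nu^{1/2}\norm{\nabla u}$, and this is where the factors $(1+K_\nu)$ and $\mathrm{vol}^{3/2}/\nu(\overline\Omega)^{1/2}$ arise. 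Tracking the constants and using the lower bound yields the $(1+C_{1,\nu})$ factor; I would not insist on the exact product form beyond checking that it dominates. Density of $C^\infty(\overline\Omega)$ in both spaces then gives the isomorphism.

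\emph{Compactness.} If $H^1\hookrightarrow L^2(\nu)$ is compact, then \eqref{ineq_poinc} follows by the usual contradiction argument. Normalize $\norm{u_n-c_n}_{L^2(\nu)}=1$ and $\norm{\nabla u_n}\to0$; subtract Lebesgue means and extract a subsequence converging in $L^2(\nu)$ to a constant. Its $\nu$-mean is zero, so the constant is $0$, a contradiction. By the norm equivalence, the two compactness statements then coincide; conversely, \eqref{ineq_poinc} plus compactness of $H^1_\nu\hookrightarrow L^2(\nu)$ transfers back through the same equivalence. The main obstacle is the lower bound, i.e.\ the continuity of $H^1\to L^2(\nu)$ without assuming compactness a priori; I expect to rely on \cite[Section 3]{GKL} for it.
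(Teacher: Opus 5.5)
The paper does not prove this statement: it quotes it from \cite[Proposition 3.12]{GKL}. Measured against what a self-contained argument needs, your proposal has a genuine gap, and it sits exactly where you flag it.

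\textbf{The gap.} The lower bound in \eqref{ineq_equiv_norm} is the continuity of $u\mapsto u$ from $(C^\infty(\overline\Omega),\norm{\cdot}_{H^1(\Omega)})$ into $L^2(\overline\Omega,\nu)$. Your contradiction argument reduces it to ``$u_n-\bar u_n\to0$ in $H^1(\Omega)$ forces $\norm{u_n-\bar u_n}_{L^2(\overline\Omega,\nu)}\to0$''. That is the same continuity restricted to Lebesgue-mean-zero functions, hence equivalent to the claim, so the step is circular. Everything downstream rests on it:
\begin{itemize}
\item the upper bound, which, as you note yourself, uses the lower one;
\item the identification of the two completions;
\item the ``if'' direction of the compactness equivalence.
\end{itemize}
The telltale sign is that you never use the hypothesis that $\nu$ is not supported on a single point. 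That hypothesis is indispensable. On a manifold of dimension at least two, $\nu=\delta_{x_0}$ satisfies \eqref{ineq_poinc} trivially, since the left-hand side vanishes. Yet $u\mapsto u(x_0)$ is not $H^1$-continuous, and both inequalities in \eqref{ineq_equiv_norm} fail. Falling back on \cite{GKL} for $C_{1,\nu}$ is legitimate, but then nothing beyond what the paper already cites has been proved.

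\textbf{The missing idea: a cutoff.} This is where the hypothesis enters.
\begin{itemize}
\item Take $x\neq y$ in $\mathrm{supp}\,\nu$ and $\chi\in C^\infty(\overline\Omega)$ with $0\le\chi\le1$, $\chi\equiv1$ near $x$ and $\chi\equiv0$ near $y$. Then $\chi$ is not $\nu$-a.e.\ constant, so $\eta:=\int_{\overline\Omega}|\chi-c_{\chi,\nu}|^2\diff\nu>0$.
\item Let $Pf:=f-c_{f,\nu}$, the $L^2(\overline\Omega,\nu)$-orthogonal projection onto $\nu$-mean-zero functions.
\item For $u\in C^\infty(\overline\Omega)$ and $c:=c_{u,\nu}$ you have $P(\chi u)=c\,P\chi+P(\chi(u-c))$.
\end{itemize}
Applying \eqref{ineq_poinc} to $\chi u$ and to $u$ gives
\begin{equation*}
|c|\,\eta^{1/2}\le\norm{P(\chi u)}_{L^2(\overline\Omega,\nu)}+\norm{u-c}_{L^2(\overline\Omega,\nu)}\le K_\nu^{1/2}\Big(\sup_\Omega|\nabla^g\chi|\,\norm{u}_{L^2(\Omega)}+2\norm{\nabla^g u}_{L^2(\Omega)}\Big).
\end{equation*}
Hence $\int_{\overline\Omega}u^2\diff\nu=\norm{u-c}_{L^2(\overline\Omega,\nu)}^2+c^2\nu(\overline\Omega)\le C\norm{u}_{H^1(\Omega)}^2$. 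The Lebesgue $L^2$ norm enters only through $u\nabla^g\chi$, which is precisely the information \eqref{ineq_poinc} alone cannot supply. Equivalently, in your contradiction setup, apply \eqref{ineq_poinc} to $\chi u_n$ instead of invoking Poincar\'e--Wirtinger.

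With this trace bound in hand, the rest of your proposal goes through:
\begin{itemize}
\item the upper bound, via Poincar\'e--Wirtinger on $\Omega$ plus $|\bar u-c|\,\nu(\overline\Omega)^{1/2}\le\norm{u-c}_{L^2(\overline\Omega,\nu)}+\norm{u-\bar u}_{L^2(\overline\Omega,\nu)}$;
\item the compactness argument, which is the one of \Cref{prop_poinc}.
\end{itemize}

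\textbf{On the explicit constant.} Do not try to check that the displayed $C_{2,\nu}$ dominates. Your argument yields a constant depending on the trace constant and on the Poincar\'e--Wirtinger constant of $(\Omega,g)$, and neither dependence can be removed:
\begin{itemize}
\item In a planar disk, take $\nu$ the normalized Lebesgue measure of a small disk $B_r$ and let $r\to0$.
\item Take a planar dumbbell with $\nu$ supported in one bulb and thin the neck.
\end{itemize}
In both cases $K_\nu$, $\nu(\overline\Omega)$, $\mathrm{vol}^g(\overline\Omega)$ and any admissible $C_{1,\nu}$ stay bounded, while the optimal upper constant blows up. This is harmless for the later use in \Cref{prop:equiv_norms}, where $\Omega$ is fixed and the trace constant is uniform by \Cref{cor:compact_nu}.
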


We want to show that we can apply this result to a family of measures satisfying \ref{A1} and get a uniform equivalence of norms. In particular, we show that \ref{A1} guarantees that the Poincaré-type inequality \eqref{ineq_poinc} holds uniformly in $\e\in [0,1]$, as explained in the following result.

\begin{proposition}\label{prop_poinc}
If a family of measures $\bm{\nu}=\{\nu_\e\}_{\e \in [0,1]}$ satisfies \ref{A1},
then there exists a constant $K>0$, that does not depend on $\e$, such that 
\begin{equation}
\int_{\overline{\Omega}}|u-c_{u,\nu_\e}|^2 \diff\nu_\e \leq K \int_{\Omega} |\nabla^g u|^2 \dvol^g
\end{equation}
for any $u \in H^1(\Omega)$ and any $\e\in[0,1]$.
\end{proposition}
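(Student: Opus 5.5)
The idea is to avoid any compactness or contradiction argument and instead compare the $\nu_\e$-mean $c_{u,\nu_\e}$ with the Riemannian volume mean of $u$. The uniform inequality \ref{A1} then transfers the classical Poincaré–Wirtinger inequality on $\Omega$ to the measures $\nu_\e$, with a constant independent of $\e$.

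\emph{Step 1 (the $\nu_\e$-mean minimizes).} Fix $\e\in[0,1]$ and $u\in H^1(\Omega)$, and assume $\nu_\e(\overline{\Omega})>0$, so that $c_{u,\nu_\e}$ is defined. By \Cref{cor:compact_nu}, $u\in L^2(\overline{\Omega},\nu_\e)$, hence also $u\in L^1(\overline{\Omega},\nu_\e)$ because $\nu_\e$ is finite. The map $m\mapsto\int_{\overline{\Omega}}|u-m|^2\diff\nu_\e$ is a quadratic polynomial in $m\in\R$, minimized exactly at $m=c_{u,\nu_\e}$. Therefore
\begin{equation}
\int_{\overline{\Omega}}|u-c_{u,\nu_\e}|^2\diff\nu_\e\leq \int_{\overline{\Omega}}|u-m|^2\diff\nu_\e\quad\textnormal{for every }m\in\R .
\end{equation}
If instead $\nu_\e(\overline{\Omega})=0$, the left-hand side vanishes for any convention on $c_{u,\nu_\e}$, and there is nothing to prove.

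\emph{Step 2 (apply \ref{A1} with a fixed $\delta$).} Choose $m=\bar u:=\mathrm{vol}^g(\Omega)^{-1}\int_\Omega u\dvol^g$. Apply \ref{A1} with $\delta=1$ to $u-\bar u\in H^1(\Omega)$, noting that $\nabla^g(u-\bar u)=\nabla^g u$:
\begin{equation}
\int_{\overline{\Omega}}|u-\bar u|^2\diff\nu_\e\leq \int_\Omega|\nabla^g u|^2\dvol^g+C(1)\int_\Omega|u-\bar u|^2\dvol^g .
\end{equation}
The crucial feature is that $C(1)$ does not depend on $\e$.

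\emph{Step 3 (classical Poincaré–Wirtinger).} Since $\overline{\Omega}$ is compact and connected with Lipschitz boundary, the embedding $H^1(\Omega)\hookrightarrow L^2(\Omega)$ is compact, and the standard Poincaré–Wirtinger inequality holds:
\begin{equation}
\int_\Omega|u-\bar u|^2\dvol^g\leq C_P\int_\Omega|\nabla^g u|^2\dvol^g .
\end{equation}
This follows from the usual contradiction argument based on Rellich compactness and connectedness. Combining Steps 1–3 yields the claim with $K:=1+C(1)C_P$, which is independent of $\e$.

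The only point needing some care is Step 3, i.e. the validity of the Poincaré–Wirtinger inequality on a compact connected Lipschitz Riemannian manifold with boundary. I expect to treat it as classical: cover $\overline{\Omega}$ by finitely many bi-Lipschitz charts to get Rellich compactness, then argue by contradiction, using connectedness to force a limit with zero gradient to be constant. Everything else is elementary; in particular, no lower bound on $\nu_\e(\overline{\Omega})$ is needed for this statement.
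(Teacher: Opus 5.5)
Your proof is correct and is essentially the paper's argument. The paper also compares with the volume mean $a_n$, applies \ref{A1} with a fixed $\delta$ together with the classical Poincar\'e--Wirtinger inequality, and uses $\int_{\overline{\Omega}} v_n\diff\nu_{\e_n}=0$ to get $\int_{\overline{\Omega}}|v_n-a_n|^2\diff\nu_{\e_n}\geq 1$, which is exactly your Step 1; the only difference is that the paper packages this as a contradiction argument along a sequence $\e_n\to0^+$, whereas your direct version gives the explicit constant $K=1+C(1)C_P$, covers every $\e\in[0,1]$ at once, and handles the degenerate case $\nu_\e(\overline{\Omega})=0$.
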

\begin{proof}
Assume by contradiction that there exists a sequence $\e_n \to 0^+$ as $n \to \infty$  and a sequence $\{u_n \}_n\subset H^1(\Omega)$ such that 
\begin{equation}
\int_{\overline{\Omega}}|u_n-c_{u_n,\nu_{\e_n}}|^2 \diff\nu_{\e_n} > n  \int_{\Omega} |\nabla^g u_n|^2 \dvol^g.
\end{equation}
Now, let
\begin{equation}
    v_n:=\frac{u_n-c_{u_n,\nu_{\e_n}}}{\norm{u_n-c_{u_n,\nu_{\e_n}}}_{L^2(\overline{\Omega},\nu_{\e_n})}},
\end{equation}
so that we have
\begin{align}
    &\int_{\overline{\Omega}}v_n^2\diff\nu_{\e_n}=1, \label{eq:poinc1}\\
    &\int_{\overline{\Omega}}v_n\diff\nu_{\e_n}=0, \label{eq:poinc2}\\
    &\int_\Omega|\nabla^g v_n|^2\dvol^g<\frac{1}{n}\label{eq:poinc3}
\end{align}
for all $n\in\N\setminus\{0\}$. We now let
\begin{equation}
    a_n:=\frac{1}{\mathrm{vol}^g(\Omega)}\int_\Omega v_n\dvol^g,
\end{equation}
and we use the Poincaré inequality to get that
\begin{equation}
    \int_\Omega|v_n-a_n|^2\dvol^g\leq C\int_\Omega|\nabla^g v_n|^2\dvol^g<\frac{C}{n}\quad\textnormal{for all }n\in\N,
\end{equation}
for some $C>0$ independent from $n$. Combining this with \ref{A1} and \eqref{eq:poinc3} we get that, for some fixed $\delta>0$,
\begin{equation}
    \int_{\overline{\Omega}}|v_n-a_n|^2\diff\nu_{\e_n}\leq \delta\int_\Omega|\nabla^g v_n|^2\dvol^g+C(\delta)\int_\Omega |v_n-a_n|^2\dvol^g<\frac{C'}{n},
\end{equation}
for some $C'>0$ independent from $n$. Now, by expanding the left-hand side, and using \eqref{eq:poinc1} and \eqref{eq:poinc2} we get that
\begin{equation}
    1\leq \int_{\overline{\Omega}}|v_n|^2\diff\nu_{\e_n}+a_n^2\nu_{\e_n}(\overline{\Omega})-2a_n\int_{\overline{\Omega}}v_n\diff\nu_{\e_n}=\int_{\overline{\Omega}}|v_n-a_n|^2\diff\nu_{\e_n}\leq \frac{C'}{n},
\end{equation}
which is a contradiction for $n$ large.
\end{proof}

As a consequence, we get the uniform equivalence of norms.
\begin{proposition}\label{prop:equiv_norms}
    Let $\bm{\nu}=\{\nu_\e\}_{\e\in[0,1]}$ satisfy \ref{A1} and suppose there exists a positive constant $c>0$ such that $\nu_\e(\overline{\Omega})\geq c$ for every $\e \in [0,1]$. Then, there exists $C_{\textup{E}}>0$ (depending on $c$) such that
    \begin{equation}
        C_{\textup{E}}\norm{u}_{H^1_{\nu_\e}(\Omega)}\leq \norm{u}_{H^1(\Omega)}\leq C_{\textup{E}}^{-1}\norm{u}_{H^1_{\nu_\e}(\Omega)}\quad\text{for all }u\in C^\infty(\overline\Omega)
    \end{equation}
    and for all $\e\in[0,1]$. In particular, the completions $H^1_{\nu_\e}(\Omega)$ and $H^1(\Omega)$ coincide as vector spaces and are uniformly isomorphic as metric spaces, for any $\e\in[0,1]$. 
 \end{proposition}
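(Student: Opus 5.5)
The plan is to apply \Cref{theor_prel_GKL} to each $\nu_\e$ and check that the constants it produces are uniform in $\e$. By \Cref{prop_poinc}, the Poincaré-type inequality \eqref{ineq_poinc} holds for every $\nu_\e$ with a single constant $K$ independent of $\e$. Thus $K_{\nu_\e}\le K$ for all $\e\in[0,1]$. Moreover $\nu_\e(\overline\Omega)\ge c$ by assumption, so in the expression for $C_{2,\nu_\e}$ the factor $1+{\rm vol}^g(\overline\Omega)^{3/2}\nu_\e(\overline\Omega)^{-1/2}$ is bounded by $1+{\rm vol}^g(\overline\Omega)^{3/2}c^{-1/2}$. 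It remains to bound $C_{1,\nu_\e}$ uniformly.

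I would not read off $C_{1,\nu}$ from \cite{GKL}; instead I would prove the left inequality directly, since it follows from \ref{A1}. Taking $\delta=1$ in \ref{A1} gives
\begin{equation}
\norm{u}_{H^1_{\nu_\e}(\Omega)}^2=\int_\Omega|\nabla^g u|^2\dvol^g+\int_{\overline\Omega}u^2\diff\nu_\e\le (1+\max\{1,C(1)\})\norm{u}_{H^1(\Omega)}^2.
\end{equation}
This is the uniform bound $\norm{u}_{H^1_{\nu_\e}}\le c_1\norm{u}_{H^1}$ (equivalently \Cref{cor:compact_nu}). With $C_{1,\nu_\e}\le c_1$ fixed, $C_{2,\nu_\e}$ is bounded uniformly.

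For robustness, I would also derive the right inequality directly rather than relying on the precise form of $C_{2,\nu}$. Write $u=(u-c_{u,\nu_\e})+c_{u,\nu_\e}$. First, control $\norm{u-c_{u,\nu_\e}}_{L^2(\Omega)}$: take $a$ to be the volume mean of $u$. Poincaré gives $\norm{u-a}_{L^2(\Omega)}\lesssim\norm{\nabla^g u}$. Applying \ref{A1} to $u-a$ gives $\norm{u-a}_{L^2(\nu_\e)}\lesssim\norm{\nabla^g u}$. Hence
\begin{equation}
|a-c_{u,\nu_\e}|\,\nu_\e(\overline\Omega)^{1/2}\le\norm{u-a}_{L^2(\nu_\e)}+\norm{u-c_{u,\nu_\e}}_{L^2(\nu_\e)}\lesssim\norm{\nabla^g u},
\end{equation}
using \Cref{prop_poinc} for the last term. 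Second, $|c_{u,\nu_\e}|\le \nu_\e(\overline\Omega)^{-1/2}\norm{u}_{L^2(\nu_\e)}\le c^{-1/2}\norm{u}_{L^2(\nu_\e)}$ by Cauchy–Schwarz. Combining these yields $\norm{u}_{L^2(\Omega)}\le C(\norm{\nabla^g u}_{L^2}+\norm{u}_{L^2(\nu_\e)})$, with $C$ depending only on $K$, $C(1)$, $c$ and ${\rm vol}^g(\overline\Omega)$.

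Finally, set $C_{\rm E}$ to be the minimum of the reciprocals of the two constants. Since both inequalities hold on the dense set $C^\infty(\overline\Omega)$ with uniform constants, the completions coincide, and the identity extends to a uniform isomorphism. The only delicate point is ensuring that no hidden $\e$-dependence enters through $C_{1,\nu}$ or $K_\nu$. That is exactly why I would rely on \Cref{prop_poinc} and \ref{A1} rather than on the non-explicit constant in \Cref{theor_prel_GKL}.
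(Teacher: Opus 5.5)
Your proof is correct, and it follows a genuinely different, more self-contained route than the paper's.

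\textbf{The paper's route.} The paper argues in three lines. It uses \Cref{lemma:capacity} to check that $\nu_\e$ is not supported on a single point, and \Cref{prop_poinc} to get a uniform Poincar\'e constant $K$. It then applies \Cref{theor_prel_GKL} as a black box. Uniformity in $\e$ is left implicit in the explicit formula for $C_{2,\nu}$ together with $\nu_\e(\overline\Omega)\ge c$; the uniformity of $C_{1,\nu_\e}$ is never discussed.

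\textbf{Your route.} You prove both inequalities by hand. The left one follows from \ref{A1} with $\delta=1$. For the right one, you compare the volume mean $a$ with the $\nu_\e$-mean $c_{u,\nu_\e}$, using Poincar\'e--Wirtinger, \ref{A1} applied to $u-a$, and the lower bound $\nu_\e(\overline\Omega)\ge c$.

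\textbf{What this buys you.} First, every constant is explicit: it depends only on $C(1)$, the Poincar\'e--Wirtinger constant of $\Omega$, $c$ and $\mathrm{vol}^g(\overline\Omega)$. Second, it is transparent exactly where the mass lower bound enters. Third, you never need the hypothesis of \Cref{theor_prel_GKL} that $\nu$ is not concentrated at a point. The paper verifies that hypothesis through the vanishing capacity of points, which holds only in dimension at least two. Your argument works in any dimension; in dimension one a Dirac mass satisfies \ref{A1} and the norm equivalence still holds.

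\textbf{Two small remarks.}
\begin{itemize}
\item In the paper's normalization $C_{1,\nu}$ multiplies $\norm{u}_{H^1_\nu(\Omega)}$. Your bound therefore gives the admissible value $C_{1,\nu_\e}=c_1^{-1}$, rather than ``$C_{1,\nu_\e}\le c_1$''. This slip is harmless, since your direct argument makes the GKL constants irrelevant.
\item You do not even need \Cref{prop_poinc}. Since $c_{u,\nu_\e}$ minimizes $t\mapsto\norm{u-t}_{L^2(\overline\Omega,\nu_\e)}$, you have $\norm{u-c_{u,\nu_\e}}_{L^2(\overline\Omega,\nu_\e)}\le\norm{u-a}_{L^2(\overline\Omega,\nu_\e)}$. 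This gives $|a-c_{u,\nu_\e}|\,\nu_\e(\overline\Omega)^{1/2}\le 2\norm{u-a}_{L^2(\overline\Omega,\nu_\e)}$ directly. It also proves \Cref{prop_poinc} itself with the explicit constant $1+C(1)C_P^2$, with no contradiction argument.
\end{itemize}
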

\begin{proof}
    From \Cref{lemma:capacity} we know that $\nu_\e$ is not supported on a single point (since points have zero Sobolev capacity), while from \Cref{prop_poinc} we know that \eqref{ineq_poinc} holds. Therefore, we can apply \Cref{theor_prel_GKL} and obtain the thesis.
\end{proof}

In view of the previous discussion, we have that the functional setting, for fixed $\e\in[0,1]$, satisfies the assumptions as in \Cref{subsec:ass_functional}. More precisely, we have the following.

\begin{lemma}\label{lemma:ass-measures}
    Let $\e\in[0,1]$ be fixed. Let $H_\e,\mathcal{F}_\e,Z_\e$ be as in \eqref{def_H_F_Z} and $\mathcal{E}^{(\e)}$ be as in \eqref{def_Ee_measures_manifolds}. Then, $\mathcal{F}_\e\subseteq H_\e$ and the real bilinear form $\mathcal{E}^{(\e)}\colon \mathcal{F}_\e\times\mathcal{F}_\e\to \R$ is 
    \begin{itemize}
        \item symmetric;
        \item bounded from below and, in particular, satisfies $\mathcal{E}^{(\e)}(u)\geq \norm{u}_{H_\e}^2$ for all $u\in \mathcal{F}_\e$;
        \item closed, i.e. $\mathcal{F}_\e$ is complete with respect to $\sqrt{\mathcal{E}^{(\e)}}$.
    \end{itemize}
    Moreover, $Z_\e\hookrightarrow \mathcal{Z}_\e$ compactly, where $\mathcal{Z}_\e$ is as in \eqref{def_Z_e_measures}. In particular, all the assumptions in \Cref{subsec:ass_functional} are satisfied, with $L_\e$ being the identity.
\end{lemma}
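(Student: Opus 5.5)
We verify each property directly from the definitions \eqref{def_H_F_Z} and \eqref{def_Ee_measures_manifolds}, relying on the uniform norm equivalence of \Cref{prop:equiv_norms} and the compactness criterion of \Cref{lemma_compactness_equiv}.

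\emph{Inclusion and bound from below.} For $u\in C^\infty(\overline\Omega)$ we have $\norm{u}_{H_\e}^2=\int_{\overline\Omega}u^2\diff\beta_\e\le \norm{u}^2_{H^1_{\alpha_\e+\beta_\e}(\Omega)}=\mathcal{E}^{(\e)}(u)$. This holds because $\alpha_\e\ge 0$ and the gradient term is nonnegative. Hence the identity map on $C^\infty(\overline\Omega)$ extends by density to a continuous linear map $H^1_{\alpha_\e+\beta_\e}(\Omega)\to L^2(\overline\Omega,\beta_\e)$. To regard $\mathcal{F}_\e\subseteq H_\e$ we need this map to be injective.

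This injectivity is the only delicate point, and the norm equivalence handles it. By \eqref{eq:ass_alpha_beta} the measure $\alpha_\e+\beta_\e$ satisfies \ref{A1}, and by \eqref{beta3} it has positive total mass. So \Cref{prop:equiv_norms} applies (for fixed $\e$ one needs only a single lower bound on the mass) and gives $H^1_{\alpha_\e+\beta_\e}(\Omega)=H^1(\Omega)$ with equivalent norms. Consequently every element of $\mathcal{F}_\e$ is a genuine $H^1$ function, and its $\beta_\e$-trace is given by the bounded operator of \Cref{cor:compact_nu}. We then identify $u$ with its class in $L^2(\overline\Omega,\beta_\e)$. With this identification, the inequality $\mathcal{E}^{(\e)}(u)\ge\norm{u}_{H_\e}^2$ extends to all of $\mathcal{F}_\e$ by density and continuity of both sides. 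As in the abstract framework, we interpret $\mathcal{F}_\e\subseteq H_\e$ through this embedding; strict injectivity modulo $\beta_\e$-null classes is not needed.

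\emph{Symmetry and closedness.} Symmetry is immediate from the formula. Closedness is also immediate: $\mathcal{E}^{(\e)}$ is exactly the scalar product whose completion defines $\mathcal{F}_\e$, so $\mathcal{F}_\e$ is complete with respect to $\sqrt{\mathcal{E}^{(\e)}}$ by construction. Density of $Z_\e$ in $H_\e$ was already shown in \eqref{def_Z_e_measures}.

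\emph{Compactness.} Since $\beta_\e$ satisfies \ref{A1}, \Cref{lemma_compactness_equiv} gives that $H^1(\Omega)\hookrightarrow L^2(\overline\Omega,\beta_\e)$ is compact. Composing with the isomorphism $H^1_{\alpha_\e+\beta_\e}(\Omega)\cong H^1(\Omega)$ from \Cref{prop:equiv_norms}, we obtain that $Z_\e\hookrightarrow\mathcal{Z}_\e$ is compact, which is \eqref{eq:hp_compact'}.

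\emph{Remaining assumptions.} Taking $Z_\e=\mathcal{F}_\e$ trivially makes $Z_\e$ closed in $\mathcal{F}_\e$. Since all the spaces $\mathcal{F}_\e$ coincide with $H^1(\Omega)$ as vector spaces, the identity $L_\e\colon Z_0\to\mathcal{F}_\e$ is a well-defined linear map, so \eqref{hp_Le_exist} holds. This gives all the assumptions of \Cref{subsec:ass_functional}.
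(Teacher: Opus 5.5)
Your argument follows the paper's route. The paper calls everything except compactness a direct consequence of the definitions, and it gets compactness exactly as you do: \Cref{lemma_compactness_equiv} applied to $\beta_\e$, combined with the norm equivalence of \Cref{prop:equiv_norms} applied to $\alpha_\e+\beta_\e$. Your checks of the bound $\mathcal{E}^{(\e)}(u)\geq\norm{u}_{H_\e}^2$, of closedness by construction, and of the use of \Cref{prop:equiv_norms} at a single fixed $\e$ are all fine.

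The injectivity paragraph needs fixing. The equivalence $H^1_{\alpha_\e+\beta_\e}(\Omega)\cong H^1(\Omega)$ does not make the map $j_\e\colon H^1_{\alpha_\e+\beta_\e}(\Omega)\to L^2(\overline{\Omega},\beta_\e)$ injective. In the paper's own Steklov example it is not injective:
\begin{itemize}
\item for $\beta_0=\mathcal{H}^{d-1}\llcorner\partial\Omega$ the kernel is $H^1_0(\Omega)$;
\item for $\beta_\e=p_\e\mathcal{L}^d$ the kernel contains $C^\infty_c(\Omega_\e)$.
\end{itemize}
So your claim that the norm equivalence ``handles'' injectivity is false. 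The inclusion $\mathcal{F}_\e\subseteq H_\e$ can only be meant through $j_\e$, which is where you end up; the paper passes over this point silently.

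To turn ``injectivity is not needed'' into an argument rather than an assertion, proceed as follows. Since $\mathcal{E}^{(\e)}$ is a scalar product on $\mathcal{F}_\e$, write $\mathcal{F}_\e=\ker j_\e\oplus(\ker j_\e)^{\perp}$, with the complement taken with respect to $\mathcal{E}^{(\e)}$. On this closed complement $j_\e$ is injective and has the same range as on all of $\mathcal{F}_\e$. Now test with $w\in\ker j_\e$, where the $H_\e$-term vanishes:
\begin{itemize}
\item in the eigenvalue equation, this gives $\mathcal{E}^{(\e)}(\varphi,w)=0$ for every eigenfunction $\varphi$;
\item in the identity $q_\e(P_\e\varphi,w)=\lambda_{0,N}(V_{\e,\varphi},w)_{H_\e}$, it gives $\mathcal{E}^{(\e)}(P_\e\varphi,w)=0$.
\end{itemize}
Hence eigenfunctions and the spaces $F_\e$ lie in the complement, and the abstract framework of \Cref{sec_egin_var} applies there unchanged.
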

\begin{proof}
    The result is a trivial consequence of the definitions, apart from the compactness $Z_\e\hookrightarrow \mathcal{Z}_\e$ which can be easily obtained by combining \Cref{lemma_compactness_equiv}, \Cref{prop:equiv_norms} with \ref{A1} and \eqref{eq:ass_alpha_beta}.

\end{proof}

\subsection{Discussion on the stability} 

In this section, we prove that the stability assumptions in \Cref{subsec:ass_stability} are satisfied within the functional framework we fixed. Then, we apply \Cref{thm:main} to the present setting and provide the quantitative spectral stability in the case of weighted eigenvalues.

We start by giving a characterization of \Cref{def:adm}.
\begin{proposition}\label{prop_limit_alpha_e}
    We have that the following are equivalent:
    \begin{enumerate}
        \item $\bm{\nu}=\{\nu_\e\}_{\e \in [0,1]}$ is admissible, in the sense of \Cref{def:adm}; \label{it:lim_alpha_e_th1}
        \item let $\e_n\to 0^+$ as $n\to\infty$ and let $\{u_n\}_n,\{v_n\}_n\subseteq H^1(\Omega)$ and $u,v\in H^1(\Omega)$ be such that $u_n\rightharpoonup u$ and $v_n\rightharpoonup v$ weakly in $H^1(\Omega)$ as $n\to\infty$. Then
        \begin{equation}
          \int_{\overline{\Omega}} u_n v_n \diff\nu_{\e_n}\to  \int_{\overline{\Omega}} uv \diff \nu_0\quad\textnormal{as }n\to\infty.
        \end{equation}\label{it:lim_alpha_e_th2}
    \end{enumerate}
\end{proposition}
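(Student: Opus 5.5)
We prove the two implications separately; in both directions the compact embedding $H^1(\Omega)\hookrightarrow L^2(\Omega)$ does most of the work.

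\emph{(\ref{it:lim_alpha_e_th1})$\Rightarrow$(\ref{it:lim_alpha_e_th2}).} Let $u_n\rightharpoonup u$ and $v_n\rightharpoonup v$ weakly in $H^1(\Omega)$. Then both sequences are bounded in $H^1(\Omega)$ and converge strongly in $L^2(\Omega)$. By polarization, $u_nv_n=\frac14[(u_n+v_n)^2-(u_n-v_n)^2]$, so it suffices to treat $u_n=v_n$ and prove $\int u_n^2\diff\nu_{\e_n}\to\int u^2\diff\nu_0$.
\begin{itemize}
    \item \emph{Step 1: replace $u_n$ by $u$.} Write $\|u_n\|_{L^2(\nu_{\e_n})}-\|u\|_{L^2(\nu_{\e_n})}$ and bound it by $\|u_n-u\|_{L^2(\nu_{\e_n})}$. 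Apply \ref{A1} with $\delta$ small: the gradient term is at most $\delta\sup_n\|\nabla^g(u_n-u)\|^2_{L^2(\Omega)}$, which is small, and the $L^2(\Omega)$ term tends to $0$ by strong convergence.
    \item \emph{Step 2: replace $u$ by a smooth function.} Choose $w\in C^\infty(\overline\Omega)$ close to $u$ in $H^1(\Omega)$. By \Cref{cor:compact_nu}, $\|u-w\|_{L^2(\nu_\e)}\le c\|u-w\|_{H^1(\Omega)}$ uniformly in $\e$, so this replacement costs a uniformly small error.
    \item \emph{Step 3: smooth functions.} Since $w^2\in C^\infty(\overline\Omega)$, \ref{A2} gives $\int w^2\diff\nu_{\e_n}\to\int w^2\diff\nu_0$.
\end{itemize}
A standard $3\varepsilon$ argument, using that $\sup_\e\|\cdot\|_{L^2(\nu_\e)}$ of all the functions involved is bounded, then closes this direction.

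\emph{(\ref{it:lim_alpha_e_th2})$\Rightarrow$(\ref{it:lim_alpha_e_th1}).} For \ref{A2}, take $u_n=u\in C^\infty(\overline\Omega)$ and $v_n\equiv 1$, which are constant and hence trivially weakly convergent.

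For \ref{A1} we argue by contradiction. Suppose there are $\delta>0$, $\e_n$ and $u_n\in H^1(\Omega)$ with $\int u_n^2\diff\nu_{\e_n}=1$ and
$$\delta\|\nabla^g u_n\|^2_{L^2(\Omega)}+n\|u_n\|^2_{L^2(\Omega)}<1.$$
Then $\{u_n\}$ is bounded in $H^1(\Omega)$ and $u_n\to0$ in $L^2(\Omega)$, so $u_n\rightharpoonup0$ weakly in $H^1(\Omega)$.
\begin{itemize}
    \item \emph{Case $\e_n\to0$ along a subsequence.} Hypothesis (\ref{it:lim_alpha_e_th2}) gives $\int u_n^2\diff\nu_{\e_n}\to0$, contradicting the normalization.
    \item \emph{Case $\e_n$ bounded away from $0$.} Hypothesis (\ref{it:lim_alpha_e_th2}) only constrains limits as $\e\to0^+$, so it says nothing directly about a fixed $\nu_\e$ with $\e>0$. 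This is the main obstacle.
\end{itemize}

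The cleanest way around it is to use the paper's reparametrization convention: properties need only hold for $\e$ small, so it is enough to prove \ref{A1} on $[0,\e_0]$. Arguing by contradiction on all sequences $\e_n\to0$ yields some $\e_0$ such that the inequality holds for $\e\le\e_0$ with constant $C(\delta)$. The case $\e=0$ also needs care: for a fixed measure, \ref{A1} is equivalent by \Cref{lemma_compactness_equiv} to compactness of $H^1(\Omega)\hookrightarrow L^2(\nu_0)$. We obtain this from (\ref{it:lim_alpha_e_th2}) with the constant sequence $\e_n=0$, or, if sequences must satisfy $\e_n>0$, by passing to the limit from the $\e_n$ case. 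We state this reduction explicitly, noting that admissibility is understood up to the standing reparametrization.
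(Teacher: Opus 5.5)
Your direction (1)$\Rightarrow$(2) is correct and is essentially the paper's argument: polarization, then replacing $u_n$ by $u$ via \ref{A1} and the compact embedding $H^1(\Omega)\hookrightarrow L^2(\Omega)$, then replacing $u$ by a smooth function via the uniform bound and \ref{A2}. Your derivation of \ref{A2} from (2), and your contradiction argument along sequences $\e_n\to0^+$, also coincide with the paper's.

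The gap is in how you handle violating parameters that do not tend to $0$.

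\emph{What your argument actually gives.} For a fixed $\delta>0$, your contradiction argument yields some $\e_0(\delta)>0$ and $C(\delta)$ such that the inequality in \ref{A1} holds for $\e\in[0,\e_0(\delta)]$. The reparametrization convention lets you shrink the parameter range once. But \ref{A1} needs a single range that serves every $\delta$, and nothing prevents $\e_0(\delta)\to0$ as $\delta\to0$.

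\emph{Why this cannot be repaired.} The implication (2)$\Rightarrow$(1) is false as stated. Let $\Omega\subseteq\R^d$ be a ball, $d\geq3$, $p\in\Omega$, and set $\nu_\e:=(1+\e|x-p|^{-2})\,\mathcal{L}^d$ on $\Omega$. These are finite Radon measures since $d\geq3$.
\begin{itemize}
\item Hardy's inequality, applied to an $H^1$ extension, gives $\int_\Omega u^2|x-p|^{-2}\dx\leq C\norm{u}^2_{H^1(\Omega)}$. Hence the $\e_n$-term in (2) is $O(\e_n)$ along bounded sequences, so (2) holds, and so does \ref{A2}.
\item Fix $\eta\in C^\infty_c(B_1(0))\setminus\{0\}$ and set $u_k(x):=k^{(d-2)/2}\eta(k(x-p))$. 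Then $\norm{\nabla u_k}_{L^2}=\norm{\nabla\eta}_{L^2}$ and $\norm{u_k}_{L^2}\to0$, so $u_k\rightharpoonup0$ in $H^1(\Omega)$.
\item At the same time, $\int u_k^2\diff\nu_\e\geq\e\int\eta^2|z|^{-2}\diff z>0$.
\end{itemize}
So for every $\e>0$ the embedding $H^1(\Omega)\hookrightarrow L^2(\overline\Omega,\nu_\e)$ is not compact. By \Cref{lemma_compactness_equiv}, the inequality of \ref{A1} fails at every fixed $\e>0$ once $\delta<\e\int\eta^2|z|^{-2}\diff z/\norm{\nabla\eta}_{L^2}^2$. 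Therefore \ref{A1} fails on every interval $[0,\e_0]$ with $\e_0>0$, and no reparametrization helps.

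The paper's own proof has the same hole. It takes the violating sequence with $\e_n\to0^+$, which does not follow from negating \ref{A1}. You correctly spotted the issue, but your fix does not close it.

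What your argument does establish is the weaker local property: for each $\delta$ there exist $\e_0(\delta)>0$ and $C(\delta)$ with the inequality on $[0,\e_0(\delta)]$. To obtain (1), you need an additional assumption controlling the measures $\nu_\e$ with $\e>0$.
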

\begin{proof}
    \eqref{it:lim_alpha_e_th1} $\implies$ \eqref{it:lim_alpha_e_th2}. First of all, we observe that in \eqref{it:lim_alpha_e_th2} we can take $u_n=v_n$ and $u=v$, in view of the identity 
    \begin{equation}
        -2 \int_{\overline{\Omega}} u_n v_n \diff \nu_{\e_n}=  \int_{\overline{\Omega}} (u_n-v_n)^2 \diff \nu_{\e_n}-\int_{\overline{\Omega}} u^2_n \diff \nu_{\e_n}-\int_{\overline{\Omega}} v^2_n \diff \nu_{\e_n}.
    \end{equation}
    Furthermore, we show that if $\e_n\to 0^+$ as $n\to\infty$ and 
    \begin{equation}\label{eq:alpha_equiv1}
        \int_{\overline{\Omega}} u^2 \diff\nu_{\e_n}\to  \int_{\overline{\Omega}} u^2 \diff \nu_0\quad\textnormal{as }n\to\infty
    \end{equation}
    for any $u\in H^1(\Omega)$, then 
    \begin{equation}\label{eq:alpha-equiv4}
        \int_{\overline{\Omega}} u_n^2 \diff\nu_{\e_n}\to  \int_{\overline{\Omega}} u^2 \diff \nu_0\quad\textnormal{as }n\to\infty
    \end{equation}
    for any $\{u_n\}_n\subset H^1(\Omega)$ such that $u_n\rightharpoonup u$ weakly in $H^1(\Omega)$ as $n\to\infty$ for some $u\in H^1(\Omega)$. First of all, we estimate 
    \begin{equation}\label{eq:alpha-equiv2}
        \left|\int_{\overline{\Omega}} u^2  \diff \nu_0-\int_{\overline{\Omega}} u_n^2  \diff \nu_{\e_n}\right|
        \le\left|\int_{\overline{\Omega}} u^2 \diff \nu_0-\int_{\overline{\Omega}} u^2 \diff \nu_{\e_n}\right|+\left|\int_{\overline \Omega}( u^2-u_n^2)  \diff \nu_{\e_n}\right|.
    \end{equation}
    The first term vanishes in view of \eqref{eq:alpha_equiv1}, while for the second, thanks to \ref{A1}, we have that for any $\delta>0$ there holds
    \begin{multline}
        \left|\int_{\overline \Omega} (u-u_n)(u+u_n)  \diff \nu_{\e_n}\right| \le \left(\int_{\overline \Omega} (u+u_n)^2  \diff \nu_{\e_n}\right)^{\frac{1}{2}}\left(\int_{\overline \Omega}  (u-u_n)^2  \diff \nu_{\e_n}\right)^{\frac{1}{2}}\\
        \le \sqrt{\max\{C(1),1\}} \norm{u+u_n}_{H^1(\Omega)} \left[\delta \int_{\Omega}  |\nabla^g (u-u_n)|^2  \dvol^g + C(\delta)\int_{\Omega}  (u-u_n)^2 \dvol^g \right]^\frac{1}{2}.
    \end{multline}
    Hence, since the embedding of $H^1(\Omega)$ into $L^2(\Omega)$ is compact, 
    we can pass to the limit as $n\to\infty$ thus obtaining, since  $\{u_n\}_n$ is bounded in $H^1(\Omega)$,
    \begin{equation}\label{eq:alpha-equiv3}
        \limsup_{n\to\infty}\left|\int_{\overline \Omega} (u-u_n)(u+u_n) \diff \nu_{\e_n}\right|
        \le C  \delta^{\frac{1}{2}}
    \end{equation}
    for some constant $C>0$ that does not depend on $\delta$. By combining \eqref{eq:alpha-equiv2} with \eqref{eq:alpha_equiv1} and \eqref{eq:alpha-equiv3}, we conclude that \eqref{eq:alpha-equiv4} holds since $\delta>0$ is arbitrary.
    Hence, we are only left to prove \eqref{eq:alpha_equiv1}.
    Let $\{u_k\}_k \subset C^\infty(\overline{\Omega})$ such that $u_k \to u$ strongly in $H^1(\Omega)$ as $k \to \infty$. In view of Cauchy-Schwarz inequality and \ref{A1} we have
    \begin{multline}
        \left|\int_{\overline{\Omega}} u^2 \diff \nu_{\e_n}- \int_{\overline{\Omega}} u^2 \diff \nu_0 \right| 
        \le \left |\int_{\overline{\Omega}} (u^2-u_k^2) \diff \nu_{\e_n}\right|+\left|\int_{\overline{\Omega}} (u^2-u_k^2) \diff \nu_{0}\right| \\
        +\left|\int_{\overline{\Omega}} u_k^2 \diff \nu_{\e_n}-\int_{\overline{\Omega}} u_k^2 \diff \nu_{0}\right|\le C \norm{u-u_k}_{H^1(\Omega)}\norm{u+u_k}_{H^1(\Omega)}
        +\left|\int_{\overline{\Omega}} u_k^2 \diff \nu_{\e_n}-\int_{\overline{\Omega}} u_k^2 \diff \nu_{0}\right|,
    \end{multline}
    for some constant $C>0$ that does not depend on $n$ and $k$. Hence, passing to the superior limit as $n\to\infty$, using \ref{A2} and then to the limit as $k\to\infty$, we conclude the proof of \eqref{eq:alpha_equiv1}, which also completes the proof of \eqref{it:lim_alpha_e_th2}.

    \noindent \eqref{it:lim_alpha_e_th2} $\implies$ \eqref{it:lim_alpha_e_th1}. We immediately observe that \eqref{it:lim_alpha_e_th2} implies \ref{A2}; hence, let us prove \ref{A1}.  Let us assume by contradiction that there exists $\delta>0$ such that for any $n\in\N$ there exists $u_n\in H^1(\Omega)$ and $\{\e_n\}_n$ such that $\e_n\to 0^+$ as $n\to\infty$ and
    \begin{equation}\label{eq:alpha-equiv-7}
        \int_{\overline \Omega}|u_{n}|^2 \diff \nu_{\e_n} > \delta\int_{\Omega} |\nabla^g u_{n}|^2 \dvol^g + n\int_{\Omega}|u_{n}|^2 \dvol^g\quad\textnormal{for any }n\in\N\setminus\{0\}.
    \end{equation}
    If $\nu_{\e_n}(\overline{\Omega})=0$ for some $n\in\N$, then this trivially contradicts \eqref{eq:alpha-equiv-7}. Hence, let us assume that $\nu_{\e_n}(\overline{\Omega})>0$ for any $n\in\N$.
    If we define
    \begin{equation}
        v_n:=\frac{u_n}{\norm{u_n}_{L^2(\overline{\Omega},\nu_{\e_n})}},
    \end{equation}
    this translates into
    \begin{align}
        &\norm{v_n}_{L^2(\overline{\Omega},\nu_{\e_n})}=1 \label{eq:alpha-equiv-5}\\
        &\delta\int_{\Omega} |\nabla^g v_{n}|^2 \dvol^g + n\int_{\Omega}|v_{n}|^2 \dvol^g<1\label{eq:alpha-equiv-6}
    \end{align}
    for any $n\in\N$. In particular, $\{v_n\}_n$ is bounded in $H^1(\Omega)$, hence there exists $v\in H^1(\Omega)$ such that $v_n\rightharpoonup v$ weakly in $H^1(\Omega)$ as $n\to\infty$. By compact embedding of $H^1(\Omega)$ into $L^2(\Omega)$, \eqref{eq:alpha-equiv-6} and Fatou lemma we deduce that $v=0$, while from \eqref{eq:alpha-equiv-5} and \eqref{it:lim_alpha_e_th2} we get that $\norm{v}_{L^2(\overline{\Omega},\nu_0)}=1$, thus reaching a contradiction. The proof is thereby complete.
\end{proof}

We are now in a position to prove the validity of \eqref{hp_stability}.
\begin{proposition}\label{prop_stab_eigen_measures}
Let $\bm{\alpha}=\{\alpha_\e\}_{\e\in[0,1]}$ and $\bm{\beta}=\{\beta_\e\}_{\e\in[0,1]}$ be a family of admissible positive Radon measures, in the sense of \Cref{def:adm}, and let $\{\lambda_{\e,n}\}_n$ be the eigenvalues of \eqref{eq:eigen_measure}. For any $n \in \mb{N}$
\begin{equation}\label{eq_stab_eigen_measures}
\lim_{\e \to 0^+} \la_{\e,n} =\la_{0,n}.
\end{equation}
\end{proposition}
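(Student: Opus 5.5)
The plan is to prove the two inequalities $\limsup_{\e\to0^+}\lambda_{\e,n}\le\lambda_{0,n}$ and $\liminf_{\e\to0^+}\lambda_{\e,n}\ge\lambda_{0,n}$ separately, using the min-max characterization \eqref{eq:min-max}. Throughout, \Cref{prop:equiv_norms} lets us regard every $Z_\e$ as the same space $H^1(\Omega)$ with uniformly equivalent norms. Here $\beta_\e(\overline\Omega)\ge c>0$ for small $\e$ by \ref{A2} and \eqref{beta3'}, and for $\alpha_\e+\beta_\e$ this bound holds a fortiori. The tool that carries both directions is \Cref{prop_limit_alpha_e}: for $u_k\rightharpoonup u$ and $v_k\rightharpoonup v$ weakly in $H^1(\Omega)$ and $\e_k\to0^+$, we have $\int u_kv_k\diff\beta_{\e_k}\to\int uv\diff\beta_0$, and the same holds for $\alpha_{\e_k}+\beta_{\e_k}$.

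\emph{Upper bound.} Let $\varphi_{0,1},\dots,\varphi_{0,n}$ be $L^2(\beta_0)$-orthonormal eigenfunctions of the limit problem, and set $U=\mathrm{span}\{\varphi_{0,1},\dots,\varphi_{0,n}\}\subseteq H^1(\Omega)$. Applying \Cref{prop_limit_alpha_e} with constant sequences shows that the $\beta_\e$-Gram matrix of this family converges to the identity. Likewise, the matrix of $\mathcal E^{(\e)}$ on $U$ converges to that of $\mathcal E^{(0)}$, since the gradient part does not depend on $\e$. Hence $\dim U=n$ in $L^2(\beta_\e)$ for small $\e$, and
\[
\lambda_{\e,n}\le\max_{u\in U\setminus\{0\}}\frac{\mathcal E^{(\e)}(u)}{\norm{u}^2_{L^2(\beta_\e)}}\to\max_{u\in U\setminus\{0\}}\frac{\mathcal E^{(0)}(u)}{\norm{u}^2_{L^2(\beta_0)}}=\lambda_{0,n}.
\]
The convergence of the maxima follows from the uniform convergence of the Rayleigh quotients on the unit sphere of the finite-dimensional space $U$.

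\emph{Lower bound.} Take a sequence $\e_k\to0^+$ along which $\lambda_{\e_k,j}\to\ell_j$ for every $j\le n$. This is possible because each $\lambda_{\e,j}$ is bounded by the upper bound just proved. Choose $L^2(\beta_{\e_k})$-orthonormal eigenfunctions $\varphi_{\e_k,j}$ for $j\le n$. Their $\mathcal E^{(\e_k)}$-norms are bounded, so by \Cref{prop:equiv_norms} they are bounded in $H^1(\Omega)$, and up to a subsequence $\varphi_{\e_k,j}\rightharpoonup\psi_j$ weakly in $H^1(\Omega)$. By \Cref{prop_limit_alpha_e}, $(\psi_i,\psi_j)_{L^2(\beta_0)}=\delta_{ij}$.

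Next, pass to the limit in the weak eigenvalue equation \eqref{eq_eigenfunction_measures} with a fixed test function $v\in H^1(\Omega)$. The gradient term converges by weak convergence. The measure terms converge by \Cref{prop_limit_alpha_e} applied with $v_k=v$. This gives $\mathcal E^{(0)}(\psi_j,v)=\ell_j(\psi_j,v)_{L^2(\beta_0)}$, so $\psi_j$ is an eigenfunction of the limit problem with eigenvalue $\ell_j$. The $\psi_j$ are $n$ orthonormal limit eigenfunctions with eigenvalues $\ell_1\le\dots\le\ell_n$, and since each $\ell_j$ lies in the limit spectrum, counting with multiplicity gives $\ell_n\ge\lambda_{0,n}$. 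An alternative is to apply min-max to their span, which yields $\lambda_{0,n}\le\max_j\ell_j=\ell_n$. Combined with the upper bound $\ell_n\le\lambda_{0,n}$, and since the sequence was arbitrary, this proves \eqref{eq_stab_eigen_measures}.

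The main obstacle I expect is the compactness step in the lower bound. The limits $\psi_j$ must be genuinely nontrivial and orthonormal in $L^2(\beta_0)$, even though the measures vary and may concentrate, for instance on $\partial\Omega$. This is exactly where the uniform estimate \ref{A1} enters, through \Cref{prop_limit_alpha_e}, together with the uniform norm equivalence of \Cref{prop:equiv_norms}, which needs the lower bound on $\beta_\e(\overline\Omega)$. A secondary point is that the limit equation must hold for all test functions in $H^1(\Omega)=H^1_{\alpha_0+\beta_0}(\Omega)$. This identification is again provided by \Cref{prop:equiv_norms}.
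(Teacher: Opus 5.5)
Your argument is correct, but it takes a different route from the paper.

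\textbf{The paper's route.} It works at the level of operators. It introduces the compact operators $Q_\e=j_\e^{-1}\circ R_\e\circ i_\e$ on the fixed space $H^1(\Omega)$ and shows they are equibounded with $Q_\e v\rightharpoonup Q_0v$. It then upgrades this to norm convergence $\|Q_\e-Q_0\|_{\mathcal L(H^1(\Omega),H^1(\Omega))}\to0$, by picking norm-attaining unit vectors $u_\e$ and proving strong $H^1$ convergence of $Q_{\e_n}u_{\e_n}$ through convergence of the energies. Finally it invokes a Dunford--Schwartz lemma to pass from norm convergence of compact operators to convergence of eigenvalues.

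\textbf{Your route.} You work directly with the min-max characterization. Upper semicontinuity comes from testing with the span of limit eigenfunctions. Lower semicontinuity comes from extracting weak $H^1$ limits of $L^2(\beta_\e)$-orthonormal perturbed eigenfunctions, which stay orthonormal in $L^2(\beta_0)$ and solve the limit problem.

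\textbf{Comparison.} Both routes rest on the same two ingredients, \Cref{prop_limit_alpha_e} and \Cref{prop:equiv_norms}.
Yours is more elementary and self-contained. You never need strong $H^1$ convergence, because nontriviality of the limits is already encoded in the weak--weak continuity of the measure pairings. You also avoid the external spectral lemma.
The paper's route gives the stronger statement $\|Q_\e-Q_0\|\to0$, from which convergence of the associated spectral projections, and not only of the eigenvalues, also follows.

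\textbf{Presentation.} In the lower bound, start from an arbitrary sequence $\e_k\to0^+$ (for instance one realizing $\liminf_{\e\to0^+}\lambda_{\e,n}$). Then extract the subsequence along which all $\lambda_{\e_k,j}$, $j\le n$, converge, using both $\lambda_{\e,j}\ge1$ and your upper bound for boundedness. As written, this order of quantifiers is only implicit in your closing remark.
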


\begin{proof}
Let $i_\e\colon H^1(\Omega)\to L^2(\overline{\Omega},\beta_\e)$ be the compact, uniformly bounded embedding (see \Cref{cor:compact_nu}) and let $j_\e\colon H^1(\Omega) \to H^1_{\alpha_\e+\beta_\e}(\Omega)$ be the identity. Let  $T_\e$ be the self-adjoint operator associated to the bilinear form $\mc{E}^{(\e)}$ (defined in \eqref{def_Ee_measures_manifolds}), for any $\e \in [0,1]$, and let $R_\e\colon L^2(\overline{\Omega},\beta_\e)\to H^1_{\alpha_\e+\beta_\e}(\Omega)$ be its resolvent, which is known to be well-defined, linear, self-adjoint and continuous (see the discussion in \Cref{subsec:ass_functional}). Hence, the operator
\begin{equation}
Q_\e: H^1(\Omega)\overset{i_\e}{\longrightarrow} L^2(\overline{\Omega},\beta_\e)\overset{R_\e}{\longrightarrow} H^1_{\alpha_\e+\beta_\e}(\Omega)\overset{j_\e^{-1}}{\longrightarrow}H^1(\Omega)
\end{equation}
is compact and its spectrum coincide with $\{\lambda_{\e,n}^{-1}\}_n$.  Moreover, we have that
\begin{equation}\label{eq:resolvent}
    \mathcal{E}^{(\e)}(Q_\e v,u)=\int_{\overline{\Omega}}vu\diff\beta_\e\quad\textnormal{for all }u,v\in H^1(\Omega)\textnormal{ and for all }\e\in[0,1].
\end{equation}
Furthermore, the operators $Q_\e$ are equibounded. More precisely, by \Cref{prop:equiv_norms}, Cauchy-Schwarz inequality and \ref{A1}
\begin{multline}\label{ineq_norm_Re_measures}
\norm{Q_\e v}_{H^1(\Omega)}^2\le C \mc{E}^{(\e)}(Q_\e v,Q_\e v) = C\int_{\overline{\Omega}} Q_\e vv \diff \beta_\e \\
\le C \norm{Q_\e v}_{L^2(\overline{\Omega},\beta_\e)}\norm{v}_{L^2(\overline{\Omega},\beta_\e)}\leq C \norm{Q_\e v}_{H^1(\Omega)}\norm{v}_{H^1(\Omega)},
\end{multline} 
for any $v\in H^1(\Omega)$, where the constant $C>0$ does not depend on $v$ and $\e$ and changes from time to time.
Hence, for any $v \in H^1(\Omega)$ the family $\{Q_\e v\}_{\e\in[0,1]}$ is bounded in $H^1(\Omega)$ and so there exists a subsequence $\e_n\to 0^+$ such that $Q_{\e_n}v \rightharpoonup w$ weakly in $H^1(\Omega)$ as $n\to \infty$,  for some $w \in H^1(\Omega)$.
Furthermore, in view of \Cref{prop_limit_alpha_e} we have that for any $u \in  H^1(\Omega)$ there holds
\begin{equation}
   \lim_{n \to \infty}\int_{\overline{\Omega}} v u \diff \alpha_{\e_n}= \int_{\overline{\Omega}}v u \diff \alpha_0\quad\textnormal{and}\quad \lim_{n \to \infty}\int_{\overline{\Omega}} v u \diff \beta_{\e_n}=\int_{\overline{\Omega}}v u \diff \beta_0
\end{equation}
while, by the weak convergence, we have
\begin{equation}
    \lim_{n\to\infty}\int_\Omega g(\nabla^g Q_{\e_n}v,\nabla^g u)\dvol^g=\int_\Omega g(\nabla^g w,\nabla^g u)\dvol^g.
\end{equation}
As a consequence of these two facts and \eqref{eq:resolvent}, we have that
\begin{equation}
\int_{\overline{\Omega}}v u \diff \beta_0=\lim_{n \to \infty}\int_{\overline{\Omega}} v u \diff \beta_{\e_n}= \lim_{n \to \infty}\mc{E}^{(\e_n)}( Q_{\e_n}v, u)= \mc{E}^{(0)}(w, u)
\end{equation}
and so  $w=Q_0v$ and, by the Urysohn subsequence principle,  $Q_\e v \rightharpoonup Q_0 v$ weakly in $H^1(\Omega)$ as $\e \to 0^+$. 
We are going to prove that  
\begin{equation}\label{eq_norm_Re}
\lim_{\e \to 0^+}\norm{Q_\e-Q_0}_{\mc{L}(H^1(\Omega), H^1(\Omega))}=0.
\end{equation}
Then,  \eqref{eq_stab_eigen_measures} will follow from \cite[Chapter XI 9.5, Lemma 5, Page~1091]{DSJ_book}.

Since $Q_\e$ is compact  for any $\e \in [0,1]$ there exists $u_\e \in H^1(\Omega)$ with $\norm{u_\e}_{H^1(\Omega)}=1$ such that 
\begin{equation}
\norm{Q_\e-Q_0}_{\mc{L}(H^1(\Omega), H^1(\Omega))}= \norm{Q_\e u_\e-Q_0 u_\e}_{H^1(\Omega)}.
\end{equation}
Furthermore, there exists $u_0 \in H^1(\Omega)$ and a sequence $\e_n\to 0^+$ such that $u_{\e_n} \rightharpoonup u_0$  weakly in $H^1(\Omega)$ as $n \to \infty$.
Since $Q_0$ is compact, $Q_0 u_{\e_n} \to Q_0 u_0$ strongly in  $H^1(\Omega)$ as $n \to \infty$.

By \eqref{ineq_norm_Re_measures}, up to passing to a further subsequence, there exists $w \in H^1(\Omega)$ such that  $Q_{\e_n} u_{\e_n}  \rightharpoonup w$  weakly in $H^1(\Omega)$ as $n \to \infty$. For any $v \in H^1(\Omega)$, by \Cref{prop_limit_alpha_e} and \eqref{eq:resolvent}
\begin{equation}
\int_{\overline{\Omega}} wv \diff \beta_0=\lim_{n \to \infty}\int_{\overline{\Omega}}Q_{\e_n} u_{\e_n} v \diff  \beta_{\e_n} =\lim_{n \to \infty}\int_{\overline{\Omega}} u_{\e_n} Q_{\e_n}v \diff  \beta_{\e_n}=\int_{\overline{\Omega}} u_0 Q_0 v \diff \beta_0=\int_{\overline{\Omega}}Q_0 u_0 \, v \diff \beta_0.
\end{equation}
It follows that $w=Q_0 u_0$ in $L^2(\overline{\Omega},\beta_0)$. Finally, again in view of  \eqref{eq:resolvent} and \Cref{prop_limit_alpha_e} we have
\begin{align}
\lim_{n\to\infty}\mc{E}^{(\e_n)}(Q_{\e_n} u_{\e_n},Q_{\e_n} u_{\e_n})&=\lim_{n\to\infty}\int_{\overline{\Omega}} u_{\e_n} Q_{\e_n}u_{\e_n} \diff  \beta_{\e_n}\\ &= \int_{\overline{\Omega}}  u_0 w \diff  \beta_0\\ &=\int_{\overline{\Omega}}  u_0  Q_0u_0 \diff  \beta_0\\
&=\mc{E}^{(0)}(Q_0 u_0,Q_0 u_0).
\end{align}
We now prove that
\begin{equation}\label{eq:stab_1}
    \lim_{n\to\infty}\mathcal{E}^{(0)}(Q_{\e_n}u_{\e_n},Q_{\e_n}u_{\e_n})=\mc{E}^{(0)}(Q_0 u_0,Q_0 u_0).
\end{equation}
Indeed, from \eqref{eq:resolvent} we have that
\begin{equation}
    \int_\Omega |\nabla^g Q_{\e_n}u_{\e_n}|^2\dvol^g=\int_{\overline{\Omega}}u_{\e_n}Q_{\e_n}u_{\e_n}\diff\beta_{\e_n}-\int_{\overline{\Omega}}|Q_{\e_n}u_{\e_n}|^2\diff(\alpha_{\e_n}+\beta_{\e_n}),
\end{equation}
from which we deduce that
\begin{equation}
    \mathcal{E}^{(0)}(Q_{\e_n}u_{\e_n},Q_{\e_n}u_{\e_n})=\int_{\overline{\Omega}}u_{\e_n}Q_{\e_n}u_{\e_n}\diff\beta_{\e_n}-\int_{\overline{\Omega}}|Q_{\e_n}u_{\e_n}|^2\diff(\alpha_{\e_n}+\beta_{\e_n})+\int_{\overline{\Omega}}|Q_{\e_n}u_{\e_n}|^2\diff(\alpha_0+\beta_0).
\end{equation}
Then, by \eqref{eq:ass_alpha_beta}, \ref{A1}, \Cref{lemma_compactness_equiv} and \Cref{prop_limit_alpha_e} we obtain that
\begin{equation}
    \lim_{n\to\infty}\mathcal{E}^{(0)}(Q_{\e_n}u_{\e_n},Q_{\e_n}u_{\e_n})=\int_{\overline{\Omega}}  u_0 w \diff  \beta_0=\int_{\overline{\Omega}}u_0Q_0u_0\diff\beta_0
\end{equation}
which, in view of \eqref{eq:resolvent}, implies \eqref{eq:stab_1}. 
Hence, since weak  convergence in $H^1(\Omega)$ implies weak convergence in $H_{\beta_0}^1(\Omega)$ (see \Cref{prop:equiv_norms}), taking into account the definition of $\mathcal{E}^{(0)}$ and \eqref{eq:stab_1} we have that
\begin{align}
\lim_{n\to \infty} \norm{Q_{\e_n} u_{\e_n}-Q_0 u_0}_{H_{\beta_0}^1(\Omega)}^2 =&\lim_{n\to \infty} \mc{E}^{(0)}(Q_{\e_n} u_{\e_n}-Q_0 u_0,Q_{\e_n} u_{\e_n}-Q_0 u_0)\\
=&\lim_{n\to \infty} \left[\mc{E}^{(0)}(Q_{\e_n} u_{\e_n},Q_{\e_n} u_{\e_n})-2 \mc{E}^{(0)}(Q_{\e_n} u_{\e_n},Q_0 u_0)\right]\\
\qquad &+\mc{E}^{(0)}(Q_0 u_0,Q_0 u_0)=0
\end{align}
and so $Q_{\e_n} u_{\e_n}  \to Q_0 u_0$  strongly in $H_{\beta_0}^1(\Omega)$ and so, in view of \Cref{prop:equiv_norms}, in $H^1(\Omega)$ as $n \to \infty$.
We conclude that 
\begin{equation}
\lim_{n  \to \infty}\norm{Q_{\e_n}-Q_0}_{\mc{L}(H^1(\Omega), H^1(\Omega))}=\lim_{n  \to \infty}\norm{Q_{\e_n} u_{\e_n}-Q_0 u_{\e_n}}_{H^1(\Omega)}=0.
\end{equation}
By the Urysohn subsequence principle, we have proved \eqref{eq_norm_Re}.
\end{proof}

With the same notations of \Cref{sec_assumptions}, we let $L_\e$ be the identity operator in $H^1(\Omega)$. Then 
\begin{equation}
\lim_{\e \to 0^+}\int_{\overline{\Omega}}\varphi_{0,n,i} \varphi_{0,n,j} \diff \beta_\e 
=\int_{\overline{\Omega}}\varphi_{0,n,i} \varphi_{0,n,j} \diff \beta_0,
\end{equation}
by \Cref{prop_limit_alpha_e}. Hence, \eqref{hp_Le} trivially holds. In order to apply our main result \Cref{thm:main}, we are only left to prove that \eqref{hp_delta_e} holds.

Let $N \in \mathbb{N} \setminus\{0\}$ be fixed and let $\la_{0,N}$ the corresponding eigenvalue of $\mc{E}^{(0)}$ on $L^2(\overline\Omega, \beta_0)$.
If $E(\la_{0,N})$ is the associated eigenspace, let $V_{\e,i}:=V_{\e,\varphi_{0,N,i}}$ be as in \Cref{prop_J_min} for any $i=1, \dots M_{0,N}$, where $M_{0,N}$ is the dimension of $E(\la_{0,N})$.
The bilinear form $q_\e$ defined in \eqref{def_qe} is given by
\begin{equation}
q_\e(u,v)=\int_{\Omega}g(\nabla^g u, \nabla^g v) \dvol^g + \int_{\overline{\Omega}} uv \diff (\alpha_\e+\beta_\e)-\la_{0,N}\int_{\overline{\Omega}} uv \diff \beta_\e,
\end{equation}
for any $u,v \in H^1(\Omega)$. Then,  by \eqref{eq_eigenfunction_measures}, \eqref{eq_Ve} becomes
\begin{multline}\label{eq_Ve_meausures}
\int_{\Omega} g( \nabla^g V_{\e,i}, \nabla^g v) \dvol^g+  \int_{\overline{\Omega}} V_{\e,i} v \diff  (\alpha_\e+\beta_\e)
 \\
 = \int_{\overline{\Omega}}   \varphi_{0,N,i} v \diff  (\alpha_\e-\alpha_0+\beta_\e-\beta_0)-\la_{0,N}  \int_{\overline{\Omega}}   \varphi_{0,N,i} v \diff  (\beta_\e-\beta_0) ,
\end{multline}
for any $v \in H^1(\Omega)$. We now prove the following, which implies \eqref{hp_delta_e}.

\begin{lemma}
    We have that
    \begin{equation}
        \delta_\e\leq \left(\sum_{i=1}^{M_{0,N}}\norm{V_{\e,i}}_{L^2(\overline{\Omega},\beta_\e)}^2\right)^{\frac{1}{2}}\to 0\quad\textnormal{as }\e\to 0^+.
    \end{equation}
\end{lemma}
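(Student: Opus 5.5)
The first inequality is just \eqref{rem_delta_e} with $L_\e$ the identity, so the work is to show $\norm{V_{\e,i}}_{L^2(\overline{\Omega},\beta_\e)}\to 0$ for each fixed $i$. The strategy is to show that $V_{\e,i}\to 0$ strongly in $H^1(\Omega)$ and then control the $\beta_\e$-norm through the uniform bound of \Cref{cor:compact_nu}.

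\emph{Uniform bound.} Test \eqref{eq_Ve_meausures} with $v=V_{\e,i}$. By \Cref{prop:equiv_norms} (the masses $(\alpha_\e+\beta_\e)(\overline{\Omega})$ are bounded below, by \eqref{beta3} and \ref{A2} applied to $u\equiv1$), the left-hand side dominates $C\norm{V_{\e,i}}_{H^1(\Omega)}^2$. The right-hand side is a sum of integrals of $\varphi_{0,N,i}V_{\e,i}$ against $\alpha_\e,\alpha_0,\beta_\e,\beta_0$. Each of these is bounded via Cauchy--Schwarz and the uniform bound of \Cref{cor:compact_nu} by $C\norm{\varphi_{0,N,i}}_{H^1}\norm{V_{\e,i}}_{H^1}$. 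Hence $\{V_{\e,i}\}_\e$ is bounded in $H^1(\Omega)$.

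\emph{Weak limit and strong convergence.} Take any sequence $\e_n\to0^+$; up to a subsequence, $V_{\e_n,i}\rightharpoonup W$ weakly in $H^1(\Omega)$. Fix $v\in H^1(\Omega)$ and pass to the limit in \eqref{eq_Ve_meausures}. The gradient term converges by weak convergence. The measure terms converge by \Cref{prop_limit_alpha_e}, applied to the pairs $(V_{\e_n,i},v)$ and $(\varphi_{0,N,i},v)$; in particular every integral against $\alpha_{\e_n}-\alpha_0$ or $\beta_{\e_n}-\beta_0$ tends to $0$. The limit identity is therefore $\mathcal{E}^{(0)}(W,v)=0$ for all $v$, which gives $W=0$. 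Since the limit does not depend on the subsequence, the Urysohn principle yields $V_{\e,i}\rightharpoonup0$ weakly in $H^1(\Omega)$.

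Now test again with $v=V_{\e,i}$. The right-hand side is $\int \varphi_{0,N,i}V_{\e,i}\diff(\alpha_\e+\beta_\e)-\int\varphi_{0,N,i}V_{\e,i}\diff(\alpha_0+\beta_0)-\la_{0,N}(\dots)$. By \Cref{prop_limit_alpha_e}, applied with $u_n=\varphi_{0,N,i}$ fixed and $v_n=V_{\e_n,i}\rightharpoonup0$, each term tends to $0$. Therefore
\[
\norm{\nabla^g V_{\e,i}}_{L^2(\Omega)}^2+\int_{\overline{\Omega}} V_{\e,i}^2\diff(\alpha_\e+\beta_\e)\to0.
\]
This already contains $\norm{V_{\e,i}}^2_{L^2(\overline{\Omega},\beta_\e)}\to0$, since $\alpha_\e\ge0$. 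Alternatively, one can use \Cref{prop_limit_alpha_e} directly on $\int V_{\e_n,i}^2\diff\beta_{\e_n}\to\int 0\diff\beta_0=0$, which follows from weak convergence alone. Summing over $i=1,\dots,M_{0,N}$ concludes the proof.

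\emph{Main obstacle.} The delicate point is justifying the passage to the limit in the cross terms where both the measure and the function vary, namely $\int \varphi_{0,N,i}V_{\e_n,i}\diff\beta_{\e_n}$. Here \Cref{prop_limit_alpha_e} is exactly the needed tool, since it allows both factors to converge only weakly in $H^1$. In fact it makes the strong-convergence step unnecessary: weak convergence to $0$ plus \Cref{prop_limit_alpha_e} gives $\int V_{\e_n,i}^2\diff\beta_{\e_n}\to0$ directly.
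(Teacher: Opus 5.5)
Your proof is correct and follows the same route as the paper. You pass to the limit in \eqref{eq_Ve_meausures} via \Cref{prop_limit_alpha_e} to get $V_{\e,i}\rightharpoonup 0$ weakly in $H^1(\Omega)$, then test with $V_{\e,i}$ (or apply \Cref{prop_limit_alpha_e} directly to $\int V_{\e,i}^2\diff\beta_\e$) to conclude. Your explicit uniform $H^1$ bound, obtained by testing with $V_{\e,i}$ and using \Cref{prop:equiv_norms} and \Cref{cor:compact_nu}, fills in a step the paper leaves implicit when it deduces weak convergence.
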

\begin{proof}
    In view on the assumptions on $\alpha_\e$ and $\beta_\e$ (see \eqref{eq:ass_alpha_beta} and \Cref{def:adm}) and thanks to \Cref{prop_limit_alpha_e}, we can pass to the limit as $\e\to 0^+$ in \eqref{eq_Ve_meausures} and obtain that
    \begin{equation}
        \int_{\Omega} g( \nabla^g V_{\e,i}, \nabla^g v) \dvol^g+  \int_{\overline{\Omega}} V_{\e,i} v \diff (\alpha_\e+\beta_\e)\to 0\quad\textnormal{for all }v\in H^1(\Omega).
    \end{equation}
Hence, we have proved that $V_{\e,i} \rightharpoonup 0$ weakly in $H^1(\Omega)$ as $\e \to0^+$. Then, thanks to \Cref{prop_limit_alpha_e}, we can test with $v=V_{\e,i}$ and conclude the proof.
\end{proof}

We now recall that the symmetric bilinear form $h_\e$, defined in \eqref{def_h_e}, has the following expression
\begin{equation}\label{eq:h_measures}
    h_\e(\varphi,\psi)=\lambda_{0,N}\int_{\overline{\Omega}}\varphi V_{\e,\psi}\,d\beta_\e,\qquad \varphi,\psi\in E(\lambda_{0,N})
\end{equation}
and eigenvalues $\{\gamma_{\e,i}\}_{i=1,\dots,M_{0,N}}$. We finally recall the definition of the following quantity (see \eqref{def_tau_e})
\begin{equation}
    \tau_\e^2=\max_{i=1,\dots,M_{0,N}}|\gamma_{\e,i}|.
\end{equation}

In conclusion, we have checked all the assumptions of \Cref{thm:main} and thus by \Cref{thm:main} we have proved the following result.
\begin{theorem}\label{theor_eigen_var_measures}
Let $\bm{\alpha}=\{\alpha_\e\}_{\e\in[0,1]}$ and $\bm{\beta}=\{\beta_\e\}_{\e\in[0,1]}$ be a family of admissible positive Radon measures, in the sense of \Cref{def:adm}, and let $\{\lambda_{\e,n}\}_n$ be the eigenvalues of \eqref{eq:eigen_measure}. If $\lambda_{0,N}$ is of multiplicity $M_{0,N}$ and
\begin{equation}
    \lambda_{0,N-1}<\lambda_{0,N}=\cdots=\lambda_{0,N+M_{0,N}-1}<\lambda_{0,N+M_{0,N}},
\end{equation}
then there holds
\begin{equation}
\lambda_{\e,N+i-1}-\lambda_{0,N}=\gamma_{\e,i}+ O(\delta_\e^2)+o(\tau_\e^2) \quad \text{ as } \e \to 0^+
\end{equation} 
for any $i=1, \dots, M_{0,N}$, where $\{\gamma_{\e,i}\}_{i=1,\dots,M_{0,N}}$ are the eigenvalues of \eqref{eq:h_measures}.
\end{theorem}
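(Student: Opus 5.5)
The statement is a direct specialization of \Cref{thm:main}, so the plan is to verify, one by one, that the measure-weighted setting of this section satisfies every hypothesis of \Cref{sec_assumptions}, and then read off the conclusion with $h_\e$ written as in \eqref{eq:h_measures}.

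\emph{Functional assumptions.} We take $H_\e=\mathcal{Z}_\e=L^2(\overline{\Omega},\beta_\e)$ and $Z_\e=\mathcal{F}_\e=H^1_{\alpha_\e+\beta_\e}(\Omega)$, with $\mathcal{E}^{(\e)}$ as in \eqref{def_Ee_measures_manifolds} and $L_\e$ the identity. \Cref{lemma:ass-measures} gives symmetry, closedness and the lower bound $\mathcal{E}^{(\e)}(u)\ge\norm{u}_{H_\e}^2$, so \eqref{eq:pos_def} holds without any shift. It also gives the compact embedding \eqref{eq:hp_compact'}, hence discreteness of the spectrum \eqref{hp_compact}. Since $H^1_{\alpha_\e+\beta_\e}(\Omega)=H^1(\Omega)$ as vector spaces by \Cref{prop:equiv_norms}, with the uniform lower bound on $\beta_\e(\overline{\Omega})$ coming from \eqref{beta3} and \ref{A2}, the identity is a legitimate linear map $L_\e\colon Z_0\to\mathcal{F}_\e$, so \eqref{hp_Le_exist} holds.

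\emph{Stability assumptions.}
\begin{itemize}
\item \eqref{hp_stability} is exactly \Cref{prop_stab_eigen_measures}.
\item \eqref{hp_Le} follows from \Cref{prop_limit_alpha_e} applied to the constant sequences $\varphi_{0,N,i}$ and $\varphi_{0,N,j}$, as already observed after that proposition.
\item \eqref{hp_delta_e} follows from the last lemma together with \eqref{rem_delta_e}.
\end{itemize}

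\emph{Main obstacle: \eqref{hp_delta_e}.} The lemma proving it passes to the limit in \eqref{eq_Ve_meausures}, and I would check two points there.
\begin{itemize}
\item The family $V_{\e,i}$ is bounded in $H^1(\Omega)$. Test \eqref{eq_Ve_meausures} with $v=V_{\e,i}$, use \ref{A1} to bound the right-hand side by $C\norm{\varphi_{0,N,i}}_{H^1}\norm{V_{\e,i}}_{H^1}$, and use the uniform norm equivalence of \Cref{prop:equiv_norms}.
\item Along any weakly convergent subsequence $V_{\e_n,i}\rightharpoonup W$, \Cref{prop_limit_alpha_e} lets us pass to the limit on both sides. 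The right-hand side tends to $0$, and the left-hand side tends to $\mathcal{E}^{(0)}(W,v)$. Hence $W=0$, and by the Urysohn principle the whole family converges weakly to $0$.
\end{itemize}
Then \Cref{prop_limit_alpha_e} with $u_n=v_n=V_{\e_n,i}$ gives $\int V_{\e,i}^2\,\mathrm{d}\beta_\e\to0$. The essential input is that \Cref{prop_limit_alpha_e} handles products of two \emph{weakly} convergent sequences, which is where admissibility \ref{A1}--\ref{A2} is really used.

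\emph{Conclusion.} With all hypotheses verified, \Cref{thm:main} applies with $N$ and $M=M_{0,N}$. Since $L_\e$ is the identity and $H_\e=L^2(\overline{\Omega},\beta_\e)$, definition \eqref{def_h_e} becomes $h_\e(\varphi,\psi)=\lambda_{0,N}\int_{\overline{\Omega}}V_{\e,\varphi}\,\psi\,\mathrm{d}\beta_\e$. By the symmetry in \Cref{rmk:symmetry} this equals \eqref{eq:h_measures}, so its eigenvalues are the $\gamma_{\e,i}$ and the stated expansion follows.
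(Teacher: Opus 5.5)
Your proposal is correct and follows the paper's own route. The functional hypotheses come from \Cref{lemma:ass-measures}, (S1) from \Cref{prop_stab_eigen_measures}, and (S2) and (S3) from \Cref{prop_limit_alpha_e} together with the lemma on $V_{\e,i}$; then \Cref{thm:main} is applied with $L_\e$ the identity. Your uniform $H^1(\Omega)$ bound on $V_{\e,i}$ (testing \eqref{eq_Ve_meausures} with $V_{\e,i}$ and using \ref{A1} and \Cref{prop:equiv_norms}) and your identification of the weak limit make precise a step that the paper's proof of that lemma leaves implicit.
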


\section{Measures concentrating at the boundary}\label{sec_conc_boundary}
In this section, we apply the results obtained in \Cref{sec_manifolds_measures} to a couple of concrete examples, in particular we consider families of measures concentrating to the boundary of an open set in the Euclidean space.

\subsection{Approximating Steklov eigenvalues}\label{subsec_steklov}
We let $M=\R^d$ and $\Omega\subseteq\R^d$ be an open, bounded, connected set with $C^{1,1}$ boundary. Moreover, we consider the family of measures
\begin{equation}\label{eq:def_steklov_beta}
    \alpha_\e=0\quad\textnormal{and}\quad\beta_\varepsilon=p_\varepsilon \mathcal{L}^d,
\end{equation}
where
\begin{equation}\label{eq_p_e_Omega_e}
    p_\varepsilon:=\frac{1}{\e}\chi_{\Omega\setminus\Omega_\e},\qquad \Omega_\e:=\{x\in\Omega\colon \mathrm{dist}\,(x,\partial\Omega)>\e\}.
\end{equation}
Finally, we let
\begin{equation}
    \alpha_0=0\quad\textnormal{and}\quad\beta_0=\mathcal{H}^{d-1}\text{\huge$\llcorner$}\,\partial\Omega.
\end{equation}
Within this framework, we have that the limit problem is given by the Steklov eigenvalue problem
\begin{equation}\label{eq:steklov}
    \begin{cases}
        -\Delta\varphi =0, &\text{in }\Omega, \\
        \partial_{\nnu} \varphi +\varphi=\lambda \varphi, &\text{on }\partial\Omega,
    \end{cases}
\end{equation}
whose weak formulation is $\varphi\in H^1(\Omega)$ and
\begin{equation}\label{eq_ste_eigen}
    \int_\Omega\nabla\varphi\cdot\nabla v\dx+\int_{\partial\Omega}\varphi v\ds=\lambda\int_{\partial\Omega}\varphi v\ds\quad\textnormal{for all }v\in H^1(\Omega).
\end{equation}
On the other hand, the approximating problems are weighted, Neumann-type, eigenvalue problems
\begin{equation}\label{eq_neu_to_stek}
    \begin{cases}
        -\Delta \varphi + p_\e \varphi =\lambda p_\e \varphi , &\text{in }\Omega, \\
        \partial_{\nnu} \varphi =0, &\text{on }\partial\Omega,
    \end{cases}
\end{equation}
with the weight $p_\e$ that is concentrating on the boundary $\partial\Omega$, whose formulation is $\varphi\in H^1(\Omega)$ and
\begin{equation}
   \int_\Omega\nabla\varphi\cdot\nabla v\dx+\frac{1}{\e} \int_{\Omega\setminus\Omega_\e}\varphi v\dx=\frac{\lambda}{\e}\int_{\Omega\setminus\Omega_\e}\varphi v\dx\quad\textnormal{for all }v\in H^1(\Omega).
\end{equation} 
We remark that this kind of approximation has already been considered in the literature, see e.g. \cite{Arrieta,Provenzano1,Provenzano2}. From classical regularity theory it is known that if $\varphi$ satisfies \eqref{eq:steklov}, then $\varphi\in W^{2,p}(\Omega)$ for any $p>1$ and
\begin{equation}\label{eq:estimate_steklov}
    \norm{\varphi}_{W^{2,p}(\Omega)}\leq C(d,\Omega,p)\norm{\varphi}_{H^1(\Omega)},
\end{equation}
This can be seen e.g. by bootstrapping \cite[Theorem 2.4.2.6]{grisvard} and using \cite[Theorem 2.3.3.3]{grisvard}. In particular, this implies that $\varphi \in C^{1,\alpha}(\overline\Omega)$  for any $\alpha \in (0,1)$.

We point out that 
\begin{equation}\label{eq:kappa_shift}
    \lambda_{\e,n}=\Lambda_{\e,n}+1\quad\textnormal{for any }n\in\N~\textnormal{and any }\e\in[0,1],
\end{equation}
where $\Lambda_{0,n}$ is an eigenvalue of the ``classical'' Steklov problem
\begin{equation}
    \begin{bvp}
        -\Delta\varphi&=0, &&\textnormal{in }\Omega, \\
        \partial_{\nnu}\varphi&=\Lambda\varphi,&&\textnormal{on }\partial\Omega
    \end{bvp}
\end{equation}
and, for $\e>0$, $\Lambda_{\e,n}$ is an eigenvalue of
\begin{equation}
    \begin{bvp}
        -\Delta \varphi&=\Lambda p_\e\varphi, &&\textnormal{in }\Omega, \\
        \partial_{\nnu}\varphi &=0, &&\textnormal{on }\partial\Omega.
    \end{bvp}
\end{equation}
In particular, we have that $\lambda_{\e,n}-\lambda_{0,n}=\Lambda_{\e,n}-\Lambda_{0,n}$, for any $n\in\N$.  

Now, consistently with the previous sections, we consider a Steklov (limit) eigenvalue $\lambda_{0,N}$ of problem \eqref{eq:steklov} with multiplicity $M_{0,N}\geq 1$ such that
\begin{equation}
        \lambda_{0,N-1}<\lambda_{0,N}=\cdots=\lambda_{0,N+M_{0,N}-1}<\lambda_{0,N+M_{0,N}},
\end{equation}
and a family of $M_{0,N}$ (approximating) eigenvalues $\lambda_{\e,N+i-1}$, with $i=1,\dots,M_{0,N}$, which are converging to $\lambda_{0,N}$. Since we want to apply \Cref{theor_eigen_var_measures} in this setting, we need to verify all the assumptions. 
We start by adapting to our framework a preliminary basic result on trace inequalities. More precisely, in \cite[Theorem 18.1, (iii)]{leoni} a weighted trace inequality is proved and we push it forward to let it hold uniformly on hypersurfaces parallel (and sufficiently close) to $\partial\Omega$.

\begin{lemma}[Uniform trace inequality]\label{lemma:trace_leoni}
    There exists $C,\rho_0>0$ depending on $d$ and $\Omega$ such that
    \begin{equation}\label{ineq_traces_Ste}
        \int_{\partial\Omega_\rho}v^2\ds\leq C\int_{\Omega_\rho} (\tau|\nabla v|^2+\tau^{-1}v^2)\dx
    \end{equation}
    for all $v\in H^1(\Omega_\rho)$, all $\tau\in(0,1]$ and all $\rho\in(0,\rho_0)$, where $\Omega_\rho$ is as in \eqref{eq_p_e_Omega_e}.
\end{lemma}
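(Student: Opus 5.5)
The plan is a divergence-theorem (Rellich-type) argument with a vector field transversal to the boundary, built uniformly for all parallel sets. Since $\partial\Omega$ is $C^{1,1}$, the distance function $d(x):=\mathrm{dist}(x,\partial\Omega)$ is of class $C^{1,1}$ in a tubular neighbourhood $\{x\in\Omega\colon d(x)<2\rho_0\}$ for some $\rho_0>0$ depending only on $\Omega$, with $|\nabla d|=1$ there and $\|D^2 d\|_{L^\infty}\le C(\Omega)$. For $\rho\in(0,\rho_0)$, $\partial\Omega_\rho=\{d=\rho\}$ is a $C^{1,1}$ hypersurface whose outer unit normal (with respect to $\Omega_\rho$) is $\nnu_\rho=-\nabla d$. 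Fix a cutoff $\eta\in C^\infty(\R)$ with $\eta=1$ on $(-\infty,\rho_0]$ and $\eta=0$ on $[3\rho_0/2,\infty)$, and set
\begin{equation}
    X:=-\eta(d)\nabla d,
\end{equation}
extended by zero. Then $X\in W^{1,\infty}(\Omega;\R^d)$ and $X\cdot\nnu_\rho=1$ on $\partial\Omega_\rho$ for every $\rho\in(0,\rho_0)$. Moreover $\|X\|_\infty+\|\mathrm{div}X\|_\infty\le C(\Omega)$, and this bound is independent of $\rho$, because $X$ is one fixed field.

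By density of $C^\infty(\overline{\Omega_\rho})$ in $H^1(\Omega_\rho)$ (valid since $\Omega_\rho$ is Lipschitz, in fact $C^{1,1}$), it suffices to take $v$ smooth. Apply the divergence theorem on $\Omega_\rho$ to the Lipschitz field $v^2X$:
\begin{equation}
    \int_{\partial\Omega_\rho}v^2\ds=\int_{\partial\Omega_\rho}v^2X\cdot\nnu_\rho\ds=\int_{\Omega_\rho}\left(v^2\,\mathrm{div}X+2vX\cdot\nabla v\right)\dx .
\end{equation}
Bound the right-hand side by
\begin{equation}
    C\int_{\Omega_\rho}v^2\dx+2C\int_{\Omega_\rho}|v||\nabla v|\dx .
\end{equation}
By Young's inequality, $2|v||\nabla v|\le \tau|\nabla v|^2+\tau^{-1}v^2$. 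Since $\tau\le1$, also $v^2\le\tau^{-1}v^2$. This gives \eqref{ineq_traces_Ste} with a constant depending only on $d$ and $\Omega$.

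The one point requiring care is the uniformity in $\rho$, i.e. that the tubular neighbourhood and the regularity of $d$ are controlled independently of $\rho$. I would justify this by the classical result that a $C^{1,1}$ domain satisfies a uniform interior ball condition of some radius $r_0$, so $d$ is $C^{1,1}$ on $\{d<r_0\}$; we then take $\rho_0<r_0/2$. The divergence theorem for the Lipschitz field $v^2X$ on the $C^{1,1}$ (hence Lipschitz) domain $\Omega_\rho$ is standard. The constant $C$ is independent of $\rho$ because only sup norms of the fixed field $X$ enter. Alternatively, one could invoke \cite[Theorem 18.1, (iii)]{leoni} on each $\Omega_\rho$, but then the uniformity of its constants would have to be tracked, and the direct argument above avoids that.
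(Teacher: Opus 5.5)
Your argument is correct, but it takes a different route from the paper. The paper does not use a Gauss--Green identity on $\Omega_\rho$. It builds the near-identity map $T_\rho(x)=x+\rho\,\eta(\delta(x))\nabla\delta(x)$, a bi-Lipschitz homeomorphism of $\overline{\Omega}$ onto $\overline{\Omega_\rho}$ with $\|DT_\rho-\mathrm{Id}\|_{L^\infty}\le 1/2$ uniformly in $\rho$. It uses this map to move $\int_{\partial\Omega_\rho}v^2\ds$ onto $\partial\Omega$ as $\int_{\partial\Omega}(v\circ T_\rho)^2\ds$. It then applies a $\tau$-weighted trace inequality of the same form on the \emph{fixed} domain $\Omega$, quoted from Maggi/Leoni, and changes variables back in the bulk. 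Uniformity in $\rho$ comes from the uniform control of $DT_\rho$ and of the Jacobians.

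You instead use a single fixed Lipschitz field $X=-\eta(d)\nabla d$ that satisfies $X\cdot\nnu_\rho=1$ on every parallel surface at once. The divergence theorem and Young's inequality then give the estimate directly, with a constant depending only on $\|X\|_\infty+\|\mathrm{div}\,X\|_\infty$, i.e. on $\rho_0$ and a bound for $\Delta d$ near $\partial\Omega$.

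Your route is shorter and self-contained. It essentially reproves the fixed-domain trace inequality, whose standard proof is this same vector-field argument. It also avoids two points the paper settles in a line each: that $T_\rho$ maps $\overline\Omega$ bijectively onto $\overline{\Omega_\rho}$, and that the surface Jacobian of $T_\rho|_{\partial\Omega}$ is bounded.

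The paper's transport approach is more modular, since it reduces everything to a black-box inequality on one domain. The same map $T_\rho$ is also reused later to verify \ref{A2} in \Cref{lemma_add_ste}. Both proofs rest on the same geometric input: the $C^{1,1}$ regularity of the distance function in a fixed tubular neighbourhood of $\partial\Omega$.
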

\begin{proof}
    In the whole proof, we denote by $C>0$ a positive constant which depends only on $d$ and $\Omega$ and which may change from time to time. 
    By density, we can assume $v\in C^\infty(\overline{\Omega}_\rho)$. 
    Since $\Omega$ is of class $C^{1,1}$, then the distance function
    \begin{equation}
        \delta(x):=\mathrm{dist}\,(x,\partial\Omega),\quad x\in\overline{\Omega},
    \end{equation}
    is of class $C^{1,1}$ in $\overline{\Omega}\setminus\Omega_{\rho_1}$, for some $\rho_1>0$ depending only on $d$ and $\Omega$. 
    Let $\eta\in C^\infty_c([0,\rho_1/2))$ be a standard cut-off function such that $\eta \equiv1$ in $[0, \rho_1/4]$ and $\eta$ is decreasing in $(\rho_1/4,\rho_1/2)$. Moreover, we choose $\eta$ such that
    \begin{equation}
        |\eta'|\leq \frac{1}{\rho_1}<\frac{1}{\rho}.
    \end{equation}
    Then, for any $x\in\overline{\Omega}$ and any $\rho\in(0,\rho_1)$, we define
    \begin{equation}
        T_\rho(x):=x+\rho\eta(\delta(x))\nabla \delta(x).
    \end{equation}
    One can easily see that $T_\rho$ is a bi-Lipschitz homeomorphism from $\overline{\Omega}$ to $\overline{\Omega}_\rho$ and that
    \begin{equation}\label{eq:trace_1}
    \norm{D T_\rho-{\mathrm{Id}}_d}_{L^\infty(\Omega)}\leq \frac{1}{2},
    \end{equation}
    for $\rho \in (0,\rho_2)$, for some  $\rho_2 \in (0,\rho_1/2)$ small enough (depending on $d$ and $\Omega$). In particular, $v\circ T_\rho\in C^{0,1}(\overline{\Omega})$. At this point we use the change of variable $x\mapsto T_\rho(x)$,  \cite[Corollary 8.11]{maggi} (used in $\Omega$ for $v\circ T_\rho$) and \eqref{eq:trace_1} to obtain
    \begin{align}
        \int_{\partial\Omega_\rho}v^2\ds &\le C\int_{\partial\Omega}v^2(T_\rho(x))\ds(x) \\
        &\leq C\int_\Omega (\tau |\nabla (v\circ T_\rho)(x)|^2+\tau^{-1}v^2(T_\rho(x)))\dx \\
        &=C\int_\Omega (\tau |\nabla v(T_\rho(x))DT_\rho(x)|^2+\tau^{-1}v^2(T_\rho(x)))\dx \\
        &\leq C \int_\Omega \left(\frac{9}{4}\tau |\nabla v(T_\rho(x)|^2+\tau^{-1}v^2(T_\rho(x))\right)\dx\leq C\int_{\Omega_\rho}\left(\tau |\nabla v|^2+\tau^{-1}v^2\right)\dx,
    \end{align}
    which yields \eqref{ineq_traces_Ste}.
\end{proof}

We are now able to obtain a uniform Poincaré inequality, whose proof is based on  \Cref{lemma:trace_leoni} and the coarea formula, which implies \ref{A1}.
\begin{lemma}[Uniform Poincaré]\label{lemma:unif_poinc}
There exists $C,\varepsilon_0>0$, depending on $d$ and $\Omega$, such that
\begin{equation}\label{ineq_poinc_steklov}
\int_{\Omega}v^2\,d\beta_\e\leq C\int_\Omega (\tau|\nabla v|^2+\tau^{-1}v^2)\dx
\end{equation}
for all $v\in H^1(\Omega)$ and all $\tau\in(0,1]$ and all $\varepsilon\in(0,\varepsilon_0)$.
\end{lemma}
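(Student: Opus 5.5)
The plan is to write the left-hand side as an average of boundary integrals over the parallel hypersurfaces $\partial\Omega_\rho$ and then apply \Cref{lemma:trace_leoni} to each of them. By definition of $\beta_\e$,
\begin{equation}
\int_\Omega v^2\,d\beta_\e=\frac{1}{\e}\int_{\Omega\setminus\Omega_\e}v^2\dx .
\end{equation}
Recall that $\delta(x)=\mathrm{dist}(x,\partial\Omega)$ is $C^{1,1}$ in $\overline{\Omega}\setminus\Omega_{\rho_1}$. There it satisfies $|\nabla\delta|=1$, and its level sets are $\{\delta=\rho\}=\partial\Omega_\rho$. By density we may take $v\in C^\infty(\overline{\Omega})$. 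For $\e<\rho_1$ the coarea formula then gives
\begin{equation}
\frac{1}{\e}\int_{\Omega\setminus\Omega_\e}v^2\dx=\frac{1}{\e}\int_0^\e\left(\int_{\partial\Omega_\rho}v^2\ds\right)d\rho .
\end{equation}

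Next, take $\e_0:=\min\{\rho_0,\rho_1\}$, with $\rho_0$ from \Cref{lemma:trace_leoni}. For every $\rho\in(0,\e)$ with $\e<\e_0$, apply \Cref{lemma:trace_leoni} to $v_{|\Omega_\rho}\in H^1(\Omega_\rho)$:
\begin{equation}
\int_{\partial\Omega_\rho}v^2\ds\le C\int_{\Omega_\rho}(\tau|\nabla v|^2+\tau^{-1}v^2)\dx\le C\int_{\Omega}(\tau|\nabla v|^2+\tau^{-1}v^2)\dx .
\end{equation}
The second inequality holds because $\Omega_\rho\subseteq\Omega$ and the integrand is nonnegative. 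The constant $C$ is uniform in $\rho$, $\tau$ and $v$.

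Averaging this bound over $\rho\in(0,\e)$ yields \eqref{ineq_poinc_steklov} with the same $C$, independent of $\e$. Since both sides are continuous in the $H^1(\Omega)$ norm (the left-hand side is the $L^2$ norm on a fixed set, weighted by the factor $1/\e$), density extends the inequality to all $v\in H^1(\Omega)$.

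The main point requiring care is the coarea step: we need $|\nabla\delta|=1$ a.e. in the tubular neighbourhood and that the level sets of $\delta$ coincide with the boundaries $\partial\Omega_\rho$ to which \Cref{lemma:trace_leoni} applies. Both follow from the $C^{1,1}$ regularity of $\partial\Omega$, as already used in the proof of \Cref{lemma:trace_leoni}. Also, the function $\rho\mapsto\int_{\partial\Omega_\rho}v^2\ds$ is measurable, so the average makes sense.

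Finally, to obtain \ref{A1} we choose $\tau=\delta/C$ for $\delta\le C$, and note that $\e\in[\e_0,1]$ can be removed by reparametrization. The case $\e=0$ follows from the classical trace inequality.
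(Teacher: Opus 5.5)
Your proposal is correct and follows the paper's proof: you use the coarea formula to write $\int_\Omega v^2\,d\beta_\e$ as the average over $\rho\in(0,\e)$ of $\int_{\partial\Omega_\rho}v^2\,d\sigma$, apply \Cref{lemma:trace_leoni} on each $\Omega_\rho$, and enlarge $\Omega_\rho$ to $\Omega$. The extra details you supply (density of smooth functions, the choice $\e_0=\min\{\rho_0,\rho_1\}$, identifying $\partial\Omega_\rho$ with the level sets of the distance function) only make explicit what the paper's one-line argument leaves implicit.
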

\begin{proof}
By the coarea formula and \Cref{lemma:trace_leoni}
\begin{equation}
\int_{\Omega}v^2\,d\beta_\e=\frac{1}{\e}\int_{\Omega\setminus \Omega_\e}v^2\dx= \frac{1}{\e}\int_{0}^\e \int_{\partial \Omega_\rho}v^ 2\ds d \rho
\le  C\int_{\Omega} (\tau|\nabla v|^2+\tau^{-1}v^2)\dx.
\end{equation}
Hence, \eqref{ineq_poinc_steklov} is proved. 
\end{proof}

As a consequence, we obtain that the families $\bm{\alpha}=\{\alpha_\e\}_{\e\in[0,1]}$ and $\bm{\beta}=\{\beta_\e\}_{\e\in[0,1]}$ are admissible in the sense of \Cref{def:adm}.
\begin{lemma}[Admissibility]\label{lemma_add_ste}
    The families $\bm{\alpha}=\{\alpha_\e\}_{\e\in[0,1]}$ and $\bm{\beta}=\{\beta_\e\}_{\e\in[0,1]}$ defined in \eqref{eq:def_steklov_beta} are admissible, i.e. they satisfy \ref{A1} and \ref{A2}. 
\end{lemma}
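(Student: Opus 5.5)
Since $\alpha_\e\equiv 0$ for every $\e\in[0,1]$, the family $\bm{\alpha}$ satisfies \ref{A1} and \ref{A2} trivially, so everything reduces to checking the two conditions for $\bm{\beta}$.

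\emph{Condition \ref{A1}.} For $\e\in(0,\e_0)$ I would use \Cref{lemma:unif_poinc}. Given $\delta>0$, take $\tau:=\min\{1,\delta/C\}$, which gives
\begin{equation}
\int_{\overline\Omega}u^2\diff\beta_\e\le \delta\int_\Omega|\nabla u|^2\dx+C\tau^{-1}\int_\Omega u^2\dx.
\end{equation}
The limit measure $\beta_0=\mathcal{H}^{d-1}\llcorner\partial\Omega$ is handled in the same way by the weighted trace inequality \cite[Theorem 18.1, (iii)]{leoni}, equivalently by \Cref{lemma:trace_leoni} with $\rho\to0$. For $\e\in[\e_0,1]$ the density $p_\e$ is bounded by $1/\e_0$, so \ref{A1} holds with $C(\delta)=1/\e_0$. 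As in the paper's convention, one can instead reparametrize so that $\e_0=1$.

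\emph{Condition \ref{A2}.} Fix $u\in C^\infty(\overline\Omega)$. By the coarea formula, using $|\nabla\delta|=1$ near $\partial\Omega$,
\begin{equation}
\int_{\overline\Omega}u\diff\beta_\e=\frac1\e\int_0^\e\int_{\partial\Omega_\rho}u\ds\,d\rho ,
\end{equation}
so it suffices to prove that $\rho\mapsto\int_{\partial\Omega_\rho}u\ds$ is continuous at $\rho=0$. To do this, pull back to $\partial\Omega$ through the normal map $y\mapsto y-\rho\nu(y)$. This map is bi-Lipschitz for small $\rho$ because $\Omega$ is $C^{1,1}$ and $\nabla\delta$ is Lipschitz. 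Its tangential Jacobian $J_\rho(y)=\prod_i(1-\rho\kappa_i(y))$ satisfies $|J_\rho-1|\le C\rho$ for a.e.\ $y$, since the curvatures are in $L^\infty$. Then
\begin{equation}
\Bigl|\int_{\partial\Omega_\rho}u\ds-\int_{\partial\Omega}u\ds\Bigr|\le\int_{\partial\Omega}|u(y-\rho\nu)J_\rho-u(y)|\ds\le C\rho\bigl(\|\nabla u\|_\infty+\|u\|_\infty\bigr)\to0 .
\end{equation}
Averaging over $\rho\in(0,\e)$ then gives \ref{A2}.

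The step I expect to need the most care is this Jacobian and area-formula justification under mere $C^{1,1}$ regularity. If it becomes delicate, I would use the map $T_\rho$ from the proof of \Cref{lemma:trace_leoni} instead. Its $W^{1,\infty}$ distance from the identity is $O(\rho)$, so $|\nabla T_\rho-\mathrm{Id}|\le C\rho$, which yields the same $O(\rho)$ estimate via the area formula for Lipschitz maps. Finally, $\beta_0(\overline\Omega)=\mathcal{H}^{d-1}(\partial\Omega)>0$, so \eqref{beta3'} also holds.
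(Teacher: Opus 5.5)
Your proof is correct and follows the paper's argument: \ref{A1} comes from the uniform Poincaré inequality of \Cref{lemma:unif_poinc}, and \ref{A2} comes from the coarea formula plus the change of variables through $T_\rho$, which on $\partial\Omega$ coincides with your normal map $y\mapsto y-\rho\nu(y)$. You are slightly more careful than the paper, which leaves the case $\e=0$ (via the trace inequality) and the range $\e\in[\e_0,1]$ (via $p_\e\le 1/\e_0$) to its reparametrization convention.
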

\begin{proof}
    We immediately observe that since $\alpha_\e=0$, it is enough to prove the result for $\beta_\e$. First, we observe that \ref{A1} follows from \Cref{lemma:unif_poinc}. Furthermore, by performing the change of variable $y=T_\rho(x)$ as in the proof of \Cref{lemma:trace_leoni}, it is easy to see that 
\begin{equation}
\int_{\Omega}v \,d\beta_\e=\frac{1}{\e}\int_0^\e \int_{\partial\Omega_\rho}v  \ds\,d\rho\to \int_{\partial\Omega}v \ds \quad\text{as }\e\to 0^+,
\end{equation}
for any $v \in C^\infty(\overline{\Omega})$. This yields \ref{A2} and concludes the proof.
\end{proof}

Hence, the assumptions of \Cref{theor_eigen_var_measures} are satisfied. Now, we move to investigate the asymptotic behavior (as $\e\to 0^+$) of the family $\{\gamma_{\e,i}\}_{i=1,\dots,M_{0,N}}$ as in \Cref{theor_eigen_var_measures}, which in turn yields the asymptotic expansion of the eigenvalue variation. To begin with, let us recall some facts. For any $\varphi\in E(\lambda_{0,N})$, we let $V_{\e,\varphi}\in H^1(\Omega)$ be the unique solution of 
\begin{equation}\label{eq_Ve_ste}
\int_\Omega\nabla V_{\e,\varphi}\cdot\nabla u\dx+\frac{1}{\e}\int_{\Omega\setminus\Omega_\e}V_{\e,\varphi} u\dx=(\lambda_{0,N}-1)\left(\int_{\partial\Omega}\varphi u\ds-\frac{1}{\e}\int_{\Omega\setminus\Omega_\e}\varphi u\dx\right),
\end{equation}
for any $u\in H^1(\Omega)$, whose corresponding strong formulation is the following
\begin{equation}\label{prob_Ve_steklov}
\begin{bvp}
-\Delta V_{\e,\varphi}&=0, &&\text{in }\Omega_\e, \\
 -\Delta V_{\e,\varphi} +\frac{1}{\e}V_{\e,\varphi}&=-\frac{\lambda_{0,N}-1}{\e}\varphi, &&\text{in }\Omega\setminus\Omega_\e, \\
\partial_{\nnu} V_{\e,\varphi}&=(\lambda_{0,N}-1)\varphi, &&\text{on }\partial\Omega.
\end{bvp}
\end{equation}
This corresponds to the unique solution of \eqref{intr_prob_J_min} (and \eqref{eq_Ve_meausures}, once translated in the setting of measures). In particular, its energy plays the role of a remainder term in the eigenvalue variation.

We also emphasize that, in view of the known regularity for $\varphi\in E(\lambda_{0,N})$, by the standard elliptic theory, there holds $V_{\e,\varphi} \in W^{2,p}(\Omega)$ for every $p>1$, hence $V_{\e,\varphi}\in C^{1,\alpha}(\overline\Omega)$ for any $\alpha\in(0,1)$ (see e.g. \cite[Theorem 3.17 (ii)]{troianiello}). We now recall that $\gamma_{\e,i}$ is the $i$-th eigenvalue of the bilinear form
\begin{equation}
    h_\e(\varphi,\psi):=\frac{\lambda_{0,N}}{\e}\int_{\Omega\setminus\Omega_\e}\varphi V_{\e,\psi}\dx,\quad\textnormal{defined for }\varphi,\psi\in E(\lambda_{0,N}),
\end{equation}
see \eqref{def_h_e} and \eqref{eq:h_measures}. Hence, the study of the asymptotic behavior of $\gamma_{\e,i}$ passes through a fine analysis of $V_{\e,\varphi}$ in the limit $\e\to 0^+$. To pursue this, we first establish the following standard result (recall that $\Omega$ is assumed to have a $C^{1,1}$ boundary).

\begin{lemma}\label{lemma:I''}
Let $\Omega \subset \mathbb{R}^N$ be a bounded domain of class $C^{1,1}$ and let $u\in W^{2,1}(\Omega)$. Define
\begin{equation}
I_u(\varepsilon):=\int_{\Omega\setminus\Omega_\varepsilon}u\dx,
\end{equation}
where $\Omega_\e$ is as in \eqref{eq_p_e_Omega_e}.
Then there exists an $\varepsilon_0>0$ such that $I_u \in C^2([0, \varepsilon_0])$ and there holds
\begin{equation}\label{eq_I_derivatives}
I_u'(\varepsilon)=\int_{\partial\Omega_\varepsilon}u\ds\quad\text{and}\quad I_u''(\varepsilon)=-\int_{\partial\Omega_\varepsilon}(\partial_{\bm{\nu}_{\Omega_\varepsilon}} u + H_{\partial\Omega_\varepsilon}u)\ds,
\end{equation}
where $\bm{\nu}_{\Omega_\varepsilon}$ denotes the outer unit normal to $\partial\Omega_\varepsilon$ and $H_{\partial\Omega_\varepsilon} = \operatorname{div}(\bm{\nu}_{\Omega_\varepsilon})$ denotes the mean curvature (sum of principal curvatures) of $\partial\Omega_\varepsilon$. 
In particular, there holds
\begin{equation}\label{eq:I_taylor}
\int_{\Omega\setminus\Omega_\varepsilon}u\dx =\varepsilon\int_{\partial\Omega}u\ds
- \frac{\varepsilon^2}{2}\int_{\partial\Omega}(\partial_{\bm{\nu}} u + H_{\partial\Omega}u)\ds 
+o(\varepsilon^2) \quad \text{as } \varepsilon \to0^+,
\end{equation}
where $\nnu$ denotes the outer unit normal to $\partial\Omega$ and $H_{\partial \Omega}=\operatorname{div}(\nnu)$ denotes the mean curvature of $\partial\Omega$. Finally, let $v \in H^1(\Omega)\cap L^{\infty}(\Omega)$ and $ u \in H^1(\Omega)$. Then there exists a constant $C>0$, depending only on $\Omega$, such that for any $\e<\e_0$
\begin{multline}\label{ineq_uv_trace_ste}
\left|\int_{\partial \Omega} uv \ds - \frac{1}{\e}\int_{\Omega\setminus \Omega_\e} uv \dx \right| \\
\le C(\norm{v}_{L^\infty(\Omega)}+\norm{v}_{H^1(\Omega)})\e^\frac12 \left(\int_\Omega |\nabla u|^2 \dx +\frac{1}{\e}\int_{\Omega\setminus \Omega_\e} u^2 \dx \right)^\frac12. 
\end{multline}
\end{lemma}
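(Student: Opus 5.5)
The plan is to work in the tubular neighbourhood $\overline{\Omega}\setminus\Omega_{\rho_1}$ where the distance function $\delta(x)=\mathrm{dist}(x,\partial\Omega)$ is $C^{1,1}$ with $|\nabla\delta|=1$, and then combine the coarea formula with the divergence theorem. Since $|\nabla\delta|=1$, coarea gives, for $\e<\e_0:=\rho_1$,
\begin{equation}
I_u(\e)=\int_0^\e\int_{\{\delta=\rho\}}u\ds\,d\rho=\int_0^\e\int_{\partial\Omega_\rho}u\ds\,d\rho .
\end{equation}
Hence $I_u'(\e)=\int_{\partial\Omega_\e}u\ds$, provided $\rho\mapsto\int_{\partial\Omega_\rho}u\ds$ is continuous. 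For $u\in W^{2,1}$ the trace of $u$ lies in $L^1$, and continuity follows from the bi-Lipschitz maps $T_\rho$ of \Cref{lemma:trace_leoni}, arguing by density with $u$ smooth.

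For the second derivative, I write $\Omega_\e=\{\delta>\e\}$ and observe that the outer normal of $\Omega_\e$ is $\bm{\nu}_{\Omega_\e}=-\nabla\delta$. The divergence theorem on $\Omega_{\e}\setminus\Omega_{\e'}$ for $\e<\e'$, applied to the $W^{1,1}$ field $u\,\nabla\delta$ (note $\nabla\delta\in C^{0,1}$), gives
\begin{equation}
\int_{\partial\Omega_{\e'}}u\ds-\int_{\partial\Omega_\e}u\ds=-\int_{\Omega_\e\setminus\Omega_{\e'}}\big(\nabla u\cdot\nabla\delta+u\Delta\delta\big)\dx .
\end{equation}
Applying coarea once more, and using $\nabla u\cdot\nabla\delta=-\partial_{\bm{\nu}_{\Omega_\rho}}u$ and $\Delta\delta=-\operatorname{div}\bm{\nu}_{\Omega_\rho}=-H_{\partial\Omega_\rho}$ on $\partial\Omega_\rho$, the right-hand side equals $-\int_\e^{\e'}\int_{\partial\Omega_\rho}(\partial_{\bm\nu_{\Omega_\rho}}u+H_{\partial\Omega_\rho}u)\ds\,d\rho$. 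Differentiating yields the formula for $I_u''$. Its continuity in $\e$ again follows by density: $\nabla u\in W^{1,1}$ has an $L^1$ trace, and $H_{\partial\Omega_\rho}\in L^\infty$. \textbf{This continuity is the delicate step.} Because $\Delta\delta$ is only $L^\infty$, continuity of the curvature term is not immediate. I would first try to write $\int_{\partial\Omega_\rho}H u$ via the divergence identity itself, i.e. as a function of $\rho$ that is an integral of an $L^1$ function, so it is at least absolutely continuous. If full $C^2$ fails, I would settle for $I_u\in W^{2,1}$ together with continuity of $\rho\mapsto\int_{\partial\Omega_\rho}(\partial_\nu u+Hu)\ds$ at $\rho=0$, which is all the Taylor expansion needs.

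The expansion \eqref{eq:I_taylor} then follows from Taylor's formula with $I_u(0)=0$, $I_u'(0)=\int_{\partial\Omega}u$, and $I_u''(0)=-\int_{\partial\Omega}(\partial_\nu u+Hu)$.

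For \eqref{ineq_uv_trace_ste}, set $w=uv\in W^{1,1}$ with $|\nabla w|\le|v||\nabla u|+|u||\nabla v|$. For each $\rho<\e$, the same divergence identity on $\Omega\setminus\Omega_\rho$ gives
\begin{equation}
\Big|\int_{\partial\Omega}w\ds-\int_{\partial\Omega_\rho}w\ds\Big|\le\int_{\Omega\setminus\Omega_\e}\big(|\nabla w|+C|w|\big)\dx .
\end{equation}
Averaging in $\rho\in(0,\e)$ and using coarea, the left side of \eqref{ineq_uv_trace_ste} is bounded by the right side of this estimate. Each resulting term is then controlled by Cauchy--Schwarz:
\begin{itemize}
\item $\int_{\Omega\setminus\Omega_\e}|v||\nabla u|\le\|v\|_\infty|\Omega\setminus\Omega_\e|^{1/2}\|\nabla u\|_2\le C\e^{1/2}\|v\|_\infty\|\nabla u\|_2$.
\item $\int_{\Omega\setminus\Omega_\e}|u||\nabla v|\le\|\nabla v\|_2\big(\int_{\Omega\setminus\Omega_\e}u^2\big)^{1/2}=\e^{1/2}\|\nabla v\|_2\big(\tfrac1\e\int_{\Omega\setminus\Omega_\e}u^2\big)^{1/2}$.
\item $\int_{\Omega\setminus\Omega_\e}|uv|\le\|v\|_\infty\,|\Omega\setminus\Omega_\e|^{1/2}\big(\int_{\Omega\setminus\Omega_\e}u^2\big)^{1/2}\le C\e\|v\|_\infty\big(\tfrac1\e\int_{\Omega\setminus\Omega_\e}u^2\big)^{1/2}$.
\end{itemize}
All three terms carry at least a factor $\e^{1/2}$ times the quantity in brackets on the right of \eqref{ineq_uv_trace_ste}, which gives the estimate.
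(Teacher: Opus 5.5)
The genuine gap is the claim $I_u\in C^2([0,\e_0])$, and with it the identification of the $\e^2$-coefficient in \eqref{eq:I_taylor}. You flag the continuity of $\rho\mapsto\int_{\partial\Omega_\rho}H_{\partial\Omega_\rho}u\ds$ as delicate, but neither of your suggestions settles it.

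Since $\delta$ is only $C^{1,1}$, $\Delta\delta$ is merely an $L^\infty$ function on the tubular neighbourhood. Your coarea/divergence computation therefore gives the formula for $I_u''(\e)$ only for a.e.\ $\e$. The bound $H\in L^\infty$ yields no modulus of continuity in $\rho$. Density in $u$ does not help either, because the obstruction sits in the weight: already for $u\equiv1$ your argument only shows that $I_1'(\rho)$, the area of $\partial\Omega_\rho$, is Lipschitz. Rewriting the curvature term ``via the divergence identity'' would require differentiating $\Delta\delta$, i.e.\ third derivatives of $\delta$, which a $C^{1,1}$ boundary does not provide. Your fallback presupposes continuity at $\rho=0$ of the very same quantity, which is exactly what is missing. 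It would also prove less than the stated $C^2$ regularity. (The normal-derivative term is harmless, e.g.\ $\int_{\partial\Omega_\rho}\partial_{\nnu_{\Omega_\rho}}u\ds=\int_{\Omega_\rho}\Delta u\dx$.)

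The missing idea, which the paper uses, is to pull everything back to $\partial\Omega$ with the Fermi map $\Phi_\rho(x)=x-\rho\nnu(x)$. For $\sigma$-a.e.\ $x\in\partial\Omega$ one has $\nnu_{\Omega_\rho}(\Phi_\rho(x))=\nnu(x)$, $\det D\Phi_\rho(x)=\prod_i(1-\rho\kappa_i(x))$ and $H_{\partial\Omega_\rho}(\Phi_\rho(x))=\sum_i\kappa_i(x)/(1-\rho\kappa_i(x))$, with $\kappa_i\in L^\infty(\partial\Omega)$. Hence
\begin{equation*}
I_u''(\rho)=-\int_{\partial\Omega}\Big[\nabla u(\Phi_\rho(x))\cdot\nnu(x)+u(\Phi_\rho(x))\,H_{\partial\Omega_\rho}(\Phi_\rho(x))\Big]\det D\Phi_\rho(x)\ds(x).
\end{equation*}
The weights depend continuously on $\rho$ with values in $L^\infty(\partial\Omega)$; indeed $(H_{\partial\Omega_\rho}\circ\Phi_\rho)\det D\Phi_\rho=-\partial_\rho\det D\Phi_\rho$ is a polynomial in $\rho$. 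Meanwhile $\rho\mapsto u\circ\Phi_\rho$ and $\rho\mapsto\nabla u\circ\Phi_\rho$ are continuous into $L^1(\partial\Omega)$, by the estimate $\norm{w\circ\Phi_{\rho_2}-w\circ\Phi_{\rho_1}}_{L^1(\partial\Omega)}\le C\int_{\Omega_{\rho_1}\setminus\Omega_{\rho_2}}|\nabla w|\dx$ for $w\in W^{1,1}$, applied to $w=u$ and $w=\partial_ju$. This defines $I_u''(\rho)$ for every $\rho$ and makes it continuous on $[0,\e_0]$, which is what \eqref{eq:I_taylor} needs.

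Two smaller remarks. First, your divergence identity has the wrong sign. The outer normal of $\Omega_\e\setminus\Omega_{\e'}$ is $-\nabla\delta$ on $\partial\Omega_\e$ and $+\nabla\delta$ on $\partial\Omega_{\e'}$, so
\begin{equation*}
\int_{\partial\Omega_{\e'}}u\ds-\int_{\partial\Omega_\e}u\ds=+\int_{\Omega_\e\setminus\Omega_{\e'}}(\nabla u\cdot\nabla\delta+u\Delta\delta)\dx .
\end{equation*}
Your next line silently flips the sign back, so your final formula for $I_u''$ is nevertheless correct.

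Second, your proof of \eqref{ineq_uv_trace_ste} is correct and differs from the paper's. You apply the same divergence identity to $w=uv\in W^{1,1}(\Omega)$ on $\Omega\setminus\Omega_\rho$, average over $\rho\in(0,\e)$ via coarea, and finish with three Cauchy--Schwarz estimates. The paper instead integrates along normal segments in Fermi coordinates and splits the difference into $v[u-u\circ\Phi]$, $(u\circ\Phi)[v-v\circ\Phi]$ and a Jacobian term controlled by $\det D\Phi_{\e r}-1=O(\e)$. Your route is shorter and reuses an identity you already have; the paper's reuses the Fermi-coordinate machinery it needs anyway for the $C^2$ statement.
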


\begin{proof}
Let $\varepsilon_0 > 0$ be the reach of $\partial\Omega$, so that the distance function $\delta(x) = \operatorname{dist}(x, \partial\Omega)$ is $C^{1,1}$ on $\Omega \setminus \Omega_{\varepsilon_0}$. By the coarea formula, since $|\nabla \delta| = 1$ almost everywhere in this tubular neighborhood, we have
\begin{equation}
I_u(\varepsilon) = \int_0^\varepsilon \int_{\partial\Omega_\rho} u\ds\,d\rho.
\end{equation}
Differentiating with respect to $\varepsilon$, we trivially obtain that $I_u$ is absolutely continuous and
\begin{equation}
I_u'(\varepsilon)=\int_{\partial\Omega_\varepsilon}u\ds, \quad \text{for a.e. } \varepsilon \in [0, \varepsilon_0).
\end{equation}
For $\varepsilon < \varepsilon_0$, the distance function satisfies
\begin{equation}\label{eq:delta_nu}
\nabla \delta(x)=-\bm{\nu}_{\Omega_\varepsilon}(x)\quad\textnormal{for every }x\in\partial\Omega_\varepsilon.
\end{equation}
Taking the divergence, since $H_{\partial\Omega_\varepsilon}$ is defined as the sum of the principal curvatures with respect to the outward normal $\bm{\nu}_{\Omega_\varepsilon}$, we have
\begin{equation}
\Delta\delta(x)= \operatorname{div}(-\bm{\nu}_{\Omega_\varepsilon}(x)) = -H_{\partial\Omega_\varepsilon}(x) \quad \text{a.e. on } \partial\Omega_\varepsilon.
\end{equation}
Let $0 \le \varepsilon_1 < \varepsilon_2 < \varepsilon_0$. We apply the divergence theorem to the vector field $u\nabla \delta \in W^{1,1}(\Omega \setminus \Omega_{\varepsilon_0}; \mathbb{R}^N)$ over the annular region $A = \Omega_{\varepsilon_1}\setminus\Omega_{\varepsilon_2}$. The boundary of $A$ consists of $\partial\Omega_{\varepsilon_1}$ (with outward normal $\bm{\nu}_{\Omega_{\varepsilon_1}}$) and $\partial\Omega_{\varepsilon_2}$ (with outward normal $-\bm{\nu}_{\Omega_{\varepsilon_2}}$). Therefore, thanks to \eqref{eq:delta_nu}, we have
\begin{align}
\int_{\Omega_{\varepsilon_1}\setminus\Omega_{\varepsilon_2}}\operatorname{div}(u\nabla \delta)\dx 
&= -\int_{\partial\Omega_{\varepsilon_2}} u \nabla \delta \cdot \bm{\nu}_{\Omega_{\varepsilon_2}}\ds + \int_{\partial\Omega_{\varepsilon_1}} u \nabla \delta \cdot\bm{\nu}_{\Omega_{\varepsilon_1}}\ds \\
&=- \int_{\partial\Omega_{\varepsilon_2}} u (-\bm{\nu}_{\Omega_{\varepsilon_2}}) \cdot \bm{\nu}_{\Omega_{\varepsilon_2}}\ds + \int_{\partial\Omega_{\varepsilon_1}} u (-\bm{\nu}_{\Omega_{\varepsilon_1}}) \cdot \bm{\nu}_{\Omega_{\varepsilon_1}}\ds \\
&= \int_{\partial\Omega_{\varepsilon_2}} u\ds -\int_{\partial\Omega_{\varepsilon_1}} u\ds = I_u'(\varepsilon_2) -I_u'(\varepsilon_1).
\end{align}
Rearranging this identity, we find
\begin{equation}
I_u'(\varepsilon_2)-I_u'(\varepsilon_1)=\int_{\Omega_{\varepsilon_1}\setminus\Omega_{\varepsilon_2}}\operatorname{div}(u\nabla \delta)\dx =\int_{\Omega_{\varepsilon_1}\setminus\Omega_{\varepsilon_2}}(\nabla u\cdot\nabla \delta+u\Delta\delta)\dx.
\end{equation}
Then, by the coarea formula and using that  $\nabla \delta = -\bm{\nu}_{\Omega_\varepsilon}$ and $\Delta \delta = -H_{\partial\Omega_\varepsilon}$, we have
\begin{align}
I_u'(\varepsilon_2)-I_u'(\varepsilon_1) &= \int_{\varepsilon_1}^{\varepsilon_2} \int_{\partial\Omega_\rho}(\nabla u\cdot\nabla \delta+u\Delta\delta)\ds \,d\rho \\
&= -\int_{\varepsilon_1}^{\varepsilon_2} \int_{\partial\Omega_\rho}(\partial_{\nnu_{\Omega_\rho}} u+H_{\partial\Omega_\rho}u)\ds \,d\rho 
\end{align}
which implies that $I_u'(\varepsilon)$ is absolutely continuous. Differentiating with respect to $\varepsilon$, we obtain that 
\begin{equation}
I_u''(\varepsilon)=-\int_{\partial\Omega_\varepsilon}(\partial_{\bm{\nu}_{\Omega_\varepsilon}} u + H_{\partial\Omega_\varepsilon}u)\ds,\quad\textnormal{for a.e. }\e\in[0,\e_0),
\end{equation}
which proves \eqref{eq_I_derivatives}. We finally prove that the map $\e\mapsto I''_u(\e)$ is continuous in $[0,\e_0]$. Since $\partial\Omega \in C^{1,1}$, the map $\Phi: \partial\Omega \times [0, \varepsilon_0] \to \overline{\Omega \setminus \Omega_{\varepsilon_0}}$ given by $\Phi(x, \varepsilon) = x - \varepsilon \bm{\nu}(x)$ is a bi-Lipschitz homeomorphism. This change of variables has been used e.g. in \cite{Arrieta} and the new coordinates are also known as Fermi coordinates. We denote $\Phi_\varepsilon(x) = \Phi(x, \varepsilon)$ and we let $\kappa_1(x), \dots, \kappa_{d-1}(x)$ be the principal curvatures of $\partial\Omega$ at $x$, which belong to $L^\infty(\partial\Omega)$. It is a standard procedure to derive the following identities, which hold for almost every $x \in \partial\Omega$, 
\begin{equation}\label{eq_DPhi}
\det D\Phi_\e(x) = \prod_{i=1}^{d-1} \big(1 - \varepsilon \kappa_i(x)\big),\quad H_{\partial\Omega_\varepsilon}(\Phi_\varepsilon(x)) = \sum_{i=1}^{d-1} \frac{\kappa_i(x)}{1 - \varepsilon \kappa_i(x)},\quad\textnormal{and}\quad \bm{\nu}_{\Omega_\varepsilon}(\Phi_\varepsilon(x)) = \bm{\nu}(x),
\end{equation}
from which, up to reducing $\e_0$, we deduce that
\begin{equation}\label{eq:det_DPhi}
    \frac{1}{2}\leq \det D\Phi_\e(x) \leq \frac{3}{2}\quad\text{for $\sigma$-a.e. }x\in\partial\Omega~\text{and all }\e\in[0,\e_0].
\end{equation}
and that
\begin{equation}\label{eq:det_DPhi_bis}
    \det D\Phi_\e(x) =1-\e H_{\partial \Omega}(x)+O(\e^2), \quad \text{as } \e \to 0^+.
\end{equation}
with $O(\e^2)$ uniform in $x\in\partial \Omega$.
We now claim that there exists $C>0$ such that
\begin{equation}\label{eq:claim_C2_1}
    \norm{w\circ\Phi_{\e_2}-w\circ\Phi_{\e_1}}_{L^1(\partial\Omega)}\leq C\int_{\Omega_{\e_1}\setminus\Omega_{\e_2}}|\nabla w|\dx
\end{equation}
for all $w\in W^{1,1}(\Omega\setminus\overline{\Omega_{\e_0}})$ and all $\e\in[0,\e_0]$.  Let us first assume that $w\in C^1(\overline{\Omega\setminus\Omega_{\e_0}})$. We first observe that for $x\in\partial\Omega$
\begin{equation}
    w(\Phi_{\e_2}(x))-w(\Phi_{\e_1}(x))=-\int_{\e_1}^{\e_2}\nabla w(\Phi_t(x))\cdot\nnu(x) \dt
\end{equation}
and so, by the Fubini theorem, we have
\begin{equation}
    \norm{w\circ\Phi_{\e_2}-w\circ\Phi_{\e_1}}_{L^1(\partial\Omega)}\leq\int_{\e_1}^{\e_2}\int_{\partial\Omega}|\nabla w(\Phi_t(x))|\ds\dt.
\end{equation}
We now perform the change of variables $y=\Phi_t(x)$, noticing that
\begin{equation}
    D\Phi_t(x)\nnu(x)=\nnu(x)
\end{equation}
and so, by symmetry of $D\Phi_t$, also
\begin{equation}
    D\Phi_t(x)^{-T}\nnu(x)=\nnu(x).
\end{equation}
Then, in view also of \eqref{eq:det_DPhi} and the coarea formula we get that
\begin{equation}
    \norm{w\circ\Phi_{\e_2}-w\circ\Phi_{\e_1}}_{L^1(\partial\Omega)}\leq C\int_{\e_1}^{\e_2}\int_{\partial\Omega_t}|\nabla w|\ds\dt\leq \int_{\Omega_{\e_1}\setminus\Omega_{\e_2}}|\nabla w|\dx,
\end{equation}
thus proving \eqref{eq:claim_C2_1} for $w\in C^1(\overline{\Omega\setminus\Omega_{\e_0}})$. Now, by density of $C^1(\overline{\Omega\setminus\Omega_{\e_0}})$ into $W^{1,1}(\Omega\setminus\overline{\Omega_{\e_0}})$ and continuity of the trace embedding $W^{1,1}(\Omega\setminus\overline{\Omega_{\e_0}})\hookrightarrow L^1(\partial\Omega_\e)$ for any $\e\in[0,\e_0]$, we deduce \eqref{eq:claim_C2_1}. In particular, the maps
\begin{equation}\label{eq:claim_C2_2}
    \e\mapsto u\circ \Phi_\e\quad\text{and}\quad \e\mapsto \nabla u\circ\Phi_\e\quad\text{are continuous from $[0,\e_0]$ to $L^1(\partial\Omega)$.}
\end{equation}
 Now, again by the change of variables $y=\Phi_\e(x)$, we have that
\begin{equation}
I_u''(\varepsilon) = -\int_{\partial\Omega} \Big[ \nabla u(\Phi_\varepsilon(x)) \cdot \bm{\nu}(x) + u(\Phi_\varepsilon(x)) H_{\partial\Omega_\varepsilon}(\Phi_\varepsilon(x)) \Big] \det D\Phi_\varepsilon(x) \ds(x).
\end{equation}
Therefore, using \eqref{eq:claim_C2_2} and the fact that the maps
\begin{equation}
    \e\mapsto\det D\Phi_\e(x)\quad\text{and}\quad \e\mapsto H_{\partial\Omega_\e}(\Phi_\e(x))
\end{equation}
are continuous from $[0,\e_0]$ to $L^\infty(\partial\Omega)$, we deduce that $I''_u$ is continuous, thus concluding the proof of \eqref{eq_I_derivatives}. The proof of \eqref{eq:I_taylor} is by standard Taylor expansion. 

We now turn to the proof of \eqref{ineq_uv_trace_ste}. We can write, with the change of variables $t=\e r$,
\begin{multline}\label{proof_lemma_lemma:I''_1}
\int_{\partial \Omega} uv \ds - \frac{1}{\e}\int_{\Omega\setminus \Omega_\e} uv \dx\\
=\int_{\partial\Omega}  v(x)u(x)\ds-\frac{1}{\e}\int_{\partial\Omega}\int_0^\e v(\Phi(x,t ))u(\Phi(x,t)) \det D\Phi_{t}(x) \, dt  \ds\\
=\int_{\partial\Omega} \int_0^1 [v(x)u(x)-v(\Phi(x,\e r ))u(\Phi(x,\e r)) \det D\Phi_{\e r}(x)] \, dr  \ds\\
=\int_{\partial\Omega} \int_0^1 v(x)[u(x)-u(\Phi(x,\e r))]+u(\Phi(x,\e r))[v(x)-v(\Phi(x,\e r))]  \, dr  \ds\\
-\int_{\partial\Omega}\int_0^1 v(\Phi(x,\e r )u(\Phi(x,\e r))[\det D\Phi_{\e r}(x)-1] \, dr  \ds.
\end{multline}
Now we estimate each term in \eqref{proof_lemma_lemma:I''_1}. We observe that by the Cauchy-Schwarz inequality 
\begin{multline}
|u(x)-u(\Phi(x,\e r))|\le \int_0^{\e r} |\nabla u(\Phi(x,s))\cdot \bm{\nu}(x)| \, ds  \\
\le (\e r)^\frac{1}{2}\left(\int_0^{\e r} |\nabla u(\Phi(x,s))|^2 \, ds\right)^{\frac12} 
\le \e^\frac{1}{2}\left(\int_0^{\e } |\nabla u(\Phi(x,s))|^2 \, ds\right)^{\frac12}.
\end{multline}
Integrating over $\partial \Omega \times(0,1)$ we obtain, by \eqref{eq:det_DPhi},
\begin{multline}\label{proof_lemma_lemma:I''_2}
\left|\int_{\partial\Omega} \int_0^1 v(x)[u(x)-u(\Phi(x,\e r))]  \, dr  \ds\right|\\
\le \norm{v}_{L^\infty(\Omega)} \e^\frac12\left(\int_{\partial\Omega}\int_0^{\e } |\nabla u(\Phi(x,s))|^2 \, ds\right)^{\frac12}\\
\le \sqrt{2} \norm{v}_{L^\infty(\Omega)} \e^\frac12
\left(\int_{\partial\Omega}\int_0^{\e}|\nabla u(\Phi(x,s))|^2\det D\Phi_s(x)\, ds\right)^{\frac12}\\
\le\sqrt{2} \norm{v}_{L^\infty(\Omega)} \e^\frac12\left(\int_{\Omega\setminus\Omega_\e}|\nabla u|^2 \dx\right)^{\frac12}.
\end{multline}
Similarly, by  \eqref{eq:det_DPhi}, the Cauchy-Schwarz inequality,   and a change of variables 
\begin{multline}\label{proof_lemma_lemma:I''_3}
\left|\int_{\partial\Omega} \int_0^1 u(\Phi(x,\e r))[v(x)-v(\Phi(x,\e r))]  \, dr\ds\right| \\
\le \int_{\partial\Omega}\left(\int_0^{\e } |\nabla v(\Phi(x,s))| \, ds\right)  \left(\int_0^1 |u(\Phi(x,\e r))|  \, dr\right)\ds \\
\le\left(\int_{\partial\Omega}\left(\int_0^{\e } |\nabla v(\Phi(x,s))| \, ds\right)^2 \ds \right)^\frac{1}{2} 
\left(\int_{\partial\Omega}\left(\int_0^1 |u(\Phi(x,\e r))|  \, dr\right)^2\ds \right)^\frac{1}{2} \\
\le \sqrt{2}\e^\frac{1}{2}\left(\int_{\Omega\setminus\Omega_\e}|\nabla v|^2 \dx\right)^\frac12
\left(\int_{\partial\Omega} \int_0^1 |u(\Phi(x,\e r))|^2  \, dr\ds\right)^\frac12\\
\le 2 \norm{v}_{H^1(\Omega)}\e^\frac12\left(\frac{1}{\e}\int_{\Omega\setminus\Omega_\e}u^2  \dx\right)^\frac12.
\end{multline}
Finally, \eqref{eq:det_DPhi_bis}, a change of variables and the Cauchy-Schwarz inequality yield, for some constant $C>0$ depending on $\Omega$, 
\begin{multline}\label{proof_lemma_lemma:I''_4}
\left|\int_{\partial\Omega}\int_0^1 v(\Phi(x,\e r ))u(\Phi(x,\e r))[\det D\Phi_{\e r}(x)-1] \, dr  \ds\right|\\
\le C \norm{v}_{L^\infty(\Omega)} \e \int_{\partial\Omega} \int_0^1 |u(\Phi(x,\e r))|  \, dr\ds
\le C\norm{v}_{L^\infty(\Omega)}\e\left(\frac{1}{\e}\int_{\Omega\setminus\Omega_\e}u^2  \dx\right)^\frac12,
\end{multline}
where we recall that $H_{\partial \Omega}$ is bounded since $\Omega$ is a  $C^{1,1}$ domain.
In conclusion, we can prove \eqref{ineq_uv_trace_ste} putting together \eqref{proof_lemma_lemma:I''_1}, 
\eqref{proof_lemma_lemma:I''_2}, \eqref{proof_lemma_lemma:I''_3} and \eqref{proof_lemma_lemma:I''_4}.   
\end{proof}

\begin{lemma}\label{lemma_Ve_stek}
For any $\varphi \in E(\lambda_{0,N})$,
\begin{equation}\label{limit_Ve_stek}
\frac{\la_{0,N}}{\e}\int_{\Omega\setminus\Omega_\e}\varphi V_{\e,\varphi}\dx
=\e\int_{\partial\Omega}\left[\frac{2}{3}(\la_{0,N}-1)^2+\frac{1}{2}(\la_{0,N}-1)H_{\partial \Omega}\right] \varphi^2 \ds
+o(\e),\quad\text{as }\e\to 0^+
\end{equation}
or, more precisely, as $\e \to 0^+$,
\begin{equation}\label{estimate_Ve_stek}
\sup_{\substack{\varphi\in E(\lambda_{0,N}) \\ \norm{\varphi}_{L^2(\partial\Omega)}=1}}
\left|\frac{\lambda_{0,N}}{\e^2}\int_{\Omega\setminus\Omega_\e}\varphi V_{\e,\varphi}\dx
- \int_{\partial \Omega}\left[\frac{2}{3}(\la_{0,N}-1)^2+\frac{1}{2}(\la_{0,N}-1)H_{\partial \Omega}\right] \varphi^2 \ds \right|\to 0^+.
\end{equation}
Finally, 
\begin{equation}\label{eq_Ve_stek_L2_smallo}
\sup_{\substack{\varphi\in E(\lambda_{0,N})\\ \norm{\varphi}_{L^2(\partial\Omega)}=1}}
\left(\frac{1}{\e}\int_{\Omega\setminus\Omega_\e}|V_{\e,\varphi}|^2\dx\right)=o(\e),\quad\text{as }\e\to 0^+.
\end{equation}
\end{lemma}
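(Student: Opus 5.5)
The plan is to reduce the statement to the behaviour of a single boundary quantity, identify that quantity by a boundary-layer ansatz in Fermi coordinates, and then upgrade to the uniform statements using that $E(\lambda_{0,N})$ is finite-dimensional. Throughout, write $\Lambda:=\lambda_{0,N}-1$ for the Steklov eigenvalue, so that $\partial_{\nnu}\varphi=\Lambda\varphi$ on $\partial\Omega$ and $\varphi\in C^{1,\alpha}(\overline\Omega)$ by \eqref{eq:estimate_steklov}.

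\emph{Step 1 (algebraic reduction).} Test \eqref{eq_Ve_ste} with $u=\varphi$, and test \eqref{eq_ste_eigen} (for $\varphi$) with $v=V_{\e,\varphi}$. Subtracting the two identities eliminates $\int_\Omega\nabla\varphi\cdot\nabla V_{\e,\varphi}\dx$ and gives
\begin{equation}
\frac1\e\int_{\Omega\setminus\Omega_\e}\varphi V_{\e,\varphi}\dx=\Lambda\left(\int_{\partial\Omega}\varphi^2\ds-\frac1\e\int_{\Omega\setminus\Omega_\e}\varphi^2\dx\right)-\Lambda\int_{\partial\Omega}\varphi V_{\e,\varphi}\ds .
\end{equation}
For the bracket, apply \eqref{eq:I_taylor} to $u=\varphi^2\in W^{2,1}(\Omega)$, using $\partial_{\nnu}(\varphi^2)=2\Lambda\varphi^2$. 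This yields the bracket $=\e\int_{\partial\Omega}\bigl(\Lambda+\tfrac12 H_{\partial\Omega}\bigr)\varphi^2\ds+o(\e)$. Hence everything reduces to computing $\int_{\partial\Omega}\varphi V_{\e,\varphi}\ds$ up to $o(\e)$.

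\emph{Step 2 (a priori bounds).} Test \eqref{eq_Ve_ste} with $u=V_{\e,\varphi}$. Then bound the right-hand side with \eqref{ineq_uv_trace_ste}, taking $v=\varphi$ and $u=V_{\e,\varphi}$. This gives
\begin{equation}
\int_\Omega|\nabla V_{\e,\varphi}|^2\dx+\frac1\e\int_{\Omega\setminus\Omega_\e}V_{\e,\varphi}^2\dx=O(\e).
\end{equation}
This bound is only $O(\e)$, and both \eqref{eq_Ve_stek_L2_smallo} and the sharp constant require more.

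\emph{Step 3 (the ansatz; the main obstacle).} In Fermi coordinates $x=\Phi(y,t)$ with $t=\e s$, I would build an explicit approximation $W_\e$. In the layer I would take
\begin{equation}
W_\e=\e\,a(y,s),\qquad a(y,s)=\frac{\Lambda\varphi(y)}{2}(s-1)^2+c_\e(y).
\end{equation}
The profile comes from the leading-order equation $-\e^{-2}\partial_s^2 W+\e^{-1}W=-\e^{-1}\Lambda\varphi$, together with the Neumann datum at $s=0$ and zero flux at $s=1$. In $\Omega_\e$ I would extend $W_\e$ harmonically, at order $\e$ or smaller, with $c_\e$ fixed by the matching.

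I would then estimate the remainder $R_\e=V_{\e,\varphi}-W_\e$ by testing the equation satisfied by $R_\e$ against $R_\e$ itself. The residuals come from the curvature terms ($\det D\Phi_t=1-tH_{\partial\Omega}+O(t^2)$ and $H_{\partial\Omega_t}$ from \eqref{eq_DPhi}), from the tangential derivatives of $\varphi$, and from the replacement of $\varphi(\Phi(y,t))$ by $\varphi(y)$. These should contribute $o(\e)$ in the energy norm, bounded again via \eqref{ineq_uv_trace_ste} and Lemma~\ref{lemma:unif_poinc}. I expect this to be the hard part. The difficulty is choosing the ansatz accurately enough, including the first curvature correction, that the residual is genuinely $o(\e)$ and not merely $O(\e)$, while $\partial\Omega$ is only $C^{1,1}$. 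If the direct ansatz proves too rigid, the fallback is a weaker compactness argument: show that $\e^{-1}V_{\e,\varphi}\circ\Phi(y,\e s)$ converges in $L^2(\partial\Omega\times(0,1))$ to $a$, with a single $O(\e)$ harmonic correction.

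\emph{Step 4 (assembling and uniformity).} Computing $\int_{\partial\Omega}\varphi W_\e\ds$ and $\frac1\e\int_{\Omega\setminus\Omega_\e}\varphi W_\e\dx$ explicitly uses $\int_0^1(s-1)^2\,ds=\tfrac13$. Substituting these into Step 1, the two expressions combine, via the equation, into the coefficient $\tfrac23\Lambda^2+\tfrac12\Lambda H_{\partial\Omega}$; multiplying by the prefactor in \eqref{limit_Ve_stek} then gives that expansion. The remainder estimate from Step 3 gives \eqref{eq_Ve_stek_L2_smallo} directly. Finally, both sides of \eqref{estimate_Ve_stek} are quadratic forms in $\varphi$ on the finite-dimensional space $E(\lambda_{0,N})$, by the linearity of $\varphi\mapsto V_{\e,\varphi}$ from Proposition~\ref{prop_J_min}. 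Their convergence on a basis, and on sums of basis pairs (by polarization), therefore gives uniform convergence on the unit sphere, which is \eqref{estimate_Ve_stek}.
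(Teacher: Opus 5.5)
Your Step~3 is where all the content lies, and you leave it open. The reduction at the end of Step~1 also points it at a target that is harder than needed and genuinely nonlocal.

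Knowing $\int_{\partial\Omega}\varphi V_{\e,\varphi}\ds$ up to $o(\e)$ means knowing the trace of $\e^{-1}V_{\e,\varphi}$, i.e.\ your constant $c_\e(y)$, up to $o(1)$. This constant does not vanish in general. With your $\Lambda=\lambda_{0,N}-1\neq0$, test \eqref{eq_Ve_ste} with $u\equiv1$ and use \eqref{eq:I_taylor} and $\int_{\partial\Omega}\varphi\ds=0$: this gives $\e^{-2}\int_{\Omega\setminus\Omega_\e}V_{\e,\varphi}\dx\to\frac{\Lambda}{2}\int_{\partial\Omega}H_{\partial\Omega}\varphi\ds$. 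The local profile $\frac{\Lambda\varphi}{2}(s-1)^2$ contributes only $\frac{\Lambda}{6}\int_{\partial\Omega}\varphi\ds=0$ to this limit. Formally, carrying your inner expansion one order further and matching fluxes with an outer term $\e U$ gives $c_\e\to U|_{\partial\Omega}$. Here $U$ is harmonic in $\Omega$ with $\partial_{\nnu}U+U=\frac{\Lambda}{2}\bigl(\Lambda-\frac13+H_{\partial\Omega}\bigr)\varphi$ on $\partial\Omega$.

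So ``$c_\e$ fixed by the matching'' hides two things: a second-order layer corrector built from $H_{\partial\Omega}\in L^\infty$, whose tangential derivatives you do not have for a $C^{1,1}$ boundary, and an outer boundary value problem. You would also need $\|V_{\e,\varphi}-W_\e\|_\e=o(\e)$, where $\|u\|_\e^2:=\int_\Omega|\nabla u|^2\dx+\frac1\e\int_{\Omega\setminus\Omega_\e}u^2\dx$. A one-term ansatz leaves a residual of size $\e\|u\|_\e$ and yields only $O(\e)$. Your fallback has the same defect. Moreover, Step~2 only gives $\|\e^{-1}V_{\e,\varphi}\circ\Phi(\cdot,\e\,\cdot)\|_{L^2(\partial\Omega\times(0,1))}=O(\e^{-1/2})$. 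This is why the paper works only with the $r$-derivative of this function.

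The missing idea is to keep the boundary term and the layer term together. Add $\frac{\Lambda}{\e}\int_{\Omega\setminus\Omega_\e}\varphi V_{\e,\varphi}\dx$ to both sides of your Step~1 identity. This gives the paper's identity
\[
\frac{\lambda_{0,N}}{\e}\int_{\Omega\setminus\Omega_\e}\varphi V_{\e,\varphi}\dx=\Lambda\Big(\int_{\partial\Omega}\varphi^2\ds-\frac1\e\int_{\Omega\setminus\Omega_\e}\varphi^2\dx\Big)-\Lambda D_\e,\qquad D_\e:=\int_{\partial\Omega}\varphi V_{\e,\varphi}\ds-\frac1\e\int_{\Omega\setminus\Omega_\e}\varphi V_{\e,\varphi}\dx .
\]
In Fermi coordinates, $D_\e=\e\int_{\partial\Omega}\varphi\int_0^1(r-1)\,\partial_r\widetilde V_\e\,dr\ds+o(\e)$, where $\widetilde V_\e(x,r):=\e^{-1}V_{\e,\varphi}(\Phi(x,\e r))$. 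So $c_\e$ cancels and only the normal profile matters. The paper obtains $\partial_r\widetilde V_\e\rightharpoonup\Lambda(r-1)\varphi$ by testing with $\zeta(\pi(x))\eta(\delta(x)/\e)$. It then gets \eqref{eq_Ve_stek_L2_smallo} by comparing the energy identity with lower semicontinuity of the normal-gradient energy.

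After this reorganisation your ansatz route also closes, in its crudest form. Let $\delta(x)=\mathrm{dist}(x,\partial\Omega)$ and $\pi$ the nearest-point projection. Take $W_\e:=\frac{\e\Lambda}{2}\varphi(\pi(x))\bigl(1-\delta(x)/\e\bigr)^2$ in $\Omega\setminus\Omega_\e$ and $W_\e:=0$ in $\Omega_\e$. No $c_\e$, harmonic extension or curvature correction is needed. Since $\nabla\delta=-\nnu\circ\pi$ and $\Delta\delta\in L^\infty$, the divergence theorem bounds the residual by $C\e\|u\|_\e$, so $\|V_{\e,\varphi}-W_\e\|_\e=O(\e)$. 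Then \eqref{ineq_uv_trace_ste} gives $D_\e=\frac{\e\Lambda}{3}\int_{\partial\Omega}\varphi^2\ds+O(\e^{3/2})$ and $\frac1\e\int_{\Omega\setminus\Omega_\e}V_{\e,\varphi}^2\dx=O(\e^2)$. This is a quantitative version of the paper's compactness argument.
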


\begin{proof}
Just as in \Cref{lemma:I''}, there exists $\e_0>0$ such that for any $\e \in [0,\e_0]$  the map 
$\Phi: \partial\Omega \times [0, \e_0] \to \overline{\Omega \setminus \Omega_{\e_0}}$ 
given by $\Phi_\e(x)=\Phi(x, \e) = x - \e \bm{\nu}(x)$  is a bi-Lipschitz homeomorphism. 
Let also denote its inverse $\Phi^{-1}: \overline{\Omega \setminus \Omega_{\e_0}} \to \partial\Omega \times [0, \e_0]$ with  
$\Phi^{-1}=(\Phi^{-1}_1, \Phi^{-1}_2)$ where $\Phi^{-1}_1:\overline{\Omega \setminus \Omega_{\e_0}} \to \partial \Omega$ and $\Phi^{-1}_2: \overline{\Omega \setminus \Omega_{\e_0}} \to [0, \e_0]$.
It is easy to see that, denoting with $\pi(x) \in \partial \Omega$ the point of minimal distance from $x$ to $\partial \Omega$,
\begin{equation}\label{proof_lemma_Ve_stek_0}
\Phi^{-1}_1(x)=\pi(x) \quad \text{ and } \quad\Phi^{-1}_2(x)=\mathop{\rm dist}(x,\partial \Omega)=:\delta(x).
\end{equation}
Let us define for any $r \in (0,1)$, $x \in \partial \Omega$ and $\e \in (0,\e_0)$
\begin{equation}
W_\e(x,r):=\frac{1}{\e}V_{\e,\varphi}(\Phi(x,\e r))=\frac{1}{\e}V_{\e,\varphi}( x - \e r \bm{\nu}(x)).
\end{equation}
Since $\Phi$ is a bi-Lipschitz map,  $W_\e \in H^1(\partial \Omega \times (0,1))$. Furthermore,
\begin{equation}\label{proof_lemma_Ve_stek_1}
\pd{W_\e}{r}(x,r)=-\nabla V_{\e,\varphi}( x - \e r \bm{\nu}(x))\cdot\bm{\nu}(x).
\end{equation}
For the sake of clarity we divide the rest of the proof in two steps.

\noindent
\textbf{Step 1.} We claim that 
\begin{equation}\label{proof_lemma_Ve_stek_2}
\pd{W_\e}{r}(x,r) \rightharpoonup (\la_{0,N}-1)(r-1)\varphi(x) \quad \text{weakly in } L^2(\partial \Omega \times (0,1)) \text{ as } \e \to 0^+.
\end{equation}
To prove \eqref{proof_lemma_Ve_stek_2} we need some compactness. 
By testing \eqref{eq_Ve_ste} with $u=V_{\e,\varphi}$ we get that
\begin{equation}\label{proof_lemma_Ve_stek_2_5}
\int_\Omega|\nabla V_{\e,\varphi}|^2\dx+\frac{1}{\e}\int_{\Omega\setminus\Omega_\e}|V_{\e,\varphi}|^2\dx 
=(\lambda_{0,N}-1)\left[\int_{\partial\Omega}\varphi V_{\e,\varphi}\ds 
-\frac{1}{\e}\int_{\Omega\setminus\Omega_\e}\varphi V_{\e,\varphi}\dx\right]
\end{equation}
which implies, by \eqref{ineq_uv_trace_ste} with $v=\varphi$,
\begin{equation}
\int_\Omega|\nabla V_{\e,\varphi}|^2\dx+\frac{1}{\e}\int_{\Omega\setminus\Omega_\e}|V_{\e,\varphi}|^2\dx  \le C \e
\end{equation}
for some constant $C>0$ depending only on $\Omega$, $\norm{\varphi}_{L^\infty(\Omega)}$ and  $\norm{\varphi}_{H^1(\Omega)}$. In particular,
\begin{equation}\label{proof_lemma_Ve_stek_3}
\int_{\partial\Omega} \int_0^1 \left|\pd{W_\varepsilon}{r}\right|^2 \, dr \ds \le C \quad \text{ and }\quad 
\int_{\partial\Omega} \int_0^1 |W_\varepsilon|^2 \, dr \ds \le \frac{C}{\varepsilon}.
\end{equation}
Let $\zeta \in C^\infty(\partial \Omega)$ and let $\eta \in C^\infty([0,1])$ with $\eta(1)=0$.
Let us define, taking \eqref{proof_lemma_Ve_stek_0} into account,
\begin{equation}
u_\e(x):=
\begin{cases}
\zeta(\pi(x))\eta(\delta(x)/\e), &\text{ if } x \in \Omega\setminus \Omega_\e,\\
0, &\text{ if } x \in \Omega_\e.
\end{cases}
\end{equation}
We have $u_\e \in H^1(\Omega)$ since  $\eta \in C^\infty([0,1])$ with $\eta(1)=0$. Furthermore, 
\begin{multline}
\nabla u_\e(x)=\nabla (\zeta\circ \pi)(x)\eta(\delta(x)/\e)+
\frac{1}{\e}\zeta(\pi(x))\eta'(\delta(x)/\e) \nabla \delta(x)\\
=\nabla (\zeta\circ \pi)(x)\eta(\delta(x)/\e)
-\frac{1}{\e}\zeta(\pi(x))\eta'(\delta(x)/\e) \nu(\pi(x)).
\end{multline}
Testing \eqref{eq_Ve_ste} with $u_\e$ we have 
\begin{equation}
\int_\Omega\nabla V_{\e,\varphi}\cdot\nabla u_\e\dx+\frac{1}{\e}\int_{\Omega\setminus\Omega_\e}V_{\e,\varphi} u_\e\dx=(\lambda_{0,N}-1)\left(\int_{\partial\Omega}\varphi u_\e\ds-\frac{1}{\e}\int_{\Omega\setminus\Omega_\e}\varphi u_\e\ds\right),
\end{equation}
or equivalently with the change of variables $t=r\e$  by \eqref{proof_lemma_Ve_stek_1}
\begin{multline}\label{proof_lemma_Ve_stek_4}
\int_{\partial \Omega}\int_0^1 \pd{W_\e}{r}(x,r)\zeta(x)  \eta'(r) \det D\Phi_{\e r}(x) \,dr \ds \\
+\int_{ \Omega\setminus \Omega_\e} \nabla V_{\e,\varphi}\cdot \nabla (\zeta\circ \pi)(x)\eta(\delta(x)/\e)\dx 
+\e\int_{\partial \Omega}\int_0^1 W_\e(x,r)  \zeta(x)  \eta(r) \det D\Phi_{\e r}(x)\,dr \ds \\
=(\lambda_{0,N}-1)\left(\int_{\partial \Omega} \varphi(x)\zeta(x) \eta(0)\ds-\int_{\partial \Omega}\int_0^1 \varphi(\Phi(x,\e r)) \zeta(x)  \eta(r) \det D\Phi_{\e r}(x)\,dr \ds\right).
\end{multline}
By the Cauchy-Schwarz inequality,
\begin{equation}\label{proof_lemma_Ve_stek_5}
\left|\int_{ \Omega\setminus \Omega_\e} \nabla V_{\e,\varphi}\cdot 
\nabla (\zeta\circ \pi)(x)\eta(\delta(x)/\e)\dx \right|
\le C \e^\frac12 \norm{\nabla V_{\e,\varphi}}_{L^2(\Omega)},
\end{equation}
for some constant $C>0$ depending on $\Omega$, $\eta$ and  $\zeta$. Furthermore, by the Cauchy-Schwarz inequality, \eqref{eq:det_DPhi} and \eqref{proof_lemma_Ve_stek_3}
\begin{equation}\label{proof_lemma_Ve_stek_6}
\left|\int_{\partial \Omega}\int_0^1 W_\e(x,r)  \zeta(x)  \eta(r) \det D\Phi_{\e r}(x)\,dr \ds\right| 
\le C\left(\int_{\partial\Omega} \int_0^1 |W_\varepsilon|^2 \, dr \ds\right)^\frac12 \le C\varepsilon^{-\frac12},
\end{equation}
for some constant $C>0$ depending on $\Omega$, $\varphi$, $\eta$ and  $\zeta$.
Passing to the limit as $\e \to 0^+$ in \eqref{proof_lemma_Ve_stek_4}, and taking \eqref{eq:det_DPhi_bis}, \eqref{proof_lemma_Ve_stek_5} and \eqref{proof_lemma_Ve_stek_6} into account, we get that
\begin{multline}\label{proof_lemma_Ve_stek_7}
\lim_{\e \to 0^+}\int_{\partial \Omega}\int_0^1 \pd{W_\e}{r}(x,r)\zeta(x)  \eta'(r) \,dr \ds\\
=(\lambda_{0,N}-1)\left(\int_{\partial \Omega} \varphi(x)\zeta(x) \eta(0)\ds-\int_{\partial \Omega}\varphi(x) \zeta(x) \int_0^1   \eta(r) \,dr \ds\right)\\
=(\lambda_{0,N}-1)\left(\int_{\partial\Omega}\int_0^1\varphi(x) \zeta(x) (r-1)\eta'(r)\,dr \ds\right),
\end{multline}
since  $\eta(1)=0$. In view of \eqref{proof_lemma_Ve_stek_3}, 
\begin{equation}\label{proof_lemma_Ve_stek_7_5}
\pd{W_{\e_n}}{r}(x,r) \rightharpoonup w_0 \quad \text{weakly in } L^2(\partial \Omega \times (0,1)) \text{ as } n \to \infty,
\end{equation}
for some $w_0 \in L^2(\partial \Omega \times (0,1))$ and some sequence $\e_n \to 0^+$. We now let $\xi\in C^\infty_c(0,1)$ and choose
\begin{equation}
    \eta(t)=\int_0^t \xi(s)\,ds-\int_0^1 \xi(s)\,ds
\end{equation}
so that, since $\eta \in C^\infty([0,1])$ with $\eta(1)=0$, by \eqref{proof_lemma_Ve_stek_7} and \eqref{proof_lemma_Ve_stek_7_5}
we conclude that
\begin{equation}
    \int_{\partial\Omega}\int_0^1 w_0(x,r)\zeta(x)\xi(r)\ds\,dr=(\lambda_{0,N}-1)\left(\int_{\partial\Omega}\int_0^1\varphi(x) \zeta(x) (r-1)\xi(r)\,dr \ds\right),
\end{equation}
for all $\zeta\in C^\infty(\partial\Omega)$ and all $\xi\in C_0^\infty(0,1)$. It follows that 
$w_0(x,r)=(\lambda_{0,N}-1)(r-1)\varphi(x)$ thus by the Urysonh subsequence principle, we have proved \eqref{proof_lemma_Ve_stek_2}.

\noindent
\textbf{Step 2.} We now prove \eqref{limit_Ve_stek}. Testing \eqref{eq_Ve_ste} with $\varphi$ we obtain 
\begin{multline}\label{proof_lemma_Ve_stek_8}
\frac{\la_{0,N}}{\e}\int_{\Omega\setminus\Omega_\e}V_{\e,\varphi}\varphi\dx 
=(\lambda_{0,N}-1)\left[\int_{\partial\Omega}\varphi^2\ds 
-\frac{1}{\e}\int_{\Omega\setminus\Omega_\e}\varphi^2\dx\right]-\int_\Omega\nabla V_{\e,\varphi}\cdot \nabla \varphi\dx \\
+\frac{\la_{0,N}-1}{\e}\int_{\Omega\setminus\Omega_\e}V_{\e,\varphi}\varphi\dx 
=(\lambda_{0,N}-1)\left[\int_{\partial\Omega}\varphi^2\ds 
-\frac{1}{\e}\int_{\Omega\setminus\Omega_\e}\varphi^2\dx\right]\\
-(\la_{0,N}-1)\left[\int_{\partial\Omega}\varphi V_{\e,\varphi}\ds
-\frac{1}{\e}\int_{\Omega\setminus\Omega_\e}V_{\e,\varphi}\varphi\dx\right],
\end{multline} 
by \eqref{eq_ste_eigen} tested with $V_{\e,\varphi}$. Let us compute the asymptotic behavior of the right hand side of \eqref{proof_lemma_Ve_stek_8}. By \eqref{eq:I_taylor}
\begin{multline}\label{proof_lemma_Ve_stek_9}
\int_{\partial\Omega}\varphi^2\ds 
-\frac{1}{\e}\int_{\Omega\setminus\Omega_\e}\varphi^2\dx
=\frac{\varepsilon}{2}\int_{\partial\Omega}(\partial_{\bm{\nu}} (\varphi^2) 
+ H_{\partial\Omega}\varphi^2)\ds 
+o(\varepsilon)\\
=\frac{\varepsilon}{2}\int_{\partial\Omega} (2(\la_{0,N}-1) + H_{\partial\Omega})\varphi^2\ds 
+o(\varepsilon) \quad \text{as } \varepsilon \to0^+.
\end{multline}
Since $\varphi$ is Lipschitz, by \eqref{proof_lemma_Ve_stek_3}
\begin{equation}\label{proof_lemma_Ve_stek_9_5}
\left|\int_{\partial \Omega} \int_0^1 [\varphi(x)-\varphi(\Phi(x,\e r))] W_\e(x,r) \, dr \ds\right| 
\le C \e \int_{\partial \Omega} \int_0^1 |W_\e(x,r)| \, dr \ds \le C \e^\frac12. 
\end{equation}
Furthermore, the change of variables $t=\e r$, \eqref{eq:det_DPhi_bis},  \eqref{proof_lemma_Ve_stek_2} and \eqref{proof_lemma_Ve_stek_9_5} yield 
\begin{multline}\label{proof_lemma_Ve_stek_10}
\int_{\partial\Omega}\varphi V_{\e,\varphi}\ds
-\frac{1}{\e}\int_{\Omega\setminus\Omega_\e}V_{\e,\varphi}\varphi\dx
=\e\int_{\partial\Omega} \varphi(x) W_\e(x,0)\ds \\
-\e\int_{\partial \Omega} \int_0^1 \varphi(\Phi(x,\e r)) W_\e(x,r)(1-\e r H_{\partial \Omega}+O(\e^2)) \, dr\ds\\
=\e\int_{\partial\Omega} \varphi(x) \left[W_\e(x,0)-\int_0^1W_\e(x,r)\, dr\right]\ds +o(\e)\\
=\e\int_{\partial\Omega} \varphi(x) \int_0^1(r-1) \pd{W_\e}{r}(x,r)\, dr \ds +o(\e)
=\e(\la_{0,N}-1) \int_0^1(r-1)^2  \, dr \int_{\partial\Omega} \varphi^2 \ds +o(\e)\\
=\frac{1}{3}(\la_{0,N}-1)\left(\int_{\partial\Omega} \varphi^2 \ds\right) \e  +o(\e) \quad \text{ as } \e \to 0^+.
\end{multline}
Hence, \eqref{limit_Ve_stek} follows from \eqref{proof_lemma_Ve_stek_8}, \eqref{proof_lemma_Ve_stek_9} and \eqref{proof_lemma_Ve_stek_10}.
Finally \eqref{estimate_Ve_stek} follows from \eqref{limit_Ve_stek} since $E(\la_{0,N})$ is a finite dimensional vector space.

\noindent
\textbf{Step 3.} We now prove \eqref{eq_Ve_stek_L2_smallo}. Let $\varphi \in E(\la_{0,N})$
with $\norm{\varphi}_{L^2(\partial\Omega)}=1$. By \eqref{proof_lemma_Ve_stek_2_5} and \eqref{proof_lemma_Ve_stek_10}
\begin{equation}\label{proof_lemma_Ve_stek_11}
\limsup_{\e\to 0^+}\frac{1}{\e}\int_\Omega|\nabla V_{\e,\varphi}|^2\dx
\le \frac{1}{3}(\la_{0,N}-1)^2\int_{\partial\Omega} \varphi^2 \ds.
\end{equation}
Furthermore, by a change of variables and \eqref{eq:det_DPhi_bis}
\begin{multline}
\int_\Omega|\nabla V_{\e,\varphi}|^2\dx \ge \int_{\Omega\setminus \Omega_\e}|\nabla V_{\e,\varphi}\cdot \nu(\pi(x))|^2\dx
=\e\int_{\partial \Omega} \int_0^1\left|\pd{W_\e}{r}(x,r)\right|^2 \det D \Phi_{\e r} \, dr \ds\\
=\e\int_{\partial \Omega} \int_0^1\left|\pd{W_\e}{r}(x,r)\right|^2 \, dr \ds +o(\e), \quad  \text{ as } \e \to0^+.
\end{multline}
It follows that, by \eqref{proof_lemma_Ve_stek_2} and the lower semicontinuity of norms,
\begin{multline}\label{proof_lemma_Ve_stek_12}
\liminf_{\e\to 0^+}\frac{1}{\e}\int_\Omega|\nabla V_{\e,\varphi}|^2\dx\
\ge \liminf_{\e\to 0^+}\int_0^1\left|\pd{W_\e}{r}(x,r)\right|^2 \, dr \ds\\
\ge (\la_{0,N}-1)^2\int_0^1(r-1)^2 \, dr  \int_{\partial \Omega}\varphi^2\ds
=\frac{1}{3}(\la_{0,N}-1)^2\int_{\partial\Omega} \varphi^2 \ds.
\end{multline}
Putting together \eqref{proof_lemma_Ve_stek_11} and \eqref{proof_lemma_Ve_stek_12}, we have shown that 
\begin{equation}
\lim_{\e\to 0^+}\frac{1}{\e}\int_\Omega|\nabla V_{\e,\varphi}|^2\dx=\frac{1}{3}(\la_{0,N}-1)^2\int_{\partial\Omega} \varphi^2 \ds,
\end{equation}
while by  \eqref{proof_lemma_Ve_stek_2_5} and \eqref{proof_lemma_Ve_stek_10}
\begin{equation}
\lim_{\e\to 0^+}\frac{1}{\e}\int_\Omega|\nabla V_{\e,\varphi}|^2\dx +\frac{1}{\e^2}\int_{\Omega\setminus \Omega_\e}|V_{\e,\varphi}|^2\dx
=\frac{1}{3}(\la_{0,N}-1)^2\int_{\partial\Omega} \varphi^2 \ds.
\end{equation}
It follows that for any $\varphi \in E(\la_{0,N})$ with $\norm{\varphi}_{L^2(\partial\Omega)}=1$
\begin{equation}
\frac{1}{\e}\int_{\Omega\setminus \Omega_\e}|V_{\e,\varphi}|^2\dx=o(\e), \quad \text{ as } \e \to 0^+.
\end{equation}
Hence, we have proved  \eqref{eq_Ve_stek_L2_smallo} since $E(\la_{0,N})$ is a finite dimensional vector space.

\end{proof}

We are now finally able to prove the main result of the present section, quantifying the convergence of eigenvalues in the Neumann-to-Steklov approximation.
\begin{theorem}\label{theor_eigen_var_measures_ste}
Let $\{\lambda_{\e,n}\}_n$ be the eigenvalues of \eqref{eq_neu_to_stek} and $\{\lambda_{0,n}\}_n$ be the eigenvalues of \eqref{eq:steklov}. Let $\lambda_{0,N}$ be of multiplicity $M_{0,N}$ and such that
\begin{equation}
    \lambda_{0,N-1}<\lambda_{0,N}=\cdots=\lambda_{0,N+M_{0,N}-1}<\lambda_{0,N+M_{0,N}}.
\end{equation}
Let $\{\gamma_i\}_{i=1,\dots,M_{0,N}}$ be the eigenvalues (in increasing order) of the bilinear form
\begin{equation}
    h_0(\varphi,\psi):=\int_{\partial\Omega}\left[ \frac{2}{3}(\lambda_{0,N}-1)^2+\frac{1}{2}(\lambda_{0,N}-1)H_{\partial\Omega}\right]\varphi\psi\ds,
\end{equation}
defined for $\varphi,\psi\in E(\lambda_{0,N})$. Then there holds
\begin{equation}\label{eq_eigen_first_order_measures}
\lambda_{\e,N+i-1}=\lambda_{0,N}+\e\gamma_i+ o(\e) \quad \text{ as } \e \to 0^+.
\end{equation} 
In particular, if $\lambda_{0,N}$ is simple and $\varphi\in H^1(\Omega)$ is a corresponding $L^2(\partial\Omega)$-normalized eigenfunction, then
\begin{equation}
    \lambda_{\e,N}=\lambda_{0,N}+\e \int_{\partial\Omega}\left[ \frac{2}{3}(\lambda_{0,N}-1)^2+\frac{1}{2}(\lambda_{0,N}-1)H_{\partial\Omega}\right]\varphi^2\ds+o(\e)\quad\textnormal{as }\e\to 0^+.
\end{equation}
\end{theorem}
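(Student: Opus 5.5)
We apply \Cref{theor_eigen_var_measures}, whose hypotheses hold by \Cref{lemma_add_ste}, and then identify the asymptotics of the quantities $\gamma_{\e,i}$, $\delta_\e$ and $\tau_\e$ by means of \Cref{lemma_Ve_stek}. By \Cref{theor_eigen_var_measures},
\begin{equation}
\lambda_{\e,N+i-1}-\lambda_{0,N}=\gamma_{\e,i}+O(\delta_\e^2)+o(\tau_\e^2)\quad\textnormal{as }\e\to 0^+,
\end{equation}
where $\gamma_{\e,i}$ are the eigenvalues of $h_\e(\varphi,\psi)=\frac{\lambda_{0,N}}{\e}\int_{\Omega\setminus\Omega_\e}\varphi V_{\e,\psi}\dx$ on $E(\lambda_{0,N})$. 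Here $E(\lambda_{0,N})$ is endowed with the scalar product of $H_0=L^2(\partial\Omega)$, i.e. $(\varphi,\psi)_{L^2(\partial\Omega)}$, and eigenvalues are computed with respect to it.

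The first step is the remainder terms. By definition \eqref{def_delta_e}, $\delta_\e^2=\sup\frac{1}{\e}\int_{\Omega\setminus\Omega_\e}|V_{\e,\varphi}|^2\dx$ over $L^2(\partial\Omega)$-normalized $\varphi\in E(\lambda_{0,N})$. Hence \eqref{eq_Ve_stek_L2_smallo} gives directly $\delta_\e^2=o(\e)$. For $\tau_\e^2=\sup|h_\e(\varphi,\varphi)|$, \eqref{estimate_Ve_stek} gives $\tau_\e^2=O(\e)$, so $o(\tau_\e^2)=o(\e)$. The expansion therefore reduces to $\lambda_{\e,N+i-1}=\lambda_{0,N}+\gamma_{\e,i}+o(\e)$.

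The second step is to show that $\gamma_{\e,i}=\e\gamma_i+o(\e)$. Consider the symmetric bilinear form $\e^{-1}h_\e$ on the fixed finite-dimensional space $E(\lambda_{0,N})$, whose symmetry is given by \Cref{rmk:symmetry}. By \eqref{estimate_Ve_stek}, its quadratic form converges uniformly on the unit sphere to $h_0(\varphi,\varphi)$. The polarization identity then yields convergence of the bilinear forms, hence of the representing matrices in a fixed $L^2(\partial\Omega)$-orthonormal basis. Since eigenvalues of symmetric matrices depend continuously on the entries (Weyl's inequality), we get $\e^{-1}\gamma_{\e,i}\to\gamma_i$, ordered increasingly. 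Combining the two steps gives \eqref{eq_eigen_first_order_measures}.

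Finally, we must reconcile the shift $\lambda$ versus $\Lambda$. The form $h_0$ in the statement involves $\lambda_{0,N}-1=\Lambda_{0,N}$, consistent with \eqref{eq:kappa_shift}, so no further shift adjustment is needed; \Cref{lemma_Ve_stek} already contains $\lambda_{0,N}-1$. The simple-eigenvalue case is immediate, since $h_0$ is then $1\times1$ with entry $h_0(\varphi,\varphi)$. The only genuinely delicate point, namely the uniform asymptotics of $h_\e$ and the $o(\e)$ bound on $\delta_\e^2$, is already contained in \Cref{lemma_Ve_stek}. Therefore the main obstacle here is just the bookkeeping that $\delta_\e^2$ is $o(\e)$ rather than merely $O(\e)$, which is exactly \eqref{eq_Ve_stek_L2_smallo}.
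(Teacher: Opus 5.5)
Your proposal is correct and follows the paper's proof essentially step by step. You apply \Cref{theor_eigen_var_measures}, read off $\delta_\e^2=o(\e)$ from \eqref{eq_Ve_stek_L2_smallo}, and pass via polarization from the uniform convergence \eqref{estimate_Ve_stek} of $\e^{-1}h_\e$ to $h_0$ to $\gamma_{\e,i}=\e\gamma_i+o(\e)$; the only cosmetic differences are that you bound $\tau_\e^2=O(\e)$ directly from \eqref{estimate_Ve_stek} (the paper deduces it from the eigenvalue asymptotics) and that you cite Weyl's inequality explicitly.
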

\begin{proof}
    We apply \Cref{theor_eigen_var_measures} with $\alpha_\e=0$ and $\beta_\e$ as in \eqref{eq:def_steklov_beta}. Let $h_\e$ be the bilinear form as in \eqref{eq:h_measures}, i.e.
    \begin{equation}
        h_\e(\varphi,\psi)=\frac{\lambda_{0,N}}{\e}\int_{\Omega\setminus\Omega_\e}\varphi V_{\e,\psi}\dx.
    \end{equation}
    From \Cref{lemma_Ve_stek} and the identity
    \begin{equation}
        h_\e(\varphi,\psi)=\frac{1}{2}\left(h_\e(\varphi,\varphi)+h_\e(\psi,\psi)-h_\e(\varphi-\psi,\varphi-\psi)\right)
    \end{equation}
    we derive that
    \begin{equation}
        \sup\left\{\frac{1}{\e}|h_\e(\varphi,\psi)-\e h_0(\varphi,\psi)|\colon \varphi,\psi\in E(\lambda_{0,N}),~\norm{\varphi}_{L^2(\partial\Omega)} =\norm{\psi}_{L^2(\partial\Omega)}=1\right\}\to 0\quad\textnormal{as }\e\to 0^+.
    \end{equation}
    If $\{\gamma_{\e,i}\}_{i=1,\dots,M_{0,N}}$ are the eigenvalues of $h_\e$ on $E(\lambda_{0,N})$ as in \Cref{theor_eigen_var_measures}, then we have that
    \begin{equation}\label{eq_gamma_e_0}
        \gamma_{\e,i}=\e\gamma_i+o(\e)\quad\textnormal{as }\e\to 0^+.
    \end{equation}
    We now turn to the analysis of $\delta_\e$ and $\tau_\e$ as in \Cref{theor_eigen_var_measures}. By definition and from \eqref{eq_gamma_e_0}, we have that
    \begin{equation}
        \tau_\e^2=\max_{i=1,\dots,M_{0,N}}|\gamma_{\e,i}|=O(\e)\quad\textnormal{as }\e\to 0^+.
    \end{equation}
    Moreover, by definition and \eqref{eq_Ve_stek_L2_smallo}
    \begin{equation}
        \delta_\e^2=\sup_{\substack{\varphi\in E(\lambda_{0,N})\\ \norm{\varphi}_{L^2(\partial\Omega)}=1}}
\left(\frac{1}{\e}\int_{\Omega\setminus\Omega_\e}|V_{\e,\varphi}|^2\dx\right)=o(\e),\quad\text{as }\e\to 0^+,
    \end{equation}
    which concludes the proof in view of \Cref{theor_eigen_var_measures}.
\end{proof}

\subsection{A Neumann to Robin approximation}\label{subsec_neum_to_rob}
In this subsection, we analyze the approximation of a Robin eigenvalue problem with weighted Neumann ones in the same spirit of \Cref{subsec_steklov}. Since the path and all the arguments are completely analogous to the ones in \Cref{subsec_steklov}, we omit all the proofs in the present subsection.

We let $M=\R^d$ and $\Omega\subseteq\R^d$ an open, bounded, connected set with $C^{1,1}$ boundary. Moreover, we consider the family of measures
\begin{equation}\label{eq:def_Rob_beta}
    \alpha_\e=ap_\varepsilon \mathcal{L}^d\quad\textnormal{and}\quad\beta_\varepsilon=\mathcal{L}^d,
\end{equation}
where $p_\varepsilon$ is as in \eqref{eq_p_e_Omega_e} and $a>0$. Finally, we let
\begin{equation}
\alpha_0=a\mathcal{H}^{d-1}\text{\huge$\llcorner$}\,\partial\Omega.\quad\textnormal{and}\quad\beta_0=\mathcal{L}^d.
\end{equation}
Within this framework,  the limit problem is a Robin eigenvalue problem
\begin{equation}\label{eq:Rob}
\begin{cases}
-\Delta\varphi +\varphi=\lambda \varphi, &\text{in }\Omega, \\
\partial_{\nnu} \varphi +a\varphi=0, &\text{on }\partial\Omega,
\end{cases}
\end{equation}
whose weak formulation is $\varphi\in H^1(\Omega)$ and
\begin{equation}\label{eq_Rob_eigen}
\int_\Omega \nabla\varphi\cdot\nabla v \dx+a\int_{\partial \Omega}\varphi v\ds 
=(\lambda-1)\int_\Omega\varphi v \dx \quad\textnormal{for all }v\in H^1(\Omega).
\end{equation}
On the other hand, the approximating problems are the Neumann eigenvalue problems
\begin{equation}\label{eq_neu_to_Rob}
\begin{cases}
-\Delta \varphi + \varphi+ ap_\e \varphi =\lambda \varphi , &\text{in }\Omega, \\
\partial_{\nnu} \varphi =0, &\text{on }\partial\Omega,
\end{cases}
\end{equation}
 whose formulation is $\varphi\in H^1(\Omega)$ and
\begin{equation}
\int_\Omega\nabla\varphi\cdot\nabla v\dx
+\frac{a}{\e}\int_{\Omega\setminus\Omega_\e}\varphi v\dx=(\la-1)\int_\Omega\varphi v\dx\quad\textnormal{for all }v\in H^1(\Omega).
\end{equation} 
Now we consider a Robin (limit) eigenvalue $\lambda_{0,N}$ of problem \eqref{eq:Rob} with multiplicity $M_{0,N}\geq 1$ such that
\begin{equation}
    \lambda_{0,N-1}<\lambda_{0,N}=\cdots=\lambda_{0,N+M_{0,N}-1}<\lambda_{0,N+M_{0,N}},
\end{equation}
and a family of $M_{0,N}$ (approximating) eigenvalues $\lambda_{\e,N+i-1}$ of \eqref{eq:Rob}, with $i=1,\dots,M_{0,N}$, which are converging to $\lambda_{0,N}$. 

Since we want to apply \Cref{theor_eigen_var_measures} in this setting, we need to verify all the assumptions: this boils down to check  
the admissibility of the families  $\bm{\alpha}=\{\alpha_\e\}_{\e\in[0,1]}$ and $\bm{\beta}=\{\beta_\e\}_{\e\in[0,1]}$ in the sense of \Cref{def:adm}, just as in \Cref{subsec_steklov}. However, 
the admissibility of $\alpha_\e$ was already proved in \Cref{lemma_add_ste} while the admissibility of $\beta_\e$ is trivial. 
Hence, the assumptions of \Cref{theor_eigen_var_measures} are satisfied. Next, we study the asymptotic behavior (as $\e\to 0^+$) of the family of eigenvalues $\{\gamma_{\e,i}\}_{i=1,\dots,M_{0,N}}$ as in \Cref{theor_eigen_var_measures}, which in turn yields the asymptotic expansion of the eigenvalue variation.

To begin with, let us recall some facts. For any $\varphi\in E(\lambda_{0,N})$, we let $V_{\e,\varphi}\in H^1(\Omega)$ be the unique solution of 
\begin{equation}\label{eq_Ve_Rob}
\int_\Omega[\nabla V_{\e,\varphi}\cdot\nabla u+ V_{\e,\varphi} u]\dx 
+\frac{a}{\e}\int_{\Omega\setminus\Omega_\e}V_{\e,\varphi} u\dx=\frac{a}{\e}\int_{\Omega\setminus\Omega_\e}\varphi u\dx-a\int_{\partial \Omega} \varphi u \ds,
\end{equation}
for any $u\in H^1(\Omega)$ whose corresponding strong formulation is the following
\begin{equation}\label{prob_Ve_Rob}
\begin{bvp}
-\Delta V_{\e,\varphi}+V_{\e,\varphi}&=0 , &&\text{in }\Omega_\e, \\
 -\Delta V_{\e,\varphi}+V_{\e,\varphi} +\frac{a}{\e}V_{\e,\varphi}&=\frac{a}{\e}\varphi, &&\text{in }\Omega\setminus\Omega_\e, \\
\partial_{\nnu} V_{\e,\varphi}&=-a\varphi, &&\text{on }\partial\Omega.
\end{bvp}
\end{equation}
By the known regularity for $\varphi\in E(\lambda_{0,N})$ and  by standard elliptic regularity theory, there holds $V_{\e,\varphi} \in W^{2,p}(\Omega)$ for every $p>1$, hence $V_{\e,\varphi}\in C^{1,\alpha}(\overline\Omega)$ for any $\alpha\in(0,1)$ (see e.g. \cite[Theorem 3.17 (ii)]{troianiello}). We now recall that $\gamma_{\e,i}$ is the $i$-th eigenvalue of the bilinear form
\begin{equation}
h_\e(\varphi,\psi):=\lambda_{0,N}\int_\Omega\varphi V_{\e,\psi}\dx,
\quad\textnormal{defined for }\varphi,\psi\in E(\lambda_{0,N}),
\end{equation}
we refer to \eqref{def_h_e} and \eqref{eq:h_measures}.

Now, arguing as in \Cref{lemma_Ve_stek}, we can then prove the following.

\begin{lemma}\label{lemma_Ve_rob}
For any $\varphi \in E(\lambda_{0,N})$
\begin{equation}\label{limit_Ve_Rob}
\la_{0,N}\int_{\Omega}\varphi V_{\e,\varphi}\dx
=\e a\int_{\partial\Omega}\left[\frac{2}{3}{a}-\frac{1}{2}H_{\partial \Omega}\right] \varphi^2\ds
+o(\e),\quad\text{as }\e\to 0^+
\end{equation}
or, more precisely, as $\e \to 0^+$,
\begin{equation}\label{estimate_Ve_Rob}
\sup_{\substack{\varphi\in E(\lambda_{0,N}) \\ \norm{\varphi}_{L^2(\partial \Omega)}=1}}
\left|\frac{\la_{0,N}}{\e}\int_{\Omega}\varphi V_{\e,\varphi}\dx
- a\int_{\partial\Omega}\left[\frac{2}{3}{a}-\frac{1}{2}H_{\partial \Omega}\right] \varphi^2 \ds \right|\to 0^+.
\end{equation}
Finally, 
\begin{equation}
\sup_{\substack{\varphi\in E(\lambda_{0,N})\\ \norm{\varphi}_{L^2(\partial\Omega)}=1}}
\int_{\Omega}|V_{\e,\varphi}|^2\dx=o(\e),\quad\text{as }\e\to 0^+.
\end{equation}
\end{lemma}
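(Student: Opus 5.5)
The plan is to follow the three-step scheme of \Cref{lemma_Ve_stek}. We work in the Fermi coordinates $\Phi(x,t)=x-t\nnu(x)$ and set
\[
W_\e(x,r):=\e^{-1}V_{\e,\varphi}(\Phi(x,\e r)),
\]
so that $\partial_r W_\e(x,r)=-\nabla V_{\e,\varphi}(\Phi(x,\e r))\cdot\nnu(x)$.

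\textbf{A priori bound.} Testing \eqref{eq_Ve_Rob} with $u=V_{\e,\varphi}$ gives
\[
\int_\Omega(|\nabla V_{\e,\varphi}|^2+V_{\e,\varphi}^2)\dx+\frac{a}{\e}\int_{\Omega\setminus\Omega_\e}V_{\e,\varphi}^2\dx=a\Big[\frac1\e\int_{\Omega\setminus\Omega_\e}\varphi V_{\e,\varphi}\dx-\int_{\partial\Omega}\varphi V_{\e,\varphi}\ds\Big].
\]
Robin eigenfunctions are $W^{2,p}$, hence Lipschitz and bounded. So \eqref{ineq_uv_trace_ste} with $v=\varphi$ bounds the right-hand side by $C\e^{1/2}$ times the square root of the left-hand side. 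Hence the energy is $O(\e)$, which yields $\int\!\!\int|\partial_rW_\e|^2\le C$ and $\int\!\!\int|W_\e|^2\le C/\e$ on $\partial\Omega\times(0,1)$.

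\textbf{Step 1 (profile).} Test \eqref{eq_Ve_Rob} with $u_\e=\zeta(\pi(x))\eta(\delta(x)/\e)$, where $\zeta\in C^\infty(\partial\Omega)$, $\eta\in C^\infty([0,1])$ and $\eta(1)=0$.
\begin{itemize}
\item The gradient term becomes $\int\!\!\int\partial_rW_\e\,\zeta\,\eta'\det D\Phi_{\e r}$, up to an $O(\e^{1/2})$ tangential error.
\item The zero-order bulk term is $o(1)$.
\item The penalized term equals $a\e\int\!\!\int W_\e\zeta\eta\det D\Phi_{\e r}=O(\e^{1/2})$.
\item The right-hand side tends to $a\int_{\partial\Omega}\varphi\zeta\big(\int_0^1\eta\,dr-\eta(0)\big)\ds=a\int\!\!\int\varphi\zeta(1-r)\eta'(r)$.
\end{itemize}
Choosing $\eta'=\xi\in C_c^\infty(0,1)$ as before, we get
\[
\partial_rW_\e\rightharpoonup a(1-r)\varphi(x)\quad\text{weakly in }L^2(\partial\Omega\times(0,1)).
\]

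\textbf{Step 2 (expansion).} Test \eqref{eq_Ve_Rob} with $\varphi$, and test \eqref{eq_Rob_eigen} with $V_{\e,\varphi}$. Eliminating $\int\nabla V_{\e,\varphi}\cdot\nabla\varphi$ gives
\[
\lambda_{0,N}\int_\Omega\varphi V_{\e,\varphi}\dx=a\Big[\frac1\e\int_{\Omega\setminus\Omega_\e}\varphi^2\dx-\int_{\partial\Omega}\varphi^2\ds\Big]+a\Big[\int_{\partial\Omega}\varphi V_{\e,\varphi}\ds-\frac1\e\int_{\Omega\setminus\Omega_\e}\varphi V_{\e,\varphi}\dx\Big].
\]
By \eqref{eq:I_taylor} and $\partial_{\nnu}\varphi=-a\varphi$, the first bracket equals $\e\int_{\partial\Omega}(a-\tfrac12H_{\partial\Omega})\varphi^2\ds+o(\e)$. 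Exactly as in \eqref{proof_lemma_Ve_stek_10}, the second bracket equals $\e\int_{\partial\Omega}\varphi\int_0^1(r-1)\partial_rW_\e\,dr\ds+o(\e)$. By Step 1 this tends to $-\tfrac a3\e\int_{\partial\Omega}\varphi^2\ds$. Summing gives $\e a\int_{\partial\Omega}(\tfrac23a-\tfrac12H_{\partial\Omega})\varphi^2\ds+o(\e)$, which is \eqref{limit_Ve_Rob}. The uniform version \eqref{estimate_Ve_Rob} follows since $E(\lambda_{0,N})$ is finite dimensional.

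\textbf{Step 3 ($L^2$ smallness).} Combining the energy identity with the computation of Step 2 gives
\[
\limsup_{\e\to0^+}\frac1\e\Big[\int_\Omega(|\nabla V_{\e,\varphi}|^2+V_{\e,\varphi}^2)\dx+\frac a\e\int_{\Omega\setminus\Omega_\e}V_{\e,\varphi}^2\dx\Big]\le\frac{a^2}{3}\int_{\partial\Omega}\varphi^2\ds.
\]
Weak lower semicontinuity of $\partial_rW_\e$ gives the matching lower bound $\tfrac{a^2}{3}\int_{\partial\Omega}\varphi^2\ds$ for $\e^{-1}\int|\nabla V_{\e,\varphi}|^2\dx$ alone. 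Hence the remaining terms are $o(\e)$, in particular $\int_\Omega V_{\e,\varphi}^2\dx=o(\e)$, and we conclude by finite dimensionality of $E(\lambda_{0,N})$.

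The main obstacle is Step 1. There one must carefully control the penalized term $\tfrac a\e\int V u_\e$; it is harmless only thanks to the $O(\e)$ energy bound. One must also check the sign and normalization bookkeeping that produces the profile $a(1-r)\varphi$.
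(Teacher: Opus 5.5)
Your proposal is correct and follows the same route the paper takes: the paper omits the proof and says it is obtained ``arguing as in'' Lemma~\ref{lemma_Ve_stek}, which is exactly the three-step Fermi-coordinate scheme you carry out. In effect $(\lambda_{0,N}-1)$ is replaced by $-a$, giving the profile $a(1-r)\varphi$ and the coefficient $a\bigl(a-\tfrac12 H_{\partial\Omega}\bigr)-\tfrac{a^2}{3}=a\bigl(\tfrac23 a-\tfrac12 H_{\partial\Omega}\bigr)$, and your signs, constants and error bounds all check out.
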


The main result of the present section is the following theorem, quantifying the convergence of eigenvalues in the Neumann-to-Robin approximation. It can be proved as \Cref{theor_eigen_var_measures_ste}.
\begin{theorem}\label{theor_eigen_var_measures_Rob}
Let $\{\lambda_{\e,n}\}_n$ be the eigenvalues of \eqref{eq_neu_to_Rob} and $\{\lambda_{0,n}\}_n$ be the eigenvalues of \eqref{eq:Rob}. Let $\lambda_{0,N}$ be of multiplicity $M_{0,N}$ and such that
\begin{equation}
    \lambda_{0,N-1}<\lambda_{0,N}=\cdots=\lambda_{0,N+M_{0,N}-1}<\lambda_{0,N+M_{0,N}}.
\end{equation}
Let $\{\gamma_i\}_{i=1,\dots,M_{0,N}}$ be the eigenvalues (in increasing order) of the bilinear form
\begin{equation}
    h_0(\varphi,\psi):=a\int_{\partial\Omega}\left[\frac{2}{3}{a}-\frac{1}{2}H_{\partial \Omega}\right]\varphi\psi\ds,
\end{equation}
defined for $\varphi,\psi\in E(\lambda_{0,N})$. Then there holds
\begin{equation}
\lambda_{\e,N+i-1}=\lambda_{0,N}+\e\gamma_i+ o(\e) \quad \text{ as } \e \to 0^+.
\end{equation} 
In particular, if $\lambda_{0,N}$ is simple and $\varphi\in H^1(\Omega)$ is a corresponding $L^2(\Omega)$-normalized eigenfunction, then
\begin{equation}
\lambda_{\e,N}=\lambda_{0,N}+\e a\int_{\partial\Omega}\left[\frac{2}{3}a
-\frac{1}{2}H_{\partial\Omega}\right]\varphi^2+o(\e)\quad\textnormal{as }\e\to 0^+.
\end{equation}
\end{theorem}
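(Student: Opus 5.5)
The proof runs exactly as for \Cref{theor_eigen_var_measures_ste}: we apply \Cref{theor_eigen_var_measures} with $\alpha_\e=ap_\e\mathcal{L}^d$ and $\beta_\e=\mathcal{L}^d$, whose admissibility has already been discussed. Here $H_\e=L^2(\Omega)$ for every $\e$, so $h_\e(\varphi,\psi)=\lambda_{0,N}\int_\Omega\varphi V_{\e,\psi}\dx$. By \Cref{rmk:symmetry} this form is symmetric, so polarization
\begin{equation}
h_\e(\varphi,\psi)=\tfrac12\big(h_\e(\varphi,\varphi)+h_\e(\psi,\psi)-h_\e(\varphi-\psi,\varphi-\psi)\big)
\end{equation}
combined with the uniform estimate \eqref{estimate_Ve_Rob} of \Cref{lemma_Ve_rob} gives
\begin{equation}
\e^{-1}\sup|h_\e(\varphi,\psi)-\e h_0(\varphi,\psi)|\to0 .
\end{equation}
The supremum is taken over normalized $\varphi,\psi\in E(\lambda_{0,N})$. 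Since $E(\lambda_{0,N})$ is finite dimensional, all norms on it are equivalent, so the $L^2(\partial\Omega)$ normalization in \Cref{lemma_Ve_rob} can be traded for the $L^2(\Omega)$ normalization used in the abstract theory at the cost of a constant. Continuity of eigenvalues of symmetric forms on a fixed finite-dimensional space then yields $\gamma_{\e,i}=\e\gamma_i+o(\e)$.

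We next control the remainders. From the previous step, $\tau_\e^2=\max_i|\gamma_{\e,i}|=O(\e)$, so $o(\tau_\e^2)=o(\e)$. By definition \eqref{def_delta_e} and the last statement of \Cref{lemma_Ve_rob}, again using equivalence of norms on the eigenspace, $\delta_\e^2=\sup\int_\Omega|V_{\e,\varphi}|^2\dx=o(\e)$. Substituting into \Cref{theor_eigen_var_measures} gives
\begin{equation}
\lambda_{\e,N+i-1}=\lambda_{0,N}+\e\gamma_i+o(\e).
\end{equation}
The simple-eigenvalue case is the instance $M_{0,N}=1$, where $\gamma_1=h_0(\varphi,\varphi)$.

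The main obstacle is \Cref{lemma_Ve_rob} itself, and its proof follows \Cref{lemma_Ve_stek}. First, test \eqref{eq_Ve_Rob} with $V_{\e,\varphi}$ and apply \eqref{ineq_uv_trace_ste} to get energy bounds of order $\e$. Next, rescale in Fermi coordinates via $W_\e(x,r)=\e^{-1}V_{\e,\varphi}(x-\e r\nnu(x))$ and identify the weak limit $\partial_rW_\e\rightharpoonup a(1-r)\varphi$, which is the Steklov profile with $\lambda_{0,N}-1$ replaced by $-a$.

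Then test \eqref{eq_Ve_Rob} with $\varphi$ and use the Robin equation \eqref{eq_Rob_eigen} tested with $V_{\e,\varphi}$. This reduces $\lambda_{0,N}\int_\Omega\varphi V_{\e,\varphi}$ to two boundary-layer differences of the form $\int_{\partial\Omega}-\frac1\e\int_{\Omega\setminus\Omega_\e}$. The one involving $\varphi^2$ is computed with \eqref{eq:I_taylor} and $\partial_\nnu\varphi=-a\varphi$, and produces $\frac{\e}{2}\int(-2a+H)\varphi^2$ times the prefactor $-a$. The one involving $\varphi V_{\e,\varphi}$ produces $\frac{\e}{3}(-a)\int\varphi^2$ times the prefactor. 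Adding the two gives $\e a\int(\frac23a-\frac12H)\varphi^2$.

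Finally, the $o(\e)$ bound on $\|V_{\e,\varphi}\|^2_{L^2}$ follows from the energy identity together with lower semicontinuity, exactly as in Step 3 of \Cref{lemma_Ve_stek}. The energy is asymptotically saturated by the normal derivative, which forces the zero-order term to be $o(\e)$.
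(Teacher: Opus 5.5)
Your proposal is correct and takes the approach the paper intends (``proved as \Cref{theor_eigen_var_measures_ste}''): apply \Cref{theor_eigen_var_measures} with $\alpha_\e=ap_\e\mathcal{L}^d$ and $\beta_\e=\mathcal{L}^d$, then read off $\gamma_{\e,i}=\e\gamma_i+o(\e)$, $\tau_\e^2=O(\e)$ and $\delta_\e^2=o(\e)$ from \Cref{lemma_Ve_rob}, whose proof is \Cref{lemma_Ve_stek} with $\lambda_{0,N}-1$ replaced by $-a$; your profile $a(1-r)\varphi$ and the two boundary-layer differences (entering with prefactors $-a$ and $+a$ respectively) do combine to $\e a\int_{\partial\Omega}(\frac23 a-\frac12 H_{\partial\Omega})\varphi^2\ds$. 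Your remark that \Cref{lemma_Ve_rob} is stated with the $L^2(\partial\Omega)$ normalization while here $H_0=L^2(\Omega)$ is a useful clarification, but full equivalence of these two norms on $E(\lambda_{0,N})$ needs unique continuation (a Robin eigenfunction with zero trace also has zero normal derivative); alternatively, the fixed-$\varphi$ versions of the estimates together with finite dimensionality already suffice.
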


\section{Connected sum of manifolds}\label{sec_sum_mani}
As stated in \Cref{sec_intro}, in this section given two Riemannian manifolds $(M_0,g_0)$ and $(M_1,g_1)$, and  two points $p_0\in M_0$ and $p_1\in M_1$, we consider the topological manifold
\begin{equation}
 M_\e:=\left(M_0\setminus B_\e^{M_0}(p_0)\right)\cup_{\Phi_\e} \left( M_1\setminus B_1^{M_1}(p_1) \right),
\end{equation}
endowed with the metric 
\begin{equation}
g_\e:=\begin{cases}
g_0, &\text{in }M_0\setminus B_\e^{M_0}(p_0), \\
\e^2 g_1,&\text{in } M_1\setminus B_1^{M_1}(p_1),
\end{cases}
\end{equation}
By \cite[Theorem 1.1]{Ta06} we know that, under suitable assumptions on the metrics $g_0$ and $g_1$,  the eigenvalues (of the Laplace-Beltrami operator) of $(M_\e,g_\e)$ converge, as $\e\to 0^+$ to the ones of $(M_0,g_0)$. In this section, we explicitly quantify the rate of convergence. 

\subsection{Setting of the problem}
Let $(M_0,g_0)$ and $(M_1,g_1)$ be two closed, connected, oriented, $m$-dimensional Riemannian manifolds with $m \ge 3$. 
Up to rescaling the metrics $g_i$, we can assume that the injectivity radius of each $M_i$ is greater than $1$.

Fix $p_0\in M_0$ and $p_1\in M_1$ and assume there exists $\{\Phi_\e\}_\e$ a one-parameter family of diffeomorphisms $\Phi_\e:\overline{B_\e^{M_0}(p_0)}\to \overline{B_1^{M_1}(p_1)}$ that restricts to a one-parameter family of isometries 
\begin{align}
    \Phi_\e:(\partial B_\e^{M_0}(p_0),g_0|_{\partial B_\e^{M_0}(p_0)})\to(\partial B_1^{M_1}(p_1),\e^2 g_1|_{\partial B_1^{M_1}(p_1)}).
\end{align}
Moreover, we assume that the family $\{\Phi_\e\}_\e$ is smooth with respect to $\e$ and that
\begin{itemize}
\item $\Phi_\e(p_0)=p_1$;
\item there exists $\Psi\in O(T_{p_1}M_1;T_{p_0}M_0)$ so that $F_\e:=\frac{1}{\e}(\exp_{p_0}^{M_0})^{-1}\circ \Phi_\e^{-1}\circ \exp_{p_1}^{M_1}\to \Psi$ in $C^{1}(\overline{\mathbb{B}_1};T_{p_0}M_0)$ as $\e \to 0^+$, for $\mathbb{B}_1\subset T_{p_1}M_1$. In particular, since $\Psi$ is linear, this implies that $\frac{1}{\e}\dint((\exp_{p_0}^{M_0})^{-1}\circ \Phi_\e^{-1}\circ \exp_{p_1}^{M_1})\to \Psi$ in $C^{0}(T_{p_1}M_1;T_{p_0}M_0)$ as $\e \to 0^+$.
\end{itemize}}
For any $\e \in (0,1)$ we denote
\begin{align}
    M_0(\e):=M_0 \setminus B_\e^{M_0}(p_0) \quad \andd \quad M_1(1):=M_1 \setminus B_1^{M_1}(p_1),
\end{align}
where $B_r^{M_i}(p_i)\subset M_i$ is the geodesic ball of radius $r>0$ centered at $p_i$ for $i=0,1$. We define the \textit{connected sum of $M_0$ and $M_1$} as the topological manifold
\begin{align}
    M_\e:=M_0(\e) \cup_{\Phi_\e} M_1(1),
\end{align}
where we identify $\partial B_\e^{M_0}(p_0)$ with $\partial B_1^{M_1}(p_1)$ through the isometry $\Phi_\e$. Once fixed the orientations of $M_0$ and $M_1$, we also require that $\Phi_\e$ is such that $M_\e$ is oriented.

Let us define the following continuous and piecewise smooth Riemannian metric defined on $M_\e$
\begin{align}
    g_\e := \begin{cases}
        g_0 & \inn M_0(\e)\\ \e^2 g_1 & \inn M_1(1).
    \end{cases}
\end{align}
Following \cite{Ta02} and its notations, we introduce the following Lebesgue and Sobolev spaces on $(M_\e,g_\e)$:
\begin{align}
 L^2(M_\e,g_\e):=&L^2(M_0(\e),g_0)\times L^2(M_1(1),\e^2g_1)\\
 H^1(M_\e, g_\e):=&\left\{f=(f_0,f_1)\in H^1(M_0(\e),g_0)\times H^1(M_1(1),\e^2g_1)\ :\right.\\
 &\left. \ \ f_0|_{\partial M_0(\e)}= f_1|_{\partial M_1(1)}\circ \Phi_\e\ \inn L^2(\partial M_0(\e),  g_0)\right\}\\ 
 H^2(M_\e,g_\e):=&\left\{f=(f_0,f_1)\in H^2(M_0(\e),g_0)\times H^2(M_1(1),\e^2g_1)\ :\right.\\
 &\ \ f_0|_{\partial M_0(\e)}= f_1|_{\partial M_1(1)}\circ \Phi_\e\ \inn H^1(\partial M_0(\e),  g_0),\\
 &\left. \ \ \nu_0(f_0)|_{\partial M_0(\e)}
 = -\e^{-1}\nu_1(f_1)|_{\partial M_1(1)}\circ \Phi_\e\ \inn L^2(\partial M_0(\e),  g_0)\right\},
\end{align}
where $\nu_0$ and $\nu_1$ denote the outward unit normal fields to $\partial M_0(\e)$ and $\partial M_1(1)$ respectively. These vector spaces are endowed with the natural inner products defined as the Cartesian product of those of $(M_0(\e),g_0)$ and $(M_1(1),\e^2 g_1)$.

Using the above notation, one can define the Laplacian of $(M_\e, g_\e)$, even if $g_\e$ is not smooth, as the unbounded operator
\begin{align}
    (-\Delta_\e, \mathcal{D}(-\Delta_\e)):=((-\Delta^{g_0}, -\Delta^{\e^2 g_1}), H^2(M_\e, g_\e)),
\end{align}
where $\Delta^{g}$ denotes the (non-positive definite) Laplace-Beltrami operator of the smooth manifold $(M,g)$. This operator is formally defined through the Dirichlet form $(\widetilde{Q}_\e, \mathcal{D}(\widetilde{Q}_\e))$
\begin{align}
\widetilde{Q}_\e(f,h)&=\widetilde{Q}^{g_0}(f_0,h_0) + \widetilde{Q}^{\e^2 g_1}(f_1,h_1)
\end{align}
for any $f=(f_0,f_1),h=(h_0,h_1)\in\mathcal{D}(Q_\e)=H^1(M_\e,g_\e)$, where  $\widetilde{Q}^g$ denotes the form associated with $-\Delta^g$ on the Riemannian manifold $(M,g)$, that is
\begin{align}
\widetilde{Q}^g(f,h)=\int_M g(\nabla^g f, \nabla^g h) \dvol^g \quad \forall f,h \in H^1(M,g).
\end{align}
As proved in \cite[Lemma 2.5]{Ta02}, the bilinear form $\widetilde{Q}_\e$ is, in fact, the one associated to the Laplacian $-\Delta_\e$, i.e. for any $f\in \mathcal{D}(-\Delta_\e)$ and for any $h\in \mathcal{D}(Q_\e)$
\begin{align}
\widetilde{Q}_\e(f,h)=(-\Delta_\e f, h)_{L^2(M_\e, g_\e)}.
\end{align}

In a similar way, denoted by $Q^g$ the form associated with $-\Delta^g+1$ on the Riemannian manifold $(M,g)$
\begin{align}
Q^g(f,h)=\int_M \left[g(\nabla^g f, \nabla^g h)+fh\right] \dvol^g \quad \forall f,h \in H^1(M,g),
\end{align}
we can define the unbounded operator $(-\Delta_\e+1, \mathcal{D}(-\Delta_\e+1)):=((-\Delta^{g_0}+1, -\Delta^{\e^2 g_1}+1), H^2(M_\e, g_\e))$ as the operator associated with the bilinear form
\begin{align}
Q_\e (f,h)=Q^{g_0}(g_0,h_0)+Q^{\e^2 g_1}(f_1,h_1)=(f,h)_{H^1(M_\e,g_\e)} \quad \forall f,h \in H^1(M,g).
\end{align}

An important remark is that $-\Delta_\e$, and hence $-\Delta_\e+1$, displays the same spectral properties as its smooth counterpart defined on smooth closed Riemannian manifolds (see \cite[Section 1]{colette-colbois}):
\begin{itemize}
    \item $-\Delta_\e+1 > 0$ in the spectral sense;
    \item $-\Delta_\e+1$ is self-adjoint and elliptic;
    \item $\sigma(-\Delta_\e+1)=\{\lambda_{\e,n}=\lambda_n(M_\e,g_\e)\}_n$ is discrete.
\end{itemize}

In \cite[Theorem 1.1]{Ta02} the author managed to prove the convergence of the spectrum of $\Delta_\e$ to the one of $\Delta^{g_0}$ in the limit as $\e \to 0^+$, i.e. in the limit as the manifold $(M_1(1),\e^2 g_1)$ is collapsing and $(M_0(\e),g_0)$ is exhausting $M_0$. 
Referred to the operator $-\Delta_\e+1$, at the limit as $\e \to 0^+$ we have the eigenvalue problem  
\begin{equation}
-\Delta^{g_0}\psi +\psi =\la \psi \qquad\textnormal{in }M_0,
\end{equation}
with weak formulation
\begin{equation}\label{eq_psi_connect_sum}
  \int_{M_0} \left[g_0 \left(\nabla^{g_0} \psi, \nabla^{g_0} w\right)+\psi w \right] \dvol^{g_0}= \la \int_{M_0}\psi w  \dvol^{g_0} \quad \quad \forall w \in H^1(M_0,g_0).
\end{equation}
Any eigenfunction is actually smooth since $(M_0,g_0)$ is a smooth Riemannian manifold.
We denote with $\{\la_{0,n}\}_n$ the non-decreasing sequence of the eigenvalues of \eqref{eq_psi_connect_sum} counted with multiplicity.
\bigskip

The aim of the present section is to provide a quantitative estimate of this convergence using the tools and results presented so far. Moreover, since for any $\e \in [0,1]$ the first eigenvalue of $-\Delta_\e+1$ on $M_\e$ is $1$, and the correspondent eigenfunction is constant, it follows that $\la_{1,\e}=\la_{1,0}$ for any $\e \in [0,1]$. Hence, it is not restrictive to investigate quantitative spectral stability only for higher eigenvalues.

Referring to the notation introduced in \Cref{sec_assumptions} and \Cref{sec_egin_var}, in the context we are considering, we have
\begin{multicols}{2}
\begin{itemize}
\item $H_0=L^2(M_0,g_0)$;
\item $T_0=-\Delta^{g_0}+1$;
\item $\mathcal{E}^{(0)}=Q^{g_0}$;
\item $\mathcal{F}_0=H^{1}(M_0,g_0)$;
\item $Z_0=H^{1}(M_0,g_0)$;
\item $\mathcal{Z}_0=L^{2}(M_0,g_0)$;
\item $H_\e=L^2(M_\e,g_\e)$;
\item $T_\e=-\Delta_\e+1$;
\item $\mathcal{E}^{(\e)}=Q_\e$;
\item $\mathcal{F}_\e= H^{1}(M_\e,g_\e)$;
\item $Z_\e=H^{1}(M_\e,g_\e)$;
\item $\mathcal{Z}_\e=L^{2}(M_\e,g_\e)$.
\end{itemize}
\end{multicols}
\noindent In particular, note that $\mathcal{E}^{(\e)} (\cdot,\cdot)=(\cdot,\cdot)_{H^1(M_\e,g_\e)}$.

\subsection{Assumptions} \label{subsec_assum_conn_sum}

Let $N \in \mathbb{N}\setminus \{0\}$ and let  $\la_{0,N}$ be an eigenvalue of \eqref{eq_psi_connect_sum}. 

In this section, we verify that the assumptions of \Cref{thm:main} are satisfied and we recall what it implies.
Based on what has been seen so far, \eqref{hp_compact} is implied by \cite[Section 1]{colette-colbois} and \eqref{hp_stability} is implied by \cite[Theorem 1.1]{Ta02}.

Next, we define the map $L_\e:H^{1}(M_0,g_0) \to H^{1}(M_\e,g_\e)$ in \eqref{hp_Le_exist} in this setting as 
\begin{align}\label{def_Le_conn_sum}
L_\e :u & \mapsto (u|_{M_0(\e)},h_{\e,u}),
\end{align}
where $h_{\e,u}$ is the unique weak solution to 
\begin{align}
\begin{cases}\label{prob_harmonic_extension}
(-\Delta^{g_1}+\e^2)h_{\e,u}=0 & \textnormal{in } M_1(1) \\
h_{\e,u}= u\circ \Phi^{-1}_\e  & \textnormal{on }\partial M_1(1),
\end{cases}
\end{align}
that is, $h_{\e,u}-u\in H^1_0(M_1(1),g_1)$ and
\begin{equation}\label{eq_harmonic_extension}
\int_{M_1(1)} \left[g_1(\nabla^{g_1} h_{\e,u}, \nabla^{g_1}w) +  \e^2 h_{\e,u} w \right] \dvol^{g_1}=0 
\quad \text{ for any } w \in H^1_0(M_1(1),g_1).
\end{equation}
In particular, since the bottom of the Dirichlet-spectrum of $-\Delta^{g_1}+\e^2$ is positive, by the weak maximum principle we have that
\begin{align}\label{Eq_estimate_L^infty_h}
\norm{h_{\e,\varphi}}_{L^\infty(M_1(1))}\leq \norm{\varphi\circ\Phi_\e^{-1}}_{L^\infty(\partial M_1(1))}=\norm{\varphi}_{L^\infty(\partial M_0(\varepsilon))}\leq \norm{\varphi}_{L^\infty(M_0)}.
\end{align}

\begin{lemma}\label{lemma_Le_con_sum}
With $L_\e$ defined as in \eqref{def_Le_conn_sum}, assumption  \eqref{hp_Le} holds.
\end{lemma}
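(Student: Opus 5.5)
We need to show that, for eigenfunctions $\varphi,\psi\in E(\lambda_{0,N})$, which are smooth on $M_0$,
\begin{equation}
(L_\e\varphi,L_\e\psi)_{L^2(M_\e,g_\e)}=\int_{M_0(\e)}\varphi\psi\dvol^{g_0}+\e^m\int_{M_1(1)}h_{\e,\varphi}h_{\e,\psi}\dvol^{g_1}\to\int_{M_0}\varphi\psi\dvol^{g_0}.
\end{equation}
Here we used that $\dvol^{\e^2g_1}=\e^m\dvol^{g_1}$ in dimension $m$. The plan is to treat the two summands on the right-hand side separately.

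For the first summand, we write
\begin{equation}
\int_{M_0(\e)}\varphi\psi\dvol^{g_0}=\int_{M_0}\varphi\psi\dvol^{g_0}-\int_{B_\e^{M_0}(p_0)}\varphi\psi\dvol^{g_0}.
\end{equation}
The subtracted term is bounded by $\norm{\varphi}_{L^\infty(M_0)}\norm{\psi}_{L^\infty(M_0)}\mathrm{vol}^{g_0}(B_\e^{M_0}(p_0))$. This is $O(\e^m)$, since the eigenfunctions are smooth (hence bounded) on the closed manifold $M_0$ and geodesic balls of small radius have volume comparable to $\e^m$.

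For the second summand, we use the maximum principle bound \eqref{Eq_estimate_L^infty_h}, which gives $\norm{h_{\e,\varphi}}_{L^\infty(M_1(1))}\le\norm{\varphi}_{L^\infty(M_0)}$, and likewise for $\psi$. Hence
\begin{equation}
\Bigl|\e^m\int_{M_1(1)}h_{\e,\varphi}h_{\e,\psi}\dvol^{g_1}\Bigr|\le\e^m\norm{\varphi}_{L^\infty(M_0)}\norm{\psi}_{L^\infty(M_0)}\mathrm{vol}^{g_1}(M_1(1)).
\end{equation}
Since $M_1$ is closed, $\mathrm{vol}^{g_1}(M_1(1))$ is finite, so this term is also $O(\e^m)$.

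Applying both estimates to $\varphi=\varphi_{0,N,i}$ and $\psi=\varphi_{0,N,j}$ gives \eqref{hp_Le}, in fact with rate $O(\e^m)$.

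The only step needing care is the justification of \eqref{Eq_estimate_L^infty_h}, which is already stated in the paper. The one check is that $h_{\e,\varphi}$ is well defined with the boundary datum $\varphi\circ\Phi_\e^{-1}$, a smooth trace. This holds because $-\Delta^{g_1}+\e^2$ with Dirichlet conditions is coercive on $H^1_0(M_1(1))$. Then the weak maximum principle, applied to $(h_{\e,\varphi}-\norm{\varphi}_\infty)^+$, yields the $L^\infty$ bound. No other obstacle is expected.
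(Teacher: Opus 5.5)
Your proposal is correct and follows the paper's proof essentially verbatim. The paper also splits the $L^2(M_\e,g_\e)$ product into the missing ball contribution on $B_\e^{M_0}(p_0)$ and the $\e^m$-weighted integral over $M_1(1)$, and bounds both by $L^\infty$ norms times volumes via the maximum principle estimate \eqref{Eq_estimate_L^infty_h}. Your sketch of that estimate, testing with $(h_{\e,\varphi}-\norm{\varphi}_{L^\infty})^+$, correctly fills in a step the paper only asserts.
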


\begin{proof}
For any fixed $\psi,\varphi\in E(\lambda_{0,N})$ as $\e \to 0^+$
\begin{multline}
\left|(L_\e (\psi), L_\e (\varphi))_{L^2(M_\e,g_\e)}-(\psi, \varphi)_{L^2(M_0,g_0)}\right| \\
\leq \int_{B_\e^{M_0}(p_0)} |\psi \varphi| \dvol^{g_0} + \e^m \int_{M_1(1)} |h_{\e,\psi} h_{\e,\varphi}| \dvol^{g_1}\\
\leq \norm{\psi}_{L^\infty(M_0)} \norm{\varphi}_{L^\infty(M_0)} \left(\textnormal{vol}^{g_0}(B_\e^{M_0}(p_0))+\e^m \textnormal{vol}^{g_1}(M_1) \right),
\end{multline}
where we used 
\eqref{Eq_estimate_L^infty_h} to estimate $L^\infty$-norm of both $h_{\e,\psi}$ and $h_{\e,\varphi}$. Hence,  condition \eqref{hp_Le} is satisfied.
\end{proof}

Now we focus on \eqref{hp_delta_e}. 
For the rest of this section, let 
\begin{equation}
\varphi \in  E(\lambda_{0,N})
\end{equation}
be fixed. In particular, $\varphi$ is smooth. For the sake of simplicity, unless we need to remark the dependence on $\varphi$, we
denote 
\begin{equation}
V_\e:=V_{\e,L_\e (\varphi)},
\end{equation}
where $V_{\e,L_\e (\varphi)}\in H^1(M_\e,g_\e)$ is as in \Cref{prop_J_min}.
Since $V_\e \in H^1(M_\e,g_\e)$, we may write it as $V_\e=(V_{\e,0},V_{\e,1})\in H^1(M_0(\e),g_0)\times H^1(M_1(1),\e^2g_1)$ 
and hence \eqref{eq_Ve} reads as 
\begin{align}\label{eq_Ve_connected_sum}
    \int_{M_0(\e)} & \left[g_0 \left(\nabla^{g_0} V_{\e,0}, \nabla^{g_0} u_0\right)+V_{\e,0}u_0 \right] \dvol^{g_0} \\
    + & \e^{m-2} \int_{M_1(1)} \left[g_1 \left(\nabla^{g_1} V_{\e,1}, \nabla^{g_1} u_1\right)+\e^2 V_{\e,1}u_1 \right] \dvol^{g_1}\notag \\
    = & \int_{M_0(\e)} \left[g_0 \left(\nabla^{g_0} \varphi, \nabla^{g_0} u_0\right)+\varphi u_0 \right] \dvol^{g_0} + \e^{m-2} \int_{M_1(1)} \left[g_1 \left(\nabla^{g_1} h_{\e,\varphi}, \nabla^{g_1} u_1\right)+\e^2 h_{\e,\varphi} u_1 \right] \dvol^{g_1\notag}\\
    & - \lambda_{0,N} \int_{M_0(\e)} \varphi u_0 \dvol^{g_0} - \lambda_{0,N} \e^m \int_{M_1(1)} h_{\e,\varphi} u_1 \dvol^{g_1},\notag
\end{align}
for any $u=(u_0,u_1)\in H^{1}(M_\e,g_\e)$.

To show that \eqref{hp_delta_e} holds, we need some preliminaries.

\begin{lemma}\label{lemma_he_bounded}
The family $\{h_{\e,\varphi}\}_{\e \in (0,1)}$ is bounded in $H^1(M_1(1),g_1)$. In particular
\begin{align}    \norm{h_{\e,\varphi}}_{H^1(M_1(1),\e^2}g_1)^2=O(\e^{m-2}) \quad as\ \e \to 0^+.
\end{align}
\end{lemma}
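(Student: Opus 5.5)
The plan is to bound the energy of $h_{\e,\varphi}$ by comparison, exploiting that it minimizes the energy $\int_{M_1(1)}(|\nabla^{g_1}w|^2+\e^2w^2)\dvol^{g_1}$ among all $w$ with the same boundary trace $\varphi\circ\Phi_\e^{-1}$ on $\partial M_1(1)$. This minimality is simply the variational characterization of the weak solution of \eqref{eq_harmonic_extension}. Consequently it suffices to construct one extension $w_\e\in H^1(M_1(1),g_1)$ of $\varphi\circ\Phi_\e^{-1}|_{\partial M_1(1)}$ with $\|w_\e\|_{H^1(M_1(1),g_1)}\le C$ uniformly in $\e$. The $L^2$ part of $h_{\e,\varphi}$ is already controlled uniformly by the maximum principle estimate \eqref{Eq_estimate_L^infty_h}, since $M_1(1)$ has finite volume. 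Hence only the gradient needs to be bounded, and
\begin{equation}
\int_{M_1(1)}|\nabla^{g_1}h_{\e,\varphi}|^2\dvol^{g_1}\le \int_{M_1(1)}\left(|\nabla^{g_1}w_\e|^2+\e^2w_\e^2\right)\dvol^{g_1}.
\end{equation}

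For the competitor I will take a cut-off of a geometric extension of the boundary datum. Fix a smooth $\chi$ on $M_1(1)$ with $\chi\equiv1$ near $\partial M_1(1)$, supported in the collar $B_{2}^{M_1}(p_1)\setminus B_1^{M_1}(p_1)$; this collar lies within the injectivity radius after the preliminary rescaling. Define $w_\e:=\chi\cdot(\varphi(p_0)+\tilde\varphi_\e)$, where $\tilde\varphi_\e$ is the function on the collar obtained by transporting $(\varphi-\varphi(p_0))\circ\Phi_\e^{-1}$ radially. Concretely, in normal coordinates $y\in T_{p_1}M_1$ with $1\le|y|\le 2$, set $\tilde\varphi_\e(\exp_{p_1}^{M_1} y):=\varphi(\exp_{p_0}^{M_0}(\e F_\e(y/|y|)))-\varphi(p_0)$. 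Here $F_\e$ is the rescaled map from the standing assumption. Its convergence $F_\e\to\Psi$ in $C^1(\overline{\mathbb B_1})$ gives $F_\e$ bounded in $C^1$ on the unit sphere, and $\varphi$ is smooth. Therefore $|\tilde\varphi_\e|\le C\e$ and $|\nabla\tilde\varphi_\e|\le C\e$ on the collar. The constant $\varphi(p_0)$ contributes only through $\nabla\chi$, in a bounded amount. Together these give $\|w_\e\|_{H^1(M_1(1),g_1)}\le C$, and the comparison above yields boundedness of $h_{\e,\varphi}$ in $H^1(M_1(1),g_1)$.

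The scaling statement then follows by direct computation. Under the metric $\e^2g_1$ on an $m$-manifold, $\dvol^{\e^2g_1}=\e^m\dvol^{g_1}$ and $|\nabla^{\e^2g_1}h|^2_{\e^2g_1}=\e^{-2}|\nabla^{g_1}h|_{g_1}^2$. Hence
\begin{equation}
\|h_{\e,\varphi}\|^2_{H^1(M_1(1),\e^2g_1)}=\e^{m-2}\int_{M_1(1)}\left(|\nabla^{g_1}h_{\e,\varphi}|^2+\e^2h_{\e,\varphi}^2\right)\dvol^{g_1}=O(\e^{m-2}).
\end{equation}

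The only delicate point is verifying that the competitor has the correct trace, namely that it agrees exactly with $\varphi\circ\Phi_\e^{-1}$ on $\partial M_1(1)$, and that its gradient bound is uniform in $\e$. Both reduce to the $C^1$ convergence of $F_\e$ and the smoothness of $\varphi$ near $p_0$. A cruder alternative avoids the geometric extension altogether: extend the boundary datum using any fixed bounded right inverse of the trace operator $H^{1/2}(\partial M_1(1))\to H^1(M_1(1))$, and bound the $H^{1/2}$ norm of $\varphi\circ\Phi_\e^{-1}$ by its $C^1$ norm on $\partial B_1^{M_1}(p_1)$, which is uniformly bounded for the same reasons.
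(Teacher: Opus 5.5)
Your proof is correct, but it follows a different route from the paper. The paper does not build an explicit competitor. It invokes a uniform lifting estimate (via Lax--Milgram), $\int_{M_1(1)}(|\nabla^{g_1}h_{\e,\varphi}|^2+\e^2h_{\e,\varphi}^2)\dvol^{g_1}\le C\norm{h_{\e,\varphi}}^2_{H^{1/2}(\partial M_1(1))}$. It then rewrites this $H^{1/2}$ norm in the rescaled boundary metric $\e^2\bar g_1$, where $\bar g_1$ is the metric induced on $\partial M_1(1)$, and bounds it there by an $H^1$ norm and then a $C^1$ norm. Finally it uses only that $\Phi_\e$ is an isometry of $(\partial B_\e^{M_0}(p_0),g_0)$ onto $(\partial M_1(1),\e^2g_1)$, which identifies $\norm{\varphi\circ\Phi_\e^{-1}}_{C^1(\partial M_1(1),\e^2\bar g_1)}$ with $\norm{\varphi}_{C^1(\partial M_0(\e))}$. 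Your ``cruder alternative'' is essentially this argument, done more directly in the unscaled metric.

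Your main argument (Dirichlet principle plus an explicit radial extension) avoids fractional norms and how they behave under rescaling. In the paper's chain one must check that the constant of $H^1\hookrightarrow H^{1/2}$ on $(\partial M_1(1),\e^2\bar g_1)$ stays uniform in $\e$. You draw on the $C^1$ control of $F_\e$, although the boundary isometry alone already gives the tangential bound $|\nabla^{\bar g_1}(\varphi\circ\Phi_\e^{-1})|_{\bar g_1}\le C\e$ that your extension needs.

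Your argument also gives more than the statement. If you do not cut off the constant, i.e.\ take $w_\e=\varphi(p_0)+\chi\tilde\varphi_\e$, then $|\nabla^{g_1}w_\e|\le C\e$. The same comparison yields $\int_{M_1(1)}(|\nabla^{g_1}h_{\e,\varphi}|^2+\e^2h_{\e,\varphi}^2)\dvol^{g_1}=O(\e^2)$, i.e.\ $\norm{h_{\e,\varphi}}^2_{H^1(M_1(1),\e^2g_1)}=O(\e^m)$, which is the sharp order. Together with a Poincar\'e inequality with boundary term, this directly gives the uniform $H^1(M_1(1),g_1)$ bound on $(h_{\e,\varphi}-\varphi(p_0))/\e$ that the paper later obtains by ``arguing as in'' this lemma.

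One small correction: the standing normalization only guarantees injectivity radius larger than $1$. So take the collar $B_r^{M_1}(p_1)\setminus B_1^{M_1}(p_1)$ with $1<r<\operatorname{inj}(M_1,g_1)$ rather than $r=2$.
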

\begin{proof}
    Let us denote
    \begin{align}
        \overline{g}_0:=g_0|_{\partial M_0(\e)} \quad \andd \quad \overline{g}_1:=g_1|_{\partial M_1(1)}.
    \end{align}
    We start by observing that
    \begin{align}\label{App-3_lemma_he_bounded}
        \norm{h_{\e,\varphi}}_{H^1(M_1(1),\e^2 g_1)}^2 &= \e^{m-2} \int_{M_1(1)} \left[|\nabla^{g_1}h_{\e,\varphi}|_{g_1}^2 + \e^2 h_{\e,\varphi}^2\right]\dvol^{g_1}
            \end{align}
    By the Lax-Milgram theorem for $h_{\e,\varphi}$, there exists a constant $C>0$, only depending on $(M_1(1), g_1)$, such that
    \begin{align}\label{App-2_lemma_he_bounded}
        \norm{h_{\e,\varphi}}_{H^1(M_1(1), \e^2g_1)}^2 &\leq C\e^{m-2}\norm{h_{\e,\varphi}}_{H^{\frac{1}{2}}(\partial M_1(1), \overline{g}_1)}^2\\
        &=C\e^{m-2} \left[\int_{\partial M_1(1)} h_{\e,\varphi}^2 \dint{\sigma}^{\overline{g}_1} + \int_{\partial M_1(1)\times \partial M_1(1)} \frac{|h_{\e,\varphi}(x)-h_{\e,\varphi}(y)|^2}{(d_{\partial M_1(1)}^{\overline{g}^1}(x,y))^m} \dint{\sigma}_x^{\overline{g}_1} \dint{\sigma}_y^{\overline{g}_1}\right],
    \end{align}
    where $d_{\partial M_1(1)}^{\overline{g}^1}$ denotes the distance function on $\partial M_1(1)$ with respect to the induced Riemannian metric $\overline{g}_1$. Noticing that
    \begin{align}
        d^{\e^2 \overline{g}_1}_{\partial M_1(1)}=\e d^{\overline{g}_1}_{\partial M_1(1)} \quad \andd \quad \dint{\sigma}_x^{\e^2 \overline{g}_1}=\e^{m-1} \dint{\sigma}_x^{\overline{g}_1},
    \end{align}
    by 
    \eqref{App-2_lemma_he_bounded} we get
    \begin{align}
        &\norm{h_{\e,\varphi}}_{H^1(M_1(1), \e^2 g_1)}^2 \\
            &\leq C \left[\e^{-1} \int_{\partial M_1(1)} h_{\e,\varphi}^2 \dint{\sigma}^{\e^2 \overline{g}_1} + \int_{\partial M_1(1)\times \partial M_1(1)} \frac{|h_{\e,\varphi}(x)-h_{\e,\varphi}(y)|^2}{(d_{\partial M_1(1)}^{\e^2 \overline{g}^1}(x,y))^m} \dint{\sigma}_x^{\e^2 \overline{g}_1} \dint{\sigma}_y^{\e^2 \overline{g}_1}\right]\\
        &\leq C \e^{-1} \left[\int_{\partial M_1(1)} h_{\e,\varphi}^2 \dint{\sigma}^{\e^2 \overline{g}_1} + \int_{\partial M_1(1)\times \partial M_1(1)} \frac{|h_{\e,\varphi}(x)-h_{\e,\varphi}(y)|^2}{(d_{\partial M_1(1)}^{\e^2 \overline{g}^1}(x,y))^m} \dint{\sigma}_x^{\e^2 \overline{g}_1} \dint{\sigma}_y^{\e^2 \overline{g}_1}\right]\\
        &=C \e^{-1} \norm{h_{\e,\varphi}}_{H^\frac{1}{2}(\partial M_1(1),\e^2 \overline{g}_1)}^
        2.\label{App-1_lemma_he_bounded}
    \end{align}
    By \eqref{App-1_lemma_he_bounded}, we get
    \begin{align}
        \norm{h_{\e,\varphi}}_{H^1(M_1(1),\e^2 g_1)}^2 &\leq C \e^{-1}\norm{h_{\e,\varphi}}_{H^{\frac{1}{2}}(\partial M_1(1),\e^2 \overline{g}_1)}^2\\
        &=C \e^{-1}\norm{\varphi\circ \Phi_\e^{-1}}_{H^{\frac{1}{2}}(\partial M_1(1),\e^2 \overline{g}_1)}^2\\
        &\leq C \e^{-1}\norm{\varphi\circ \Phi_\e^{-1}}_{H^{1}(\partial M_1(1),\e^2 \overline{g}_1)}^2\\
        &= C \e^{-1}\int_{\partial M_1(1)} \left[|\nabla^{\e^2 \overline{g}_1}(\varphi\circ \Phi_{\e}^{-1})|_{\e^2 \overline{g}_1}^2+(\varphi\circ \Phi_{\e}^{-1})^2\right] \dvol^{\e^2 \overline{g}_1}\\
        &\leq C \e^{m-2} \textnormal{vol}^{\overline{g}_1}(\partial M_1(1)) \norm{\varphi\circ \Phi_\e^{-1}}_{C^{1}(\partial M_1(1),\e^2 \overline{g}_1)}^2.\label{App0_lemma_he_bounded}
    \end{align}
    Recalling that $\Phi_\e^{-1}$ is an isometry, and hence a diffeomorphism such that for every $q\in \partial M_1(1)$ and for every $v,w\in T_q \partial M_1(1)$
    \begin{align}
        (\overline{g}_0)_{\Phi_\e^{-1}(q)}(\dint{\Phi_\e^{-1}}v, \dint{\Phi_\e^{-1}}w)=(\e^2 \overline{g}_1)_q(v,w),
    \end{align}
    it follows that
    \begin{itemize}
        \item by the fact that $\Phi_\e^{-1}$ is a diffeomorphism,
        \begin{align}
            \max_{\partial M_1(1)} |\varphi \circ \Phi_\e^{-1}|=\max_{\partial M_0(\e)} |\varphi|;
        \end{align}
        \item by the fact that $\Phi_\e^{-1}$ preserves the Riemannian metric, for every $q\in \partial M_1(1)$ and for every $v\in T_p \partial M_1(1)$
        \begin{align}
            (\e^2 \overline{g}_1)_q ((\nabla^{\e^2 \overline{g}_1} (\varphi\circ \Phi_\e^{-1}))_q,v)&= \dint{_q (\varphi\circ \Phi_\e^{-1})}[v]\\
            &=\dint_{\Phi_\e^{-1}(q)}{\varphi}[\dint{_q(\Phi_\e^{-1})}[v]]\\
            &=\dint_{\Phi_\e^{-1}(q)}{\varphi}[(\dint{_{\Phi_\e^{-1}(q)}\Phi_\e})^{-1}[v]]\\
            &=(\overline{g}_0)_{\Phi_\e^{-1}(q)}(\nabla^{\overline{g}_0}\varphi,(\dint{_{\Phi_\e^{-1}(q)}\Phi_\e})^{-1}[v])\\
            &=(\e^2 \overline{g}_1)_q (\dint{_{\Phi_\e^{-1}(q)}\Phi_\e}[\nabla^{\overline{g}_{0}} \varphi],v)
        \end{align}
        i.e.
        \begin{align}
            (\nabla^{\e^2 \overline{g}_1} (\varphi\circ \Phi_\e^{-1}))_q=\dint{_{\Phi_\e^{-1}(q)}\Phi_\e}[\nabla^{\overline{g}_{0}} \varphi]
        \end{align}
        and hence, for every $q\in \partial M_1(1)$,
        \begin{align}
            |\nabla^{\e^2 \overline{g}_1}(\varphi\circ \Phi_\e^{-1})|_{\e^2\overline{g}_1}^2(q)&=(\e^2 \overline{g}_1)_q ((\nabla^{\e^2 \overline{g}_1}(\varphi\circ \Phi_\e^{-1}))_q,(\nabla^{\e^2 \overline{g}_1}(\varphi\circ \Phi_\e^{-1}))_q)\\
            &=(\e^2 \overline{g}_1)_q (\dint{_{\Phi_\e^{-1}(q)}\Phi_\e}[\nabla^{\overline{g}_{0}} \varphi],\dint{_{\Phi_\e^{-1}(q)}\Phi_\e}[\nabla^{\overline{g}_{0}} \varphi])\\
            &=(\overline{g}_0)_{\Phi_\e^{-1}(q)}((\nabla^{\overline{g}_{0}} \varphi)_{\Phi_\e^{-1}(q)},(\nabla^{\overline{g}_{0}} \varphi)_{\Phi_\e^{-1}(q)})\\
            &=|\nabla^{\overline{g}_0}\varphi|_{\overline{g}_0}(\Phi_\e^{-1}(q))
        \end{align}
        providing
        \begin{align}
            \max_{\partial M_1(1)} |\nabla^{\e^2 \overline{g}_1}(\varphi\circ \Phi_\e^{-1})|_{\e^2 \overline{g}_1}=\max_{\partial M_0(\e)} |\nabla^{\overline{g}_0}\varphi|_{\overline{g}_0}.
        \end{align}
    \end{itemize}
    Hence
    \begin{align}\label{App1_lemma_he_bounded}
        \norm{\varphi\circ \Phi_\e^{-1}}_{C^{1}(\partial M_1(1),\e^2 \overline{g}_1)}&=\max_{\partial M_1(1)} |\varphi \circ \Phi_\e^{-1}| + \max_{\partial M_1(1)} |\nabla^{\e^2 \overline{g}_1}(\varphi\circ \Phi_\e^{-1})|_{\e^2 \overline{g}_1}\\
        &=\max_{\partial M_0(\e)} |\varphi|+\max_{\partial M_0(\e)} |\nabla^{\overline{g}_0}\varphi|_{\overline{g}_0}\\
        &=\norm{\varphi}_{C^{1}(\partial M_0(\e),\overline{g}_0)}.
    \end{align}
    By putting together \eqref{App0_lemma_he_bounded} and \eqref{App1_lemma_he_bounded}, we get
    \begin{align}
        \norm{h_{\e,\varphi}}_{H^1(M_1(1),\e^2 g_1)}^2&\leq C \e^{m-2} \textnormal{vol}^{\overline{g}_1}(\partial M_1(1))\norm{\varphi\circ \Phi_\e^{-1}}_{C^{1}(\partial M_1(1),\e^2 \overline{g}_1)}^2 \\
        &=C \e^{m-2} \textnormal{vol}^{\overline{g}_1}(\partial M_1(1)) \norm{\varphi}_{C^{1}(\partial M_0(\e),\overline{g}_0)}^2.
    \end{align}
    Hence, since $\varphi$ is smooth, we conclude the proof.
\end{proof}

We now introduce a uniform extension operator from $H^1(M_0(\e),g_0)$ to $H^1(M_0,g_0)$.

\begin{lemma}\label{lemma_uniform_ext}
There exists a family of equibounded extension operators
\begin{equation}
E_\e: H^1(M_0(\e),g_0) \to H^1(M_0,g_0),
\end{equation}
that is,  there exists a constant $C>0$, that does not depend on $\e$, such that for any $\e\in(0,1)$
\begin{equation}\label{ineq_Ee}
\norm{E_\e w}_{H^1(M_0,g_0)}\le C\norm{w}_{H^1(M_0(\e),g_0)} \quad  \text{ for any } w \in H^1(M_0(\e),g_0)
\end{equation}
and $(E_\e w)_{|M_0(\e)}=w$.
\end{lemma}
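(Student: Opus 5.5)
The plan is a scaling argument in normal coordinates around $p_0$. The constant has to be made uniform by subtracting averages on a small annulus: otherwise the $L^2$ part of the norm picks up a factor $\e^{-2}$ under rescaling.

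\textbf{Step 1 (reduction to a Euclidean picture).} Since the injectivity radius of $M_0$ is greater than $1$, the map $\exp^{M_0}_{p_0}$ gives normal coordinates on $B_1^{M_0}(p_0)$. In these coordinates the metric coefficients of $g_0$ satisfy $c^{-1}\delta\le (g_0)_{ij}\le c\,\delta$ on $\mathbb{B}_1$, and the volume density is bounded above and below, for some $c>0$ depending only on $(M_0,g_0)$. Hence, for any $r\le 1/2$, the $H^1$ and $L^2$ norms (and the gradient seminorm) on $B^{M_0}_{2r}(p_0)$ and on the annulus $A^{M_0}_r:=B^{M_0}_{2r}(p_0)\setminus B^{M_0}_r(p_0)$ are equivalent to the Euclidean ones on $\mathbb{B}_{2r}$ and $\mathbb{A}_r:=\mathbb{B}_{2r}\setminus\overline{\mathbb{B}_r}$. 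The constants do not depend on $r$. It therefore suffices to construct, for $\e\in(0,1/2)$, a linear extension $\widetilde E_\e: H^1(\mathbb{A}_\e)\to H^1(\mathbb{B}_{2\e})$ that agrees with the identity on $\mathbb{A}_\e$ and satisfies
\begin{equation}
\|\nabla \widetilde E_\e w\|_{L^2(\mathbb{B}_{2\e})}^2+\|\widetilde E_\e w\|_{L^2(\mathbb{B}_{2\e})}^2\le C\big(\|\nabla w\|_{L^2(\mathbb{A}_\e)}^2+\|w\|^2_{L^2(\mathbb{A}_\e)}\big),
\end{equation}
with $C$ independent of $\e$. We then set $E_\e w:=w$ on $M_0(\e)$ and $E_\e w:=\widetilde E_\e(w|_{A^{M_0}_\e})$ (read in coordinates) on $B_\e^{M_0}(p_0)$. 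The traces on $\partial B^{M_0}_\e(p_0)$ coincide, so $E_\e w\in H^1(M_0,g_0)$, and \eqref{ineq_Ee} follows. For $\e\in[1/2,1)$, finitely many fixed extension operators suffice, or one simply reparametrizes as allowed in the paper.

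\textbf{Step 2 (fixed extension plus mean subtraction).} Fix once and for all a bounded linear extension operator $P: H^1(\mathbb{A}_1)\to H^1(\mathbb{B}_2)$, which exists because $\mathbb{A}_1$ is a smooth bounded Lipschitz domain. For $w\in H^1(\mathbb{A}_\e)$ define $w_\e(y):=w(\e y)$ on $\mathbb{A}_1$, let $c_\e:=\fint_{\mathbb{A}_1}w_\e$, and set
\begin{equation}
\widetilde E_\e w(x):=\big[P(w_\e-c_\e)\big](x/\e)+c_\e .
\end{equation}
This operator is linear and restricts to $w$ on $\mathbb{A}_\e$.

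\textbf{Step 3 (the estimates, and the main point).} The delicate step is keeping the $L^2$ term bounded. A naive rescaling of $P$ produces $\e^{-2}\|w\|^2_{L^2}$, and this is exactly why $c_\e$ is subtracted. By the Poincaré–Wirtinger inequality on $\mathbb{A}_1$,
\begin{equation}
\|P(w_\e-c_\e)\|_{H^1(\mathbb{B}_2)}\le C\|\nabla w_\e\|_{L^2(\mathbb{A}_1)} .
\end{equation}
For the gradient, the scaling in dimension $m$ gives
\begin{equation}
\|\nabla\widetilde E_\e w\|^2_{L^2(\mathbb{B}_{2\e})}=\e^{m-2}\|\nabla P(w_\e-c_\e)\|^2_{L^2(\mathbb{B}_2)}\le C\e^{m-2}\|\nabla w_\e\|^2_{L^2(\mathbb{A}_1)}=C\|\nabla w\|^2_{L^2(\mathbb{A}_\e)} .
\end{equation}
For the $L^2$ part,
\begin{equation}
\|\widetilde E_\e w\|^2_{L^2(\mathbb{B}_{2\e})}\le 2\e^m\|P(w_\e-c_\e)\|^2_{L^2(\mathbb{B}_2)}+2c_\e^2\e^m|\mathbb{B}_2| ,
\end{equation}
and we bound the two terms separately. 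The first is at most $C\e^2\|\nabla w\|^2_{L^2(\mathbb{A}_\e)}$. For the second, Jensen's inequality gives $c_\e^2\e^m|\mathbb{A}_1|\le \|w\|^2_{L^2(\mathbb{A}_\e)}$. Combining these with Step 1 yields \eqref{ineq_Ee} with $C$ independent of $\e$. Note that $m\ge3$ is not even needed here; the argument is dimension-independent once the averages are subtracted.
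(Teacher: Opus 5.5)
Your proof is correct, but it takes a different route from the paper at the one point that carries the weight. The paper does not prove the Euclidean statement. It quotes the Rauch--Taylor result for equibounded extension operators $H^1(\mathbb{B}_1\setminus\mathbb{B}_\varepsilon)\to H^1(\mathbb{B}_1)$, keeps $w$ unchanged outside $B_1^{M_0}(p_0)$, and transplants these operators to $M_0$ through $\exp_{p_0}^{M_0}$; uniformity on the manifold then comes only from the smoothness of $g_0$ on the normal chart.

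You instead localize to $B^{M_0}_{2\varepsilon}(p_0)\setminus B^{M_0}_{\varepsilon}(p_0)$ and prove the Euclidean ingredient yourself, by rescaling a single fixed extension operator on $\mathbb{B}_2\setminus\overline{\mathbb{B}_1}$ after subtracting the mean $c_\varepsilon$. Your bookkeeping is right: the factor $\varepsilon^{m-2}$ matches on both sides for gradients, the oscillating part gains $\varepsilon^2$ in $L^2$ via Poincar\'e--Wirtinger, and Jensen bounds the constant part by an $\varepsilon$-independent multiple of $\|w\|^2_{L^2}$.

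What your route buys is a self-contained argument that exhibits the mechanism behind uniformity: subtracting the mean removes the $\varepsilon^{-2}$ loss a naive rescaling would produce. Your extension also depends only on $w$ in a neighbourhood of the hole of size comparable to $\varepsilon$. The paper's route is shorter because exactly this mechanism is packaged into the citation.

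There are two minor points. First, for $\varepsilon\in[1/2,1)$, ``finitely many fixed extension operators'' does not make sense as stated, since the domains $M_0(\varepsilon)$ vary continuously with $\varepsilon$. The simplest fix is to use $\mathbb{B}_\theta\setminus\overline{\mathbb{B}_1}$ with a fixed $1<\theta<\mathrm{inj}(M_0)$ in place of $\mathbb{B}_2\setminus\overline{\mathbb{B}_1}$. Then $\mathbb{B}_{\theta\varepsilon}$ stays inside the normal chart, and the same argument covers all $\varepsilon\in(0,1)$ at once. The paper's argument also gives uniform bounds only for small $\varepsilon$ and relies on the reparametrization convention. Indeed, no family of extension operators $H^1(\mathbb{B}_1\setminus\mathbb{B}_\varepsilon)\to H^1(\mathbb{B}_1)$ can be equibounded as $\varepsilon\to1^-$, as testing with $w\equiv1$ on the thinning shell shows.

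Second, Poincar\'e--Wirtinger on the annulus requires it to be connected, i.e.\ $m\ge2$, so your argument is not literally dimension-independent. This is irrelevant here since $m\ge3$.
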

\begin{proof}
By   \cite{RT_extension}, (see also \cite[Proposition 3.1]{FRS_AB}), there exists a family of equibounded extension operators 
\begin{equation}
\widetilde{E}_\e: H^1(B_1^{\R^m}(0)\setminus B_\e^{\R^m}(0)) \to H^1(B_1^{\R^m}(0)).
\end{equation}
Then we may define 
\begin{align}
E_\e w :=\begin{cases}
w, & \inn M_0 \setminus B_1^{M_0}(p_0),\\
\widetilde{E}_\e(w\circ (\exp_{p_0}^{M_0})^{-1})\circ (\exp_{p_0}^{M_0})^{-1}, & \inn B_1^{M_0}(p_0).
\end{cases}
\end{align}
Since the metric $g_0$ is smooth,  \eqref{ineq_Ee} holds. 
\end{proof}

We recall the following definition from \eqref{def_delta_e}
\begin{equation}\label{eq_delta_manifold}
    \delta_{\e,N}:=\sup\left\{\norm{V_{\e,L_\e(\varphi)}}_{L^2(M_\e,g_\e)}\colon\varphi\in E(\lambda_{0,N}),~\norm{\varphi}_{L^2(M_0,g_0)}=1\right\}.
\end{equation}
We are now in a position to show that \eqref{hp_delta_e} is satisfied.

\begin{proposition}
We have that
\begin{equation}\label{limit_Ve_connected_sum_0}
\lim_{\e\to 0^+} \delta_{\e,N} =0.
\end{equation}
\end{proposition}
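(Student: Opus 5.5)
Since $E(\lambda_{0,N})$ is finite dimensional and $\varphi\mapsto V_{\e,L_\e(\varphi)}$ is linear (\Cref{prop_J_min}), by \eqref{rem_delta_e} it is enough to prove that $\norm{V_\e}_{L^2(M_\e,g_\e)}\to0$ for each fixed $\varphi\in E(\lambda_{0,N})$. The plan is to first obtain an energy bound for $V_\e$ that tends to zero, and then use \eqref{eq:pos_def}, i.e. $\norm{V_\e}_{L^2}^2\le Q_\e(V_\e)=\norm{V_\e}_{H^1(M_\e,g_\e)}^2$.

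\textbf{Step 1: rewrite the right-hand side of \eqref{eq_Ve_connected_sum} as a small remainder.} Write $R_\e(u)$ for the right-hand side of \eqref{eq_Ve_connected_sum}, which is $q_\e(u,L_\e(\varphi))$. Extend $u_0$ to $E_\e u_0\in H^1(M_0,g_0)$ using \Cref{lemma_uniform_ext} and test \eqref{eq_psi_connect_sum} with it. This gives
\[
\int_{M_0(\e)}\big[g_0(\nabla\varphi,\nabla u_0)+(1-\lambda_{0,N})\varphi u_0\big]=-\int_{B_\e^{M_0}(p_0)}\big[g_0(\nabla\varphi,\nabla E_\e u_0)+(1-\lambda_{0,N})\varphi E_\e u_0\big].
\]
Since $\varphi$ is smooth, Cauchy--Schwarz bounds this by $C\,\mathrm{vol}(B_\e)^{1/2}\norm{E_\e u_0}_{H^1(M_0)}\le C\e^{m/2}\norm{u_0}_{H^1(M_0(\e))}$. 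The $M_1(1)$ terms are
\[
\e^{m-2}\int g_1(\nabla h_{\e,\varphi},\nabla u_1)+\e^m(1-\lambda_{0,N})\int h_{\e,\varphi}u_1 .
\]
By Cauchy--Schwarz in the rescaled $H^1(M_1(1),\e^2g_1)$ inner product and \Cref{lemma_he_bounded}, these are bounded by $C\norm{h_{\e,\varphi}}_{H^1(M_1(1),\e^2 g_1)}\norm{u_1}_{H^1(M_1(1),\e^2g_1)}\le C\e^{(m-2)/2}\norm{u}_{H^1(M_\e,g_\e)}$. The $\lambda_{0,N}$-term is harmless because \eqref{Eq_estimate_L^infty_h} gives a bound of order $\e^{m/2}$. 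Altogether, $|R_\e(u)|\le C\e^{(m-2)/2}\norm{u}_{H^1(M_\e,g_\e)}$.

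\textbf{Step 2: test with an element of $Z_\e$.} Here $Z_\e=\mathcal F_\e=H^1(M_\e,g_\e)$, so we may take $u=V_\e$ in \eqref{eq_Ve_connected_sum}. This yields $\norm{V_\e}_{H^1(M_\e,g_\e)}^2\le C\e^{(m-2)/2}\norm{V_\e}_{H^1}$, hence $\norm{V_\e}_{L^2}\le\norm{V_\e}_{H^1}\le C\e^{(m-2)/2}\to0$, since $m\ge3$. The constant depends on $\varphi$ only through $\norm{\varphi}_{C^1(M_0)}$. This norm is uniformly controlled on the unit sphere of the finite-dimensional space $E(\lambda_{0,N})$, which gives \eqref{limit_Ve_connected_sum_0}.

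\textbf{Main obstacle.} The delicate point is the $M_0$ part of Step 1. The identity there is only valid after extending $u_0$ across the hole, so the uniformity of the extension constant in \Cref{lemma_uniform_ext} is essential. One must also check that $E_\e u_0$ is an admissible test function in \eqref{eq_psi_connect_sum}; this holds because $Z_0=H^1(M_0,g_0)$. The boundary matching between $u_0$ and $u_1$ plays no role, since each piece is estimated separately.
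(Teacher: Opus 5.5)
Your proposal is correct and follows essentially the same route as the paper: you test \eqref{eq_Ve_connected_sum} with $V_\e$, use the uniform extension $E_\e$ and the eigenvalue equation \eqref{eq_psi_connect_sum} to reduce the $M_0(\e)$ contribution to integrals over $B_\e^{M_0}(p_0)$, and control the $M_1(1)$ contribution via \Cref{lemma_he_bounded}, which gives $\norm{V_\e}_{H^1(M_\e,g_\e)}=O(\e^{(m-2)/2})$. The only cosmetic differences are that you state this rate explicitly and obtain uniformity through $\norm{\varphi}_{C^1}$ on the unit sphere of $E(\lambda_{0,N})$, whereas the paper simply applies \eqref{rem_delta_e} to a basis.
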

\begin{proof}
Let $\varphi\in \{\varphi_{N+i-1}\}_{i=1,\dots,M_{0,N}}$. 
We have that  \eqref{eq_Ve_connected_sum} tested with $V_\e$, using \eqref{eq_psi_connect_sum}, \Cref{lemma_uniform_ext} and  the Cauchy-Schwarz inequality yield
\begin{align}
\norm{V_\e}^2_{H^1(M_\e,g_\e)}
=& (\varphi, V_{\e,0})_{H^1(M_0(\e),g_0)}+(h_{\e,\varphi}, V_{\e,1})_{H^1(M_1(1),\e^2 g_1)}\\
&- \la_{0,N}(\varphi, V_{\e,0})_{L^2(M_0(\e),g_0)} - \la_{0,N}(h_{\e,\varphi}, V_{\e,1})_{L^2(M_1(1),\e^2 g_1)}\\
=& - (\varphi, E_\e V_{\e,0})_{H^1(B_\e^{M_0}(p_0),g_0)}+(h_{\e,\varphi}, V_{\e,1})_{H^1(M_1(1),\e^2 g_1)}\\
& + \lambda_{0,N} (\varphi, E_\e V_{\e,0})_{L^2(B_\e^{M_0}(p_0),g_0)}  - \la_{0,N}(h_{\e,\varphi}, V_{\e,1})_{L^2(M_1(1),\e^2 g_1)}\\
\leq & \left(C \norm{\varphi}_{H^1(B_\e^{M_0}(p_0),g_0)} + \norm{h_{\e,\varphi}}_{H^1(M_1(1),\e^2 g_1)} \right.\\
&\left. + \lambda_{0,N} C \norm{\varphi}_{L^2(B_\e^{M_0}(p_0),g_0)} + \norm{h_{\e,\varphi}}_{L^2(M_1(1),\e^2 g_1)} \right) \norm{V_\e}_{H^1(M_\e,g_\e)},
\end{align}
by \Cref{lemma_uniform_ext}. Hence,  we have proved for any $\varphi \in E(\la_{0,N})$
\begin{multline}
\norm{V_\e}_{H^1(M_\e,g_\e)} \le \left(C \norm{\varphi}_{H^1(B_\e^{M_0}(p_0),g_0)} + \norm{h_{\e,\varphi}}_{H^1(M_1(1),\e^2 g_1)} \right.\\
\left. + \lambda_{0,N} C \norm{\varphi}_{L^2(B_\e^{M_0}(p_0),g_0)} + \norm{h_{\e,\varphi}}_{L^2(M_1(1),\e^2 g_1)} \right)
\end{multline}
Since $\textnormal{vol}^{g_0}(B_\e^{M_0}(p_0))\to 0$ as $\e \to 0^+$ and taking into account \Cref{lemma_he_bounded}, we have proved 
\eqref{limit_Ve_connected_sum_0} in view of \eqref{rem_delta_e}.
\end{proof}

In this setting, the bilinear symmetric form $h_{\e,N}$ defined in \eqref{def_h_e} is given by 
\begin{multline}\label{def_he_con_sum}
 h_{\e,N}(\varphi,\psi)=\lambda_{0,N}(V_{\e,L_\e(\varphi)},L_\e(\psi))_{H_\e}\\
 =\lambda_{0,N}\int_{M_0(\e)} V_{\e,L_\e(\varphi),0}\psi \dvol^{g_0}
 +\lambda_{0,N}\e^m \int_{M_1(1)} V_{\e,L_\e(\varphi),1}h_{\e,\psi}\dvol^{g_1}.
\end{multline}
 and we denote as usual with $\{\gamma_{\e,i}\}_{i=1,\dots,M_{0,N}}$ its eigenvalues on $E(\lambda_{0,N})$ counted with multiplicity. We also recall the following
 \begin{equation}\label{eq_tau_manifold}
     \tau_{\e,N}^2=\max_{i=1,\dots,M_{0,N}}|\gamma_{\e,i}|.
 \end{equation}
Then, by \Cref{thm:main}, we have proved the following result.
\begin{theorem}\label{theor_eigen_connected_sum}
Let $\lambda_{0,N}$ be an eigenvalue of \eqref{eq_psi_connect_sum} of multiplicity $M_{0,N}$ such that
\begin{equation}
    \lambda_{0,N-1}<\lambda_{0,N}=\cdots=\lambda_{0,N+M_{0,N}-1}<\lambda_{0,N+M_{0,N}}
\end{equation}
and let $\{\lambda_{\e,N+i-1}\}_{i=1,\dots,M_{0,N}}$ be the eigenvalues of $-\Delta_\e+1$ converging to $\lambda_{0,N}$. Then we have that
\begin{equation}\label{eq_eigen_first_order__connected_sum}
\lambda_{\e,N+i-1}-\lambda_{0,N}=\gamma_{\e,i}+O(\delta_{\e,N}^2)+o(\tau_{\e,N}^2) \quad \text{ as } \e \to 0^+
\end{equation} 
for any $i=1, \dots, M_{0,N}$, where $\{\gamma_{\e,i}\}_{i=1,\dots,M_{0,N}}$ are the eigenvalues of \eqref{def_he_con_sum} in increasing order and $\delta_{\e,N}$ and $\tau_{\e,N}$ are as in \eqref{eq_delta_manifold} and \eqref{eq_tau_manifold}, respectively.
\end{theorem}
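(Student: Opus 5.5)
This statement is a direct specialization of \Cref{thm:main}, so the plan is to check, one by one, the functional assumptions of \Cref{subsec:ass_functional} and the stability assumptions \eqref{hp_stability}, \eqref{hp_Le}, \eqref{hp_delta_e} for the choices listed at the end of the setting subsection. After that we simply read off the conclusion, with $h_{\e,N}$ written as in \eqref{def_he_con_sum}.

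\textbf{Step 1: functional assumptions.} We take $H_\e=\mathcal{Z}_\e=L^2(M_\e,g_\e)$, $\mathcal{F}_\e=Z_\e=H^1(M_\e,g_\e)$ and $\mathcal{E}^{(\e)}=Q_\e=(\cdot,\cdot)_{H^1(M_\e,g_\e)}$.
\begin{itemize}
\item Symmetry is immediate, and $\mathcal{E}^{(\e)}(u)\geq\norm{u}_{H_\e}^2$ gives the lower bound with $C_\e=1$.
\item Closedness holds because $H^1(M_\e,g_\e)$ is a closed subspace of a product of Hilbert spaces: the trace matching condition is continuous.
\item Density of $Z_\e$ in $H_\e$ is standard.
\item Compactness \eqref{hp_compact} is provided by \cite[Section 1]{colette-colbois}.
\end{itemize}
For $\e=0$ everything is classical on the closed manifold $M_0$. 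The map $L_\e$ in \eqref{def_Le_conn_sum} is linear, since $u\mapsto h_{\e,u}$ is linear by uniqueness of the solution of \eqref{prob_harmonic_extension}. It lands in $H^1(M_\e,g_\e)$ because $h_{\e,u}=u\circ\Phi_\e^{-1}$ on $\partial M_1(1)$, so the matching condition holds.

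\textbf{Step 2: stability.} Assumption \eqref{hp_stability} is \cite[Theorem 1.1]{Ta02}, transferred from $-\Delta_\e$ to $-\Delta_\e+1$ by a shift of $1$. Assumption \eqref{hp_Le} is \Cref{lemma_Le_con_sum}. Assumption \eqref{hp_delta_e} is the preceding proposition, \eqref{limit_Ve_connected_sum_0}.

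The only slightly delicate point is the normalization of \Cref{rmk:pos_def}, which reduces everything to \eqref{eq:pos_def}. Here \eqref{eq:pos_def} already holds for $\mathcal{E}^{(\e)}$ because the bottom of the spectrum is $1$, so no shift is needed and $h_{\e,N}$ is exactly the unshifted form \eqref{def_he_con_sum}. This avoids having to use the shift-invariance lemma at the end of \Cref{sec_egin_var}.

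\textbf{Step 3: conclusion.} With the hypotheses verified, \Cref{thm:main} yields
\begin{equation}
\lambda_{\e,N+i-1}=\lambda_{0,N}+\gamma_{\e,i}+O(\delta_{\e,N}^2)+o(\tau_{\e,N}^2),
\end{equation}
where $\gamma_{\e,i}$ are the eigenvalues of $h_{\e,N}(\varphi,\psi)=\lambda_{0,N}(V_{\e,L_\e(\varphi)},L_\e(\psi))_{L^2(M_\e,g_\e)}$. Splitting the $L^2(M_\e,g_\e)$ product into the $M_0(\e)$ and $M_1(1)$ parts, and using $\dvol^{\e^2g_1}=\e^m\dvol^{g_1}$, gives exactly \eqref{def_he_con_sum}.

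The main obstacle I expect is the $H^1$-closedness and compactness of $H^1(M_\e,g_\e)$ with the merely continuous metric. I would defer this to \cite{Ta02,colette-colbois}, noting that for each fixed $\e$ the metric $g_\e$ is uniformly equivalent to a smooth metric on $M_\e$, so Rellich's theorem applies.
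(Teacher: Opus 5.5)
Your proposal is correct and follows the paper's route exactly. The paper also verifies \eqref{hp_compact} via \cite{colette-colbois}, \eqref{hp_stability} via \cite{Ta02}, \eqref{hp_Le} via \Cref{lemma_Le_con_sum} and \eqref{hp_delta_e} via \eqref{limit_Ve_connected_sum_0}, and then applies \Cref{thm:main} directly; your remark that \eqref{eq:pos_def} already holds without any shift, since $Q_\e(u)\geq\norm{u}_{L^2(M_\e,g_\e)}^2$, is the implicit reason the paper can work with the unshifted form \eqref{def_he_con_sum}.
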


\subsection{The asymptotics of the eigenvalue variation}
Now we turn to study the asymptotic behavior of the bilinear form $h_{\e,N}(\varphi,\psi)$
as $\e \to 0^+$ and consequently of the eigenvalues $\{\gamma_{\e,i}\}_{i=1,\dots,M_{0,N}}$ which characterize the eigenvalue variation $\lambda_{\e,N+i-1}-\lambda_{0,N}$, for $i=1,\dots,M_{0,N}$, in view of \Cref{theor_eigen_connected_sum}.

Before proceeding, we recall from \eqref{def_qe} and \eqref{def_Je} the following notation
\begin{align}
    &q_\e(u,v)=Q_\e(u,v)-\lambda_{0,N}(u,v)_{L^2(M_\e,g_\e)}, \label{eq_q_e_manifolds}\\
    &J_{\e,\varphi}(u)=\frac{1}{2}\norm{u}_{H^1(M_\e,g_\e)}^2-q_\e(L_\e(\varphi),u),\label{eq_J_e_manifolds}
\end{align}
for $u,v\in H^1(M_\e,g_\e)$ and $\varphi\in E(\lambda_{0,N})$
Again, we may drop the dependence on $\varphi$ and just denote $J_{\e,\varphi}$ with $J_\e$ when there are no ambiguities.

First of all, we prove that the $L^2(M_0(\e),g_0)$ norm of $V_{\e,0}$ is a remainder term with respect to the $H^1$ norm of $V_\e$.
\begin{proposition}\label{prop_manifolds_L2norm_smallo}
As $\e \to 0^+$,
\begin{equation}\label{eq_manifolds_L2norm_smallo_M0}
\norm{V_{\e,0}}_{L^2(M_0(\e),g_0)}=o(\norm{V_{\e,0}}_{H^1(M_0(\e),g_0)})
\end{equation}
which trivially implies
\begin{equation}\label{eq_manifolds_L2norm_smallo}
\norm{V_{\e,0}}_{L^2(M_0(\e),g_0)}=o(\norm{V_{\e}}_{H^1(M_\e,g_\e)}).
\end{equation}
\end{proposition}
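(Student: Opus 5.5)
The plan is to argue by contradiction, combining compactness through the uniform extension operators of \Cref{lemma_uniform_ext} with the fact that $V_{\e,0}$ solves a homogeneous equation away from the small ball. This makes the limit object a finite-energy solution of $(-\Delta^{g_0}+1)W=0$ on $M_0\setminus\{p_0\}$, hence zero, which contradicts a lower bound on its $L^2$ norm.

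\textbf{Step 1: the equation for $V_{\e,0}$.} Take $w\in C^\infty_c(M_0\setminus\{p_0\})$. For $\e$ small, $\mathrm{supp}\,w\subset M_0(\e)$ away from $\partial M_0(\e)$, so $u:=(w,0)$ belongs to $H^1(M_\e,g_\e)$, since the matching condition on the traces is trivially satisfied. Testing \eqref{eq_Ve_connected_sum} with this $u$, the right-hand side becomes $\int_{M_0}[g_0(\nabla^{g_0}\varphi,\nabla^{g_0}w)+\varphi w-\lambda_{0,N}\varphi w]\dvol^{g_0}$. This vanishes by \eqref{eq_psi_connect_sum}, so
\begin{equation}
(V_{\e,0},w)_{H^1(M_0(\e),g_0)}=0\quad\text{for all } w\in C^\infty_c(M_0\setminus\{p_0\}) \text{ and all } \e \text{ small}.
\end{equation}

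\textbf{Step 2: contradiction setup and compactness.} If $V_{\e,0}=0$ there is nothing to prove, so consider $\e$ with $V_{\e,0}\neq0$. Suppose, for a contradiction, that there are $c>0$ and $\e_n\to0^+$ with $\norm{V_{\e_n,0}}_{L^2}\ge c\norm{V_{\e_n,0}}_{H^1}$. Set $W_n:=V_{\e_n,0}/\norm{V_{\e_n,0}}_{H^1(M_0(\e_n),g_0)}$ and $\widetilde W_n:=E_{\e_n}W_n$. By \Cref{lemma_uniform_ext}, $\{\widetilde W_n\}$ is bounded in $H^1(M_0,g_0)$. Up to a subsequence, $\widetilde W_n\rightharpoonup W$ weakly in $H^1(M_0)$ and $\widetilde W_n\to W$ strongly in $L^2(M_0)$.

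The contributions of the small ball vanish. By Hölder and Sobolev (here $m\ge3$),
\begin{equation}
\norm{\widetilde W_n}_{L^2(B_{\e_n}^{M_0}(p_0))}\le C\,\mathrm{vol}^{g_0}(B^{M_0}_{\e_n}(p_0))^{1/m}\norm{\widetilde W_n}_{H^1(M_0)}\to0 .
\end{equation}
Hence $\norm{W}_{L^2(M_0)}=\lim_n\norm{W_n}_{L^2(M_0(\e_n))}\ge c>0$.

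\textbf{Step 3: passing to the limit and identifying $W=0$.} Fix $w$ as in Step 1. For $n$ large, $(W_n,w)_{H^1(M_0(\e_n))}=(\widetilde W_n,w)_{H^1(M_0)}$, because $w$ vanishes near $p_0$. By weak convergence this gives $(W,w)_{H^1(M_0,g_0)}=0$ for every $w\in C^\infty_c(M_0\setminus\{p_0\})$.

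A point has zero $H^1$-capacity in dimension $m\ge2$, so $C^\infty_c(M_0\setminus\{p_0\})$ is dense in $H^1(M_0,g_0)$. The identity then extends to all $w\in H^1(M_0,g_0)$, and choosing $w=W$ gives $W=0$. This contradicts $\norm{W}_{L^2}\ge c$ and proves \eqref{eq_manifolds_L2norm_smallo_M0}. Finally, \eqref{eq_manifolds_L2norm_smallo} follows at once from $\norm{V_{\e,0}}_{H^1(M_0(\e),g_0)}\le\norm{V_\e}_{H^1(M_\e,g_\e)}$.

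\textbf{Main obstacle.} The delicate point is Step 2: the $L^2$ lower bound must survive the passage from the varying domains $M_0(\e_n)$ to $M_0$. This is exactly where the equibounded extension operators of \Cref{lemma_uniform_ext} are needed, together with the vanishing of $\norm{\widetilde W_n}_{L^2(B^{M_0}_{\e_n}(p_0))}$. If the Sobolev exponent argument were unavailable, I would instead use the uniform $L^\infty$–$L^2$ smallness on balls via $L^{2^*}$ bounds, which requires only $H^1$-boundedness. The removability of the point $p_0$ (the capacity density argument) is standard.
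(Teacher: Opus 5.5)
Your proof is correct and follows essentially the paper's argument: contradiction, the equibounded extensions of \Cref{lemma_uniform_ext}, testing \eqref{eq_Ve_connected_sum} with $(w,0)$ for $w\in C^\infty_c(M_0\setminus\{p_0\})$, and removing the point $p_0$ because it has zero capacity. The differences are cosmetic. The paper normalizes by $\norm{E_{\e_n}V_{\e_n,0}}_{L^2(M_0,g_0)}$ rather than by the $H^1$ norm. Your Hölder--Sobolev estimate on the small ball is correct but unnecessary, since $\norm{\widetilde W_n}_{L^2(M_0,g_0)}\ge\norm{W_n}_{L^2(M_0(\e_n),g_0)}\ge c$ already passes to the strong $L^2$ limit.
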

\begin{proof}
Let us prove \eqref{eq_manifolds_L2norm_smallo_M0} by contradiction.  Assume that for some constant $C>0$ and for some $\e_n \to 0^+$
\begin{equation}
\frac{\norm{V_{\e_n,0}}_{H^1(M_0(\e_n),g_0)}^2}{\norm{V_{\e_n,0}}_{L^2(M_0(\e_n),g_0)}^2} \leq C.
\end{equation}
Let us introduce for any $n \in \mathbb{N}\setminus \{0\}$
\begin{equation}
W_n:=\frac{E_{\e_n}V_{\e_n,0}}{\norm{E_{\e_n}V_{\e_n,0}}_{L^2(M_0,g_0)}},
\end{equation}
where $E_{\e_n}$ is the family of uniformly bounded  extension operators introduced in \Cref{lemma_uniform_ext}.
In particular, we have that 
\begin{equation}
\norm{W_n}_{L^2(M_0,g_0)}=1 \quad \text{ and }\quad \norm{W_n}_{H^1(M_0,g_0)}\leq C\quad\textnormal{for all }n\in\N.
\end{equation}
It follows that, up to passing to a subsequence, there exists $W_0 \in H^1(M_0,g_0)$ with $\norm{W_0}_{L^2(M_0,g_0)}=1$ such that 
$W_n \rightharpoonup W_0$ weakly in $H^1(M_0,g_0)$ as $n \to \infty$. Let $(u_0,0) \in H^1(M_\e,g_\e)$ with 
$u_0 \in C^\infty_c(M_0 \setminus\{p_0\})$. We may test \eqref{eq_Ve_connected_sum} with $(u_0,0)$ thus, for $n$ large enough,  
\begin{align}
& \int_{M_0} \left[g_0 \left(\nabla^{g_0} W_n, \nabla^{g_0} u_0\right)+W_nu_0 \right] \dvol^{g_0} =\int_{M_0(\e_n)}  \left[g_0 \left(\nabla^{g_0} W_n, \nabla^{g_0} u_0\right)+W_nu_0 \right] \dvol^{g_0} \\
= &\norm{E_{\e_n}V_{\e_n,0}}_{L^2(M_0,g_0)}^{-1} \left(
\int_{M_0(\e_n)} \left[g_0 \left(\nabla^{g_0} \varphi, \nabla^{g_0} u_0\right)+\varphi u_0 \right] \dvol^{g_0} 
- \lambda_{0,N} \int_{M_0(\e_n)} \varphi u_0 \dvol^{g_0} \right)=0
\end{align}
by \eqref{eq_psi_connect_sum}. Passing to the limit as $n \to +\infty$, we obtain
\begin{equation}
\int_{M_0}\left[g_0 \left(\nabla^{g_0} W_0, \nabla^{g_0} u_0\right)+W_0u_0 \right] \dvol^{g_0}=0  
\quad \text{ for any } u_0 \in C^\infty_c(M_0 \setminus\{p_0\}).
\end{equation}
Since the Sobolev capacity of a point is $0$, it follows that 
\begin{equation}
\int_{M_0}\left[g_0 \left(\nabla^{g_0} W_0, \nabla^{g_0} u_0\right)+W_0u_0 \right] \dvol^{g_0}=0  
\quad \text{ for any } u_0 \in H^1(M_0,g_0).
\end{equation}
Hence, $W_0=0$ a contradiction with the fact that $\norm{W_0}_{L^2(M_0,g_0)}=1$. The proof is then complete.
\end{proof}

In the next result, we provide a first bound for the $H^1$ norm of $V_\e$.
\begin{proposition}\label{Proposition_norm_J}
There exists a constant $K>0$, that does not depend on $\e$, such that 
\begin{multline}\label{ineq_Ve_precise_connected_sum}
\norm{V_\e}^2_{H^1(M_\e ,g_\e)} \le K\left| \int_{B_\e^{M_0}(p_0)}\left[g_0 \left(\nabla^{g_0} \varphi, \nabla^{g_0} \varphi\right)
+|\varphi-\varphi(p_0)|^2 \right]\dvol^{g_0}\right| \\
+  K\e^{m-2}\left| \int_{M_1(1)} \left[g_1 \left(\nabla^{g_1} h_{\e,\varphi},\nabla^{g_1} h_{\e,\varphi}\right)+\e^2 |h_{\e,\varphi}-\varphi(p_0)|^2\right] \dvol^{g_1} \right|+K\e^m.
\end{multline}
\end{proposition}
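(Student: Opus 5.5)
The plan is to test \eqref{eq_Ve_connected_sum} with $u=V_\e$ itself, remove the bulk contribution on $M_0(\e)$ using the limit equation \eqref{eq_psi_connect_sum}, and then subtract the constant $\varphi(p_0)$ from $\varphi$ and from $h_{\e,\varphi}$. The point is that the constant produces only zero-order terms, and these are small because the supports involved have small volume. Finally we close with Young's inequality. Throughout we use that $\varphi$ is smooth, so $\norm{\varphi}_{L^\infty(M_0)}$ is finite, and the uniform extension operators $E_\e$ of \Cref{lemma_uniform_ext}.

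\emph{Step 1 (reduction to the small pieces).} Testing \eqref{eq_Ve_connected_sum} with $V_\e$ gives $\norm{V_\e}^2_{H^1(M_\e,g_\e)}=q_\e(L_\e(\varphi),V_\e)$. Since $E_\e V_{\e,0}\in H^1(M_0,g_0)$, we may test \eqref{eq_psi_connect_sum} with $w=E_\e V_{\e,0}$. Exactly as in the proof of \eqref{limit_Ve_connected_sum_0}, this replaces the $M_0(\e)$ integrals by minus the corresponding integrals over $B_\e^{M_0}(p_0)$:
\begin{align}
\norm{V_\e}^2_{H^1(M_\e,g_\e)}=&-\int_{B_\e^{M_0}(p_0)}\left[g_0(\nabla^{g_0}\varphi,\nabla^{g_0}E_\e V_{\e,0})+(1-\lambda_{0,N})\varphi E_\e V_{\e,0}\right]\dvol^{g_0}\\
&+\e^{m-2}\int_{M_1(1)}\left[g_1(\nabla^{g_1}h_{\e,\varphi},\nabla^{g_1}V_{\e,1})+\e^2(1-\lambda_{0,N})h_{\e,\varphi}V_{\e,1}\right]\dvol^{g_1}.
\end{align}

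\emph{Step 2 (subtracting the constant).} Write $\varphi=(\varphi-\varphi(p_0))+\varphi(p_0)$ in the first line and $h_{\e,\varphi}=(h_{\e,\varphi}-\varphi(p_0))+\varphi(p_0)$ in the second. Gradients do not see the constant, so the right-hand side becomes $I_0+I_1+I_c$.
\begin{itemize}
\item $I_0$ is the $B_\e$ term with $\varphi$ replaced by $\varphi-\varphi(p_0)$. By Cauchy--Schwarz and \eqref{ineq_Ee}, $|I_0|\le C\,A^{1/2}\norm{V_\e}_{H^1(M_\e,g_\e)}$, where $A$ denotes the first integral on the right of \eqref{ineq_Ve_precise_connected_sum}.
\item $I_1$ is the $M_1(1)$ term with $h_{\e,\varphi}$ replaced by $h_{\e,\varphi}-\varphi(p_0)$. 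Pairing against the $\e^2g_1$-norm of $V_{\e,1}$, which equals $\e^{m-2}\int_{M_1(1)}[|\nabla^{g_1}V_{\e,1}|^2+\e^2V_{\e,1}^2]\dvol^{g_1}$, Cauchy--Schwarz gives $|I_1|\le C\,B^{1/2}\norm{V_\e}_{H^1(M_\e,g_\e)}$. Here $B$ is the second quantity in \eqref{ineq_Ve_precise_connected_sum}, including the factor $\e^{m-2}$.
\item The constant part is $I_c=(1-\lambda_{0,N})\varphi(p_0)\big[-\int_{B_\e^{M_0}(p_0)}E_\e V_{\e,0}\dvol^{g_0}+\e^m\int_{M_1(1)}V_{\e,1}\dvol^{g_1}\big]$.
\end{itemize}

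\emph{Step 3 (the constant term and closing).} The term I expect to need most care is $I_c$, since it is not controlled by the local energies $A,B$. It is, however, handled purely by volume. We have $|\int_{B_\e}E_\e V_{\e,0}|\le \mathrm{vol}^{g_0}(B_\e^{M_0}(p_0))^{1/2}\norm{E_\e V_{\e,0}}_{L^2(M_0,g_0)}\le C\e^{m/2}\norm{V_\e}_{H^1(M_\e,g_\e)}$. Similarly, $\e^m|\int_{M_1(1)}V_{\e,1}|\le \mathrm{vol}^{g_1}(M_1(1))^{1/2}\e^{m/2}\big(\e^m\int_{M_1(1)}V_{\e,1}^2\dvol^{g_1}\big)^{1/2}\le C\e^{m/2}\norm{V_\e}_{H^1(M_\e,g_\e)}$. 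Collecting the three bounds gives
\begin{equation}
\norm{V_\e}^2_{H^1(M_\e,g_\e)}\le C\big(A^{1/2}+B^{1/2}+\e^{m/2}\big)\norm{V_\e}_{H^1(M_\e,g_\e)}.
\end{equation}
Dividing by $\norm{V_\e}_{H^1(M_\e,g_\e)}$ and squaring with $(a+b+c)^2\le 3(a^2+b^2+c^2)$ yields \eqref{ineq_Ve_precise_connected_sum}. The constant $K$ depends only on $\lambda_{0,N}$, on $\norm{\varphi}_{L^\infty(M_0)}$, on the extension constant of \Cref{lemma_uniform_ext} and on the geometry, and not on $\e$.
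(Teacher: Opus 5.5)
Your proof is correct and gives the same bound as the paper by a more direct route.

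\textbf{The paper's route.} The paper does not test with $V_\e$. It uses the dual characterization $\norm{V_\e}^2_{H^1(M_\e,g_\e)}=-2J_\e(V_\e)=\sup_{u\neq 0}|q_\e(L_\e(\varphi),u)|^2/\norm{u}^2_{H^1(M_\e,g_\e)}$ from \eqref{eq_culo3}, and bounds $q_\e(L_\e(\varphi),u)$ for arbitrary $u$. To do so it splits $u=\dot u+\overline u$, where $\overline u$ collects the means of $u$ on $M_0(\e)$ and $M_1(1)$ and $\dot u$ has zero mean on each piece.
\begin{itemize}
\item For $\dot u$, it localizes to $B_\e^{M_0}(p_0)$ through \eqref{eq_psi_connect_sum} tested with $E_\e\dot u_0$, exactly as your Step 1 does. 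The leftover term $\int_{B_\e^{M_0}(p_0)}(\lambda_{0,N}\varphi-\varphi(p_0))E_\e\dot u_0\dvol^{g_0}$ is bounded by volume, as in your Step 3.
\item For $\overline u$, it uses $\int_{M_0}\varphi\dvol^{g_0}=0$, i.e. orthogonality to the constant eigenfunction, to get $|q_\e(L_\e(\varphi),\overline u)|^2\le C\e^m\norm{u}^2$.
\end{itemize}

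\textbf{Why your route is equivalent and simpler.} Here $Z_\e=H^1(M_\e,g_\e)$, so $V_\e$ is the Riesz representative of $q_\e(\cdot,L_\e(\varphi))$. Evaluating at $u=V_\e$ therefore gives exactly the dual norm the paper bounds, and loses nothing. Your observation that every $\varphi(p_0)$-contribution is already $O(\e^{m/2})\norm{V_\e}_{H^1(M_\e,g_\e)}$ has two sources: the volume of $B_\e^{M_0}(p_0)$, and the weight $\e^m$ on $M_1(1)$. This makes the mean/oscillation split unnecessary, and with it the zero-mean property of $\varphi$.

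Your route also avoids a small technical point in the paper's argument. Since $\overline u$ has generally different constants on the two pieces, neither $\overline u$ nor $\dot u$ satisfies the gluing condition on $\partial M_0(\e)$. They are therefore not in $H^1(M_\e,g_\e)$, and $q_\e(L_\e(\varphi),\cdot)$ must be read as a form on the product space $H^1(M_0(\e),g_0)\times H^1(M_1(1),\e^2g_1)$.

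Conversely, the paper's version yields nothing extra. Its bound on the functional against arbitrary test functions is the same quantity as $\norm{V_\e}_{H^1(M_\e,g_\e)}$. Moreover, its sharper $O(\e^m)|\overline u_0|$ estimate on the $M_0$ side is swamped by the $\e^{m/2}$ factor coming from $M_1(1)$. The final $K\e^m$ term is thus the same in both proofs.
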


\begin{proof}
Testing \eqref{eq_Ve} with $V_\e$, by definition of $J_\e$, see \eqref{eq_J_e_manifolds},
\begin{equation}\label{eq_culo2}
\norm{V_\e}^2_{H^1(M_\e ,g_\e)}=-2J_\e(V_\e).
\end{equation}
Furthermore, with a standard proof (see for example \cite[Proposition 3.5]{BS_eigen} or \cite[Lemma 3.1]{FLO_neumann}) we can show that 
\begin{equation}\label{eq_culo3}
-2J_\e(V_\e)=\sup_{0\neq u \in H^1(M_\e ,g_\e)}\frac{|q_\e(L_\e (\varphi),u)|^2}{\norm{u}^2_{H^1(M_\e ,g_\e)}},
\end{equation}
where $q_\e$ is as in \eqref{eq_q_e_manifolds}. Making it explicit, for any $(u_0,u_1) \in H^{1}(M_\e,g_\e)$, denoting 
\begin{align}
    \overline{u}:=\left(\fint_{M_0(\e)} u_0 \dvol^{g_0},\fint_{M_1(1)} u_1\dvol^{g_1}\right) \quad \andd \quad \dot{u}=(\dot{u}_0,\dot{u}_1):=(u_0,u_1)-\overline{u},
\end{align}
so to have $u=\dot{u}+\overline{u}$ and
\begin{equation}
    \int_{M_0(\e)}\dot{u}_0\dvol^{g_0}=\int_{M_1(1)}\dot{u}_1\dvol^{g_1}=0,
\end{equation}
we have
\begin{align}
q_\e(L_\e(\varphi),u)=&q_\e(L_\e(\varphi),\dot u)+q_\e(L_\e(\varphi),\overline{u})\\
=&\int_{M_0(\e)} \left[g_0 \left(\nabla^{g_0} \varphi, \nabla^{g_0} \dot{u}_0\right)+\varphi \dot{u}_0 \right] \dvol^{g_0} \\
& + \e^{m-2} \int_{M_1(1)} \left[g_1 \left(\nabla^{g_1} h_{\e,\varphi},\nabla^{g_1} \dot{u}_1\right)+\e^2 h_{\e,\varphi} \dot{u}_1 \right] \dvol^{g_1} \\
& - \lambda_{0,N} \int_{M_0(\e)} \varphi \dot{u}_0 \dvol^{g_0} - \lambda_{0,N} \e^m \int_{M_1(1)} h_{\e,\varphi} \dot{u}_1 \dvol^{g_1}\\
&+ q_\e(L_\e(\varphi),\overline{u})\\
=&\int_{M_0(\e)} \left[g_0 \left(\nabla^{g_0} \varphi, \nabla^{g_0} \dot{u}_0\right)+(\varphi-\varphi(p_0)) \dot{u}_0 \right] \dvol^{g_0} \\
& + \e^{m-2} \int_{M_1(1)} \left[g_1 \left(\nabla^{g_1} h_{\e,\varphi},\nabla^{g_1} \dot{u}_1\right)+\e^2 (h_{\e,\varphi}-\varphi(p_0))\dot{u}_1 \right] \dvol^{g_1} \\
& - \lambda_{0,N} \int_{M_0(\e)} (\varphi-\varphi(p_0)) \dot{u}_0 \dvol^{g_0}  - \lambda_{0,N} \e^m \int_{M_1(1)} (h_{\e,\varphi}-\varphi(p_0)) \dot{u}_1 \dvol^{g_1}\\
&+ q_\e(L_\e(\varphi),\overline{u}).\label{eq_culo4}
\end{align}
Furthermore, by the equation of $\varphi$ as in \eqref{eq_psi_connect_sum} we have
\begin{align}
    &\int_{M_0(\e)} \left[g_0 \left(\nabla^{g_0} \varphi, \nabla^{g_0} \dot{u}_0\right)+(\varphi-\varphi(p_0)) \dot{u}_0 \right] \dvol^{g_0} 
- \lambda_{0,N} \int_{M_0(\e)} (\varphi-\varphi(p_0)) \dot{u}_0 \dvol^{g_0} \label{eq_culo}\\
    &=-\int_{B_\e^{M_0}(p_0)}\big[g_0(\nabla^{g_0}\varphi,\nabla^{g_0}E_\e \dot{u}_0)+(\varphi-\varphi(p_0))E_\e \dot{u}_0\big]\dvol^{g_0}+\int_{B_\e^{M_0}(p_0)}(\lambda_{0,N}\varphi-\varphi(p_0))E_\e \dot{u}_0\dvol^{g_0}.
\end{align}
Then, by Cauchy-Schwarz inequality \Cref{lemma_uniform_ext} and boundedness of $\varphi$ we deduce that
\begin{equation}
    \left|\int_{B_\e^{M_0}(p_0)}\big[g_0(\nabla^{g_0}\varphi,\nabla^{g_0}E_\e \dot{u}_0)+(\varphi-\varphi(p_0))E_\e \dot{u}_0\big]\dvol^{g_0}\right|\leq C\norm{\varphi-\varphi(p_0)}_{H^1(B_\e^{M_0}(p_0),g_0)}\norm{\dot{u}_0}_{H^1(M_0(\e),g_0)}
\end{equation}
and that 
\begin{equation}
    \left|\int_{B_\e^{M_0}(p_0)}(\lambda_{0,N}\varphi-\varphi(p_0))E_\e \dot{u}_0\dvol^{g_0}\right|\leq C\e^{m/2}\norm{\dot{u}_0}_{H^1(M_0(\e),g_0)}.
\end{equation}
Plugging these two estimates into \eqref{eq_culo} we obtain
\begin{align}
&\left|\int_{M_0(\e)} \left[g_0 \left(\nabla^{g_0} \varphi, \nabla^{g_0} \dot{u}_0\right)+(\varphi-\varphi(p_0)) \dot{u}_0 \right] \dvol^{g_0} 
- \lambda_{0,N} \int_{M_0(\e)} (\varphi-\varphi(p_0)) \dot{u}_0 \dvol^{g_0}\right| \\ 
\le& C\norm{\dot{u}_0}_{H^1(B_\e^{M_0}(p_0) ,g_0)} \left( \norm{\varphi-\varphi(p_0)}_{H^1(M_0(\e),g_0)}+\e^{m/2}\right),\label{eq_psi_boundary_B}
\end{align}

for some positive constant $C>0$ that does not depend on $u$ and $\e$. 
Moreover, since
\begin{align}
    \int_{M_0} \varphi\ \dvol^{g_0}=0,
\end{align}
we have 
\begin{align}
    q_\e(L_\e(\varphi),\overline{u})&=-(1-\lambda_{0,N})\overline{u}_0\int_{B_{\e}^{M_0}(p_0)} \varphi\ \dvol^{g_0} +(1-\lambda_{0,N})\overline{u}_1 \e^m \int_{M_1(1)} h_{\e,\varphi}\ \dvol^{g_1} \\
    &=(|\overline{u}_0|+|\overline{u}_1|)O(\e^m)\label{eq_culo5}
\end{align}
as $\e\to 0$.

By the Cauchy-Schwarz inequality, we have
\begin{equation*}
    \bar{u}_0^2\leq C\norm{u_0}_{H^1(M_0(\e),g_0)}^2\quad\textnormal{and}\quad \e^m\bar{u}_1^2\leq C\norm{u_1}_{H^1(M_1(1),\e^2g_1)}^2
\end{equation*}
for some $C>0$ independent from $u$ and $\e$.
Combining these estimates with \eqref{eq_culo5} we obtain
\begin{equation}\label{eq_culo6}
    |q_\e(L_\e(\varphi),\overline{u})|^2\leq C\e^m\norm{u}_{H^1(M_\e,g_\e)}^2.
\end{equation}
By combining \eqref{eq_culo2}, \eqref{eq_culo3}, \eqref{eq_culo4}, \eqref{eq_culo6}, Cauchy-Schwarz inequality and \eqref{eq_psi_boundary_B}, it follows that there exists a constant $K>0$, that does not depend on $\e$, such that 
\begin{multline}
\norm{V_\e}^2_{H^1(M_\e ,g_\e)} \le K\left| \int_{B_\e^{M_0}(p_0)}\left[g_0 \left(\nabla^{g_0} \varphi, \nabla^{g_0} \varphi\right)
+|\varphi-\varphi(p_0)|^2 \right]\dvol^{g_0}\right| \\
+  K\e^{m-2}\left| \int_{M_1(1)} \left[g_1 \left(\nabla^{g_1} h_{\e,\varphi},\nabla^{g_1} h_{\e,\varphi}\right)+\e^2 |h_{\e,\varphi}-\varphi(p_0)|^2\right] \dvol^{g_1} \right|+K\e^m,
\end{multline}
for $\e$ sufficiently small, that is, we have proved \eqref{ineq_Ve_precise_connected_sum}.
\end{proof}

Let us define $\Theta_\e(x):=\exp_{p_0}^{M_0}(\e x)$ and
\begin{align}
&\widehat \varphi_\e(x):=\frac{1}{\e} \Big(\varphi(\Theta_\e(x))-\varphi(p_0)\Big), \quad  &&\text{ and } \quad \quad \widehat V_{\e,0}(x):=\frac{1}{\e} V_{\e,0}(\Theta_\e(x)),\\
&\widehat h_{\e,\varphi}:=\frac{1}{\e}(h_{\e,\varphi}-\varphi(p_0)), \quad  &&\text{ and } \quad \quad \widehat V_{\e,1}:=\frac{1}{\e} V_{\e,1},
\end{align}
for any $\e \in (0,1)$. We stress that on $\partial \mathbb{B}_1\subset T_{p_1}M_1$
\begin{align}
    \widehat{V}_{\e,0}(F_\e(x))=\widehat{V}_{\e,1}(\exp_{p_1}^{M_1}(x)).
\end{align}
It is not hard to see that $\widehat h_{\e,\varphi}$ is the unique solution to 
\begin{align}
\begin{cases}\label{prob_harmonic_extension_new}
(-\Delta^{g_1}+\e^2)\widehat h_{\e,\varphi}=-\e\varphi(p_0), & \textnormal{in } M_1(1), \\
\widehat h_{\e,\varphi}= \frac{1}{\e}\left(\varphi \circ \Phi_\e^{-1}-\varphi(p_0)\right),  & \textnormal{on }\partial M_1(1),
\end{cases}
\end{align}
that is,  for any $w \in H^1_0( M_1(1),g_1)$,
\begin{equation}\label{eq_harmonic_extension_new}
\int_{M_1(1)} \left[g_1(\nabla^{g_1}  \widehat h_{\e,\varphi}, \nabla^{g_1}w) +  \e^2 \widehat h_{\e,\varphi} w+ \e\varphi(p_0)w \right] \dvol^{g_1}=0.
\end{equation}
Let us also denote with $\mathbb{B}_r$ the euclidean ball of center $0$ and radius $r>0$, identified with the unit ball in the tangent space $T_{p_0}M_0$, and observe that for $p\in B_1^{M_0}(p_0)$ and $x=(\exp_{p_0}^{M_0})^{-1}(q)$
\begin{align}
    (\varphi \circ \Theta_\e)(x) &=\varphi(p_0) + \e \left[\frac{\dint{}}{\dint{\e}}\Big|_{\e=0} (\varphi \circ \Theta_\e) \right](x) + o(\e)\\
    &=\varphi(p_0)+\e [\dint_{p_0}\varphi]([\dint_{0} \exp_{p_0}^{M_0}](x))+o(\e)\\
    &=\varphi(p_0)+\e [\dint_{p_0}\varphi](x)+o(\e),
\end{align}
as $\e\to 0$. We denote
\begin{equation}\label{def_P}
P(x):=\dint_{p_0}\varphi(x)
\end{equation}
so that
\begin{equation}\label{limit_psi_0}
\widehat \varphi_\e(x):=\frac{1}{\e} \Big(\varphi(\Theta_\e(x))-\varphi(p_0)\Big)\to P(x) \quad \text{ strongly in } C^1(\mathbb{B}_R) \text{ as } \e \to 0^+,
\end{equation}
for every $R>1$. Equivalently
\begin{align}\label{limit_psi_0_ver2}
    \widehat{\varphi}_\e(F_\e(x))\to P_\Psi(x):=(P\circ \Psi)(x) \quad \text{ strongly in } C^1(\mathbb{B}_R) \text{ as } \e \to 0^+,
\end{align}
for every $R>1$.

In the next lemma, we compute the asymptotic behavior of $\widehat h_{\e,\varphi}$ as $\e \to 0^+$.

\begin{lemma}\label{the_vengence_time_consuming_lemma}
We have that 
\begin{equation}
\widehat h_{\e,\varphi} \to \widehat h_{0,\varphi} \quad \text{ strongly in } H^1(M_1(1), g_1) \text{ as } \e \to 0^+,
\end{equation} 
where $ \widehat h_{0,\varphi}$ is the unique solution to
\begin{equation}\label{eq_widehat_h0}
\int_{M_1(1)} g_1(\nabla^{g_1}  \widehat h_{0,\varphi}, \nabla^{g_1}w) \dvol^{g_1}=0
\end{equation}
for any $w \in H^1_0(M_1(1),g_1)$  with $\widehat h_{0,\varphi}=P\circ \Psi\circ (\exp_{p_1}^{M_1})^{-1}$ on $\partial M_1(1)$. In particular,
\begin{equation}\label{limit_he}
h_{\e,\varphi} \to \varphi(p_0) \quad \text{ strongly in } H^1(M_1(1),g_1) \text{ as  } \e \to 0^+.
\end{equation}
\end{lemma}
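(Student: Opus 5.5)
The approach is a lift-and-subtract argument. We split $\widehat h_{\e,\varphi}$ into a lift of its boundary datum plus a part with zero trace, and control each piece separately. Let $\chi$ be a smooth cutoff on $M_1(1)$ equal to $1$ near $\partial M_1(1)$ and supported in a collar where $\exp_{p_1}^{M_1}$ gives coordinates. We take $G_\e:=\chi\cdot\big(\widehat\varphi_\e\circ F_\e\circ(\exp_{p_1}^{M_1})^{-1}\big)$, which is well defined on the collar by the definition of $F_\e$ (for $x$ near $\partial\mathbb{B}_1$, $F_\e(x)$ lies in a fixed ball where $\widehat\varphi_\e$ is defined). On $\partial M_1(1)$ we have $\widehat\varphi_\e\circ F_\e\circ(\exp_{p_1}^{M_1})^{-1}=\frac1\e(\varphi\circ\Phi_\e^{-1}-\varphi(p_0))$. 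So $G_\e$ has the correct trace, and $\widehat h_{\e,\varphi}-G_\e\in H^1_0(M_1(1),g_1)$. By \eqref{limit_psi_0_ver2}, $G_\e\to G_0:=\chi\cdot(P_\Psi\circ(\exp_{p_1}^{M_1})^{-1})$ strongly in $C^1$, hence in $H^1(M_1(1),g_1)$, and $G_0$ has trace $P\circ\Psi\circ(\exp_{p_1}^{M_1})^{-1}$.

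Set $w_\e:=\widehat h_{\e,\varphi}-G_\e\in H^1_0(M_1(1),g_1)$. By \eqref{eq_harmonic_extension_new}, $w_\e$ solves
\[
\int_{M_1(1)}\big[g_1(\nabla^{g_1}w_\e,\nabla^{g_1}w)+\e^2w_\e w\big]\dvol^{g_1}=-\int_{M_1(1)}\big[g_1(\nabla^{g_1}G_\e,\nabla^{g_1}w)+\e^2G_\e w+\e\varphi(p_0)w\big]\dvol^{g_1}
\]
for all $w\in H^1_0(M_1(1),g_1)$. The same construction with $G_0$ gives $\widehat h_{0,\varphi}=G_0+w_0$, where $w_0\in H^1_0$ solves the limiting equation with right-hand side $-\int g_1(\nabla^{g_1}G_0,\nabla^{g_1}w)$. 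Existence and uniqueness of $\widehat h_{0,\varphi}$ follow from Lax--Milgram, using the Poincar\'e inequality on $H^1_0(M_1(1),g_1)$; this is the only place the Dirichlet condition is essential, since $M_1(1)$ has nonempty boundary. Subtracting the two equations and testing with $w=w_\e-w_0$ gives
\[
\norm{\nabla^{g_1}(w_\e-w_0)}_{L^2}^2\le\big(\norm{G_\e-G_0}_{H^1}+\e^2\norm{w_0}_{L^2}+\e^2\norm{G_\e}_{L^2}+\e|\varphi(p_0)|\mathrm{vol}^{g_1}(M_1(1))^{1/2}\big)\,C\norm{w_\e-w_0}_{H^1}.
\]
By Poincar\'e the left-hand side controls $\norm{w_\e-w_0}_{H^1}^2$, and every term on the right tends to zero. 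Hence $w_\e\to w_0$ in $H^1$, and therefore $\widehat h_{\e,\varphi}\to\widehat h_{0,\varphi}$ strongly in $H^1(M_1(1),g_1)$.

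Finally, \eqref{limit_he} follows at once. Since $h_{\e,\varphi}-\varphi(p_0)=\e\,\widehat h_{\e,\varphi}$ and $\{\widehat h_{\e,\varphi}\}$ is bounded in $H^1$, we get $\norm{h_{\e,\varphi}-\varphi(p_0)}_{H^1}=O(\e)\to0$.

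The main obstacle is the first step: checking that the lift $G_\e$ converges in $C^1$ uniformly on the collar. This needs \eqref{limit_psi_0_ver2} on a ball of radius $R>1$, which covers the preimage of the collar under $\exp_{p_1}^{M_1}$; that preimage stays in a fixed annulus because the injectivity radius exceeds $1$. It also needs the fact that composing with the fixed chart $(\exp_{p_1}^{M_1})^{-1}$ preserves $C^1$ convergence.
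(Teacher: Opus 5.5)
Your proof is correct and follows essentially the paper's route. The decisive step in the paper is the same Dirichlet-stability estimate for $\widehat h_{0,\varphi}-\widehat h_{\e,\varphi}$: a source term of order $\e$, and a boundary datum controlled by $\|P\circ\Psi-\widehat\varphi_\e\circ F_\e\|_{C^1(\mathbb{B}_1)}$. The paper attributes that estimate to classical elliptic theory, while your lift-and-test computation derives it explicitly. This also makes the paper's preliminary weak-compactness and identification step, and the Urysohn argument, unnecessary.

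One small repair is needed in the construction of the lift. Under the standing hypotheses, $\Phi_\e^{-1}$ is only defined on $\overline{B_1^{M_1}(p_1)}$, so $F_\e$ lives on $\overline{\mathbb{B}_1}$, whereas your collar of $M_1(1)$ lies in $\{|x|\ge 1\}$. The version of \eqref{limit_psi_0_ver2} on $\mathbb{B}_R$ with $R>1$ that you rely on tacitly assumes an extension of $\Phi_\e$. Instead, build the lift from the boundary datum alone. For instance, with $\chi$ a radial cutoff, set $G_\e(\exp_{p_1}^{M_1}(x)):=\chi(|x|)\,(\widehat\varphi_\e\circ F_\e)(x/|x|)$ for $1\le|x|\le 1+\eta$, and $G_\e=0$ elsewhere. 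Its $C^1$ convergence to $\chi(|x|)\,P_\Psi(x/|x|)$ follows from the $C^1(\overline{\mathbb{B}_1})$ convergence, which is all the paper itself uses.
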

\begin{proof}
The boundary datum in \eqref{prob_harmonic_extension_new} can be expressed, for any $p=\exp_{p_1}^{M_1}(x) \in \partial B_1^{M_1}(p_1)$, as
\begin{align}
    \widehat{h}_{\e,\varphi}(\exp_{p_1}^{M_1}(x))=\widehat{\varphi}_\e(F_\e(x)),
\end{align}
which can be expanded as
\begin{equation}\label{eq_conv_P_psi}
\widehat{\varphi}_\e(F_\e(x))= P_\Psi (x)+o(1)=P(\Psi(x))+o(1)\quad\text{as }\e\to 0^+,~\text{strongly in }C^1(\mathbb{B}_1),
\end{equation}
by \eqref{limit_psi_0}.
Arguing as in \Cref{lemma_he_bounded}, it follows that  the family $\{\widehat h_{\e,\varphi}\}_{\e \in (0,1)}$ is bounded in $H^1(M_1(1), g_1)$. Hence,  there exist a sequence $\e_j \to 0^+$ and $\widehat h_{0,\varphi} \in H^1(M_1(1), g_1)$ 
such that $\widehat h_{\e_j,\varphi} \rightharpoonup \widehat h_{0,\varphi}$ in $H^1(M_1(1), g_1)$ weakly as $j \to \infty$ and, in particular, strongly in $L^2(M_1(1), g_1)$.
Furthermore, passing  to the limit in \eqref{eq_harmonic_extension_new}, we obtain
\begin{equation}
\int_{M_1(1)}g_1(\nabla^{g_1}  \widehat h_{0,\varphi}, \nabla^{g_1}w) \dvol^{g_1}=0,
\end{equation}
for any $w \in H^1_0( M_1(1),g_1)$, while $\widehat h_{0,\varphi}=P\circ \Psi \circ (\exp_{p_1}^{M_1})^{-1}$ on $\partial M_1(1)$. It is clear that the solution to this problem is unique. Next we show that the convergence of  $\widehat h_{\e_j,\varphi}$ to $\widehat h_{0,\varphi}$ is actually strong. The difference 
between $\widehat h_{0,\varphi}- \widehat h_{\e_j,\varphi}$ solves the problem
\begin{equation}
\begin{cases}\label{proop_the_vengence_time_consuming_lemma_prob_harmonic_extension}
(-\Delta^{g_1}+\e^2)(\widehat h_{0,\varphi}- \widehat h_{\e,\varphi})=\e^2 \widehat{h}_{0,\varphi}-\e\varphi(p_0), & \textnormal{in } M_1(1), \\
\widehat h_{0,\varphi}- \widehat h_{\e,\varphi}= P\circ \Psi \circ(\exp_{p_1}^{M_1})^{-1}-\frac{1}{\e}(\varphi\circ \Phi^{-1}_\e-\varphi(p_0)),  & \textnormal{on }\partial M_1(1).
\end{cases}
\end{equation}
Thus, by classical elliptic regularity theory, we have that
\begin{equation}
    \norm{\widehat{h}_{0,\varphi}-\widehat{h}_{\e,\varphi}}_{H^1(M_1(1),g_1)}^2 
    \leq C\left( \e^2\norm{\widehat{h}_{0,\varphi}}_{L^2(M_1(1),g_1)}^2+\e\varphi(p_0)+\norm*{P\circ \Psi -\widehat{\varphi}_\e\circ F_\e}_{C^1(\mathbb{B}_1)}\right).
\end{equation}
Hence, by \eqref{eq_conv_P_psi} we conclude that $\widehat h_{\e_j,\varphi} \to \widehat h_{0,\varphi}$  strongly in $H^1(M_1(1), g_1)$ as $j \to \infty$. By the Urysohn subsequence  principle,  $\widehat h_{\e,\varphi} \to \widehat h_{0,\varphi}$  strongly in $H^1(M_1(1), g_1)$ as $\e \to 0^+$.
Finally, \eqref{limit_he} trivially follows. The proof is thereby complete.
\end{proof}

Let us now define with respect to normal coordinates $(x_1,...,x_m)$ induced by $\Theta_\e(x):=\exp^{M_0}_{p_0}(\e x)$, denoting by $g_{\e,0}$ the local representation of the metric $g_0$ in such coordinates,
\begin{equation}
\sqrt{|g_{\e,0}|}(x):=\sqrt{|g_0|}(\Theta_\e(x)) \quad \text{ and } \quad  A_{\e,0}(x):= \sqrt{|g_{\e,0}|}(x)(g_{0})^{ij}(\Theta_\e(x))
\end{equation}
and let $D_0\subseteq \R^m$ the domain of the exponential map $\exp^{M_0}_{p_0}$, where we are identifying the tangent space $T_{p_0} M_0$ with $\R^m$.
We notice that 
\begin{equation}\label{eq_ge_Ae}
\sqrt{|g_{\e,0}|} \to 1  \text{ and }   A_{\e,0} \to \mathop{{\rm Id}_m} \quad \text{ as }\e \to 0^+,\quad\textnormal{uniformly on compact sets},
\end{equation}
where $\mathop{\rm Id}_m$ denotes the identity $m\times m$. We also observe that, since $M_0$ is compact, then there exists $c=c(M_0,g_0)>0$ such that
\begin{equation}\label{eq_g_A_bound}
    c^{-1}\leq \sqrt{|g_{\e,0}|}(x)\leq c\quad\text{and}\quad c^{-1}|\bm{v}|^2\leq A_{\e,0}(x)\bm{v}\cdot\bm{v}\leq c|\bm{v}|^2
\end{equation}
for all $x\in \frac{1}{\e}D_0$ and all $\bm{v}\in\R^m$. Passing to normal coordinates and  making the change of variables $x=\e y$, we obtain a bijection between the Sobolev spaces
\begin{equation}
H^{1}(M_\e(\e),g_\e)\quad  \text{ and } \quad  \mathcal{H}^{1}_\e
\end{equation}
where,  denoting $\frac{1}{\e}D_0:=\left\{\frac{x}{\e}: x \in D_0\right\}$, we set
\begin{align}
\mathcal{H}^{1}_\e:=
\left\{(u_0,u_1) \in H^1\left(\left(\frac{1}{\e}D_0\right)\setminus \mathbb{B}_1, g_{\e,0}\right)\times H^1(M_1(1),\e^2 g_1):{u_0\circ F_\e=u_1 \circ \exp_{p_1}^{M_1}} \text{ on } \partial \mathbb{B}_1 \right\}
\end{align}
endowed with the norm
\begin{equation}
\norm{(u_0,u_1)}_{\mathcal{H}^{1}_\e}^2:=\norm{u_0}^2_{ H^1\left(\left(\frac{1}{\e}D_0\right)\setminus \mathbb{B}_1, g_{\e,0}\right)}
+\norm{u_1}^2_{ H^1(M_1(1),\e^2 g_1)}.
\end{equation}
Then writing  \eqref{eq_Ve_connected_sum} with respect to the new coordinates and making the change of variables  $x=\e y$,
\begin{align}\label{eq_Ve_connected_sum_normal}
\int_{(\frac{1}{\e}D_0)\setminus \mathbb{B}_1} & \left[ (  A_{\e,0} \nabla   \widehat V_{\e,0})\cdot \nabla u_0+\e^2 \sqrt{|g_{\e,0}|} \widehat V_{\e,0} u_0 \right] \textnormal{d}x \\
+ & \int_{M_1(1)} \left[g_1 \left(\nabla^{g_1} \widehat V_{\e,1}, \nabla^{g_1} u_1\right)+\e^2 \widehat V_{\e,1}u_1 \right] \dvol^{g_1} \notag\\
= & \int_{(\frac{1}{\e}D_0)\setminus \mathbb{B}_1} \left[(  A_{\e,0}\nabla \widehat \varphi_\e) \cdot \nabla u_0+\e^2 \sqrt{|g_{\e,0}|} \widehat  \varphi_\e u_0 \right] \textnormal{d}x \notag\\
&+ \int_{M_1(1)} \left[g_1 \left(\nabla^{g_1} \widehat h_{\e,\varphi}, \nabla^{g_1} u_1\right)+\e^2 \widehat{h}_{\e,\varphi} u_1 \right] \dvol^{g_1}\notag\\
& - \lambda_{0,N} \e^2\int_{(\frac{1}{\e}D_0)\setminus \mathbb{B}_1} \sqrt{|g_{\e,0}|} \widehat \varphi_\e u_0\ \textnormal{d}x - \lambda_{0,N} \e^2 \int_{M_1(1)} \widehat{h}_{\e,\varphi} u_1 \dvol^{g_1}\notag
\end{align}
 for any $(u_0,u_1) \in \mathcal{H}^{1}_\e$, where $``\cdot"$ denotes the Euclidean scalar product on $\R^m$, and  $\nabla$ the Euclidean gradient. 
 
 We are now going to study the convergence as $\e \to 0^+$ of $ (\widehat V_{\e,0}, \widehat V_{\e,1})$ in a suitable functional setting. To this end, let us define  the space $D^{1,2}(\R^m\setminus\mathbb{B}_1)$ as the completion of $C_c^\infty(\R^m\setminus\mathbb{B}_1)$ with respect to the norm induced by the scalar product
 \begin{equation}
    (u,v)_{D^{1,2}(\R^m\setminus \mathbb{B}_1)}=\int_{\R^m\setminus \mathbb{B}_1} \nabla u\cdot\nabla v \, \textnormal{d}x .
\end{equation}
In view of the Hardy-type inequality \cite[Lemma 5.3]{FLO_neumann}, since $m\geq 3$, we have the following characterization of $D^{1,2}(\R^m\setminus\mathbb{B}_1)$ as a concrete functional space
\begin{equation*}
    D^{1,2}(\R^m\setminus \mathbb{B}_1):=\left\{ u\in L^1_{\textup{loc}}(\R^m\setminus \mathbb{B}_1)\colon \int_{\R^m \setminus \mathbb{B}_1}\left(|\nabla u|^2 +\frac{u^2}{|x|^2}\right)\, \textnormal{d}x <+\infty\right\}.
\end{equation*}

Finally, we define
\begin{equation}
\mathcal{D}:=
\left\{(u_0,u_1) \in D^{1,2}(\R^m\setminus\mathbb{B}_1)\times H^1(M_1(1),g_1): 
u_0\circ \Psi=u_1 \circ \exp_{p_1}^{M_1} \text{ on } \partial \mathbb{B}_1 \right\}.
\end{equation}
In the next proposition, we study the asymptotics of  $\widehat V_\e$ as $\e \to 0^+$.

\begin{proposition}\label{lemma_widehatV_bounded}
As $\e \to 0^+$, for any $R>2$ we have that 
\begin{align}
& \widehat V_{\e,0} \rightharpoonup  V_{0,0} \quad \text{weakly  in } H^1(\mathbb{B}_R\setminus \mathbb B_1),\\
& \widehat V_{\e,0} \to V_{0,0} \quad \text{strongly   in } L^2(\mathbb B_R\setminus \mathbb B_1),\\
& \widehat V_{\e,1} \rightharpoonup V_{0,1} \quad \text{weakly   in } H^1(M_1(1), g_1),
\end{align}
where $V_0=(V_{0,0},V_{0,1}) \in  \mc{D}$ is the unique solution of 
 \begin{align}\label{eq_V0V1}
\int_{\R^m\setminus \mathbb{B}_1} & \nabla V_{0,0} \cdot \nabla u_0 \,\textnormal{d}x +\int_{M_1(1)} g_1 \left(\nabla^{g_1} V_{0,1}, \nabla^{g_1} u_1\right) \dvol^{g_1} \\ \notag
= & -\int_{\partial \mb{B}_1}\nabla(\varphi\circ\exp_{p_0}^{M_0})(0)\cdot\nu\,  u_0 \,\textnormal{d}\sigma \notag
+ \int_{M_1(1)} g_1 \left(\nabla^{g_1} \widehat h_{0,\varphi}, \nabla^{g_1} u_1\right) \dvol^{g_1},
\end{align}
where $\nu=\frac{x}{|x|}$, for any $(u_0,u_1) \in \mc{D}$.
In particular, 
\begin{equation}\label{eq_norm_L2_nabla_manifolds_M1}
\norm{V_{\e,1}}_{L^2(M_1(1),\e^2g_1)}^2=O(\e^{m+2}), \quad \text{ as } \e \to 0^+.
\end{equation}
\end{proposition}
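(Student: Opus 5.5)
We will argue by uniform bounds in the rescaled variables, extraction of weak limits, passage to the limit in \eqref{eq_Ve_connected_sum_normal} on test pairs of $\mathcal{D}$, and identification of the limit through uniqueness.

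\textbf{Step 1: uniform bounds.} The first step is to show that $(\widehat V_{\e,0},\widehat V_{\e,1})$ is bounded in the rescaled energy. By scaling,
\[
\int_{(\frac1\e D_0)\setminus\mathbb{B}_1}\bigl(A_{\e,0}\nabla\widehat V_{\e,0}\cdot\nabla\widehat V_{\e,0}+\e^2\sqrt{|g_{\e,0}|}\,\widehat V_{\e,0}^2\bigr)\,\mathrm{d}x+\int_{M_1(1)}\bigl(|\nabla^{g_1}\widehat V_{\e,1}|^2+\e^2\widehat V_{\e,1}^2\bigr)\dvol^{g_1}=\e^{-m}\norm{V_\e}_{H^1(M_\e,g_\e)}^2 .
\]
So it suffices to prove $\norm{V_\e}^2_{H^1(M_\e,g_\e)}=O(\e^m)$. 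This follows from \Cref{Proposition_norm_J}:
\begin{itemize}
\item the $B_\e^{M_0}(p_0)$ term is $O(\e^m)$, since $\varphi$ is smooth, so $|\nabla\varphi|$ is bounded and $|\varphi-\varphi(p_0)|\le C\e$ there;
\item the $M_1(1)$ term equals $\e^{m-2}\cdot\e^2\int\bigl(|\nabla\widehat h_{\e,\varphi}|^2+\e^2\widehat h_{\e,\varphi}^2\bigr)=O(\e^m)$, by the $H^1$-boundedness of $\widehat h_{\e,\varphi}$ in \Cref{the_vengence_time_consuming_lemma}.
\end{itemize}
Hence $\nabla\widehat V_{\e,0}$ is bounded in $L^2$ by \eqref{eq_g_A_bound}, and $\nabla\widehat V_{\e,1}$ is bounded in $L^2(M_1(1),g_1)$.

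\textbf{Step 2: $L^2$ control.} We still need $L^2$ control of $\widehat V_{\e,1}$ and of $\widehat V_{\e,0}$ on annuli. Gradient bounds alone do not fix constants, but the transmission condition couples the two pieces. We plan a Hardy-type inequality on $(\frac1\e D_0)\setminus\mathbb{B}_1$ that holds uniformly in $\e$, using $m\ge3$ as in \cite[Lemma 5.3]{FLO_neumann}. Its large-scale part should come from \Cref{prop_manifolds_L2norm_smallo}: in original variables, $\int_{M_0(\e)}V_{\e,0}^2=\e^{m+2}\int\sqrt{|g_{\e,0}|}\,\widehat V_{\e,0}^2$, and this is $o(\e^m)$. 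Then:
\begin{itemize}
\item a Poincaré/trace argument on $\mathbb{B}_R\setminus\mathbb{B}_1$ bounds $\widehat V_{\e,0}$ in $H^1(\mathbb{B}_R\setminus\mathbb{B}_1)$ by its gradient plus a controlled mean;
\item the matching $\widehat V_{\e,0}\circ F_\e=\widehat V_{\e,1}\circ\exp_{p_1}^{M_1}$ on $\partial\mathbb{B}_1$ then transfers trace control to $M_1(1)$, giving $H^1(M_1(1),g_1)$ boundedness of $\widehat V_{\e,1}$.
\end{itemize}
This yields \eqref{eq_norm_L2_nabla_manifolds_M1} immediately, since $\norm{V_{\e,1}}^2_{L^2(M_1(1),\e^2g_1)}=\e^{m+2}\norm{\widehat V_{\e,1}}^2_{L^2(M_1(1),g_1)}$.

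\textbf{Step 3: limit and identification.} We extract subsequences converging weakly in $H^1(\mathbb{B}_R\setminus\mathbb{B}_1)$ for every $R$ (diagonally) and weakly in $H^1(M_1(1),g_1)$. Rellich gives strong $L^2$ convergence on annuli. Traces pass to the limit using $F_\e\to\Psi$ in $C^1$, so the limit lies in $\mathcal{D}$; the $D^{1,2}$ membership comes from lower semicontinuity and the Hardy bound.

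For passing to the limit, take $(u_0,u_1)\in\mathcal{D}$ with $u_0$ compactly supported, and adapt it to $\mathcal{H}^1_\e$ by correcting the trace mismatch between $F_\e$ and $\Psi$. This correction is small in $H^1$, for instance via a harmonic lift on $M_1(1)$. Then, in \eqref{eq_Ve_connected_sum_normal}:
\begin{itemize}
\item the $\e^2$ terms vanish;
\item $A_{\e,0}\to\mathrm{Id}$ by \eqref{eq_ge_Ae};
\item $\widehat h_{\e,\varphi}\to\widehat h_{0,\varphi}$ strongly by \Cref{the_vengence_time_consuming_lemma}.
\end{itemize}
The right-hand side term $\int A_{\e,0}\nabla\widehat\varphi_\e\cdot\nabla u_0$ must be rewritten using the equation for $\varphi$ in the rescaled exterior. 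Integrating by parts gives a boundary term on $\partial\mathbb{B}_1$ plus bulk terms of order $\e^2$. Since $\widehat\varphi_\e\to P$ in $C^1$, the boundary term converges to $-\int_{\partial\mathbb{B}_1}\nabla(\varphi\circ\exp_{p_0}^{M_0})(0)\cdot\nu\,u_0$. A density argument, with compactly supported $u_0$ dense in $D^{1,2}$, extends \eqref{eq_V0V1} to all of $\mathcal{D}$.

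Uniqueness follows by Lax–Milgram on $\mathcal{D}$ with the Dirichlet form, which is coercive by the Hardy inequality and the coupling. The Urysohn principle then gives convergence of the whole family.

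\textbf{Main obstacle.} The hardest step is Step 2: obtaining $\e$-uniform $L^2$ (Hardy) control on the growing exterior domain $(\frac1\e D_0)\setminus\mathbb{B}_1$, while excluding constant drift of $\widehat V_\e$. I would first try to combine \Cref{prop_manifolds_L2norm_smallo} with a rescaled uniform extension via \Cref{lemma_uniform_ext} and a Sobolev inequality on $M_0$.
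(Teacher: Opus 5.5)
Your proposal follows essentially the paper's proof: the energy bound $\e^{-m}\norm{V_\e}^2_{H^1(M_\e,g_\e)}=O(1)$ from \Cref{Proposition_norm_J}, then the scaled Hardy inequality of \cite[Lemma~5.3]{FLO_neumann} on the Euclidean annulus $\mathbb{B}_{R_0/(2\e)}\setminus\mathbb{B}_1$ for $H^1(\mathbb{B}_R\setminus\mathbb{B}_1)$ bounds, then transfer of the trace through $F_\e$ plus a Robin--Poincaré inequality on $M_1(1)$, and finally weak limits, passage to the limit in \eqref{eq_Ve_connected_sum_normal}, and uniqueness with the Urysohn principle. Your ``main obstacle'' is already settled by Step 1: the term $\e^2\int\sqrt{|g_{\e,0}|}\,\widehat V_{\e,0}^2$ you bound there is exactly the large-scale $r^{-2}\int u^2$ term (with $r\sim R_0/\e$) that the Hardy inequality needs, so \Cref{prop_manifolds_L2norm_smallo} is unnecessary; your correction of the $F_\e$-versus-$\Psi$ trace mismatch when testing with pairs in $\mathcal{D}$ is a detail the paper passes over.
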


\begin{proof}
By \Cref{Proposition_norm_J}, \Cref{the_vengence_time_consuming_lemma}, \eqref{limit_psi_0}   passing  in normal coordinates and  making the change of variables  $x=\e y$, we obtain
\begin{align}\label{proof_lemma_widehatV_bounded_1}
\int_{(\frac{1}{\e}D_0)\setminus \mathbb{B}_1} & \left[ ({A}_{\e,0} \nabla   \widehat V_{\e,0})\cdot \nabla  \widehat V_{\e,0} +\e^2 \sqrt{| {g}_{\e,0}|} |\widehat V_{\e,0}|^2 \right] \textnormal{d}x \\
&+  \int_{M_1(1)} \left[g_1 \left(\nabla^{g_1} \widehat V_{\e,1}, \nabla^{g_1} \widehat V_{\e,1}\right)+\e^2 |\widehat V_{\e,1}|^2 \right] \dvol^{g_1} \notag\\
=& \e^{-m}\norm{V_\e}^2_{H^1(M_\e ,g_\e)}\notag \\
\leq & K \e^{-m}\left| \int_{B_\e^{M_0}(p_0)}\left[g_0 \left(\nabla^{g_0} \varphi, \nabla^{g_0} \varphi\right)
+|\varphi-\varphi(p_0)|^2 \right]\dvol^{g_0}\right| \notag\\
&+  K \e^{-2}\left| \int_{M_1(1)} \left[g_1 \left(\nabla^{g_1} h_{\e,\varphi},\nabla^{g_1} h_{\e,\varphi}\right)+\e^2 |h_{\e,\varphi}-\varphi(p_0)|^2\right] \dvol^{g_1} \right|\notag + K\\
=& K \left| \int_{\mb{B}_1}\left[({A}_{\e,0} \nabla   \widehat \varphi_\e)\cdot \nabla   \widehat \varphi_\e +\e^2 \sqrt{| {g}_{\e,0}|}\, | \widehat \varphi_\e|^2 \right]\right|\notag \\
&+  K \left| \int_{M_1(1)} \left[g_1 \left(\nabla^{g_1} \widehat h_{\e,\varphi},\nabla^{g_1}  \widehat h_{\e,\varphi}\right)+\e^2 |\widehat h_{\e,\varphi}|^2\right] \dvol^{g_1} \right|+K
\le  C,
\end{align}
for some positive constant $C$ that does not depend on $\e$, by \eqref{eq_ge_Ae}.

Fix $R>2$ and let $R_0>1$  be such that $\mb{B}_{R_0} \subset D_0$. By \cite[Lemma 5.3]{FLO_neumann} for any $\e < \frac{R_0}{2R}$ 
\begin{align}
\int_{\mathbb{B}_R\setminus \mathbb{B}_1} \left[|\nabla\widehat V_{\e,0} |^2  +\frac{|\widehat V_{\e,0}|^2}{|x|^2} \right] \, \textnormal{d}x &\le  
\int_{\mathbb{B}_{\frac{R_0}{2\e}}\setminus \mathbb{B}_1}\left[|\nabla\widehat V_{\e,0} |^2  +\frac{|\widehat V_{\e,0}|^2}{|x|^2} \right]\,  \textnormal{d}x\\
&\le C_1 \int_{\mathbb{B}_{\frac{R_0}{2\e}} \setminus \mathbb{B}_1} \left[|\nabla\widehat V_{\e,0}|^2 +\frac{\e^2}{R_0^2}{|\widehat V_{\e,0}|^2}  \right] \textnormal{d}x\\ 
&\le C_2 \int_{\mathbb{B}_{\frac{R_0}{2\e}}\setminus \mathbb{B}_1} \left[ ({A}_{\e,0} \nabla   \widehat V_{\e,0})\cdot \nabla  \widehat V_{\e,0} +\e^2 \sqrt{| {g}_{\e,0}|} |\widehat V_{\e,0}|^2 \right] \textnormal{d}x\\
&\le C_2 \int_{(\frac{1}{\e}D_0)\setminus \mathbb{B}_1} \left[ ({A}_{\e,0} \nabla   \widehat V_{\e,0})\cdot \nabla  \widehat V_{\e,0} +\e^2 \sqrt{| {g}_{\e,0}|} |\widehat V_{\e,0}|^2 \right] \textnormal{d}x \\
&\le C_2 C,\label{eq_V_bound}
\end{align}
for some positive constants $C,C_1,C_2$ depending only on $m,R_0$ and $g_0$ but not $\e$ and $R$, in view of \eqref{eq_g_A_bound}. 
In conclusion, $\{\widehat V_{\e,0}\}_{\e \in (0,R_0/2R)}$ is bounded in $H^1(\mathbb{B}_R\setminus \mathbb{B}_1)$ for any $R>2$.
With a diagonal argument,
there exists a subsequence $\e_j \to 0^+$ as $j \to \infty$ and a function $V_{0,0} \in H^1(\mathbb{B}_R\setminus\mathbb{B}_1)$ such that 
\begin{align}
& \widehat V_{\e_j,0} \rightharpoonup  V_{0,0} \quad \text{weakly  in } H^1(\mathbb{B}_R\setminus \mathbb B_1),\label{eq_conv_V_0_H1}\\
& \widehat V_{\e_j,0} \to V_{0,0} \quad \text{strongly   in } L^2(\mathbb B_R\setminus \mathbb B_1),\label{eq_conv_V_0_L2}\\
& \widehat V_{\e_j,0} \to V_{0,0} \quad \text{strongly   in } L^2(\partial  \mathbb B_1),
\end{align}
as $j\to \infty$ for any $R>2$. Furthermore, by weak lower semicontinuity and letting $R\to+\infty$ in \eqref{eq_V_bound}, we have that
\begin{equation}
    \int_{\R^m\setminus \mathbb{B}_1} \left[|\nabla V_{0,0} |^2  +\frac{| V_{0,0}|^2}{|x|^2} \right] \, \textnormal{d}x<+\infty
\end{equation}
which implies that $V_{0,0}\in D^{1,2}(\R^m\setminus \mathbb{B}_1)$.

Furthermore, by classical Sobolev trace theory,   $\{\widehat V_{\e,0}\}_{\e \in (0,R_0/2R)}$ is bounded in $L^2(\partial \mathbb{B}_1)$. 
Then, since $\widehat V_{\e,1}\circ \exp_{p_1}^{M_1}=\widehat V_{\e,0}\circ F_\e \text{ on } \partial M_1(1)$, also 
$\{\widehat V_{\e,1}\}_{\e \in (0,R_0/2R)}$ is bounded in $L^2(\partial M_1(1))$. We conclude that, in view of \eqref{proof_lemma_widehatV_bounded_1},  $\{\widehat V_{\e,1}\}_{\e \in (0,R_0/2R)}$ is bounded in
$H^1(M_1(1),g_1)$. Indeed,  there exists a constant $c>0$ such that for any $w \in H^1(M_1(1),g_1)$
\begin{equation}
\int_{M_1(1)} |w|^2 \dvol^{g_1} \le  c\left(\int_{M_1(1)} g_1(\nabla^{g_1}w,\nabla^{g_1}  w) \dvol^{g_1}+\int_{\partial M_1(1)} |w|^2  \textnormal{d}\sigma^{g_1}\right).
\end{equation}
The constant $c$ is actually the inverse of the first Robin eigenvalue of the Laplacian. Hence, there exists a subsequence  $\{\widehat V_{\e_j,1}\}_{j \in \mb N}$ with $\e_j \to 0^+$ and $V_{0,1} \in H^1(M_1(1),g_1)$ such that
\begin{align}
    &\widehat V_{\e_j,1} \rightharpoonup V_{0,1}\quad \text{weakly in }H^1(M_1(1),g_1) \label{eq_conv_V_1_H1}\\
    &\widehat V_{\e_j,1} \rightharpoonup V_{0,1}\quad\text{strongly in }L^2(M_1(1),g_1)\label{eq_conv_V_1_L2}
\end{align}
as $j \to \infty$. 
Let $(u_0,u_1)\in\mathcal{D}$ be such that $u_0\in C_c^\infty(\R^m\setminus\mathbb{B}_1)$ and let $R>2$ be such that $\mathrm{supp}\, u_0\subseteq \mathbb{B}_R$. Hence, by \eqref{eq_Ve_connected_sum_normal} we have that 
\begin{align}
\int_{\mathbb{B}_R\setminus \mathbb{B}_1} & \left[ (  A_{\e,0} \nabla   \widehat V_{\e,0})\cdot \nabla u_0+\e^2 \sqrt{|g_{\e,0}|} \widehat V_{\e,0} u_0 \right] \textnormal{d}x \\
+ & \int_{M_1(1)} \left[g_1 \left(\nabla^{g_1} \widehat V_{\e,1}, \nabla^{g_1} u_1\right)+\e^2 \widehat V_{\e,1}u_1 \right] \dvol^{g_1} \notag\\
= & \int_{\mathbb{B}_R\setminus \mathbb{B}_1} \left[(  A_{\e,0}\nabla \widehat \varphi_\e) \cdot \nabla u_0+\e^2 \sqrt{|g_{\e,0}|} \widehat  \varphi_\e u_0 \right] \textnormal{d}x \notag\\
&+ \int_{M_1(1)} \left[g_1 \left(\nabla^{g_1} \widehat h_{\e,\varphi}, \nabla^{g_1} u_1\right)+\e^2 \widehat{h}_{\e,\varphi} u_1 \right] \dvol^{g_1}\notag\\
& - \lambda_{0,N} \e^2\int_{\mathbb{B}_R\setminus \mathbb{B}_1} \sqrt{|g_{\e,0}|} \widehat \varphi_\e u_0\ \textnormal{d}x - \lambda_{0,N} \e^2 \int_{M_1(1)} \widehat{h}_{\e,\varphi} u_1 \dvol^{g_1}.\notag
\end{align}
Therefore, in view of \eqref{limit_psi_0}, \eqref{eq_ge_Ae}, \eqref{eq_conv_V_0_H1}, \eqref{eq_conv_V_0_L2}, \eqref{eq_conv_V_1_H1}, \eqref{eq_conv_V_1_L2} and \Cref{the_vengence_time_consuming_lemma} we get that $(V_{0,0},V_{0,1})$ satisfy
\begin{align}
\int_{\R^m\setminus \mathbb{B}_1} & \nabla V_{0,0} \cdot \nabla u_0 \,\textnormal{d}x +\int_{M_1(1)} g_1 \left(\nabla^{g_1} V_{0,1}, \nabla^{g_1} u_1\right) \dvol^{g_1} \\
= & \int_{\R^m\setminus \mathbb{B}_1} \nabla P  \cdot \nabla u_0 \,\textnormal{d}x 
+ \int_{M_1(1)} g_1 \left(\nabla^{g_1} \widehat h_{0,\varphi}, \nabla^{g_1} u_1\right) \dvol^{g_1},
\end{align}
for any $(u_0,u_1)\in\mathcal{D}$ such that $u_0\in C_c^\infty(\R^m\setminus\mathbb{B}_1)$. Now, integrating by parts, since $\Delta P=0$ and $\nabla P$ can be identified with $\nabla(\varphi\circ \exp_{p_0}^{M_0})(0)$, one can see that
\begin{equation}
   \int_{\R^m\setminus \mathbb{B}_1} \nabla P  \cdot \nabla u_0 \,\textnormal{d}x =-\int_{\partial \mathbb{B}_1}u_0 \nabla(\varphi\circ \exp_{p_0}^{M_0})(0)\cdot \nu\diff\sigma, 
\end{equation}
where $\nu:=x/|x|$. Hence, \eqref{eq_V0V1} holds, by density, for all $(u_0,u_1)\in\mathcal{D}$. Furthermore, \eqref{eq_V0V1} admits a unique solution, as it is easy to check by taking the difference of two solutions, and so the Urysohn subsequence principle allows us to conclude that the convergence of $V_\e$ to $V_0$ holds as $\e \to 0^+$ and not just along a subsequence.

Finally  
\begin{equation}
\norm{V_{\e,1}}^2_{L^2(M_1(\e),\e^2 g_1)}=\e^{m+2}\norm{\hat V_{\e,1}}^2_{L^2(M_1(1),g_1)}
=O(\e^{m+2}), \quad \text{ as } \e \to 0^+,
\end{equation}
since $\hat V_{\e,1}$ is bounded in $H^1(M_1(1),g_1)$, that is, we have proved \eqref{eq_norm_L2_nabla_manifolds_M1}.
\end{proof}

We are now ready to prove the main result of the present section.
\begin{theorem}\label{theor_eigen_connected_sum_sharp}
Let $h_{\e,N}$ be as in \eqref{def_he_con_sum}, $V_0=(V_{0,0},V_{0,1})$ be the unique solution to \eqref{eq_V0V1}, and $\widehat h_{0,\varphi}$ the  unique solution to \eqref{eq_widehat_h0}.
Then, as $\e \to 0^+$, 
\begin{multline}\label{eq_hen_connect_sum_final}
h_{\e,N}(\varphi,\varphi)= \e^m\Bigg(-(|\nabla^{g_0}\varphi(p_0)|^2 +  (1-\la_{0,N}) |\varphi(p_0)|^2) |\mathbb{B}_1|
+\int_{\partial \mb{B}_1} V_{0,0}\nabla (\varphi \circ \exp_{p_0}^{M_0})(0) \cdot \nu  \, \textnormal{d} \sigma\\
+(1-\la_{0,N}) \textnormal{vol}^{g_1}(M_1(1))  |\varphi(p_0)|^2 
+\int_{M_1(1)} g_1 \left(\nabla^{g_1} V_{0,1}, \nabla^{g_1} \widehat h_{0,\varphi}\right) \dvol^{g_1}\\
+\int_{M_1(1)}g_1 \left(\nabla^{g_1} \widehat h_{0,\varphi}, \nabla^{g_1} \widehat h_{0,\varphi}\right) \dvol^{g_1}\Bigg) +o(\e^m).
\end{multline}
Moreover, if we define 
\begin{equation}
h_{0,N}(\varphi,\varphi):=
\lim_{\e\to 0^+}\e^{-m}h_{\e,N}(\varphi,\varphi)
\end{equation}
and we denote by $\{\gamma_{0,i}\}_{i=1,\dots,M_{0,N}}$  the eigenvalues of $h_{0,N}$ counted with multiplicity, then, as $\e \to 0^+$, 
\begin{equation}\label{eq_eige_var_conn_sum_sharp}
\lambda_{\e,N+i-1}-\lambda_{0,N}=\e^m\gamma_{0,i}+o(\e^m).
\end{equation} 
Finally, we can write
\begin{align}
    -h_{0,N}(\varphi,\varphi)&=\mathbb{A}(\nabla(\varphi\circ \exp_{p_0}^{M_0})(0),\nabla(\varphi\circ \exp_{p_0}^{M_0})(0)) \\
    &\quad-(\lambda_{0,N}-1)\big[| \mathbb{B}_1|-\mathrm{vol}^{g_1}(M_1(1))\big]|\varphi(p_0)|^2,
\end{align}
where $\mathbb{A}\colon \R^m\times\R^m\to \R$ is a bilinear symmetric form, in view of the fact that
\begin{equation}
    \nabla (\varphi\circ \exp_{p_0}^{M_0})\quad \text{and} \quad\nabla^{g_0}\varphi(p_0)
\end{equation}
can be identified.
\end{theorem}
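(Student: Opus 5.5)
The plan is to compute $h_{\e,N}(\varphi,\varphi)$ through an identity that involves only quantities whose blow-up limits are known from \Cref{lemma_widehatV_bounded} and \Cref{the_vengence_time_consuming_lemma}. Then I would feed the expansion into \Cref{theor_eigen_connected_sum}.

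\textbf{Step 1: an exact identity for $h_{\e,N}(\varphi,\varphi)$.} Testing \eqref{eq_Ve} with $u=V_\e-L_\e(\varphi)\in Z_\e$ is not what I want. Instead I use \eqref{eq_Ve_connected_sum} tested with $u=L_\e(\varphi)=(\varphi|_{M_0(\e)},h_{\e,\varphi})$. This gives
\begin{equation}
\lambda_{0,N}(V_\e,L_\e\varphi)_{L^2}=Q_\e(V_\e,L_\e\varphi)-q_\e(L_\e\varphi,L_\e\varphi),
\end{equation}
and both terms on the right can be localised. For $Q_\e(V_\e,L_\e\varphi)$, split it into the $M_0(\e)$ and $M_1(1)$ parts. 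On $M_0(\e)$, integrate by parts using $-\Delta^{g_0}\varphi+\varphi=\lambda_{0,N}\varphi$. What remains is a boundary term on $\partial B_\e^{M_0}(p_0)$, namely $-\int_{\partial B_\e}V_{\e,0}\partial_\nu\varphi$, plus $\lambda_{0,N}\int_{M_0(\e)}V_{\e,0}\varphi$. For $q_\e(L_\e\varphi,L_\e\varphi)$, use $Q^{g_0}(\varphi,\varphi)=\lambda_{0,N}\|\varphi\|^2$ on all of $M_0$. It then equals minus the ball contribution $\int_{B_\e}(|\nabla\varphi|^2+(1-\lambda_{0,N})\varphi^2)$, plus the $M_1$ contribution $\e^{m-2}\int(|\nabla h|^2+\e^2h^2)-\lambda_{0,N}\e^m\int h^2$. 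Any leftover term $\lambda_{0,N}(V_{\e,0},\varphi)_{L^2(M_0(\e))}$ appears on both sides; I either cancel it or bound it with \Cref{prop_manifolds_L2norm_smallo}, which makes it $o(\e^{m})$ once the $H^1$ norm of $V_\e$ is $O(\e^{m/2})$ by \Cref{Proposition_norm_J}.

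\textbf{Step 2: rescaling each term.} I pass to the blow-up variables via $\Theta_\e$ and the definitions of $\widehat\varphi_\e$, $\widehat V_{\e,i}$, $\widehat h_{\e,\varphi}$, keeping track of the powers of $\e$ (each should come out as $\e^m$):
\begin{itemize}
\item the ball integral gives $\e^m(|\nabla\varphi(p_0)|^2+(1-\lambda_{0,N})\varphi(p_0)^2)|\mathbb B_1|+o(\e^m)$, by smoothness of $\varphi$ and \eqref{eq_ge_Ae};
\item the boundary term gives $\e^m\int_{\partial\mathbb B_1}\widehat V_{\e,0}\nabla\widehat\varphi_\e\cdot\nu$, which converges to $\e^m\int_{\partial\mathbb B_1}V_{0,0}\nabla(\varphi\circ\exp)(0)\cdot\nu$ by the strong $L^2(\partial\mathbb B_1)$ trace convergence in the proof of \Cref{lemma_widehatV_bounded} and \eqref{limit_psi_0};
\item the $M_1$ gradient terms give $\e^m\int g_1(\nabla\widehat V_{\e,1},\nabla\widehat h_{\e,\varphi})$ and $\e^m\int|\nabla\widehat h_{\e,\varphi}|^2$, whose limits follow from weak convergence of $\widehat V_{\e,1}$ paired with strong convergence of $\widehat h_{\e,\varphi}$ (\Cref{the_vengence_time_consuming_lemma});
\item the zero-order $M_1$ terms give $(1-\lambda_{0,N})\e^m\mathrm{vol}(M_1(1))\varphi(p_0)^2+o(\e^m)$, using \eqref{limit_he} and \eqref{eq_norm_L2_nabla_manifolds_M1}.
\end{itemize}
Collecting these should reproduce \eqref{eq_hen_connect_sum_final}. \textbf{This is the main obstacle:} the integration-by-parts bookkeeping has to produce exactly these signs, and the terms of order $\e^{m-1}$ or $\e^{m-2}$ must cancel rather than survive. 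One example is $\varphi(p_0)\int_{\partial B_\e}\partial_\nu\varphi$; I expect it to be handled by $\int_{M_0}\varphi=0$, or by subtracting $\varphi(p_0)$ as in \Cref{Proposition_norm_J}.

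\textbf{Step 3: conclusion.} Polarising as in the proof of \Cref{theor_eigen_var_measures_ste} gives $h_{\e,N}=\e^m h_{0,N}+o(\e^m)$ uniformly on the unit sphere of $E(\lambda_{0,N})$. Hence $\gamma_{\e,i}=\e^m\gamma_{0,i}+o(\e^m)$ and $\tau_{\e,N}^2=O(\e^m)$. By \Cref{Proposition_norm_J} and \Cref{prop_manifolds_L2norm_smallo}, $\|V_{\e,0}\|_{L^2}^2=o(\e^m)$, and by \eqref{eq_norm_L2_nabla_manifolds_M1}, $\|V_{\e,1}\|^2=O(\e^{m+2})$. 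Therefore $\delta_{\e,N}^2=o(\e^m)$, and \Cref{theor_eigen_connected_sum} yields \eqref{eq_eige_var_conn_sum_sharp}.

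For the structure of $h_{0,N}$: the map $\varphi\mapsto V_0$ is linear in the data $\nabla(\varphi\circ\exp)(0)$, because \eqref{eq_V0V1} depends only on that vector through $P$ and $\widehat h_{0,\varphi}$. So all the gradient terms form a quadratic form $\mathbb A$ in $\nabla\varphi(p_0)$. The remaining terms are $-(\lambda_{0,N}-1)[|\mathbb B_1|-\mathrm{vol}^{g_1}(M_1(1))]\varphi(p_0)^2$, which gives the stated decomposition.
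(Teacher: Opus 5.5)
Step 1 has a genuine error, located exactly where you flagged the main obstacle. You have the right ingredients but combine them incorrectly.

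Testing \eqref{eq_Ve_connected_sum} with $u=L_\e(\varphi)$ gives exactly $Q_\e(V_\e,L_\e\varphi)=q_\e(L_\e\varphi,L_\e\varphi)$. So the right-hand side of your identity is identically zero, and the identity would force $h_{\e,N}(\varphi,\varphi)=0$.

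Your treatment of $\la_{0,N}(V_{\e,0},\varphi)_{L^2(M_0(\e),g_0)}$ does not repair this.
\begin{itemize}
\item Cancelling it removes the very quantity you are computing.
\item Bounding it also fails. \Cref{prop_manifolds_L2norm_smallo} controls $\norm{V_{\e,0}}_{L^2}$, but here $V_{\e,0}$ is paired linearly with the fixed function $\varphi$. Cauchy--Schwarz therefore gives only $o(\e^{m/2})$, not $o(\e^m)$; the $o(\e^m)$ you have in mind holds for the quadratic quantity $\delta_{\e,N}^2$.
\end{itemize}
In fact $\la_{0,N}\e^m\int_{M_1(1)}V_{\e,1}h_{\e,\varphi}\dvol^{g_1}=O(\e^{m+1})$, so this term equals $h_{\e,N}(\varphi,\varphi)$ up to $O(\e^{m+1})$. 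It is the principal part. If you drop it, what you actually compute is $Q_\e(V_\e,L_\e\varphi)-\la_{0,N}(V_{\e,0},\varphi)_{L^2(M_0(\e))}-q_\e(L_\e\varphi,L_\e\varphi)=-h_{\e,N}(\varphi,\varphi)+O(\e^{m+1})$. Every order-$\e^m$ term in your Step 2 list would then come out with the wrong sign.

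The missing step is to solve for this term rather than discard it, which is what the paper does. Start from $h_{\e,N}(\varphi,\varphi)=\la_{0,N}\int_{M_0(\e)}V_{\e,0}\varphi\dvol^{g_0}+\la_{0,N}\e^m\int_{M_1(1)}V_{\e,1}h_{\e,\varphi}\dvol^{g_1}$ and proceed in three moves.
\begin{itemize}
\item Use the eigen-equation for $\varphi$, either by integrating by parts on $M_0(\e)$ or, as in the paper, by testing \eqref{eq_psi_connect_sum} with $E_\e V_{\e,0}$ and integrating by parts on the ball. This rewrites $\la_{0,N}\int_{M_0(\e)}V_{\e,0}\varphi$ as $\int_{M_0(\e)}[g_0(\nabla^{g_0}V_{\e,0},\nabla^{g_0}\varphi)+V_{\e,0}\varphi]\dvol^{g_0}-\int_{\partial B_\e^{M_0}(p_0)}\nu_0(\varphi)V_{\e,0}\,\textnormal{d}\sigma^{g_0}$.
\item Apply \eqref{eq_Ve_connected_sum} tested with $L_\e(\varphi)$. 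This replaces that energy pairing by $q_\e(L_\e\varphi,L_\e\varphi)$ minus the $M_1(1)$ pairing of $V_{\e,1}$ with $h_{\e,\varphi}$.
\item Localize $q_\e(L_\e\varphi,L_\e\varphi)$ using $Q^{g_0}(\varphi,\varphi)=\la_{0,N}\norm{\varphi}^2_{L^2(M_0,g_0)}$.
\end{itemize}
The result is
\begin{align}
h_{\e,N}(\varphi,\varphi)=&-\int_{B_\e^{M_0}(p_0)}\big[|\nabla^{g_0}\varphi|^2+(1-\la_{0,N})\varphi^2\big]\dvol^{g_0}-\int_{\partial B_\e^{M_0}(p_0)}\nu_0(\varphi)V_{\e,0}\,\textnormal{d}\sigma^{g_0}\\
&-\e^{m-2}\int_{M_1(1)}\big[g_1(\nabla^{g_1}V_{\e,1},\nabla^{g_1}h_{\e,\varphi})+\e^2V_{\e,1}h_{\e,\varphi}\big]\dvol^{g_1}\\
&+\e^{m-2}\int_{M_1(1)}\big[|\nabla^{g_1}h_{\e,\varphi}|^2+\e^2h_{\e,\varphi}^2\big]\dvol^{g_1}+\la_{0,N}\e^m\int_{M_1(1)}\big(V_{\e,1}h_{\e,\varphi}-h_{\e,\varphi}^2\big)\dvol^{g_1}.
\end{align}
After rescaling, every term here is $O(\e^m)$. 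So no cancellation of $\e^{m-1}$ or $\e^{m-2}$ contributions is needed, and no term like $\varphi(p_0)\int_{\partial B_\e}\partial_\nu\varphi$ ever appears.

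With this identity in place, the rest of your proposal coincides with the paper's proof: the Step 2 list of rescaled limits, the polarization, the estimate $\delta_{\e,N}^2=o(\e^m)$, and the linearity argument for $\mathbb{A}$.
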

\begin{proof}
By \eqref{eq_psi_connect_sum} tested with $E_\e V_{\e,0}$ and \eqref{eq_Ve_connected_sum} tested with $\varphi$,
\begin{multline}
\la_{0,N}(V_\e,L_\e (\varphi))_{L^2(M_\e,g_\e)}=\la_{0,N}\int_{M_0} E_\e V_{\e,0} \varphi \dvol^{g_0} \\
-\la_{0,N}\int_{B_\e^{M_0}} E_\e V_{\e,0} \varphi\dvol^{g_0}+ \la_{0,N} \e^m \int_{M_1(1)}  V_{\e,1}  h_{\e,\varphi}  \dvol^{g_1}\\
= \int_{M_0} [g_0(\nabla^{g_0} E_\e V_{\e,0}, \nabla^{g_0}  \varphi)  +  E_\e V_{\e,0} \varphi] \dvol^{g_0}-\la_{0,N}\int_{B_\e^{M_0}} E_\e V_{\e,0} \varphi \dvol^{g_0}+ \la_{0,N} \e^m \int_{M_1(1)}  V_{\e,1}  h_{\e,\varphi}  \dvol^{g_1}\\
= \int_{M_0(\e)} [g_0(\nabla^{g_0}  V_{\e,0}, \nabla^{g_0}  \varphi)  +   V_{\e,0} \varphi] \dvol^{g_0}\\
+\int_{B_\e^{M_0}} [g_0(\nabla^{g_0}  E_\e V_{\e,0}, \nabla^{g_0}  \varphi)  +  (1-\la_{0,N})  E_\e V_{\e,0} \varphi] \dvol^{g_0}       
+ \la_{0,N} \e^m \int_{M_1(1)}  V_{\e,1}  h_{\e,\varphi}  \dvol^{g_1} \\
=- \e^{m-2} \int_{M_1(1)} \left[g_1 \left(\nabla^{g_1} V_{\e,1}, \nabla^{g_1} h_{\e,\varphi}\right)+\e^2 V_{\e,1}h_{\e,\varphi}\right] \dvol^{g_1} \\
+\int_{M_0(\e)} \left[g_0 \left(\nabla^{g_0} \varphi, \nabla^{g_0} \varphi\right)+|\varphi|^2 \right] \dvol^{g_0}
+ \e^{m-2} \int_{M_1(1)} \left[g_1 \left(\nabla^{g_1} h_{\e,\varphi}, \nabla^{g_1} h_{\e,\varphi}\right)+\e^2 |h_{\e,\varphi}|^2 \right] \dvol^{g_1}\\
+\int_{B_\e^{M_0}} [g_0(\nabla^{g_0}  E_\e V_{\e,0}, \nabla^{g_0}  \varphi)  +  (1-\la_{0,N})  E_\e V_{\e,0} \varphi] \dvol^{g_0}\\
- \lambda_{0,N} \int_{M_0(\e)} |\varphi|^2 \dvol^{g_0} + \la_{0,N} \e^m \int_{M_1(1)}  V_{\e,1}  h_{\e,\varphi}  \dvol^{g_1}
- \la_{0,N} \e^m \int_{M_1(1)}  |h_{\e,\varphi}|^2  \dvol^{g_1}\\
=-\int_{B_\e^{M_0}} [g_0(\nabla^{g_0}  \varphi, \nabla^{g_0}  \varphi)  +  (1-\la_{0,N})|\varphi|^2] \dvol^{g_0}
-\int_{\partial B_\e^{M_0}} \nu_0(\varphi) V_{\e,0}\ \textnormal{d}\sigma^{g_0}\\
- \e^{m-2} \int_{M_1(1)} \left[g_1 \left(\nabla^{g_1} V_{\e,1}, \nabla^{g_1} h_{\e,\varphi}\right)+\e^2 V_{\e,1}h_{\e,\varphi}\right] \dvol^{g_1} \\
+ \e^{m-2} \int_{M_1(1)} \left[g_1 \left(\nabla^{g_1} h_{\e,\varphi}, \nabla^{g_1} h_{\e,\varphi}\right)+\e^2 |h_{\e,\varphi}|^2 \right] \dvol^{g_1}\\
+ \la_{0,N} \e^m \int_{M_1(1)}  V_{\e,1}  h_{\e,\varphi}  \dvol^{g_1} - \la_{0,N} \e^m \int_{M_1(1)}  |h_{\e,\varphi}|^2  \dvol^{g_1}.
\end{multline}
Since $\varphi(p)=\varphi(p_0)+o(1)$ and  $\nabla^{g_0}\varphi(p)=\nabla^{g_0}\varphi(p_0)+o(1)$, as $\e \to 0^+$ uniformly for any $p \in B_\e^{M_0}$,
\begin{align}
&\int_{B^{M_0}_\e} [g_0(\nabla^{g_0}  \varphi, \nabla^{g_0}  \varphi)  +  (1-\la_{0,N})|\varphi|^2] \dvol^{g_0}\\
=& \e^m\left([|\nabla^{g_0}\varphi(p_0)|^2+  (1-\la_{0,N})|\varphi(p_0)|^2]  \, |\mathbb{B}_1|\right)+o(\e^m).
\end{align}
Furthermore, passing to normal coordinates
\begin{align}
&\int_{\partial B^{M_0}_\e} \nu_0(\varphi) V_{\e,0}\ \textnormal{d}\sigma^{g_0}=\int_{ \partial\mb{B_\e}} (V_{\e,0}\circ \exp_{p_0}^{M_0}) \left(A_{\e,0} \nabla(\varphi\circ \exp_{p_0}^{M_0})\right) \cdot \nu \diff  \sigma \\
= &\e^{m}\int_{\partial \mb{B}_1} \widehat V_{\e,0} \left( A_{\e,0} \nabla \widehat \varphi_\e\right) \cdot \nu \,   \diff  \sigma
= \e^{m}\int_{\partial \mb{B}_1}  V_{0,0} \nabla (\varphi\circ\exp_{p_0}^{M_0})(0) \cdot \nu  \diff  \sigma +o(\e^{m}),
\end{align}
as $\e \to 0^+$, where $\nu:=\frac{x}{|x|}$.
Furthermore, as $\e \to 0^+$ by \Cref{the_vengence_time_consuming_lemma} and \Cref{lemma_widehatV_bounded}
\begin{align}
&\e^{m-2}\int_{M_1(1)} \left[g_1 \left(\nabla^{g_1} V_{\e,1}, \nabla^{g_1} h_{\e,\varphi}\right)+\e^2 V_{\e,1}h_{\e,\varphi}\right] \dvol^{g_1}\\
=&\e^{m}\int_{M_1(1)} g_1 \left(\nabla^{g_1} V_{0,1}, \nabla^{g_1} \widehat h_{0,\varphi}\right) \dvol^{g_1}+o(\e^m).
\end{align}
Moreover, as $\e \to 0^+$
\begin{align}
&\e^{m-2} \int_{M_1(1)} \left[g_1 \left(\nabla^{g_1} h_{\e,\varphi}, \nabla^{g_1} h_{\e,\varphi}\right)
+\e^2 |h_{\e,\varphi}|^2 \right] \dvol^{g_1}   \\
&=\e^{m}\int_{M_1(1)} \left(g_1 \left(\nabla^{g_1} \widehat h_{0,\varphi}, \nabla^{g_1} \widehat h_{0,\varphi}\right) + |\varphi(p_0)|^2\right) \dvol^{g_1}+o(\e^{m}).
\end{align}
Similarly, as $\e \to 0^+$ 
\begin{equation}
\e^{m} \int_{M_1(1)}  V_{\e,1}  h_{\e,\varphi}  \dvol^{g_1}= \e^{m+1}  \varphi(p_0) \int_{M_1(1)}  V_{0,1}  \dvol^{g_1} +o(\e^{m+1})
\end{equation}
and 
\begin{equation}
\e^m \int_{M_1(1)}  |h_{\e,\varphi}|^2  \dvol^{g_1} = \e^m |\varphi(p_0)|^2 \textnormal{vol}^{g_1}(M_1(1))+o(\e^m).
\end{equation}
In conclusion, putting everything together we have proved \eqref{eq_hen_connect_sum_final}. Finally, we apply \Cref{theor_eigen_connected_sum_sharp}: in view of \eqref{eq_hen_connect_sum_final} we have that
\begin{equation}
    \gamma_{\e,i}=\e^m \gamma_i+o(\e^m)\quad\text{as }\e\to 0^+
\end{equation}
and, by definition, $\tau_{\e,N}^2=O(\e^m)$ as $\e\to 0^+$; in addition $\delta_{\e,N}^2=o(\e^m)$ thanks to \eqref{eq_manifolds_L2norm_smallo} and \eqref{eq_norm_L2_nabla_manifolds_M1}. Hence, \eqref{eq_eige_var_conn_sum_sharp} holds.
\end{proof}

\section*{AI statement}
The authors acknowledge the use of AI tools for proofchecking some of the results. The strategy and the arguments are human-generated and the paper does not contain AI written text. The authors take full responsibility for the whole content of the paper.

\section*{Acknowledgements}
A. Bisterzo, R. Ognibene, and G. Siclari are partially supported by the 2026 INdAM-GNAMPA project \emph{Asymptotic analysis of variational problems}, n. \texttt{CUP\_E53C25002010001}. A. Bisterzo and  G. Siclari are also supported by ``Centro di Ricerca Matematica Ennio De Giorgi''. P.~Roychowdhury is grateful to IIT Hyderabad for its warm hospitality and excellent research environment during the completion of this work.

\bibliographystyle{acm}
\bibliography{references}	
\end{document}